\documentclass[a4paper,11pt,twoside,reqno]{amsart}
\usepackage[utf8]{inputenc}
\usepackage[plainpages=false,pdfpagelabels=true]{hyperref}
\usepackage{amssymb,amsthm,amsmath,amscd,mathtools,mathrsfs}
\usepackage[margin=1in]{geometry}
\usepackage{slashed}
\usepackage{comment}
\usepackage{caption}
\newtheorem{Satz}{Theorem}[section]
\newtheorem{Prop}[Satz]{Proposition}
\newtheorem{Thm}[Satz]{Theorem}
\newtheorem{Cor}[Satz]{Corollary}
\theoremstyle{definition}
\newtheorem{Dfn}[Satz]{Definition}
\newtheorem{Bem}[Satz]{Remark}
\newtheorem{Hyp}[Satz]{Hypothesis}
\newtheorem{remark}[Satz]{Remark}
\newtheorem{example}[Satz]{Example}

\newcommand{\Ric}{\operatorname{Ric}}

\newcommand{\rd}{\mathrm{rd}}

\allowdisplaybreaks[1]
\numberwithin{equation}{section}
\renewcommand{\epsilon}{\varepsilon}
\title{Einstein Metrics and Equivariant Harmonic Maps:\\
The Einstein Detection Principle}
\author{Anna Siffert}
\address{Universität M\"unster, Mathematisches Institut\\
Einsteinstr. 62\\
48149 M\"unster\\
Germany}
\email{asiffert@uni-muenster.de}
\subjclass[2020]{53C25; 58E11; 58E20; 35J57}
\keywords{Einstein metrics; cohomogeneity one;
equivariant harmonic maps;
deformation theory;
Jacobi operators;
observability;
local reconstruction}
\begin{document}

\begin{abstract}
We develop a response-theoretic framework for local deformation problems of
compact cohomogeneity-one Einstein metrics.  After fixing the Einstein
normalization, the two-ended boundary-value problem is reformulated as a
finite-dimensional matching problem for smooth singular-orbit Einstein germs.
Under normalized Jacobi nondegeneracy and fixed equivariant probe data, each
nearby metric in a chosen smooth parameterized Einstein family carries a
locally unique harmonic probe.  The associated Jacobi and Green operators form
a core analytical package whose differential defines a universal first-order
response.  We prove that all admissible package-generated observations factor
through this response and identify its kernel as the universal invisible
space.  Injectivity yields first-order detection; for unrestricted smooth
local scalar postprocessings, finitely many scalar observations then give
local reconstruction.  We also give a finite operator-pairing criterion for
testing injectivity and record its scalar cohomogeneity-one specialization.
The round model and a numerical Einstein shooting example illustrate the
scope of the construction without being used in the abstract theorems.
\end{abstract}

\maketitle

\section{Introduction}
Einstein metrics occupy a central position in Riemannian geometry as canonical
metrics and critical points of the Einstein--Hilbert functional. Their
existence, construction and classification have been studied extensively; see,
for example,
\cite{Anderson,Besse,CheegerTian,Koiso}. By contrast, considerably less is
known about the local geometry of Einstein deformation spaces, particularly in
situations where the Einstein equations arise naturally as nonlinear
boundary-value problems rather than as elliptic equations on closed
manifolds.

\smallskip

Compact cohomogeneity-one Einstein manifolds provide a particularly natural
setting for studying such questions. Here the Einstein equations reduce to a
system of nonlinear ordinary differential equations together with
regular-singular smoothness conditions at the singular orbits
\cite{Bohm1999,EschenburgWang}. After fixing the Einstein constant, choosing a
unit-speed normal geodesic and quotienting by the remaining equivariant
diffeomorphism freedom, one obtains the normalized Einstein boundary-value
problem.

\smallskip

The first objective of the present paper is to reformulate this boundary-value
problem intrinsically. We show that it can be expressed as an
\emph{Einstein matching problem}: compatible Einstein germs are propagated
from the singular orbits to a canonical matching hypersurface, where they are
compared by means of an \emph{Einstein matching map}. This map is smooth
between finite-dimensional manifolds, and its zero set locally represents the
normalized Einstein boundary-value problem in the chosen gauge and
normalization. The matching construction isolates the
nonlinear geometric content of the Einstein equations and provides the
geometric foundation for all subsequent developments.

\smallskip

Building on this geometric reformulation, we introduce an auxiliary
variational theory based on canonically associated equivariant harmonic maps.
Rather than analysing the Einstein matching map directly, we study the
analytical structures generated by these auxiliary probes and show that,
after elimination of the probe variables, they descend naturally to a chosen
smooth parameterized Einstein family.

\smallskip

More precisely, after fixing a normalized reference probe and the full probe
data, normalized Jacobi nondegeneracy gives every sufficiently nearby Einstein
metric a unique harmonic probe on the local branch through that reference
probe. Throughout the paper, the adjective
\emph{canonical} always means canonical relative to the fixed Einstein
normalization together with the chosen probe data. The harmonic probes are
introduced purely for analytical purposes. They neither modify the Einstein
equations nor enlarge the underlying deformation problem. Instead, they
generate a natural core analytical package consisting of the canonical
Jacobi and Green operators. Finite-dimensional response operators and their
associated observables are then obtained from this package through admissible
observations and differentiation.

\smallskip

The central structural observation is that the auxiliary probe variables admit
a canonical nonlinear elimination. Although the probe fields themselves
disappear, the analytical structures generated by them survive the
elimination procedure and become canonically attached to the chosen smooth
parameterized Einstein family. Their relationship to the underlying Einstein
deformation problem is established through the Einstein matching problem
developed in Section~\ref{chap:intrinsic-moduli}.

\smallskip

These ideas lead to the central guiding principle of the paper.
\begin{quote}
\emph{
A suitably chosen auxiliary variational theory can generate a canonical
analytical response theory for a nonlinear geometric deformation problem
without changing the underlying geometric problem.
}
\end{quote}

\subsection{Main results}
The paper establishes four principal results.
\begin{enumerate}
\item
We reformulate the normalized Einstein boundary-value problem as an
\emph{Einstein matching problem}. Propagating compatible smooth Einstein germs
from the singular orbits to a canonical matching hypersurface defines the
associated Einstein matching map, a smooth map between finite-dimensional
manifolds. Its zero set provides a finite-dimensional realization of the local
normalized Einstein boundary-value problem in the chosen gauge.
\item
After fixing the reference probe and the complete probe data, normalized Jacobi
non\-de\-gen\-er\-acy gives a unique local continuation of that probe for every
sufficiently nearby Einstein metric, depending smoothly on the metric. This probe
assignment generates a natural core analytical package consisting of the
canonical Jacobi and Green operators. Finite-dimensional response operators
and associated observables are obtained from this package by admissible
observations and differentiation.
\item
The auxiliary probe variables admit a canonical nonlinear elimination.
Although the probe fields themselves disappear, the analytical structures
generated by them survive the elimination procedure and descend naturally to a
chosen smooth parameterized Einstein family. In this way, the auxiliary
variational theory gives rise to a canonical finite-dimensional analytical
response theory attached to the underlying Einstein deformation problem.
\item
The descended analytical package yields the
\emph{First-Order Einstein Detection Principle} together with conditional
results on local observability, distinction, rigidity and finite
reconstruction. Consequently, infinitesimal properties of a chosen smooth
parameterized Einstein family can be detected through the analytical response
of the associated harmonic probes.
\end{enumerate}
Taken together, these results establish a canonical analytical response theory
for local Einstein deformation problems. Their principal consequence is the
Einstein Detection Principle: suitably chosen auxiliary variational theories
generate canonical analytical structures that survive elimination and provide
a finite-dimensional response theory capable of detecting and, under explicit
injectivity hypotheses, locally reconstructing infinitesimal Einstein
deformations without altering the underlying geometric problem.

\subsection{Relation to previous work}
The present work lies at the intersection of three established areas of
differential geometry: the local deformation theory of Einstein metrics,
cohomogeneity-one Einstein geometry, and the theory of equivariant harmonic
maps.

\smallskip

The local deformation theory of compact Einstein metrics has a long history.
Its fundamental ingredients include gauge fixing by the Ebin slice,
elliptic linearization of the Einstein equations, infinitesimal Einstein
deformations and finite-dimensional Kuranishi reduction; see
Ebin~\cite{Ebin}, Koiso~\cite{Koiso}, Besse~\cite{Besse},
Biquard~\cite{Biquard}, and the recent survey of
Schwahn and Semmelmann~\cite{SchwahnSemmelmann}. The present paper is fully
compatible with this framework but adopts a different perspective: rather
than beginning with the linearized Einstein operator, we first reformulate
the nonlinear boundary-value problem itself.

\smallskip

The second ingredient is the theory of compact cohomogeneity-one Einstein
metrics. The regular-singular initial-value theory developed by
Eschenburg--Wang~\cite{EschenburgWang} provides the local Einstein germs used
throughout the paper, while the global existence theory includes B\"ohm's
construction of infinitely many inhomogeneous Einstein metrics
\cite{Bohm} together with numerous subsequent developments.
The finite-dimensional Einstein matching problem introduced here is built
upon this classical ODE framework but is adapted specifically to the local
two-ended boundary-value problem.

\smallskip

The third ingredient is the theory of equivariant harmonic maps.
General symmetry reductions were developed by
Eells--Ratto~\cite{EellsRatto} and Urakawa~\cite{Urakawa}; the
cohomogeneity-one self-map setting was studied by
P\"uttmann and Siffert~\cite{PuttmannSiffert}; equivariant Jacobi theory by
Branding and Siffert~\cite{BrandingSiffert}; and singular-orbit initial-value
theory by Siffert~\cite{SiffertIVP}. These results provide the analytical
tools used in the present work.

\smallskip

The contribution of this paper is not a new result within any one of these
three theories individually. Rather, it combines them into a unified
framework for studying local Einstein deformation problems.
The Einstein boundary-value problem is first reduced to a finite-dimensional
Einstein matching problem. An auxiliary harmonic-map theory is then coupled
to this geometric reduction, generating canonical analytical structures that
can be transported to a chosen smooth parameterized Einstein family.

\smallskip

The synthesis developed here combines the matching formulation, canonical harmonic probes, Jacobi--Green theory, elimination of the auxiliary variables, and finite-dimensional detection and reconstruction into a single local framework.

\smallskip

Accordingly, the novelty of the present paper lies neither in the individual
geometric constructions nor in the underlying analytical ingredients.
Instead, it lies in the response-theoretic architecture that connects them:
the auxiliary harmonic variables are eliminated, the resulting Jacobi--Green
package is descended to a chosen Einstein family, and every admissible
package-generated observation factors infinitesimally through a single
universal response differential.  This identifies a universal invisible
subspace and separates the genuinely geometric question of information loss
in the probe package from the finite-dimensional choice of observations.
Under the stated richness and injectivity hypotheses, the same structure
yields finite scalar detection and local reconstruction.  These implications,
together with the matching formulation that places them in the
cohomogeneity-one Einstein boundary-value problem, constitute the Einstein
Detection Principle proved here.

\subsection{Structure of the paper}
The paper is divided into two conceptually distinct parts.

\smallskip

The first part develops the geometric foundation of the theory. Beginning
with the normalized Einstein boundary-value problem, we introduce the
Einstein matching problem, construct the associated Einstein matching map,
and show how the local normalized Einstein moduli problem is represented by
its zero set. This finite-dimensional reformulation provides the geometric
framework for all subsequent analytical developments.

\smallskip

The second part develops the associated analytical theory. We introduce the
canonical equivariant harmonic probes, establish their existence and smooth
dependence on the Einstein metric, and develop the resulting Jacobi- and
Green-theoretic framework. After canonically eliminating the auxiliary probe
variables, the resulting analytical structures descend naturally to a chosen
smooth parameterized Einstein family and define the analytical response
theory developed in the remainder of the paper.

\smallskip

The later sections establish the universal response identity, formulate the
Einstein Detection Principle, and prove the associated local observability,
distinction and reconstruction theorems. The paper concludes with a nontrivial numerical realization for a
cohomogeneity-one Einstein shooting problem.

\subsection{Acknowledgements}
Parts of the exposition and manuscript preparation benefited from discussions with OpenAI's ChatGPT, which was used as an interactive assistant for mathematical exposition, proofreading, and structural suggestions. The author takes full responsibility for the mathematical content of the paper.

\smallskip

For the present revision, the use of ChatGPT was substantially extended. In addition to assistance with exposition and manuscript organization, it was used interactively in the development and refinement of mathematical arguments, in exploring possible proof strategies, in identifying potential gaps and hidden assumptions, in checking calculations and logical dependencies, in formulating and auditing auxiliary results, and in assisting with computational and \LaTeX{} aspects of the work. Its outputs were repeatedly checked, modified, or rejected by the author as appropriate. The author independently determined which arguments and results to include and retains full responsibility for all mathematical claims, proofs, computations, and conclusions in the revised manuscript.

\section{The Einstein Matching Problem}
\label{chap:intrinsic-moduli}
The purpose of this section is to reformulate the normalized Einstein
boundary-value problem as a finite-dimensional matching problem. Rather than
solving the global boundary-value problem directly, we propagate compatible
local Einstein germs from the two singular orbits to a canonical matching
hypersurface and compare the resulting matching data.

\smallskip

This construction leads to the \emph{Einstein matching map}, a smooth map
between finite-dimensional manifolds whose zero set provides a
finite-dimensional realization of the local normalized Einstein moduli
problem. The matching formulation isolates the nonlinear geometric content of
the Einstein equations and provides the geometric foundation for the
analytical framework developed in the remainder of the paper.

\smallskip

Throughout this section we assume familiarity with the basic theory of compact
cohomogeneity-one manifolds and refer to
\cite{AlexandrinoBettiol,GroveZiller} for background. General references for
Einstein manifolds and the regular-singular Einstein initial-value problem
include
\cite{Besse,EschenburgWang}.

\subsection{The normalized Einstein problem}
Let $G$ act smoothly on a compact manifold $M$ with cohomogeneity one and
orbit space
$M/G\cong[0,T].$
Let $H$ denote the principal isotropy subgroup. Throughout the paper we
consider $G$-invariant Einstein metrics satisfying
$$\Ric(g)=\lambda g, \qquad \lambda>0,$$
where the Einstein constant $\lambda$ is fixed once and for all.

\smallskip

We furthermore choose a unit-speed normal geodesic joining the two singular
orbits and fix the standard normalization described below, thereby removing
the remaining equivariant reparametrization freedom.

\smallskip

This leads to the following local boundary-value problem.
\begin{Dfn}[Normalized Einstein problem]
Let $g_0$ be a fixed normalized cohomogeneity-one Einstein metric.
\smallskip
The \emph{normalized Einstein problem} consists of determining all
normalized $G$-invariant Einstein metrics sufficiently close to $g_0$ that
satisfy the smoothness conditions at both singular orbits, modulo
equivariant diffeomorphisms preserving the chosen normalization.
\end{Dfn}

At this stage no smooth structure is assumed on the corresponding solution
set. Rather than studying this nonlinear boundary-value problem directly, our
goal is to replace it by an equivalent finite-dimensional matching problem.

\smallskip

More precisely, the following subsections construct the Einstein matching
map and show that its zero set provides a finite-dimensional realization of
the local normalized Einstein moduli problem. This finite-dimensional
realization forms the geometric starting point for the analytical response
theory developed in the remainder of the paper.

\subsection{Smooth parameterized Einstein families}
\label{subsec:parameterized-einstein-families}
The analytical constructions developed in this paper are local on a chosen
smooth finite-dimensional parameter manifold. They do not require the ambient
class of normalized Einstein metrics, nor the corresponding Einstein moduli
problem, to possess a smooth manifold structure.

\smallskip

Instead, the basic geometric object is a smooth finite-dimensional family of
normalized Einstein metrics.

\begin{Dfn}[Smooth parameterized Einstein family]
Let $M$ be the underlying manifold.
\smallskip
A \emph{smooth parameterized Einstein family} consists of
\begin{enumerate}
\item
a smooth finite-dimensional manifold $S$;
\item
a family of normalized Einstein metrics
$\{g_p\}_{p\in S},$
depending smoothly on the parameter $p$.
\end{enumerate}
\end{Dfn}
Here smooth dependence means that, in local coordinates on
$S\times M$, the coefficients of the metric tensor depend smoothly on both
the parameter variables and the spatial variables.

\smallskip

No smooth structure is assumed on the ambient class of normalized Einstein
metrics. In particular, the parameter manifold $S$ is not required to be a
local Einstein moduli space or to contain all nearby Einstein metrics.
Rather, it is part of the chosen data on which the subsequent analytical
theory is formulated.

\smallskip

Let $\operatorname{Met}(M)$ denote the space of smooth Riemannian metrics on $M$.
The family determines a smooth map
$$\mathbf g:S\longrightarrow \operatorname{Met}(M),\qquad p\longmapsto g_p.$$
Accordingly, a tangent vector $X\in T_pS$ determines the infinitesimal metric
variation $D\mathbf g_p[X]$. A nonzero parameter direction need not give a
nonzero metric variation unless the parameterization is immersive. Whenever
we speak below of tangent vectors of $S$ as genuine infinitesimal Einstein
deformations, we either assume that $\mathbf g$ is immersive on the
neighbourhood under consideration or interpret the statement after applying
$D\mathbf g_p$. No assertion is made that every infinitesimal Einstein
deformation is realized by the chosen family. To prevent ambiguity between the parameter manifold and the ambient metric space, we do not identify $p\in S$ with the metric $g_p$. Throughout the paper, $p$ denotes a parameter point, $g_p=\mathbf g(p)$ denotes the associated Einstein metric, and set-theoretic operations involving subsets of the ambient metric space are taken through the map $\mathbf g$.

\begin{remark}
The present framework includes local gauge-fixed Einstein deformation
families, smooth finite-dimensional Einstein moduli spaces whenever these
exist, and more generally any smooth finite-dimensional parameterization of
normalized Einstein metrics.

\smallskip

In cohomogeneity-one Einstein problems one frequently constructs a smooth
parameter space of local Einstein germs or propagated local Einstein
solutions. Whenever one obtains a smooth finite-dimensional family consisting
entirely of globally defined normalized Einstein metrics, the theory
developed in this paper applies directly to that family.
\end{remark}

\subsection{Functional-analytic conventions}
The analytical theory developed in the subsequent sections is formulated in a
fixed functional-analytic framework. We therefore collect here the standing
Banach- and Hilbert-space conventions used throughout the remainder of the
paper.

\smallskip

Fix an integer
$k\ge4$
and a Hölder exponent
$0<\alpha<1.$
All nonlinear constructions are first carried out in the corresponding
$C^{k,\alpha}$ Banach completions. Smoothness and higher regularity are then
obtained from the elliptic and regular-singular regularity theory.
Whenever spectral statements are made, they refer to the corresponding
self-adjoint $L^2$ realizations.

\smallskip

Throughout the paper we adopt the following conventions.

\begin{enumerate}
\item
\textbf{Configuration spaces.}
The spaces of invariant metrics, Einstein germs and equivariant harmonic maps
are closed Banach submanifolds determined by the prescribed symmetry,
endpoint and normalization conditions.
\item
\textbf{Jacobi operators.}
Every endpoint condition defining a Jacobi operator is elliptic.
Whenever spectral theory is used, the endpoint and normalization conditions
are assumed to determine a self-adjoint realization.
\item
\textbf{Local trivialisations.}
Nearby bundles and varying $L^2$ spaces are identified by fixed smooth local
trivialisations. All differentiability statements for operator families are
understood with respect to these identifications.
\item
\textbf{Operator families.}
After passing to a local trivialization, every operator family acts between
fixed Banach or Hilbert spaces with a common operator domain.
Accordingly, smoothness of an unbounded operator family means smoothness of
the associated bounded operator from the fixed graph norm space into the
fixed Hilbert space. Equivalently, the operator coefficients depend smoothly
on the parameter while the operator domain remains fixed.
\end{enumerate}

The assertions in items (1)--(4) are standing hypotheses of the abstract
analytical theory, not merely notational conventions. In particular, the
Banach-submanifold property of the constrained mapping spaces, ellipticity of
the endpoint conditions, existence of a common graph domain after
trivialization and, whenever spectral theory is invoked, self-adjointness of
the chosen realization are assumed on the parameter neighbourhood under
consideration. In a concrete cohomogeneity-one model each of these properties
must be verified for the corresponding regular-singular problem. None follows
from Fredholmness or formal self-adjointness alone.

\subsection{Einstein evolution}
Let $\gamma:[0,T]\rightarrow M$ be the fixed unit-speed normal geodesic joining
the two singular orbits. Along the regular part of the orbit space the
manifold is naturally identified with
$$(0,T)\times G/H,$$
and every $G$-invariant metric can be written in the form
$g=dt^2+g_t,$
where
$(g_t)_{t\in(0,T)}$
is a smooth one-parameter family of $G$-invariant metrics on the principal
orbit $G/H$.

\smallskip

After the cohomogeneity-one reduction, the Einstein equations become a
nonlinear system of ordinary differential equations governing the evolution
of the family $(g_t)$. Rather than describing this evolution in terms of
individual metric coefficients, we use intrinsic geometric quantities
associated with the principal orbits.

\smallskip

The shape operator of the principal orbit at time $t$ is
$$L_t=\frac12 g_t^{-1}\dot g_t,$$
and its trace
$$H(t)=\operatorname{Tr}(L_t)$$
is the mean curvature of the hypersurface
$\{t\}\times G/H.$
The function $H$ will play a fundamental role in the sequel, since its
monotonicity provides the canonical matching hypersurfaces used throughout
the paper.

\subsubsection{Monotonicity of the mean curvature}
The normal Einstein equation implies the evolution equation
$$H' = -\lambda-\operatorname{Tr}(L_t^{\,2}).$$
Since
$$\lambda>0, \qquad \operatorname{Tr}(L_t^{\,2})\ge0,$$
it follows immediately that the mean curvature decreases strictly along every
regular Einstein trajectory.

\smallskip

Although this monotonicity property is classical (see, for example,
\cite{Bohm1999}), it is one of the fundamental ingredients of the present
paper. It provides a canonical family of transverse hypersurfaces on which
the Einstein matching construction is based.

\begin{Prop}[Mean-curvature monotonicity {\cite{Bohm1999}}]
\label{Prop_monotonH}
Let $g$ be a $G$-invariant Einstein metric satisfying
$$\Ric(g)=\lambda g, \qquad \lambda>0.$$
Then the mean curvature is a strict Lyapunov function for the Einstein
evolution. In particular,
$H'(t)<0$
along every regular Einstein trajectory.
\smallskip
Consequently, every regular level set
$\Sigma_c=\{H=c\}$
is a smooth hypersurface transverse to the Einstein evolution.
\end{Prop}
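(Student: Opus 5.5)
The plan is to derive the Riccati-type evolution equation for the shape operator from the normal component of the Ricci tensor, take its trace, and then read off monotonicity and transversality.

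\textbf{Step 1: the normal Einstein equation.} Write $g=dt^2+g_t$ on $(0,T)\times G/H$, with $L_t=\tfrac12 g_t^{-1}\dot g_t$. The level sets $\{t\}\times G/H$ are equidistant hypersurfaces along unit-speed normal geodesics. For such a foliation, the Riccati equation for the shape operator gives $L'+L^2=-R_{\partial_t}$, where $R_{\partial_t}=R(\cdot,\partial_t)\partial_t$ is the normal Jacobi operator. Taking the trace yields
$$\Ric(\partial_t,\partial_t)=-\operatorname{Tr}(L_t')-\operatorname{Tr}(L_t^{\,2}).$$
Since $\operatorname{Tr}(L_t')=(\operatorname{Tr}L_t)'=H'$, the Einstein condition $\Ric(\partial_t,\partial_t)=\lambda$ gives
$$H'=-\lambda-\operatorname{Tr}(L_t^{\,2}).$$

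\textbf{Step 2: the sign.} $L_t$ is $g_t$-self-adjoint, so it is diagonalizable with real eigenvalues $\mu_i$. Hence $\operatorname{Tr}(L_t^{\,2})=\sum\mu_i^2\ge0$, and with $\lambda>0$ we get $H'(t)\le-\lambda<0$ on all of $(0,T)$. This gives the strict Lyapunov property. At the singular orbits $H\to\pm\infty$, by the standard asymptotics $L_t\sim t^{-1}$ on the collapsing directions, but this is not needed for the statement.

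\textbf{Step 3: transversality.} On the regular part, $H$ is a smooth function of $t$ alone, and its differential satisfies $dH(\partial_t)=H'(t)\neq0$. So every value $c$ in the range of $H$ is a regular value, and $\Sigma_c=\{H=c\}=\{t_c\}\times G/H$ for a unique $t_c$ by strict monotonicity. This set is a smooth embedded hypersurface, a principal orbit, and the evolution field $\partial_t$ is transverse to it.

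\textbf{Main obstacle.} The only nontrivial point is justifying the Riccati/trace identity with the correct signs and normalization. The sign is fixed by the convention $L_t=\tfrac12 g_t^{-1}\dot g_t$, which is the shape operator with respect to the normal $\partial_t$. I would verify it directly from the Gauss–Codazzi computation for the metric $dt^2+g_t$, or check it on the round sphere, where $H=(n-1)\cot t$ gives $H'=-(n-1)-(n-1)\cot^2t$, consistent with $\lambda=n-1$. Alternatively, one can cite \cite{Bohm1999,EschenburgWang}, where this is the $tt$-component of the cohomogeneity-one Einstein system.
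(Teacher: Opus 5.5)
Your Steps 1 and 2 are the paper's argument. The paper simply invokes the normal ($\partial_t\partial_t$) Einstein equation in the form $H'=-\lambda-\operatorname{Tr}(L_t^{\,2})$, together with $\lambda>0$ and $\operatorname{Tr}(L_t^{\,2})\ge0$. Your Riccati derivation correctly supplies that identity: in the coordinates $(t,x)$ one has $\nabla_{\partial_t}L=\dot L$, and then $\Ric(\partial_t,\partial_t)=-\operatorname{Tr}\dot L-\operatorname{Tr}L^2$. Your round-sphere sign check is also consistent.

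The gap is in Step 3, which proves transversality for the wrong object.

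You read $\Sigma_c$ as a level set in $M$ of the function $t\mapsto H(t)$ of one fixed metric, so that $\Sigma_c=\{t_c\}\times G/H$ is a principal orbit transverse to $\partial_t$. That statement is true but carries no content. Every slice $\{t\}\times G/H$ is transverse to $\partial_t$ for any metric $dt^2+g_t$, Einstein or not; monotonicity only serves to make $\{H=c\}$ a single orbit.

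In the paper, $H$ is the Lyapunov function $H(g,L)=\operatorname{Tr}_g(L)$ on the Cauchy-data space $\mathcal P$, the Einstein evolution is the vector field $X_{\mathrm{Ein}}$, and $\Sigma_c=H^{-1}(c)\subset\mathcal P$. This is exactly how the proposition is used afterwards. It is cited to get $dH(X_{\mathrm{Ein}})<0$ at every point of $\mathcal P$. That pointwise inequality yields the regular-value property, the transverse matching hypersurface $\Sigma=H^{-1}(0)$, and, via the implicit function theorem, the smooth hitting times behind $\Phi_\pm$. Monotonicity along one trajectory of one metric does not by itself give any of this.

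To fix this, upgrade your Step 1 identity to a pointwise identity on $\mathcal P$:
\begin{itemize}
\item Every $(g,L)\in\mathcal P$ is the initial datum of a local Einstein trajectory; this is a standing hypothesis on $\mathcal P$.
\item $dH_{(g,L)}(X_{\mathrm{Ein}})$ is the $t$-derivative of $H$ along that trajectory at $t=0$.
\item Hence $dH(X_{\mathrm{Ein}})=-\lambda-\operatorname{Tr}(L^2)\le-\lambda<0$ everywhere on $\mathcal P$. Alternatively, get the same identity algebraically by tracing the tangential Einstein equation and using the Hamiltonian constraint that cuts out $\mathcal P$.
\item So $dH$ vanishes nowhere on $\mathcal P$ and every $c$ is a regular value. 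By the regular value theorem, $H^{-1}(c)$ is an embedded hypersurface of $\mathcal P$.
\item Finally, $X_{\mathrm{Ein}}\notin\ker dH=T\Sigma_c$, which is the required transversality.
\end{itemize}
With this replacing your Step 3, your proof coincides with the paper's.
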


\smallskip

Since the mean curvature is strictly monotone, its regular level sets provide
a canonical foliation of a neighbourhood of every regular Einstein
trajectory. Unlike a coordinate hypersurface, this foliation is determined
intrinsically by the Einstein evolution itself and therefore requires no
auxiliary choices.

\subsubsection{Boundary-value problem}
A central problem in the theory of compact cohomogeneity-one Einstein metrics
is the Einstein boundary-value problem; see, for example,
\cite{Bohm1999,EschenburgWang}. One prescribes smooth Einstein germs at the
two singular orbits (or, equivalently, the corresponding regularity
conditions at the endpoints of the orbit space) and asks whether these data
extend to a global smooth Einstein metric.

\smallskip

After the cohomogeneity-one reduction, this becomes a nonlinear two-point
boundary-value problem for the Einstein evolution. The first step is
therefore to understand the finite-dimensional spaces of smooth Einstein
germs near the singular orbits and the way in which these germs propagate
under the Einstein flow.

\smallskip

The central observation underlying the present paper is that the global
boundary-value problem can be reformulated as a finite-dimensional matching
problem. Rather than comparing the propagated Einstein germs only at the
second singular orbit, we compare them on a canonical transverse
hypersurface determined by the mean curvature. This leads to the Einstein
matching map introduced in the following subsection and provides the
geometric foundation for the analytical response theory developed later in
the paper.

\subsection{Smooth Einstein germs}
The Einstein matching problem is built from the local Einstein geometries near
the two singular orbits. These local geometries are represented by smooth
Einstein germs and therefore constitute the fundamental local objects of the
construction.

\smallskip

Regularity across a singular orbit imposes algebraic compatibility conditions
on the Taylor expansion of an invariant Einstein metric. After imposing these
smoothness conditions, only finitely many Taylor coefficients remain free.
These admissible free jet parameters provide local coordinates for the
corresponding space of smooth Einstein germs.

\smallskip

The underlying regular-singular initial-value analysis is due to
Eschenburg--Wang~\cite{EschenburgWang}. Their general theorem supplies local
existence under a representation-theoretic hypothesis and, importantly, also
exhibits finite-dimensional nonuniqueness when only the lowest-order singular
orbit data are prescribed. Accordingly, the proposition below does not
attribute uniqueness from the initial metric and shape operator to
Eschenburg--Wang. Instead, it isolates as an explicit hypothesis the stronger
jet-level uniqueness property required by the matching construction.

\begin{Prop}[Finite-dimensional Einstein germ manifolds]
\label{prop_finitegerms}
Fix a cohomogeneity-one group diagram, a positive Einstein constant and one
of the two singular orbit types.

\smallskip

Assume that the singular-orbit Einstein problem has been placed in the
regular-singular framework of Eschenburg--Wang near the chosen orbit and that,
for the particular group diagram under consideration, the following additional
local properties hold:
\begin{enumerate}
\item
the smoothness conditions determine a finite-dimensional manifold of
admissible initial jets;
\item
the higher-order Taylor coefficients depend smoothly on the admissible free
jet parameters;
\item
every sufficiently nearby admissible jet is realized by a unique smooth
invariant Einstein germ.
\end{enumerate}
Then the space of smooth invariant Einstein germs near the reference germ is
a finite-dimensional smooth manifold. Moreover, every sufficiently nearby
Einstein germ is uniquely determined by its admissible free jet parameters.
\end{Prop}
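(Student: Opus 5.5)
The plan is to build the manifold structure on the space of germs directly from hypotheses (1)--(3), using the admissible free jet parameters as a chart. Let $\mathcal J$ denote the finite-dimensional manifold of admissible initial jets from hypothesis (1), and $j_0\in\mathcal J$ the jet of the reference germ $\gamma_0$. Hypothesis (3) gives a neighbourhood $U\subset\mathcal J$ of $j_0$ and a map
$$\Phi:U\longrightarrow \mathcal G,\qquad j\longmapsto \gamma_j,$$
where $\mathcal G$ is the space of smooth invariant Einstein germs near the chosen singular orbit and $\gamma_j$ is the unique smooth germ realizing $j$. In the opposite direction there is the jet map $\pi:\mathcal G\to\mathcal J$, which sends a germ to its admissible free jet parameters. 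This is well defined because the smoothness conditions force every smooth germ to have an admissible jet. By construction $\pi\circ\Phi=\mathrm{id}_U$, and the uniqueness in (3) gives $\Phi\circ\pi=\mathrm{id}$ on $\pi^{-1}(U)$. So $\Phi$ is a bijection onto a neighbourhood of $\gamma_0$, and it carries the smooth structure of $U$ onto that neighbourhood. This chart already gives the second assertion: nearby germs are determined by their free jet parameters.

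Calling this a "smooth manifold of germs" only has content if it matches the natural topology and smooth structure on germs. I will interpret that as follows: the germ depends smoothly on the parameters as a map into $C^{k,\alpha}$ invariant metrics on a fixed small tube $[0,\epsilon)\times G/H$, with $\epsilon$ uniform for $j\in U$. To get this, I will use the Eschenburg--Wang regular-singular framework. Write the reduced Einstein system as a regular-singular ODE
$$t\,\dot y = A\,y + t\,F(t,y).$$
Split the solution as a finite Taylor polynomial plus a remainder,
$$y=P_N(j,t)+t^{N}w.$$
By hypothesis (2), the coefficients of $P_N(j,t)$ depend smoothly on $j$. The remainder $w$ solves a fixed-point problem. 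Choose $N$ larger than every eigenvalue of $A$; then the singular integral operator $w\mapsto t^{-A}\int_0^t s^{A-1}(\cdots)\,ds$ is a contraction on a small interval. A contraction with smooth parameter dependence yields smooth dependence of $w$ on $j$, via the implicit function theorem in Banach spaces.

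I would then check that the topology induced by $\Phi$ agrees with the subspace topology coming from these $C^{k,\alpha}$ tubes. Continuity of $\pi$ is clear, since taking finitely many Taylor coefficients is a bounded linear operation. The main obstacle is the second step: getting a uniform interval of existence and smooth dependence through the resonant eigenvalues of $A$. At those eigenvalues the Taylor coefficients are exactly the free parameters, so the contraction has to be set up after they have been prescribed. Hypothesis (2) is exactly what makes that setup possible. If the fixed-point formulation proves awkward, the fallback is to take smooth parameter dependence of germs as part of the regular-singular framework assumed in the statement.
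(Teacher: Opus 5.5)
Your first paragraph is exactly the paper's proof. Hypothesis (3) supplies the jet-to-germ map, its uniqueness clause makes it inverse to reading off the admissible free jet parameters, and this correspondence is taken as the local smooth parametrization, which also gives the uniqueness assertion. The paper stops there, treating hypothesis (2) and the assumed Eschenburg--Wang framework as already providing smooth dependence, so your contraction argument on a uniform collar is an optional strengthening rather than a needed step. If you carry it out, the exponent in your integral operator should be the shifted matrix $N-A$, not $A$, so that the kernel $(t/s)^{A-N}$ is bounded for $0<s\le t$.
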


\begin{proof}
By assumption, the smoothness relations determine a finite-dimensional smooth
manifold of admissible initial jets. The regular-singular recursion expresses
every higher-order Taylor coefficient smoothly in terms of the admissible
free jet parameters, while the assumed existence and uniqueness theorem
associates a unique smooth Einstein germ to every admissible jet.
Consequently, the jet-to-germ correspondence defines a local smooth
parametrization of the germ manifold.
\end{proof}

Thus the admissible free jet parameters furnish local coordinates on the
manifold of smooth Einstein germs.
\smallskip
Applying Proposition~\ref{prop_finitegerms} to the two singular orbit types
produces finite-dimensional germ manifolds
$$G_- \qquad\text{and}\qquad G_+.$$
These two manifolds form the geometric input for the Einstein matching
construction developed below.
\smallskip
We propagate these local Einstein germs along the Einstein
evolution to a canonical matching hypersurface.

\subsection{The Einstein Cauchy-data manifold}
The reduced Einstein evolution is naturally formulated on a finite-dimensional
phase space of invariant Cauchy data rather than directly on the space of
metrics.
\smallskip
Fix the principal orbit $G/H$ together with the positive Einstein constant
$\lambda>0.$
We impose the following local Cauchy-data regularity hypothesis. In a
neighbourhood of the reference Cauchy datum, the chosen normalization together
with the Einstein Hamiltonian constraint cuts out a smooth finite-dimensional
submanifold of the ambient invariant Cauchy-data space; the reduced Einstein
vector field is smooth and tangent to this submanifold; and the corresponding
initial-value problem has unique solutions depending smoothly on the initial
datum for as long as the trajectory remains in the regular region. Denote this
local constraint manifold by $\mathcal P$.
Its points are pairs $(g,L)$, where
\begin{itemize}
\item
$g$ is a $G$-invariant Riemannian metric on the principal orbit;
\item
$L:T(G/H)\rightarrow T(G/H)$ is a $g$-symmetric endomorphism;
\item
the chosen geometric normalization has been imposed;
\item
the Einstein Hamiltonian constraint is satisfied;
\item
the remaining admissibility conditions guaranteeing local existence and
uniqueness for the reduced Einstein evolution hold.
\end{itemize}

Under this local regularity hypothesis, $\mathcal P$ is the natural phase
space for the reduced Einstein ordinary differential equations. Every regular Einstein metric determines a
trajectory
$(g(t),L(t)) \in \mathcal P,$
and conversely every admissible initial datum sufficiently close to the
reference trajectory determines a unique local Einstein evolution by the
standard local existence and uniqueness theory.

\smallskip

The mean curvature defines a smooth function
$$H:\mathcal P\longrightarrow\mathbb R, \qquad H(g,L)=\operatorname{Tr}_g(L).$$
Let
$X_{\mathrm{Ein}}$
denote the vector field on $\mathcal P$ generating the reduced Einstein
evolution.

\smallskip

By Proposition~\ref{Prop_monotonH},
$dH\!\left(X_{\mathrm{Ein}}\right)<0$
at every regular point of $\mathcal P$. Consequently,
$dH\neq0$
throughout the regular region of the Einstein evolution.

\smallskip

It follows that every regular level set
$H^{-1}(c)$
is a smooth hypersurface transverse to the Einstein flow.

\smallskip

These canonical transverse hypersurfaces provide the geometric setting for
the Einstein matching construction introduced in the following subsection.

\subsection{Einstein matching hypersurface}
The strict monotonicity of the mean curvature provides a canonical family of
hypersurfaces transverse to the Einstein evolution. These hypersurfaces serve
as canonical cross-sections on which propagated Einstein germs can be
compared.

\smallskip

\begin{Prop}
Let
$H:\mathcal P\longrightarrow\mathbb R$
denote the mean-curvature function, and let $c$ be a regular value of $H$.
Then the level set
$\Sigma_c=H^{-1}(c)$
is a smooth hypersurface in $\mathcal P$.
\smallskip
Moreover, every sufficiently nearby regular Einstein trajectory intersects
$\Sigma_c$ at most once.
\end{Prop}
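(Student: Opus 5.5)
The first assertion is the regular value theorem on the finite-dimensional manifold $\mathcal P$. By the local Cauchy-data regularity hypothesis, $\mathcal P$ is a smooth finite-dimensional manifold and $H(g,L)=\operatorname{Tr}_g(L)$ is smooth on it. If $c$ is a regular value, then $dH\neq0$ at every point of $H^{-1}(c)$. The preimage theorem then shows that $\Sigma_c$ is a smooth embedded submanifold of codimension one. I also plan to record that at regular points of the evolution, Proposition~\ref{Prop_monotonH} gives $dH(X_{\mathrm{Ein}})<0$. Hence $X_{\mathrm{Ein}}$ is nowhere tangent to $\Sigma_c$ there, since $T\Sigma_c=\ker dH$. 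This transversality is what the second assertion uses.

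For the at-most-once statement, take a regular Einstein trajectory $t\mapsto(g(t),L(t))$ in $\mathcal P$, defined on its maximal interval of existence inside the regular region. Consider the scalar function $h(t)=H(g(t),L(t))$. Because the trajectory is an integral curve of $X_{\mathrm{Ein}}$, we have
$$h'(t)=dH\bigl(X_{\mathrm{Ein}}(g(t),L(t))\bigr)=-\lambda-\operatorname{Tr}(L_t^{2})\le-\lambda<0.$$
So $h$ is strictly decreasing on an interval, hence injective, and $h(t)=c$ has at most one solution. The trajectory therefore meets $\Sigma_c$ at most once. This argument does not even need ``sufficiently nearby'': it holds for every regular trajectory, with the uniform bound $h'\le-\lambda$ coming from $\lambda>0$.

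The only point needing care is the meaning of ``trajectory'' and the domain on which it lives. If a trajectory were allowed to leave the regular region, or to pass through a chart boundary of the local manifold $\mathcal P$ and re-enter, the parameter interval would no longer be connected. I will resolve this by taking trajectories to be integral curves on a connected time interval contained in the regular region, as in the standing hypothesis. Monotonicity along a connected interval is then all that is used.

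I will then add the remark that nearby trajectories which actually reach the level $c$ cross it transversally. Because $c$ is a regular value and the flow depends smoothly on initial data, the implicit function theorem gives a smooth crossing time $\tau$ with $h(\tau)=c$, since $\partial_t h=dH(X_{\mathrm{Ein}})\neq0$. This is where ``sufficiently nearby'' naturally enters, and it is the step I expect the later matching construction to rely on.
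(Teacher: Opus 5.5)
Your proposal is correct and follows the paper's proof: the regular value theorem gives the first assertion, and strict monotonicity of $H$ along trajectories gives the at-most-once claim. The only cosmetic difference is that the paper obtains $dH\neq0$ from $dH(X_{\mathrm{Ein}})<0$ rather than directly from the regular-value hypothesis; like you, it never uses the ``sufficiently nearby'' qualifier.
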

\begin{proof}
By Proposition~\ref{Prop_monotonH},
$dH(X_{\mathrm{Ein}})<0.$
Hence
$dH\neq0$
at every point of every regular Einstein trajectory. The Regular Value
Theorem therefore implies that $\Sigma_c$ is a smooth hypersurface.

\smallskip

Since the mean curvature decreases strictly along every regular Einstein
trajectory, no trajectory can intersect the same level set twice.
\end{proof}

Among the regular level sets of $H$, one will play a distinguished role.

\begin{Dfn}[Einstein matching hypersurface]
Assume that the reference Einstein trajectory reaches $H=0$ at an interior
regular time $t_*$ and remains in the regular Cauchy-data region on a compact
time interval containing $t_*$. The hypersurface
$\Sigma:=H^{-1}(0)$
through the reference matching point is called the \emph{Einstein matching
hypersurface}.
\end{Dfn}

Since $dH(X_{\mathrm{Ein}})<0$, the reference crossing is transverse. Smooth
dependence of the reduced flow on initial data and the Implicit Function
Theorem therefore imply that, after restricting to initial data sufficiently
close to the reference datum, every such trajectory exists up to the
corresponding hitting time and intersects $\Sigma$ exactly once. The
existence-to-the-section requirement is essential: strict monotonicity alone
gives uniqueness of a crossing, but not existence of one.

\smallskip

Thus $\Sigma$ is a geometrically distinguished local cross-section for the
Einstein evolution and provides a unique matching point for every nearby
trajectory that crosses the chosen level.

\smallskip

Consequently, every normalized Einstein metric belonging to the chosen smooth
parameterized Einstein family determines a canonical matching point on
$\Sigma$. This defines the matching-point map
$j:S\longrightarrow\Sigma$. Because the family $p\mapsto g_p$ is smooth and
the $H=0$ crossing is transverse, the same hitting-time argument used in
Proposition~\ref{prop:smooth-propagation} shows that $j$ is smooth after
shrinking $S$ if necessary.

\subsection{Propagation to the Einstein matching hypersurface}
Every smooth Einstein germ in either germ manifold
$$G_- \qquad\text{or}\qquad G_+$$
determines a unique local Einstein evolution in the Einstein Cauchy-data
manifold $\mathcal P$.

\smallskip

After restricting to sufficiently small neighbourhoods of the reference
germs, every propagated Einstein trajectory intersects the Einstein matching
hypersurface $\Sigma$ exactly once. Consequently, every nearby Einstein germ
determines a unique point of matching data on $\Sigma$.

\smallskip
This defines the propagation maps
$\Phi_\pm:G_\pm\longrightarrow\Sigma.$
The geometric reduction may therefore be summarized by the diagram
$$
\begin{array}{ccc}
G_- && G_+\\[1ex]
\Big\downarrow{\Phi_-} &&
\Big\downarrow{\Phi_+}\\[1ex]
&\Sigma&
\end{array}
$$
Thus the local Einstein boundary-value problem is reduced to comparing two
finite-dimensional families of matching data inside the common Einstein
matching hypersurface.

\smallskip
\begin{Prop}[Smooth propagation]
\label{prop:smooth-propagation}
Assume that the two reference germ trajectories reach the reference matching
point in the regular region and that the reduced flow satisfies the local
existence and smooth-dependence hypothesis stated above on compact intervals
up to that crossing. Then, after restricting to sufficiently small
neighbourhoods of the reference germs, every corresponding trajectory exists
up to its unique $H=0$ crossing and the propagation maps
$\Phi_\pm:G_\pm\longrightarrow\Sigma$
are smooth.
\end{Prop}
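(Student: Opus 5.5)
The plan is to write each propagation map as a composition of smooth maps and then apply the Implicit Function Theorem to the hitting time. For definiteness I treat $\Phi_-$; the argument for $\Phi_+$ is identical, with the roles of the two ends of the orbit space exchanged.

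\emph{Step 1: from germs to Cauchy data.} A singular orbit is not a regular point of $\mathcal P$, so the flow cannot be started there. Instead fix a small regular time $t_0>0$ and consider the map $G_-\to\mathcal P$ sending a germ to its Cauchy datum $(g(t_0),L(t_0))$. By Proposition~\ref{prop_finitegerms}, nearby germs are parametrized smoothly by their admissible free jet parameters, with the higher Taylor coefficients depending smoothly on them. To conclude that this map is smooth I would use the smooth dependence of the regular-singular solution on its jet parameters. In the Eschenburg--Wang framework this follows from the contraction-mapping construction of the germ, which depends smoothly on parameters; I would take it as part of hypothesis (3) of Proposition~\ref{prop_finitegerms}, read as smooth realization. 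I expect this step to be the main obstacle. If (3) only gives uniqueness, I would add smooth dependence explicitly as a hypothesis, since it does not follow from uniqueness alone.

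\emph{Step 2: the flow.} Let $\phi^s$ be the flow of $X_{\mathrm{Ein}}$ on $\mathcal P$. Let $x_0$ be the reference datum at $t_0$ and $s_*=t_*-t_0$ its hitting time. By hypothesis the reference trajectory stays in the regular region on a compact interval $[0,s_*+\delta]$. The local existence and smooth-dependence hypothesis, together with compactness of this interval, gives a neighbourhood $U$ of $x_0$ on which $\phi^s(x)$ exists for all $s\in[0,s_*+\delta]$ and $(s,x)\mapsto\phi^s(x)$ is smooth. This is the standard tube lemma for flows.

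\emph{Step 3: the hitting time.} Define $F(s,x)=H(\phi^s(x))$ on $[0,s_*+\delta]\times U$. Then $F(s_*,x_0)=0$, and
\[
\partial_sF(s_*,x_0)=dH\bigl(X_{\mathrm{Ein}}\bigr)<0
\]
by Proposition~\ref{Prop_monotonH}. The Implicit Function Theorem gives a smooth function $\tau$ near $x_0$ with $\tau(x_0)=s_*$ and $F(\tau(x),x)=0$.

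It remains to see that $\tau(x)$ is the unique crossing. Strict monotonicity of $H$ along each trajectory, proved in the proposition on $\Sigma_c$, rules out any other zero of $s\mapsto F(s,x)$ on the interval. Moreover, by continuity $F(s,x)$ stays bounded away from $0$ for $s$ bounded away from $s_*$, after shrinking $U$. Hence every nearby trajectory exists up to its unique $H=0$ crossing at time $\tau(x)$.

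\emph{Step 4: conclusion.} We have
\[
\Phi_-(\text{germ})=\phi^{\tau(x)}(x),\qquad x=(g(t_0),L(t_0)),
\]
which is a composition of smooth maps and takes values in $\Sigma$. Since $\Sigma$ is an embedded hypersurface, by the proposition on regular level sets, $\Phi_-$ is smooth as a map into $\Sigma$.

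Finally, the result should not depend on the choice of $t_0$. Changing $t_0$ only shifts the hitting time by a smooth amount along the same trajectory, so the matching point, and hence $\Phi_-$, is unchanged.
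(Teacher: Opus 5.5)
Your proposal is correct and follows the paper's own route: smooth dependence of the regular reduced flow on initial data over a compact interval, the Implicit Function Theorem for the hitting time using $dH(X_{\mathrm{Ein}})<0$, and composition of the solution, hitting-time and evaluation maps. Your Step~1 makes explicit a point the paper's proof passes over: the map from germs (in jet-parameter coordinates) to regular Cauchy data at some $t_0>0$ is smooth only if the regular-singular solution itself depends smoothly on the jet parameters, and the stated hypotheses (uniqueness of the germ and smooth dependence of its Taylor coefficients) do not directly supply this, so recording it as an explicit assumption is appropriate.
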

\begin{proof}
On every compact subinterval of the regular orbit interval, the reduced
Einstein equations form a smooth finite-dimensional system of ordinary
differential equations. Standard smooth dependence of ODE solutions on their
initial conditions therefore yields smooth dependence of the propagated
Einstein trajectories on the initial Einstein germs; see
\cite[Chapter~V]{Hartman}.

\smallskip
Since the Einstein evolution is transverse to the matching hypersurface
$\Sigma$, the corresponding hitting time depends smoothly on the initial
germ by the Implicit Function Theorem. Composing the smooth solution map,
the smooth hitting-time map and the evaluation map yields the smooth
propagation maps.
\end{proof}

\smallskip
Each singular orbit therefore determines a finite-dimensional manifold of
matching data on the common Einstein matching hypersurface. The original
local Einstein boundary-value problem is thereby reduced to comparing these
two propagated families.

\smallskip
The next subsection packages this comparison into a single smooth
finite-dimensional map, the \emph{Einstein matching map}.

\subsection{Matching map}
The propagation maps reduce the local Einstein boundary-value problem to the
comparison of the propagated Einstein germ manifolds inside the common
Einstein matching hypersurface.
\smallskip
Intrinsically, a pair
$(u,v)\in G_-\times G_+$
is said to satisfy the Einstein matching condition if
$$\Phi_-(u)=\Phi_+(v).$$
Thus the Einstein boundary-value problem is reduced to deciding whether the
two propagated germ manifolds intersect inside the matching hypersurface.

\smallskip
For analytical purposes it is convenient to represent this intrinsic matching
condition in local coordinates.

\begin{Dfn}[Local representative of the Einstein matching map]
Choose a coordinate chart
$$\chi:V\subset\Sigma\longrightarrow\mathbb R^d, \qquad d=\dim\Sigma,$$
containing the matching point
$q_0 = \Phi_-(u_0) = \Phi_+(v_0)$
of the reference Einstein metric.

\smallskip
After possibly shrinking the germ neighbourhoods, assume
$\Phi_\pm(G_\pm)\subset V.$
The corresponding local representative of the Einstein matching map is
$\mathcal M_\chi: G_-\times G_+ \longrightarrow \mathbb R^d,$
defined by
$$\mathcal M_\chi(u,v) = \chi(\Phi_-(u)) - \chi(\Phi_+(v)).$$
\end{Dfn}

\smallskip
Its zero set is independent of the chosen coordinate chart, since
$\mathcal M_\chi(u,v)=0 \iff \Phi_-(u)=\Phi_+(v).$
Likewise, if two nearby regular mean-curvature levels are both crossed by all
trajectories under consideration, the Einstein flow gives a local
diffeomorphism between the corresponding cross-sections. The two matching
problems are then locally equivalent under this flow identification.

\begin{Prop}[Coordinate invariance]
\label{prop:coordinate-independence}
Let $\chi:V\rightarrow\mathbb R^d$ and
$\widetilde\chi:\widetilde V\rightarrow\mathbb R^d$ be coordinate charts on
$\Sigma$ with $q_0\in V\cap\widetilde V$, and let $\mathcal M_\chi$ and
$\mathcal M_{\widetilde\chi}$ be the corresponding residual maps. If
$\Phi_-(u_0)=\Phi_+(v_0)=q_0$, then
$$
D\mathcal M_{\widetilde\chi}(u_0,v_0)
=
D(\widetilde\chi\circ\chi^{-1})_{\chi(q_0)}
\circ D\mathcal M_\chi(u_0,v_0).
$$
Consequently, the kernel, rank and surjectivity of the linearized matching
map are independent of the chart, while the images for two charts are carried
into one another by the indicated transition isomorphism. In particular,
transversality of a matching point is intrinsic.
\end{Prop}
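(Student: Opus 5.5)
The plan is a chain-rule computation on the common domain of the two representatives, followed by elementary linear algebra. After shrinking the germ neighbourhoods so that $\Phi_\pm(G_\pm)\subset V\cap\widetilde V$, set $\psi:=\widetilde\chi\circ\chi^{-1}:\chi(V\cap\widetilde V)\to\widetilde\chi(V\cap\widetilde V)$. This is a diffeomorphism between open subsets of $\mathbb R^d$. Writing $a(u):=\chi(\Phi_-(u))$ and $b(v):=\chi(\Phi_+(v))$, we have
$$\mathcal M_{\widetilde\chi}(u,v)=\psi(a(u))-\psi(b(v)),\qquad \mathcal M_\chi(u,v)=a(u)-b(v).$$
Both $a$ and $b$ are smooth by Proposition~\ref{prop:smooth-propagation}.

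Next, differentiate at $(u_0,v_0)$ in a direction $(\dot u,\dot v)\in T_{u_0}G_-\times T_{v_0}G_+$. The chain rule gives
$$D\mathcal M_{\widetilde\chi}(u_0,v_0)[\dot u,\dot v]=D\psi_{a(u_0)}[Da_{u_0}\dot u]-D\psi_{b(v_0)}[Db_{v_0}\dot v].$$
The key point is the matching hypothesis $\Phi_-(u_0)=\Phi_+(v_0)=q_0$. It gives $a(u_0)=b(v_0)=\chi(q_0)$, so both terms involve the same linear map $D\psi_{\chi(q_0)}$. By linearity this yields
$$D\psi_{\chi(q_0)}\circ\bigl(Da_{u_0}\dot u-Db_{v_0}\dot v\bigr)=D(\widetilde\chi\circ\chi^{-1})_{\chi(q_0)}\circ D\mathcal M_\chi(u_0,v_0)[\dot u,\dot v],$$
which is the claimed identity. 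This is the only step where something is really used: away from matching points the two base points differ, and the formula would fail.

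Finally, the consequences follow from the fact that $D\psi_{\chi(q_0)}$ is a linear isomorphism of $\mathbb R^d$, since $\psi$ is a diffeomorphism. Composing on the left with an isomorphism leaves the kernel unchanged. It carries the image isomorphically onto the new image, which preserves rank. It also preserves surjectivity. Transversality of the matching point, meaning surjectivity of $D\mathcal M_\chi(u_0,v_0)$, or equivalently $D\Phi_-(T_{u_0}G_-)+D\Phi_+(T_{v_0}G_+)=T_{q_0}\Sigma$, is therefore chart-independent.

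The intrinsic reformulation can be checked directly: $D\chi_{q_0}$ identifies the image of $D\mathcal M_\chi(u_0,v_0)$ with $D\Phi_-(T_{u_0}G_-)+D\Phi_+(T_{v_0}G_+)$. I expect no genuine obstacle. The only care needed is the preliminary shrinking, so that both representatives are defined on a common neighbourhood of $(u_0,v_0)$.
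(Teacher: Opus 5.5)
Your proposal is correct and follows the paper's proof: both set $\widetilde\chi\circ\chi^{-1}$ as the transition map, differentiate the two branches of the residual separately, use $\Phi_-(u_0)=\Phi_+(v_0)=q_0$ to put both derivatives at the common base point $\chi(q_0)$, and then factor out the isomorphism $D(\widetilde\chi\circ\chi^{-1})_{\chi(q_0)}$. Your remark that the identity fails away from matching points is the paper's observation that $\mathcal M_{\widetilde\chi}\neq(\widetilde\chi\circ\chi^{-1})\circ\mathcal M_\chi$ in general, because the residual is a difference of two chart values.
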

\begin{proof}
Set $F=\widetilde\chi\circ\chi^{-1}$. Near the matching pair one has
$\widetilde\chi\circ\Phi_\pm=F\circ\chi\circ\Phi_\pm$, but in general
$\mathcal M_{\widetilde\chi}\neq F\circ\mathcal M_\chi$ because the matching
residual is a difference of two chart values. Differentiating the two branches
separately at the common point $q_0$ gives
$$
D\mathcal M_{\widetilde\chi}(\xi,\zeta)
=DF_{\chi(q_0)}D\chi_{q_0}D\Phi_-(\xi)
-DF_{\chi(q_0)}D\chi_{q_0}D\Phi_+(\zeta),
$$
which equals $DF_{\chi(q_0)}D\mathcal M_\chi(\xi,\zeta)$. Since
$DF_{\chi(q_0)}$ is an isomorphism, the stated intrinsic properties follow.
\end{proof}
\begin{remark}
The map
$\mathcal M_\chi$
depends on the chosen coordinate chart, whereas the underlying matching
problem
$\Phi_-(u)=\Phi_+(v)$
is intrinsic.

\smallskip
Proposition~\ref{prop:coordinate-independence} shows that every first-order
property of the matching map depends only on the intrinsic matching problem.
Accordingly, whenever no confusion can arise, we simply write
$\mathcal M$
for an arbitrary local representative.
\end{remark}

\smallskip
The normalized Einstein boundary-value problem has therefore been reduced to
the finite-dimensional nonlinear equation
$\mathcal M=0.$
Its solution set provides a finite-dimensional realization of the local
normalized Einstein moduli problem and forms the geometric foundation for the
analytical theory developed in the remainder of the paper.

\subsection{Reduction of the Einstein boundary-value problem}
We now arrive at the central geometric result of this section.
Every ingredient entering the Einstein matching problem has been constructed,
$$
G_-,
\qquad
G_+,
\qquad
\Sigma,
\qquad
\Phi_-,
\qquad
\Phi_+.
$$
Together these objects provide a finite-dimensional realization of the local
normalized Einstein boundary-value problem.

\begin{Thm}[Intrinsic Reduction Theorem]
\label{thm:intrinsic-reduction}
Fix a cohomogeneity-one group diagram, a positive Einstein constant and the
geometric normalizations introduced above.
\smallskip
Assume the hypotheses established in the preceding subsections, namely the
regular-singular Einstein germ theory of
Proposition~\ref{prop_finitegerms}, the local smooth Cauchy-data constraint
manifold, the reduced Einstein evolution, the Einstein matching hypersurface
and the smooth propagation maps. Assume also that the normal orientation and
the definition of the shape operator are fixed consistently on the two sides,
that equality of points of $\mathcal P$ means equality of the full reduced
Einstein Cauchy data required for uniqueness of the regular ODE, and that the
chosen normalization removes the residual equivariant gauge up to any
explicitly retained stabilizer.
\smallskip
After possibly shrinking to sufficiently small neighbourhoods of the
reference Einstein metric, every propagated Einstein germ intersects the
Einstein matching hypersurface
$\Sigma=\{H=0\}$
exactly once.
\smallskip
Then the following statements hold.
\begin{enumerate}
\item
There is a natural local correspondence between normalized Einstein metrics
sufficiently close to the reference metric and compatible pairs of propagated
endpoint germs. It is one-to-one after the stated normalization has removed
the residual equivariant gauge; if a stabilizer is retained, the
correspondence is understood modulo that stabilizer.
\item
Under this correspondence, the local normalized Einstein boundary-value
problem is equivalent to the finite-dimensional matching equation
$\mathcal M(u,v)=0,$
where
$\mathcal M: G_-\times G_+ \longrightarrow \mathbb R^{\dim\Sigma}$
is any local representative of the Einstein matching map.
\end{enumerate}
Consequently, the zero set of the Einstein matching map provides a
finite-dimensional realization of the local normalized Einstein moduli
problem.
\end{Thm}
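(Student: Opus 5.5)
The plan is to build the correspondence explicitly in both directions and then read off the equivalence with $\mathcal M=0$ from the chart-independence of the zero set noted after the definition of $\mathcal M_\chi$.

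\emph{From metrics to matching pairs.} Let $g$ be a normalized Einstein metric close to the reference metric $g_0$, written as $g=dt^2+g_t$ along the fixed normal geodesic. Restricting $g$ to collar neighbourhoods of the two singular orbits gives smooth invariant Einstein germs $u\in G_-$ and $v\in G_+$. These lie near $u_0,v_0$ because $g$ is $C^{k,\alpha}$-close to $g_0$. By Proposition~\ref{prop_finitegerms}(3), $u$ and $v$ are determined by their admissible free jets. Both germs are restrictions of the single trajectory $t\mapsto(g_t,L_t)\in\mathcal P$. By Proposition~\ref{Prop_monotonH} and the definition of $\Sigma$, this trajectory crosses $\Sigma=\{H=0\}$ exactly once, near $t_*$. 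Hence $\Phi_-(u)$ and $\Phi_+(v)$ are the same point of $\Sigma$, and $\mathcal M(u,v)=0$. This step uses the consistent normal orientation: the $+$ germ is propagated backwards in $t$, and with the sign conventions fixed consistently, $L$ and $H$ are the same for both sides. Equivariant diffeomorphisms preserving the normalization act trivially on $(u,v)$, up to the retained stabilizer, so the map $g\mapsto(u,v)$ is well defined on the normalized moduli problem.

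\emph{From matching pairs to metrics.} Let $(u,v)$ be close to $(u_0,v_0)$ with $\Phi_-(u)=\Phi_+(v)=:q$. By Proposition~\ref{prop:smooth-propagation}, the trajectory of $u$ exists up to its unique hitting time $t_-(u)$, and the trajectory of $v$ exists (in reversed time) from $q$ at time $t_+(v)$. Equality of points of $\mathcal P$ means equality of the full Cauchy data, so uniqueness for the regular ODE lets us concatenate the two trajectories at $q$ into one solution on an interval $[0,T']$. After reparametrizing by shifting the $+$ side, the result is a smooth invariant metric $dt^2+g_t$ on the regular part. It extends smoothly across both singular orbits because near each end it coincides with the smooth germ $u$ or $v$. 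It is Einstein with constant $\lambda$ everywhere, and the normalization holds by construction.

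\emph{Inverse bijection and main obstacle.} The two constructions are mutually inverse. Germ to metric to germ returns the same germ by Proposition~\ref{prop_finitegerms}(3). Metric to germ to metric returns a metric isometric to the original via a normalization-preserving equivariant diffeomorphism, again by ODE uniqueness. This gives statement~(1) modulo the retained stabilizer, and statement~(2) follows. The step I expect to be hardest is the concatenation. The total length $T'=t_-(u)+(\text{length of the }v\text{-side})$ is not fixed in advance, so I must check that the normalization allows varying $T$, i.e.\ that the unit-speed gauge fixes $t$ only up to this length. I must also check that the Hamiltonian constraint, being preserved by the flow, holds on the glued trajectory, so that the glued solution satisfies the full Einstein system and not just the evolution equations. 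I would handle both by appealing to the hypothesis that $\mathcal P$ is the constraint manifold on which $X_{\mathrm{Ein}}$ is tangent. Then every integral curve of $X_{\mathrm{Ein}}$ in $\mathcal P$ is a genuine Einstein metric on its interval, and the length $T'$ is simply an output of the construction.
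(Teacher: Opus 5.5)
Your proposal is correct and follows essentially the same route as the paper's proof. A nearby normalized metric restricts to two endpoint germs whose propagations both land at the metric's unique $H=0$ crossing. Conversely, a pair with $\Phi_-(u)=\Phi_+(v)$ has identical reduced Cauchy data at the common point, so uniqueness for the regular ODE glues the two evolutions into one normalized Einstein metric; your further remarks (time shift of the autonomous flow, consistent orientation on the $+$ side, variable orbit-space length, and preservation of the Hamiltonian constraint via tangency of $X_{\mathrm{Ein}}$ to $\mathcal P$) only make explicit what the paper's short proof takes from its standing hypotheses.
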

\begin{proof}
By Proposition~\ref{prop_finitegerms}, each admissible endpoint jet determines
a unique smooth Einstein germ. By smooth propagation, each such germ has a
unique matching point on $\Sigma$. If two propagated germs have the same
matching point, then they determine the same reduced Cauchy data there, so
uniqueness for the regular Einstein initial-value problem identifies the two
regular evolutions. They therefore glue to one normalized Einstein metric.
Conversely, every nearby normalized Einstein metric determines its two
endpoint germs, and both propagate to its unique matching point. Hence nearby
normalized Einstein metrics are in one-to-one correspondence with pairs
$(u,v)$ satisfying $\Phi_-(u)=\Phi_+(v)$, equivalently
$\mathcal M(u,v)=0$.
\end{proof}

\smallskip
The Intrinsic Reduction Theorem completes the geometric part of the paper.
The original nonlinear Einstein boundary-value problem has been reduced to a
finite-dimensional nonlinear matching problem. This geometric reduction
provides the foundation for the analytical response theory developed in the
remainder of the paper.

\begin{Bem}
The theorem separates the theory into two conceptually distinct stages.

\smallskip
The geometric stage constructs the Einstein matching problem and its local
finite-dimensional realization.

\smallskip
The analytical stage, developed in the remainder of the paper, first
constructs a response theory on a chosen smooth parameterized Einstein family.
Only under the separate matching-data extension hypothesis does this response
theory factor through the matching hypersurface and thereby provide
information about infinitesimal matching data. No later detection theorem
formulated purely on the parameter manifold $S$ depends on this additional
extension hypothesis.
\end{Bem}

\subsection{Transversality and local structure}
The Intrinsic Reduction Theorem reduces the local normalized Einstein
boundary-value problem to the study of the differential of the Einstein
matching map. The local geometry of the corresponding finite-dimensional
realization is therefore governed by this differential.

\begin{Dfn}[Transverse matching point]
A matching point
$(u_0,v_0)\in G_-\times G_+$
is called \emph{transverse} if the differential
$$D\mathcal M_{(u_0,v_0)} : T_{(u_0,v_0)}(G_-\times G_+) \longrightarrow \mathbb R^{\dim\Sigma}$$
is surjective.

\smallskip
A normalized Einstein metric is called \emph{transverse} if its associated
matching point is transverse.
\end{Dfn}

By Proposition~\ref{prop:coordinate-independence}, transversality is
independent of the chosen coordinate representative of the Einstein matching
map.

\begin{Prop}[Local structure]
\label{prop:local-structure}
Suppose that the reference Einstein metric is transverse.

\smallskip
Then the solution set of the local Einstein matching problem is a smooth
manifold of dimension
$\dim G_-+\dim G_+-\dim\Sigma.$
\end{Prop}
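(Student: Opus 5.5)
The plan is to apply the Regular Value Theorem (equivalently, the submersion form of the Implicit Function Theorem) to a local representative $\mathcal M=\mathcal M_\chi: G_-\times G_+\to\mathbb R^{d}$, with $d=\dim\Sigma$. First I would record the smoothness of $\mathcal M$. By Proposition~\ref{prop_finitegerms}, $G_\pm$ are finite-dimensional smooth manifolds near the reference germs $u_0,v_0$. By Proposition~\ref{prop:smooth-propagation}, the propagation maps $\Phi_\pm$ are smooth after shrinking these neighbourhoods. Since the chart $\chi$ is a diffeomorphism onto its image, $\mathcal M(u,v)=\chi(\Phi_-(u))-\chi(\Phi_+(v))$ is a smooth map between finite-dimensional manifolds.

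Next, surjectivity of the differential has to be propagated from the reference point to a neighbourhood. By hypothesis $D\mathcal M_{(u_0,v_0)}$ is surjective, and by Proposition~\ref{prop:coordinate-independence} this property does not depend on the chart. Surjectivity of a linear map into $\mathbb R^d$ is an open condition: some $d\times d$ minor of the Jacobian in local coordinates is nonzero, and this persists under continuous perturbation. Hence there is a neighbourhood $W$ of $(u_0,v_0)$ on which $D\mathcal M$ is surjective. Restricted to $W$, the map $\mathcal M$ is therefore a submersion at every point of $\mathcal M^{-1}(0)\cap W$.

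The submersion theorem then shows that $\mathcal M^{-1}(0)\cap W$ is an embedded smooth submanifold of codimension $d$. Its dimension is therefore $\dim G_-+\dim G_+-\dim\Sigma$, and its tangent space at each point is $\ker D\mathcal M$. Finally, the solution set is independent of the chart, since $\mathcal M_\chi(u,v)=0$ if and only if $\Phi_-(u)=\Phi_+(v)$. Through Theorem~\ref{thm:intrinsic-reduction} it is identified with the local Einstein matching problem. The statement is therefore intrinsic.

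There is no genuine analytic obstacle here. The one point requiring care is the word ``local'': the conclusion holds only on a sufficiently small neighbourhood $W$ of the reference pair, because transversality is assumed only at $(u_0,v_0)$. I would state explicitly that the neighbourhoods of $u_0$ and $v_0$ are shrunk so that $\Phi_\pm(G_\pm)\subset V$ and surjectivity of $D\mathcal M$ holds throughout. If the theorem is meant to describe the moduli set modulo a retained stabilizer, the same argument only gives a manifold structure before quotienting. I would add a remark that the count refers to the matching solution set in $G_-\times G_+$ itself.
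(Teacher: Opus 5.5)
Your proposal is correct and takes the same approach as the paper: the paper applies the Implicit Function Theorem to the matching map $\mathcal M$, whose differential is surjective at the reference pair, and concludes that $\mathcal M^{-1}(0)$ is locally a smooth submanifold of $G_-\times G_+$ of codimension $\dim\Sigma$. Your extra steps (openness of surjectivity, shrinking to a neighbourhood $W$, and chart independence via Proposition~\ref{prop:coordinate-independence}) make explicit what the paper's one-line argument leaves implicit.
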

\begin{proof}
Since the differential of the Einstein matching map is surjective, the
Implicit Function Theorem implies that
$\mathcal M^{-1}(0)$
is a smooth submanifold of
$G_-\times G_+$
having codimension
$\dim\Sigma.$
The dimension formula follows immediately.
\end{proof}
\begin{Cor}[Local rigidity]
If
$\dim G_-+\dim G_+=\dim\Sigma,$
then every transverse matching point is locally isolated.
\end{Cor}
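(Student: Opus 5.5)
The plan is to derive the corollary directly from Proposition~\ref{prop:local-structure}, using the Implicit Function Theorem in its zero-dimensional form. Let $(u_0,v_0)$ be a transverse matching point and set $n=\dim G_-+\dim G_+$. By hypothesis $n=\dim\Sigma$, so the differential
$$D\mathcal M_{(u_0,v_0)}:T_{(u_0,v_0)}(G_-\times G_+)\longrightarrow\mathbb R^{\dim\Sigma}$$
is a linear map between spaces of the same dimension. Transversality says this map is surjective. Rank--nullity then gives that it is injective, hence an isomorphism. By Proposition~\ref{prop:coordinate-independence} this conclusion does not depend on the chart $\chi$ chosen to represent $\mathcal M$.

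Next I will apply the Inverse Function Theorem to the smooth map $\mathcal M:G_-\times G_+\to\mathbb R^{\dim\Sigma}$ at $(u_0,v_0)$. Its smoothness comes from the smooth propagation maps $\Phi_\pm$ of Proposition~\ref{prop:smooth-propagation} composed with a chart. The theorem yields a neighbourhood $U$ of $(u_0,v_0)$ on which $\mathcal M$ is a diffeomorphism onto an open set, and in particular injective. Hence $\mathcal M^{-1}(0)\cap U=\{(u_0,v_0)\}$, so the matching point is isolated in the solution set.

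Equivalently, Proposition~\ref{prop:local-structure} already says that near $(u_0,v_0)$ the zero set is a smooth manifold of dimension $n-\dim\Sigma=0$. A zero-dimensional manifold is discrete, which again shows the point is isolated. Via the Intrinsic Reduction Theorem~\ref{thm:intrinsic-reduction}, this translates into the statement that the corresponding normalized Einstein metric is isolated among nearby normalized Einstein metrics, modulo any retained stabilizer.

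No step here is a real obstacle. The only point needing care is that "locally isolated" is read inside the local neighbourhoods on which $\mathcal M$ and the reduction theorem are defined. The Inverse Function Theorem supplies exactly such a neighbourhood, so one only has to shrink the germ neighbourhoods consistently with the earlier constructions.
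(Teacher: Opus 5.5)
Your argument is correct and matches the paper, which gives no separate proof and treats the corollary as the zero-dimensional case of Proposition~\ref{prop:local-structure}. Your direct Inverse Function Theorem argument (a surjective differential between equal-dimensional spaces is an isomorphism, so $\mathcal M$ is locally injective) is simply that same observation unpacked.
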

\begin{Cor}[Local families]
If
$\dim G_-+\dim G_+>\dim\Sigma,$
then every transverse matching point belongs to a smooth local family of
solutions of dimension
$\dim G_-+\dim G_+-\dim\Sigma>0.$
\end{Cor}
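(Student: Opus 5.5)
The plan is to derive the corollary directly from Proposition~\ref{prop:local-structure}. Let $(u_0,v_0)$ be a transverse matching point, so the differential $D\mathcal M_{(u_0,v_0)}$ is surjective onto $\mathbb R^{\dim\Sigma}$. Surjectivity is an open condition, so it persists on a neighbourhood of $(u_0,v_0)$ in $G_-\times G_+$. The regular value form of the Implicit Function Theorem, which is the argument behind Proposition~\ref{prop:local-structure} and applies at any transverse zero, then shows that near $(u_0,v_0)$ the set $\mathcal M^{-1}(0)$ is a smooth embedded submanifold of codimension $\dim\Sigma$. Its dimension is therefore $\dim G_-+\dim G_+-\dim\Sigma$, and this number is positive by hypothesis.

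Concretely, I would split
\[
T_{(u_0,v_0)}(G_-\times G_+)=\ker D\mathcal M_{(u_0,v_0)}\oplus W,
\]
where $W$ is any complement. The restriction $D\mathcal M_{(u_0,v_0)}|_W$ is then an isomorphism onto $\mathbb R^{\dim\Sigma}$. In local coordinates adapted to this splitting, the Implicit Function Theorem writes the zero set as the graph of a smooth map defined on a neighbourhood of $0$ in the kernel. The kernel has dimension $\dim G_-+\dim G_+-\dim\Sigma>0$, and the graph parametrization gives a smooth local family of solutions through $(u_0,v_0)$. By Proposition~\ref{prop:coordinate-independence}, neither the kernel nor the surjectivity depends on the choice of chart $\chi$, so the family is intrinsic.

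Finally, I would invoke the Intrinsic Reduction Theorem~\ref{thm:intrinsic-reduction} to read this family of matching pairs as a smooth family of normalized Einstein metrics, taken modulo any retained stabilizer. I expect this to be the only delicate step. Positivity of dimension holds in the matching space $G_-\times G_+$, but distinct matching pairs correspond to distinct metrics only if the normalization has removed all residual gauge. If a stabilizer is retained, the family has to be understood modulo that stabilizer, and its effective dimension could drop. Because of this, I would state the conclusion at the level of the matching problem, as the corollary itself does, and note that it transfers to metrics through the correspondence in Theorem~\ref{thm:intrinsic-reduction}.
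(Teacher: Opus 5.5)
Your argument is correct and is essentially the paper's: the corollary follows directly from Proposition~\ref{prop:local-structure}, whose implicit-function-theorem argument applies at any transverse zero of $\mathcal M$. That argument gives a smooth submanifold of dimension $\dim G_-+\dim G_+-\dim\Sigma$, which is positive by hypothesis. Your remark about transferring the family to metrics via Theorem~\ref{thm:intrinsic-reduction}, modulo any retained stabilizer, goes beyond what the corollary asserts but is a sensible caveat.
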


\smallskip
These results show that the differential of the Einstein matching map
controls the local geometry of the finite-dimensional realization of the
Einstein boundary-value problem. The remainder of the paper develops
analytical tools for studying this differential through the response theory
generated by the auxiliary harmonic probes.

\subsection{A completely explicit model: the round join sphere}
\label{sec:explicit-round-join}
The abstract construction developed in the preceding subsections can be carried
out explicitly for the round sphere viewed as a cohomogeneity-one join. The
purpose of this example is not to prove new results about round metrics, but to
illustrate the germ spaces, the Einstein evolution, the matching hypersurface,
the propagation maps and the Einstein matching map in a completely explicit
model.

\smallskip
Let $p,q\ge1$, let $g_{S^p}$ and $g_{S^q}$ denote the unit round metrics, and
consider the cohomogeneity-one action of
$$SO(p+1)\times SO(q+1)$$
on $S^{p+q+1}$. The principal orbit is $S^p\times S^q$, while the orbit space
is a closed interval.

\subsubsection*{The homothetic round family}
For $r>0$, the round metric of radius $r$ is
$g_r=dt^2+a_r(t)^2g_{S^p}+ b_r(t)^2g_{S^q},$
where
$$
a_r(t)=
r\sin\left(\frac{t}{r}\right),
\qquad
b_r(t)=
r\cos\left(\frac{t}{r}\right),
$$
and
$$
0\le t\le T_r=\frac{\pi r}{2}.
$$
Its Einstein constant is
$$\Ric(g_r)= \lambda_r g_r, \qquad \lambda_r =\frac{p+q}{r^2}.$$
Thus $r$ parametrizes the homothetic family of round Einstein metrics. After
fixing the normalization $\lambda=p+q,$
the unique normalized member of this homothetic family is the unit round
metric corresponding to $r=1$.

\subsubsection*{The extended Einstein Cauchy-data space}
Because the Einstein constant varies along the homothetic family, it is useful
first to place all round metrics in an extended Cauchy-data space in which
$\lambda$ is allowed to vary.
Writing
$g=dt^2+a(t)^2g_{S^p}+b(t)^2g_{S^q},$
and setting
$$u=a',\qquad v=b',$$
the Einstein equations reduce to the usual cohomogeneity-one Einstein
equations together with the Hamiltonian constraint
$\mathcal C_\lambda(a,b,u,v)=0.$
Define the extended admissible Cauchy-data space by
$$
\widehat{\mathcal P}=
\left\{(a,b,u,v,\lambda):
a>0,\,b>0,\,
\lambda>0,\,
\mathcal C_\lambda(a,b,u,v)=0
\right\}.
$$
For each fixed value of $\lambda$, the corresponding normalized Cauchy-data
manifold is the slice
$$
\mathcal P_\lambda=
\left\{
(a,b,u,v):
a>0,\,
b>0,\,
\mathcal C_\lambda(a,b,u,v)=0
\right\}.
$$
The mean curvature is
$H=p\frac{u}{a}+q\frac{v}{b}.$
Hence the extended matching hypersurface is
$$
\widehat{\Sigma}=
\left\{
(a,b,u,v,\lambda)\in\widehat{\mathcal P}:H=0
\right\},
$$
while for fixed $\lambda$ the normalized matching hypersurface is
$$
\Sigma_\lambda
=\left\{
(a,b,u,v)\in\mathcal P_\lambda:H=0
\right\}.
$$

\subsubsection*{The Einstein germ spaces}
At the left singular orbit, the smoothness conditions are
$$
a(0)=0,\qquad a'(0)=1,\qquad
b(0)>0,\qquad b'(0)=0,
$$
while the analogous conditions hold at the right singular orbit.
For the homothetic round family, the corresponding smooth Einstein germs are
$$\gamma_-(r),\qquad\gamma_+(r),$$
and therefore
$$\widehat G_-^{\rd} =\{\gamma_-(r):r>0\}, \qquad \widehat G_+^{\rd}=\{\gamma_+(r):r>0\}.$$
Both are naturally identified with $(0,\infty)$.
After fixing the Einstein normalization $\lambda=p+q$, one has $r=1$, and the
normalized round germ spaces reduce to the singletons
$$G_-^{\rd}= \{\gamma_-(1)\},\qquad G_+^{\rd} =\{\gamma_+(1)\}.$$

\subsubsection*{The matching point}
For the round metric $g_r$, the mean curvature is
$$
H_r(t)=\frac{1}{r}
\left(p\cot\left(\frac{t}{r}\right)
-q\tan\left(\frac{t}{r}\right)\right).
$$
The equation $H_r(t)=0$ is equivalent to
$\tan^2(t/r)=p/q$. Since $0<t/r<\pi/2$, it has the unique solution
$$t_*(r)=r\arctan\sqrt{\frac{p}{q}}.$$
At this point,
$$a_r(t_*(r)) =r\sqrt{\frac{p}{p+q}},\qquad b_r(t_*(r))=r\sqrt{\frac{q}{p+q}},$$
and
$$a_r'(t_*(r))=\sqrt{\frac{q}{p+q}}, \qquad b_r'(t_*(r))=-\sqrt{\frac{p}{p+q}}.$$
Hence the extended matching datum is
$$
\widehat m(r)=\left(
r\sqrt{\frac{p}{p+q}},
r\sqrt{\frac{q}{p+q}},
\sqrt{\frac{q}{p+q}},
-\sqrt{\frac{p}{p+q}},
\frac{p+q}{r^2}
\right)\in\widehat{\Sigma}.
$$
Equivalently, after fixing $\lambda=\lambda_r$, the corresponding normalized
matching datum is
$$
m_{\lambda_r}(r)
=\left(
r\sqrt{\frac{p}{p+q}},
r\sqrt{\frac{q}{p+q}},
\sqrt{\frac{q}{p+q}},
-\sqrt{\frac{p}{p+q}}
\right)\in\Sigma_{\lambda_r}.
$$
In the fixed normalization $\lambda=p+q$, one has $r=1$ and therefore
$$
m_0=\left(
\sqrt{\frac{p}{p+q}},
\sqrt{\frac{q}{p+q}},
\sqrt{\frac{q}{p+q}},
-\sqrt{\frac{p}{p+q}}
\right)\in\Sigma_{p+q}.
$$
\smallskip
These formulas satisfy the matching equation directly:
$$p\,a_r'(t_*)/a_r(t_*)+q\,b_r'(t_*)/b_r(t_*)=0.$$
Thus the displayed matching datum lies on the stated hypersurface without any
additional numerical input.

\subsubsection*{Propagation maps}
In the extended formulation, propagation of the left and right round Einstein
germs to the extended matching hypersurface gives
$\widehat\Phi_-^{\rd}:\widehat G_-^{\rd}\longrightarrow\widehat\Sigma,$
and
$\widehat\Phi_+^{\rd}:\widehat G_+^{\rd}\longrightarrow
\widehat\Sigma,
$
with
$$\widehat\Phi_-^{\rd}(\gamma_-(r)) =\widehat m(r)= \widehat\Phi_+^{\rd}(\gamma_+(r)).$$
After fixing the normalization $\lambda=p+q$, the propagation maps reduce to
$
\Phi_-^{\rd}:G_-^{\rd}\longrightarrow\Sigma_{p+q},
$
and
$
\Phi_+^{\rd}
:G_+^{\rd}
\longrightarrow
\Sigma_{p+q},
$
and both send the unique round germ to the matching point $m_0$.
\subsubsection*{The matching problem}
Before choosing coordinates, the extended round matching problem is encoded by
$$
\widehat\Psi^{\rd}
=\left(
\widehat\Phi_-^{\rd},
\widehat\Phi_+^{\rd}
\right):
\widehat G_-^{\rd}
\times
\widehat G_+^{\rd}
\longrightarrow
\widehat\Sigma
\times
\widehat\Sigma.
$$
A pair of round germs extends to one global round Einstein metric precisely
when
\[
\widehat\Psi^{\rd}\bigl(\gamma_-(r_-),\gamma_+(r_+)\bigr)
\in\Delta_{\widehat\Sigma}.
\]
Equivalently,
\[
\widehat m(r_-)=\widehat m(r_+).
\]
Since the final component of $\widehat m(r)$ is
$\frac{p+q}{r^2},$
this equality is equivalent to
$r_-=r_+.$
Thus, under the identification of round Einstein metrics with compatible pairs
of endpoint germs, the extended matching locus is
$\left\{ (r_-,r_+)\in(0,\infty)^2: r_-=r_+ \right\}.$
This locus corresponds precisely to the homothetic family of round Einstein
metrics.

\subsubsection*{A coordinate matching residual}
To recover the coordinate formulation of the matching map, choose a local
coordinate chart
$\chi:V\subset\widehat\Sigma\longrightarrow \mathbb R^{\dim\widehat\Sigma}$
near the reference matching point. The corresponding extended round matching
map is
$$
\widehat{\mathcal M}^{\rd}_\chi(r_-,r_+)
=\chi\left(\widehat m(r_-)\right)-
\chi\left(\widehat m(r_+)\right).
$$
Its zero set is coordinate independent and satisfies
$$\left( \widehat{\mathcal M}^{\rd}_\chi \right)^{-1}(0)=\{(r,r):r>0\}.$$
If one instead uses the ambient coordinates $(a,b,u,v,\lambda)$ merely to
define an ambient matching residual, then the residual is represented by
$$
\widehat m(r_-)-\widehat m(r_+)
=\left(
(r_--r_+)\sqrt{\frac{p}{p+q}},
(r_--r_+)\sqrt{\frac{q}{p+q}},
0,
0,
(p+q)
\left(
\frac{1}{r_-^2}-
\frac{1}{r_+^2}
\right)
\right).
$$
This ambient residual has the same zero set, although it should not be
confused with a coordinate representation of the matching map on
$\widehat\Sigma$ itself.
\subsubsection*{The normalized round problem}
After fixing the Einstein normalization $\lambda=p+q$, the homothetic
parameter is fixed by $r=1$. The normalized germ spaces are singletons, the
normalized propagation maps both take the value $m_0$, and the normalized
matching locus consists of the single compatible pair
$(\gamma_-(1),\gamma_+(1)).$
Thus, within the homothetic round subproblem considered in this example, the
fixed normalization leaves only the unit round metric. No uniqueness claim is
made here for the full cohomogeneity-one Einstein problem with the same group
action.
\begin{Bem}
This example illustrates two distinct but compatible viewpoints. The extended
formulation displays the full homothetic round family by allowing the Einstein
constant to vary, while the normalized formulation fits exactly into the
fixed-$\lambda$ theory developed in the preceding subsections. In both
formulations, the germ spaces, the Cauchy-data space, the matching
hypersurface, the propagation maps and the matching equation can be computed
explicitly.
For more complicated cohomogeneity-one Einstein metrics, such as B\"ohm
metrics, the same geometric construction applies, although the propagation
maps are generally no longer available in closed form.
\end{Bem}

\subsection{The analytical obstacle}
\label{sec:remaining-problem}
Section~2 reduces the local normalized Einstein boundary-value problem to a
finite-dimensional Einstein matching problem. The differential of the
matching map controls the local structure of its zero set, but the analytical
theory developed below does not assume that this differential can already be
recovered from the auxiliary probe construction.

\smallskip
The first analytical task is more modest and logically prior: given a chosen
smooth parameterized Einstein family, construct canonical analytical data that
vary smoothly along that family and determine their first-order response to
Einstein variations. This construction requires only the harmonic probe
problem and normalized Jacobi nondegeneracy.

\smallskip
Rather than studying the linearized Einstein evolution directly, we associate,
under the normalized Jacobi nondegeneracy hypothesis introduced in the next
section, a canonically determined equivariant harmonic map with every nearby
Einstein metric belonging to the chosen smooth parameterized Einstein family.
These auxiliary harmonic probes possess elliptic, variational and spectral
structures while introducing no additional Einstein degrees of freedom.

\smallskip
The probes generate the core analytical package consisting of the Jacobi and
Green operators. After the separate naturality hypothesis is imposed, this
package descends to the chosen parameterized Einstein family. Its differential
then defines the universal response differential used in the Einstein
Detection Principle.

\smallskip
A further connection with the Einstein matching construction is available
only under Hypothesis~\ref{hyp:matching-dependence}. Under that additional
hypothesis the package extends smoothly to matching data and its differential
factors through the infinitesimal matching-point map. Thus the logical
dependence is
$$
\text{probe existence}
\Longrightarrow
\text{core package on }S
\Longrightarrow
\text{response theory on }S,
$$
while the separate implication
$$
\text{matching-data extension}
\Longrightarrow
\text{factorization through }\Sigma
$$
connects that response theory to the Einstein matching hypersurface.
Neither implication contains the other.

\smallskip
The remainder of the paper develops these two layers separately. This
separation is essential: the Einstein Detection Principle on a chosen smooth
parameterized Einstein family depends on the first layer, whereas statements
about arbitrary tangent directions of the matching hypersurface additionally
depend on the second.

\section{Canonical Harmonic Probes}
\label{sec:canonical-harmonic-probes}
The preceding section reduced the local normalized Einstein boundary-value
problem to the finite-dimensional Einstein matching problem. The remaining
analytical challenge is to understand the differential of the Einstein
matching map.

\smallskip
Rather than analysing the linearized Einstein evolution directly, we
introduce an auxiliary variational theory based on canonically associated
equivariant harmonic maps. Under the normalized Jacobi nondegeneracy
hypothesis introduced below, every nearby Einstein metric belonging to the
chosen smooth parameterized Einstein family admits a unique canonically
determined equivariant harmonic map. These harmonic maps serve as
\emph{auxiliary variational probes} of the underlying Einstein geometry.

\smallskip
The harmonic probes do not modify the Einstein equations and introduce no
additional Einstein degrees of freedom. Their purpose is entirely analytical.
Through their Jacobi theory they generate a canonical analytical package
consisting of Jacobi, Green and response operators together with their
associated analytical invariants. After canonical elimination of the
auxiliary probe variables, these analytical structures descend naturally to
the chosen smooth parameterized Einstein family and form the basis of the
response theory developed in the remainder of the paper.

\smallskip
Throughout this section, let $(M,g_0)$ denote a fixed normalized
cohomogeneity-one Einstein manifold. Let
$S$
be the chosen smooth parameterized Einstein family introduced in
Section~2, choose a reference parameter $p_0\in S$ with
$g_{p_0}=g_0$, and carry out the analytical constructions locally near
$p_0$ in $S$.

\subsection{Probe data}
Before constructing the canonical harmonic probes we fix the auxiliary
geometric data defining the probe problem. Once these data have been fixed,
every subsequent construction is canonical relative to this choice.
\smallskip
Throughout the remainder of the paper we therefore fix the following probe
data.
\begin{enumerate}
\item a compact Riemannian target $(N,h)$;
\item a compatible action of the symmetry group on $N$;
\item an equivariant homotopy class of maps $f:M\longrightarrow N$;
\item the endpoint, smoothness and normalization conditions used to remove the
relevant probe symmetries;
\item a normalized reference harmonic probe $f_0:(M,g_0)\longrightarrow(N,h)$
in the chosen class.
\end{enumerate}

Accordingly, every subsequent use of the term \emph{canonical} means the
unique local continuation of this fixed reference probe, relative to the
Einstein normalization and the complete probe data above. It does not assert
global uniqueness of harmonic maps in the chosen homotopy class.

\smallskip
In the applications considered later, the target manifold $N$ will typically
be a compact homogeneous space, so that equivariance reduces the harmonic-map
equation to a nonlinear ordinary differential boundary-value problem on the
orbit interval. The local theory developed below, however, depends only on
the abstract properties stated in the following hypotheses and does not
require a particular choice of target manifold.

\smallskip
The existence and uniqueness theorem below shows that, once these data have
been fixed, the reference probe has a unique local continuation for nearby
metrics. Consequently, the resulting local probe branch introduces no
additional Einstein degrees of freedom.

\subsection{Equivariant mapping spaces}
Having fixed the auxiliary probe data, we now introduce the functional-analytic
setting in which the harmonic probe equation is studied. In contrast to the
finite-dimensional Einstein matching problem developed in the previous section,
the harmonic-map equation is naturally formulated on Banach manifolds.

\smallskip
Throughout this section we work within the functional-analytic framework fixed
in Section~2. In particular, all mapping spaces, bundles and operator families
are understood with the Banach-space conventions introduced there.

\smallskip
Assume, as part of the standing functional-analytic hypotheses, that the
prescribed equivariance, endpoint, regularity and normalization constraints
cut out a smooth Banach submanifold of the $C^{k+2,\alpha}$ mapping space.
Denote this Banach manifold by $\mathcal H^{k+2,\alpha}$.

\smallskip
For every
$f\in\mathcal H^{k+2,\alpha},$
the tangent space is
$T_f\mathcal H^{k+2,\alpha} \subset C_{\mathrm{eq}}^{k+2,\alpha}(f^*TN),$
consisting of equivariant sections satisfying the corresponding linearized
boundary and normalization conditions.

\smallskip
The additional two derivatives relative to the metric regularity ensure that
the tension field is a well-defined smooth nonlinear elliptic operator.

\smallskip
Accordingly, the tension field defines a smooth section of the associated
Banach bundle over
$\mathcal H^{k+2,\alpha}.$
Its zero set consists precisely of the equivariant harmonic maps compatible
with the fixed probe data.

\smallskip
The linearization of the tension field at a smooth harmonic map is the
equivariant Jacobi operator. By the conventions established in
Section~2, this operator is regarded as an elliptic Fredholm operator acting
between fixed Banach spaces after local trivialization. Whenever spectral
properties are considered, we pass to its corresponding self-adjoint
$L^2$ realization.

\smallskip
The functional-analytic framework employed here is standard in equivariant
harmonic-map theory; see
\cite{EellsRatto,Palais,TaylorPDE2}.

\smallskip
The next subsection studies the local solution set of the harmonic-map
equation in this Banach manifold and establishes the existence of the
canonical harmonic probes associated with nearby Einstein metrics.

\subsection{The nonlinear probe operator}
The Banach manifold introduced above provides the natural functional-analytic
setting for the harmonic probe equation. We now introduce the nonlinear
operator whose zero set defines the auxiliary harmonic-map problem. This
operator forms the analytical core of the probe construction.

\smallskip
For every Riemannian metric sufficiently close to the reference metric
$g_0$, define
$\mathcal F(g,f)=\tau_g(f),$
where
$f:(M,g)\longrightarrow (N,h)$
is an equivariant map and $\tau_g(f)$ denotes its tension field.

\smallskip
Thus the nonlinear equation
$\mathcal F(g,f)=0$
is precisely the equivariant harmonic-map equation. Its solution set consists
of those pairs $(g,f)$ for which $f$ is harmonic with respect to the metric
$g$. The canonical harmonic probes constructed below are obtained by
restricting this solution set to metrics belonging to the chosen smooth
parameterized Einstein family.

\smallskip
After passing to the fixed local trivializations introduced in
Section~2, the probe equation is represented by a smooth map
$\mathcal F: \mathcal G\times \mathcal H^{k+2,\alpha} \longrightarrow \mathcal Y^{k,\alpha},$
where
$\mathcal G$
is a Banach manifold of normalized cohomogeneity-one metrics containing the
chosen smooth parameterized Einstein family
$S\subset\mathcal G,$
and
$\mathcal Y^{k,\alpha}$
denotes the Banach bundle of equivariant
$C^{k,\alpha}$ sections introduced above.

\smallskip
The smoothness of $\mathcal F$ is standard. It follows from the smooth
dependence of the Levi--Civita connection, the trace operator and the
nonlinear composition terms on both the metric and the map variables; see
\cite{EellsRatto,Palais,TaylorPDE2}.

\begin{Bem}
For cohomogeneity-one manifolds, the above nonlinear elliptic equation
reduces to a singular ordinary differential boundary-value problem on the
orbit interval. Consequently, every construction developed below admits an
equivalent formulation in terms of ordinary differential equations.
\end{Bem}

\subsection{Reference probe and normalized Jacobi nondegeneracy}
We next linearize the probe equation at the distinguished reference probe.
The resulting Jacobi operator governs the local solvability of the nonlinear
harmonic-map equation and becomes the fundamental analytical object
throughout the remainder of the paper.

\smallskip
Let
$f_0:M\longrightarrow N$
be the distinguished equivariant harmonic map associated with the reference
Einstein metric
$g_0.$
Thus
$\mathcal F(g_0,f_0)=0.$
The derivative of the nonlinear probe operator with respect to the map
variable is the normalized Jacobi operator
$$
J_{g_0}
=
D_f\mathcal F(g_0,f_0):
T_{f_0}\mathcal H^{k+2,\alpha}
\longrightarrow
\mathcal Y^{k,\alpha}_{f_0}.
$$
The imposed normalization removes the infinitesimal symmetries associated
with the chosen normalization conditions. Additional Jacobi fields may,
however, still exist. To exclude them, we impose the following
nondegeneracy hypothesis.

\begin{Hyp}[Normalized Jacobi nondegeneracy]
\label{hyp:normalized-jacobi-nondegeneracy}
The normalized Jacobi operator
$J_{g_0}: T_{f_0}\mathcal H^{k+2,\alpha} \longrightarrow \mathcal Y^{k,\alpha}_{f_0}$
is an isomorphism.
\end{Hyp}

Under this hypothesis, the Banach-space Implicit Function Theorem yields a
unique smooth family of equivariant harmonic maps depending on the ambient
metric. Restricting this family to the chosen smooth parameterized Einstein
family
$S$
therefore produces a unique smoothly varying family of canonical harmonic
probes.

\smallskip
The normalized Jacobi nondegeneracy hypothesis is the fundamental analytical
assumption underlying the present paper. It guarantees the existence,
uniqueness and smooth dependence of the canonical harmonic probes and
provides the starting point for the Jacobi, Green and response theories
developed in the subsequent sections.

\subsection{Local existence and uniqueness of normalized probes}
\label{sec:probe-existence-smoothness}
The preceding subsections introduced the nonlinear probe operator and the
normalized Jacobi operator. Under the normalized Jacobi nondegeneracy
hypothesis, the Banach-space Implicit Function Theorem yields a unique smooth
family of equivariant harmonic maps depending on the ambient metric.
Restricting this family to the chosen smooth parameterized Einstein family
produces the canonical harmonic probes.

\smallskip
This result provides the analytical foundation for the probe assignment used
throughout the remainder of the paper.
\begin{Thm}[Local existence and uniqueness of canonical harmonic probes]
\label{thm:probe-existence}
Assume Hypothesis~\ref{hyp:normalized-jacobi-nondegeneracy} and the standing
Banach-bundle and smoothness hypotheses of Section~2, so that after local
trivialization $\mathcal F$ is a smooth map between fixed Banach spaces and
$D_f\mathcal F(g_0,f_0)$ is the bounded isomorphism appearing in that
hypothesis.

\smallskip
Then there exist neighbourhoods
$$g_0\in\mathcal U\subset\mathcal G, \qquad f_0\in\mathcal V\subset\mathcal H^{k+2,\alpha},$$
and a unique smooth map
$$\Pi:\mathcal U\longrightarrow\mathcal V, \qquad \Pi(g)=f_g,$$
such that
$\mathcal F(g,f_g)=0$
for every
$g\in\mathcal U.$
Moreover, every solution of
$\mathcal F(g,f)=0$
lying in
$\mathcal V$
coincides with
$f_g.$
Let
$S_{\mathcal U}:=\mathbf g^{-1}(\mathcal U)=\{p\in S:g_p\in\mathcal U\}$.
Then the composition $p\mapsto\Pi(g_p)$ is a unique smooth family of
canonical harmonic probes on $S_{\mathcal U}$ near $p_0$.
\end{Thm}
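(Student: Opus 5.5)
The proof is a direct application of the Banach-space Implicit Function Theorem, followed by composition with the smooth family map $\mathbf g$. First I will make the setting explicit. By the standing conventions of Section~2, after fixed local trivializations we may regard a neighbourhood of $g_0$ in $\mathcal G$ as an open subset of a Banach space $E$, and a neighbourhood of $f_0$ in $\mathcal H^{k+2,\alpha}$ as an open subset of a Banach space $X$, for instance via a chart centred at $f_0$ modelled on $T_{f_0}\mathcal H^{k+2,\alpha}$. The fibres $\mathcal Y^{k,\alpha}_f$ are identified with a fixed Banach space $Y$. In these identifications $\mathcal F$ becomes a smooth map $E\times X\supset U_0\to Y$ with $\mathcal F(g_0,f_0)=0$. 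The partial derivative $D_f\mathcal F(g_0,f_0)$ agrees, under the trivialization, with the normalized Jacobi operator $J_{g_0}$. This is because $\mathcal F(g_0,f_0)=0$: at a zero of a section the intrinsic vertical derivative coincides with the derivative in any trivialization. I will state this observation explicitly, since it is what makes Hypothesis~\ref{hyp:normalized-jacobi-nondegeneracy} applicable in the trivialized picture.

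Next I will apply the Implicit Function Theorem in its $C^\infty$ Banach form. It gives neighbourhoods $\mathcal U\ni g_0$ and $\mathcal V\ni f_0$ and a smooth map $\Pi:\mathcal U\to\mathcal V$ such that, within $\mathcal U\times\mathcal V$, the zero set of $\mathcal F$ is exactly the graph of $\Pi$. This single statement yields existence ($\mathcal F(g,\Pi(g))=0$), uniqueness of solutions in $\mathcal V$, and uniqueness of $\Pi$ as a map. Any other map $\Pi'$ into $\mathcal V$ with $\mathcal F(g,\Pi'(g))=0$ has its graph inside that zero set, so $\Pi'=\Pi$. Smoothness of $\Pi$ follows because $\mathcal F$ is $C^\infty$. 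If needed, I will shrink $\mathcal U$ so that $D_f\mathcal F(g,\Pi(g))$ remains an isomorphism, which holds since the isomorphisms form an open set. That is not strictly required here, but it will be useful later for the Jacobi and Green operators.

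For the family statement, I will use that $\mathbf g:S\to\operatorname{Met}(M)$ is smooth and takes values in $\mathcal G$. Smoothness of $\mathbf g$ into the $C^{k,\alpha}$ Banach completion follows from the definition of a smooth family: its coefficients are jointly smooth on $S\times M$ and $M$ is compact, so derivatives in $p$ are uniformly controlled in $C^{k,\alpha}$. Hence $S_{\mathcal U}=\mathbf g^{-1}(\mathcal U)$ is an open neighbourhood of $p_0$, and $p\mapsto\Pi(g_p)$ is smooth as a composition of smooth maps. Uniqueness on $S_{\mathcal U}$ is inherited pointwise: any smooth probe family $p\mapsto f_p\in\mathcal V$ with $\mathcal F(g_p,f_p)=0$ must satisfy $f_p=\Pi(g_p)$.

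I expect the main obstacle to be the smoothness of $\mathbf g$ as a map into the Banach manifold $\mathcal G$, together with the identification of $D_f\mathcal F$ with $J_{g_0}$, rather than the Implicit Function Theorem itself. For the former I will use the joint smoothness on $S\times M$ together with the compactness of $M$, via the standard estimate that difference quotients of a jointly smooth function converge in $C^{k,\alpha}$. For the latter I will invoke the vanishing of the section at the base point, as above.
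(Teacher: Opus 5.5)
Your proposal is correct and follows the paper's proof: apply the Banach-space Implicit Function Theorem to $\mathcal F$ at $(g_0,f_0)$, read off local uniqueness from the same theorem, and obtain the family statement by composing $\Pi$ with $\mathbf g$. Your extra remarks on the intrinsic vertical derivative at a zero of the section and on the smoothness of $\mathbf g$ into the H\"older completion fill in steps the paper leaves implicit. The paper additionally invokes elliptic (regular-singular) regularity to get spatial smoothness of the probes $f_{g_p}$, which your argument does not need for the statement as formulated.
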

\begin{proof}
The map $\mathcal F$ is smooth and
$D_f\mathcal F(g_0,f_0)=J_{g_0}$ is an isomorphism by
Hypothesis~\ref{hyp:normalized-jacobi-nondegeneracy}. The Banach-space
Implicit Function Theorem therefore gives neighbourhoods $\mathcal U,\mathcal
V$ and a unique smooth map $\Pi:\mathcal U\to\mathcal V$ with
$\mathcal F(g,\Pi(g))=0$. Local uniqueness is part of the same theorem.
Smoothness of the resulting probe follows from the stated elliptic, or
equivalently equivariant regular-singular, regularity theory.
\end{proof}
\begin{Dfn}[Canonical harmonic probe assignment]
With $S_{\mathcal U}=\mathbf g^{-1}(\mathcal U)$, the map
$\Pi_E:S_{\mathcal U}\longrightarrow\mathcal H^{k+2,\alpha}$ defined by
$\Pi_E(p)=\Pi(g_p)$ is called the \emph{canonical harmonic probe assignment}
on the chosen Einstein family. When no confusion can arise we continue to
write $f_p=f_{g_p}$ and suppress the subscript $E$.
\end{Dfn}

\subsection{The core analytical package}
The canonical harmonic probe assignment determines a collection of analytical
objects that are available under the hypotheses established so far. We refer
to this collection as the \emph{core analytical package}. It consists of the
canonical operator-theoretic structures required for the response theory
developed in the remainder of the paper and does not require any additional
spectral assumptions.

\begin{Dfn}[Core analytical package]
For every $p\in S_{\mathcal U}$, the \emph{core analytical package} is
$\mathcal A_{\mathrm{core}}(p)=(J_p,G_p)$, where
$J_p:=J_{g_p}$ denotes the normalized Jacobi operator of the probe $f_p$ and
$G_p:=J_p^{-1}$. Thus $\mathcal A_{\mathrm{core}}$ is a map on the parameter
manifold; its dependence on the Einstein metric is through
$p\mapsto g_p$.
\smallskip
The response operator introduced in the subsequent sections is constructed
canonically from this core analytical package.
\end{Dfn}
\smallskip
The core analytical package is canonically attached to the chosen smooth
parameterized Einstein family through the canonical harmonic probe
assignment. Every theorem concerning the Einstein Detection Principle depends
only on this package and on the structures canonically derived from it.

\subsection{Extended spectral data}
The core analytical package introduced above is sufficient for the
construction of the response theory and for all results concerning the
Einstein Detection Principle.

\smallskip
Under additional analytical hypotheses, however, the core package may be
enriched by further spectral information associated with the normalized
Jacobi operator. Depending on the particular setting, these additional
structures may include spectral projectors, heat kernels, zeta functions,
regularized determinants and Morse indices.

\smallskip
These objects are collectively referred to as the \emph{extended spectral
data}. They are not part of the core analytical package and are introduced
only when the corresponding additional hypotheses are explicitly assumed.
Unless stated otherwise, no theorem in the present paper depends on these
extended spectral invariants.

\smallskip
The canonical harmonic probe assignment allows every Einstein metric in the
chosen smooth parameterized Einstein family to be studied through its
associated harmonic probe. Consequently, every operator-theoretic
construction developed in the subsequent sections is obtained by applying
canonical constructions to this smoothly varying family of harmonic probes.
The response theory itself depends only on the core analytical package,
whereas the extended spectral data provide additional information when
available.

\begin{Bem}
\begin{enumerate}
\item
Theorem~\ref{thm:probe-existence} provides the analytical foundation for the
entire probe framework. Rather than producing an isolated harmonic map, it
constructs a smooth family of canonical harmonic probes parametrized by the
chosen smooth parameterized Einstein family. This smooth dependence on the
Einstein metric is the essential ingredient underlying all subsequent
operator-theoretic constructions.
\item
The Jacobi, Green and response operators developed in the following sections
belong to the core analytical package. Additional spectral invariants are
introduced only when required for specific applications and always under
their corresponding analytical hypotheses.
\item
The probe construction is purely local. No global existence or uniqueness
statement for harmonic probes is asserted.
\end{enumerate}
\end{Bem}

\subsection{A canonical probe on the round sphere}
\label{subsec:round-sphere-probe}
We now illustrate the abstract construction in a completely explicit setting.
This example serves three purposes.
\begin{enumerate}
\item
It exhibits the canonical harmonic probe associated with a distinguished
Einstein metric.
\item
It verifies the normalized Jacobi nondegeneracy hypothesis by an explicit
spectral computation.
\item
It illustrates the construction of the canonical harmonic probe assignment
provided by Theorem~\ref{thm:probe-existence}.
\end{enumerate}

\begin{example}[The identity probe on the round sphere]
\label{ex:round-probe}
We identify the canonical harmonic probe, compute its Jacobi operator,
verify Hypothesis~\ref{hyp:normalized-jacobi-nondegeneracy}, and finally
apply Theorem~\ref{thm:probe-existence}.
\subsubsection*{The canonical harmonic probe}
Let
$$
(M,g_0)
=
\left(
S^{n+1},
dt^2+\sin^2(t)\,g_{S^n}
\right),
\qquad
0\le t\le\pi,
$$
where
$n\ge2.$
As target we choose the same round sphere
$$(N,h) = \left( S^{n+1}, du^2+\sin^2(u)\,g_{S^n} \right).$$
The round metric satisfies
$\Ric(g_0)=n\,g_0.$
We let
$SO(n+1)$
act in the standard way on the
$S^n$
factor of both the domain and target.

\smallskip
We consider equivariant maps of degree one of the form
$$f_u(t,\omega) = (u(t),\omega),$$
subject to
$$u(0)=0, \qquad u(\pi)=\pi,$$
together with the standard smoothness conditions at the singular orbits.

\smallskip
Up to the inessential constant factor
$\frac12\operatorname{vol}(S^n),$
the harmonic-map energy is
$$E(u) = \int_0^\pi \left( u'^2 + n\frac{\sin^2u}{\sin^2t} \right) \sin^nt\,dt.$$
Its Euler--Lagrange equation is
$$u'' + n\cot t\,u' - \frac n2 \frac{\sin(2u)}{\sin^2t} = 0.$$
The identity solution
$u_0(t)=t$
satisfies this equation.

\smallskip
Hence
$$f_0 = \operatorname{Id}_{S^{n+1}} : (S^{n+1},g_0) \longrightarrow (S^{n+1},g_0)$$
is the distinguished harmonic map associated with the reference Einstein
metric and therefore serves as the reference harmonic probe.

\subsubsection*{The normalized Jacobi operator}
To verify
Hypothesis~\ref{hyp:normalized-jacobi-nondegeneracy},
consider
$$u_\varepsilon = t+\varepsilon v + O(\varepsilon^2).$$
Linearization of the harmonic-map equation at
$u_0=t$
gives
$$v'' + n\cot t\,v' - n \frac{\cos(2t)}{\sin^2t} v = 0.$$
The corresponding normalized Jacobi operator is
$$J_0v = - v'' - n\cot t\,v' + n \frac{\cos(2t)}{\sin^2t} v.$$
The normalized endpoint conditions imply
$v(0)=v(\pi)=0.$
Every smooth equivariant variation therefore has the form
$v(t)=\sin t\,w(t),$
where
$w\in C^\infty([0,\pi]).$
Substituting this expression yields
$$J_0(\sin t\,w) = \sin t \left( - w'' - (n+2)\cot t\,w' + (1-n)w \right).$$

\subsubsection*{Spectral verification}
The operator
$$\mathcal L_{n+2} = - w'' - (n+2)\cot t\,w'$$
is precisely the radial Laplacian on the unit sphere
$S^{n+3},$
acting on the weighted space with measure
$\sin^{\,n+2}t\,dt.$
Its spectrum is
$$\mu_k = k(k+n+2), \qquad k=0,1,\ldots.$$
Consequently,
$$\lambda_k = k(k+n+2)+1-n$$
are the eigenvalues of the normalized Jacobi operator.

\smallskip
In particular,
$$\lambda_0 = 1-n < 0, \qquad \lambda_1 = 4,$$
and therefore
$$\lambda_k\neq0 \qquad (k\ge0).$$
Hence
$$0 \notin \operatorname{spec}(J_0),$$
so
$\ker J_0 = \{0\}.$
Since
$J_0$
is a self-adjoint regular-singular Sturm--Liouville operator with compact
resolvent, its self-adjoint
$L^2$
realization is bijective.

\smallskip
Standard elliptic regularity for regular-singular operators therefore implies
$$J_0: T_{f_0}\mathcal H^{k+2,\alpha} \longrightarrow \mathcal Y^{k,\alpha}_{f_0}$$
is an isomorphism.

\smallskip
Thus the normalized Jacobi nondegeneracy hypothesis is verified completely
explicitly in this example.

\subsubsection*{Construction of the canonical harmonic probes}
Theorem~\ref{thm:probe-existence}
therefore provides neighbourhoods
$$g_0\in\mathcal U, \qquad f_0\in\mathcal V,$$
together with a unique smooth harmonic probe assignment
$$\Pi: \mathcal U \longrightarrow \mathcal V, \qquad g \longmapsto f_g,$$
satisfying
$\tau_g(f_g)=0$
for every
$g\in\mathcal U.$
Restricting this assignment to the chosen smooth parameterized Einstein
family produces the canonical harmonic probes associated with nearby
Einstein metrics.

\smallskip
Accordingly, the identity map is a normalized nondegenerate reference probe,
and the abstract construction developed above becomes completely explicit in
this model example.
\end{example}

\begin{Bem}
\label{rem:round-probe-n-equals-one}
The restriction
$n\ge2$
is essential.
Indeed,
$\lambda_0=1-n,$
so for
$n=1$
one obtains
$\lambda_0=0.$
Consequently, the normalized Jacobi operator is no longer invertible.

\smallskip
This reflects the existence of a nontrivial family of harmonic self-maps of
the round two-sphere generated by conformal transformations. Additional
normalization conditions would therefore be required before
Theorem~\ref{thm:probe-existence} could be applied.
\end{Bem}

\section{Harmonic Probe Geometry}
The previous section established the existence of a canonical harmonic probe
assignment on the chosen smooth parameterized Einstein family. We now study
the differential geometry of this assignment.

\smallskip
Differentiating the probe assignment produces the first analytical structures
associated with the family of harmonic probes, namely the smoothly varying
Jacobi operators, Green operators and their interaction with infinitesimal
Einstein deformations. These constitute the analytical core from which the
response theory developed in the subsequent sections is built.

\subsection{Infinitesimal variation of the probe assignment}
The first variation of the canonical harmonic probe assignment provides the
fundamental analytical link between infinitesimal Einstein deformations and
the response of their associated harmonic probes. It is also the point at
which the Green operator appears naturally.

\smallskip
Let $p\in S_{\mathcal U}$ and let $X\in T_pS_{\mathcal U}$. Choose a smooth
curve $(-\varepsilon,\varepsilon)\ni t\longmapsto p(t)\in S_{\mathcal U}$
with $p(0)=p$ and $\dot p(0)=X$. Write $g_p=\mathbf g(p)$ and
$f_p=\Pi(g_p)$. Since the canonical harmonic probes satisfy
$$\mathcal F(g_{p(t)},f_{p(t)})=0,$$ differentiation at $t=0$ gives
$$
D_g\mathcal F(g_p,f_p)\bigl[D\mathbf g_p(X)\bigr]
+
D_f\mathcal F(g_p,f_p)\bigl[D(\Pi\circ\mathbf g)_p(X)\bigr]=0.
$$
Define $J_p:=D_f\mathcal F(g_p,f_p)$ and define the forcing operator along the
parameterized Einstein family by
$$
K_p:=D_g\mathcal F(g_p,f_p)\circ D\mathbf g_p:
T_pS_{\mathcal U}\longrightarrow\mathcal Y^{k,\alpha}_{f_p}.
$$
Thus $K_p$ is the pullback to parameter space of the ambient metric derivative
of the probe equation. This distinction is essential when the parameterization
$\mathbf g:S\to\operatorname{Met}(M)$ is not an embedding.

\smallskip
By Theorem~\ref{thm:probe-existence}, the ambient Jacobi family depends
smoothly on the metric, and hence $p\longmapsto J_p$ depends smoothly on the
parameter. Since $J_{p_0}=J_{g_0}$ is an isomorphism by
Hypothesis~\ref{hyp:normalized-jacobi-nondegeneracy}, invertibility persists
after shrinking $S_{\mathcal U}$ if necessary. Writing $G_p:=J_p^{-1}$, the
differentiated probe equation becomes
$$
D(\Pi\circ\mathbf g)_p(X)=-G_pK_p(X).
$$

\begin{Prop}[Linear response identity]
\label{prop:probe-response}
After shrinking $S_{\mathcal U}$ if necessary, the canonical harmonic probe
assignment $\Pi_E=\Pi\circ\mathbf g:S_{\mathcal U}\to\mathcal H^{k+2,\alpha}$
satisfies
$$
D(\Pi_E)_p=-G_pK_p:
T_pS_{\mathcal U}\longrightarrow T_{f_p}\mathcal H^{k+2,\alpha}.
$$
\end{Prop}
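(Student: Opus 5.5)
The plan is to differentiate the defining identity of the probe assignment along curves in $S_{\mathcal U}$ and then invert the Jacobi operator, in three steps.

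\textbf{Step 1: the identity and its differential.}
By Theorem~\ref{thm:probe-existence}, $\Pi:\mathcal U\to\mathcal V$ is smooth and $\mathcal F(g,\Pi(g))=0$ for all $g\in\mathcal U$. Since $\mathbf g$ is smooth, $\Pi_E=\Pi\circ\mathbf g$ is smooth on $S_{\mathcal U}$, and
$$\Psi(p):=\mathcal F(g_p,\Pi_E(p))\equiv0 \quad\text{on } S_{\mathcal U}.$$
After the fixed local trivialisations of Section~2, $\mathcal F$ is a smooth map between fixed Banach spaces. The chain rule for the composition of the smooth map $p\mapsto(g_p,\Pi_E(p))$ with $\mathcal F$ then gives, for $X\in T_pS_{\mathcal U}$,
$$0=D\Psi_p(X)=D_g\mathcal F(g_p,f_p)[D\mathbf g_p(X)]+D_f\mathcal F(g_p,f_p)[D(\Pi_E)_p(X)].$$
By the definitions of $K_p$ and $J_p$, this reads $J_p\,D(\Pi_E)_p(X)=-K_p(X)$. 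This is exactly the computation displayed before the statement, now justified by the smoothness of $\Pi$ rather than by a curve argument.

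\textbf{Step 2: invertibility of $J_p$.}
The map $p\mapsto J_p=D_f\mathcal F(g_p,\Pi_E(p))$ is continuous into bounded operators between the fixed trivialised spaces, because $\mathcal F$ is $C^1$ and $\Pi_E$ is continuous. By Hypothesis~\ref{hyp:normalized-jacobi-nondegeneracy}, $J_{p_0}$ is an isomorphism. Invertible operators form an open set (Neumann series), so after shrinking $S_{\mathcal U}$ the operator $J_p$ is a bounded isomorphism $T_{f_p}\mathcal H^{k+2,\alpha}\to\mathcal Y^{k,\alpha}_{f_p}$. Its inverse $G_p$ is bounded and depends smoothly on $p$.

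\textbf{Step 3: conclusion.}
Applying $G_p$ to the identity of Step 1 gives $D(\Pi_E)_p=-G_pK_p$.

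\textbf{Main obstacle.}
The only delicate point is bookkeeping rather than analysis. The tangent space $T_{f_p}\mathcal H^{k+2,\alpha}$ and the fibre $\mathcal Y^{k,\alpha}_{f_p}$ vary with $p$. One must check that the partial derivatives $D_g\mathcal F$ and $D_f\mathcal F$, computed in the trivialisation, coincide with the intrinsic Jacobi and forcing operators. At a zero of the section $\mathcal F$ the vertical derivative is intrinsic (independent of connection or trivialisation), so $J_p$ and $K_p$ are well defined at harmonic probes. This is where I would be careful. I would state the identity in the trivialisation and then note this independence.
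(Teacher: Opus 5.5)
Your proposal is correct and follows essentially the paper's argument: differentiate $\mathcal F(g_p,\Pi_E(p))=0$ to get $J_p\,D(\Pi_E)_p=-K_p$, use openness of invertibility (from normalized Jacobi nondegeneracy at $p_0$ and continuity of $p\mapsto J_p$) to shrink $S_{\mathcal U}$ so that every $J_p$ is invertible, and apply $G_p=J_p^{-1}$. The differences are minor: you apply the chain rule directly rather than differentiating along curves, and you add the correct remark that the partial derivatives defining $J_p$ and $K_p$ are intrinsic at zeros of the section.
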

\begin{proof}
The identity follows from differentiating
$\mathcal F(\mathbf g(p),\Pi(\mathbf g(p)))=0$ and inverting $J_p$.
\end{proof}
\smallskip
This is the fundamental linear response law for the parameterized Einstein
family. If $\mathbf g$ is immersive, it may equivalently be read as a response
law for the corresponding infinitesimal metric deformations
$D\mathbf g_p(X)$. If $\mathbf g$ has redundant parameter directions, the
formula correctly annihilates them through the factor $D\mathbf g_p$.

\subsection{The harmonic probe bundle}
\label{sec:harmonic-probe-bundle}
The canonical harmonic probe assignment
$\Pi_E=\Pi\circ\mathbf g:S_{\mathcal U}\to\mathcal H^{k+2,\alpha}$
associates the unique local continuation of the fixed reference harmonic probe
with every parameter $p\in S_{\mathcal U}$. Write $f_p=\Pi_E(p)$ and define
$\mathscr H_p^{k+2,\alpha}:=T_{f_p}\mathcal H^{k+2,\alpha}$.

\begin{Dfn}[Harmonic probe bundle]
The \emph{harmonic probe bundle} is the smooth Banach vector bundle
$\pi_{\mathscr H}:\mathscr H^{k+2,\alpha}\to S_{\mathcal U}$ whose fibre over
$p$ is $\mathscr H_p^{k+2,\alpha}$.
\end{Dfn}
\smallskip
The smooth bundle structure is understood with respect to the local
trivializations fixed in the functional-analytic conventions of Section~2.
After shrinking $S_{\mathcal U}$ if necessary, the smoothly varying pullback
bundles $f_p^*TN$ and the subspaces satisfying the linearized equivariance,
endpoint and normalization conditions are identified with fixed Banach spaces.
Thus, locally,
$$
\mathscr H^{k+2,\alpha}\cong
S_{\mathcal U}\times\mathscr H_{p_0}^{k+2,\alpha}.
$$
Similarly, set $\mathscr Y_p^{k,\alpha}:=\mathcal Y_{f_p}^{k,\alpha}$.
These spaces assemble into a smooth Banach vector bundle
$\pi_{\mathscr Y}:\mathscr Y^{k,\alpha}\to S_{\mathcal U}$, locally
isomorphic to
$S_{\mathcal U}\times\mathscr Y_{p_0}^{k,\alpha}$.

\smallskip
For $p\in S_{\mathcal U}$ define
$$
J_p:=D_f\mathcal F(g_p,f_p):
\mathscr H_p^{k+2,\alpha}\longrightarrow\mathscr Y_p^{k,\alpha}.
$$
The family $p\mapsto J_p$ is smooth after local trivialization and hence
defines a smooth bundle morphism
$J:\mathscr H^{k+2,\alpha}\to\mathscr Y^{k,\alpha}$ covering the identity on
$S_{\mathcal U}$. Equivalently, in a fixed local trivialization it is a smooth
map
$$
S_{\mathcal U}\longrightarrow
\mathcal L\!\left(
\mathscr H_{p_0}^{k+2,\alpha},
\mathscr Y_{p_0}^{k,\alpha}
\right).
$$
After shrinking $S_{\mathcal U}$ if necessary,
Hypothesis~\ref{hyp:normalized-jacobi-nondegeneracy} and openness of
invertibility imply that every $J_p$ is an isomorphism.

\smallskip
The harmonic probe bundle, target bundle and Jacobi bundle morphism are thus
bundles over the \emph{parameter manifold} $S_{\mathcal U}$, while their
coefficients are determined by the associated metrics $g_p$. This separation
of base point $p$ from metric $g_p$ will be maintained below.

\subsection{The analytical bridge to the Einstein matching map}
\label{subsec:analytical-bridge-matching-map}
Theorem~\ref{thm:intrinsic-reduction} expresses the normalized Einstein
boundary-value problem as the finite-dimensional Einstein matching problem.
In a local coordinate chart on the Einstein matching hypersurface, the
Einstein matching map is represented by
$$\mathcal M_\chi(u,v) = \chi(\Phi_-(u)) - \chi(\Phi_+(v)).$$
At a matching pair $(u_0,v_0)$ with common matching point
$q_0 = \Phi_-(u_0) = \Phi_+(v_0),$
its differential is therefore
$$D\mathcal M_\chi(u_0,v_0) = D\chi_{q_0}\circ D\Phi_-(u_0) - D\chi_{q_0}\circ D\Phi_+(v_0).$$
Thus the first-order Einstein matching problem is governed by the
differentials of the two propagation maps. The purpose of the canonical
harmonic probes is to provide additional analytical structures through which
these propagated infinitesimal variations can be studied.

\smallskip
The connection between the probe theory and the Einstein matching problem
requires an additional compatibility assumption. Namely, the analytical
probe data associated with a propagated Einstein germ should depend only on
the resulting matching datum and not on the particular endpoint
parametrization from which that datum was obtained. We formulate this
requirement explicitly.
\begin{Hyp}[Matching-data extension]
\label{hyp:matching-dependence}
There exists a neighbourhood $U_\Sigma\subset\Sigma$ of the reference
matching point $q_0$ and a smooth map
$\widetilde{\mathcal A}:U_\Sigma\longrightarrow\mathcal X$ whose restriction
to every matching datum arising from a metric in the chosen Einstein family
agrees with its core analytical package. When the propagated one-sided germ
families are also assigned probe data, we additionally require their package
maps to agree with $\widetilde{\mathcal A}\circ\Phi_\pm$.
\end{Hyp}
\smallskip
This is an explicit extension hypothesis, not a consequence of the canonical
probe existence theorem. A Cauchy datum $q\in\Sigma$ need not by itself define
a global compact Einstein metric or a global harmonic probe, so the existence
of such an extension must be verified in any model in which response is to be
studied on all of $T_{q_0}\Sigma$. For response theory restricted to the
chosen family $S$, only the package map on $S$ is required.

\smallskip
One cannot infer such an extension merely from a smooth package map on
$j(S)$. For example, if $j(S)$ is a proper lower-dimensional subset of
$\Sigma$, the values of the package on $j(S)$ contain no information about
transverse derivatives in $T\Sigma$. Accordingly, every statement involving
$D\widetilde{\mathcal A}_{q_0}$ on arbitrary matching-data directions remains
conditional on Hypothesis~\ref{hyp:matching-dependence}.

\begin{Prop}[Factorization through matching data]
\label{prop:package-factorization}
Assume Hypothesis~\ref{hyp:matching-dependence}. Then
$\mathcal A_\pm=\widetilde{\mathcal A}\circ\Phi_\pm$ and hence
$D\mathcal A_\pm=D\widetilde{\mathcal A}\circ D\Phi_\pm$. At a matching pair
$(u_0,v_0)$ with common matching point $q_0$ and common package value $a_0$,
define the linear package mismatch
$$
\delta\mathcal A_{(u_0,v_0)}(\xi,\zeta)
:=D\mathcal A_-(u_0)[\xi]-D\mathcal A_+(v_0)[\zeta]
\in T_{a_0}\mathcal X.
$$
Then
$$
\delta\mathcal A_{(u_0,v_0)}(\xi,\zeta)
=D\widetilde{\mathcal A}_{q_0}
\bigl(D\Phi_-(u_0)[\xi]-D\Phi_+(v_0)[\zeta]\bigr).
$$
\end{Prop}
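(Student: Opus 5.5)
The identity $\mathcal A_\pm=\widetilde{\mathcal A}\circ\Phi_\pm$ is exactly the second clause of Hypothesis~\ref{hyp:matching-dependence}, read on the neighbourhoods of $u_0$ and $v_0$ where the one-sided package maps are defined. First I will shrink the germ neighbourhoods, as allowed by Proposition~\ref{prop:smooth-propagation}, so that $\Phi_\pm(G_\pm)\subset U_\Sigma$. This makes the composition well defined, and the identity of maps then holds pointwise. Both $\Phi_\pm$ (Proposition~\ref{prop:smooth-propagation}) and $\widetilde{\mathcal A}$ (by hypothesis) are smooth maps between manifolds, the target $\mathcal X$ being a Banach manifold after the fixed trivializations of Section~2. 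The chain rule therefore gives $D(\mathcal A_\pm)_w=D\widetilde{\mathcal A}_{\Phi_\pm(w)}\circ D(\Phi_\pm)_w$ at every point $w$ of the respective domain.

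Next I evaluate at the matching pair. Since $\Phi_-(u_0)=\Phi_+(v_0)=q_0$, both chain-rule formulas involve the same linear map $D\widetilde{\mathcal A}_{q_0}:T_{q_0}\Sigma\to T_{a_0}\mathcal X$, where $a_0=\widetilde{\mathcal A}(q_0)=\mathcal A_\pm$ at the respective base points. This shows that the two differentials land in the same tangent space, so the mismatch $\delta\mathcal A_{(u_0,v_0)}$ is a well-defined difference in $T_{a_0}\mathcal X$. Linearity of $D\widetilde{\mathcal A}_{q_0}$ then lets me pull the common factor out:
\[
D\widetilde{\mathcal A}_{q_0}D\Phi_-(u_0)[\xi]-D\widetilde{\mathcal A}_{q_0}D\Phi_+(v_0)[\zeta]
=D\widetilde{\mathcal A}_{q_0}\bigl(D\Phi_-(u_0)[\xi]-D\Phi_+(v_0)[\zeta]\bigr),
\]
and the argument of $D\widetilde{\mathcal A}_{q_0}$ is a legitimate vector in $T_{q_0}\Sigma$.

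The only point needing care is the meaning of differentiating into $\mathcal X$. Here I will use the convention of Section~2 that operator families are compared in fixed local trivializations with a common graph domain, so that $D\widetilde{\mathcal A}$ is an ordinary Fréchet derivative of a map into a fixed Banach space near $a_0$. The mismatch is also intrinsic, in the same spirit as Proposition~\ref{prop:coordinate-independence}. Both branches pass through the same point $a_0$, so a change of trivialization multiplies each branch by the same invertible derivative at $a_0$, and the identity is preserved. I expect no real obstacle, since the substance is carried entirely by the hypothesis.
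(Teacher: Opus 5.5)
Your proposal is correct and follows the paper's argument. The factorization $\mathcal A_\pm=\widetilde{\mathcal A}\circ\Phi_\pm$ is read off from Hypothesis~\ref{hyp:matching-dependence} and the chain rule gives the differentials; the common matching point $q_0$, and hence the common value $a_0$, lets the two differentials be subtracted in $T_{a_0}\mathcal X$ and the factor $D\widetilde{\mathcal A}_{q_0}$ be pulled out by linearity, while your remarks on shrinking into $U_\Sigma$ and on trivialization-independence are harmless refinements that the paper leaves implicit.
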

\begin{proof}
The first identities are the assumed factorization and the chain rule. At a
matching pair both package maps take the same value $a_0$, so their
differentials take values in the common tangent space $T_{a_0}\mathcal X$ and
may be subtracted. Substituting the two chain-rule identities gives the last
formula.
\end{proof}
\begin{Bem}
\label{rem:factorization-not-reconstruction}
The proposition is a factorization statement, not a reconstruction theorem.
Even when the extension exists, $D\widetilde{\mathcal A}_{q_0}$ may have a
nontrivial kernel. Recovery of matching variations therefore requires an
additional injectivity hypothesis on the relevant subspace of
$T_{q_0}\Sigma$. Moreover, none of the detection results formulated purely on
$S$ requires an extension of the package to an open subset of $\Sigma$.
\end{Bem}

\subsection{The Jacobi family}
\label{sec:jacobi-family}
The canonical harmonic probe assignment gives rise to a smoothly varying
family of normalized Jacobi operators. This family is the fundamental linear
analytical object associated with the probe construction. Its inverses yield
the Green operators forming the second component of the core analytical
package, while additional spectral constructions will be introduced only
under the corresponding supplementary hypotheses.

\smallskip
Recall that
$S_{\mathcal U}=\mathbf g^{-1}(\mathcal U),$
where $\mathcal U$ is the ambient metric neighbourhood supplied by
Theorem~\ref{thm:probe-existence}. For every
$p\in S_{\mathcal U},$
let
$f_p=\Pi(g_p)$
denote the corresponding canonical harmonic probe.
Linearizing the nonlinear probe operator with respect to the map variable at
$(g_p,f_p)$ gives the normalized Jacobi operator
$$J_p=D_f\mathcal F(g_p,f_p): \mathscr H_p^{k+2,\alpha} \longrightarrow \mathscr Y_p^{k,\alpha},$$
where
$\mathscr H_p^{k+2,\alpha}=T_{f_p}\mathcal H^{k+2,\alpha}$
is the Banach space of admissible equivariant variation fields satisfying the
linearized endpoint and normalization conditions, and
$\mathscr Y_p^{k,\alpha}=\mathcal Y_{f_p}^{k,\alpha}$
is the corresponding target space.

\smallskip
With the sign convention used throughout this paper, the Jacobi operator has
the geometric form
$$J_pX=\nabla^*\nabla X-\operatorname{tr}_{g_p}R^N\!\left(X,df_p(\,\cdot\,)\right)df_p(\,\cdot\,),$$
where $\nabla$ is the connection on $f_p^*TN$ induced by the
Levi--Civita connections of $(M,g_p)$ and $(N,h)$, and $\nabla^*$ denotes its
formal $L^2$ adjoint.
\begin{Prop}[Smooth Jacobi family]
\label{prop:smooth-jacobi-family}
The normalized Jacobi operators
$J_p: \mathscr H_p^{k+2,\alpha} \longrightarrow \mathscr Y_p^{k,\alpha}$
depend smoothly on
$p\in S_{\mathcal U}.$
Equivalently, after the local trivializations fixed in
Section~\ref{sec:harmonic-probe-bundle}, the family is represented by a smooth
map into the Banach space of bounded linear operators.
\end{Prop}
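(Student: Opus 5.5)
The plan is to write $J_p$ as a composition of smooth maps: the parameter enters through $p\mapsto(g_p,f_p)$, and the Jacobi operator is the partial derivative $D_f\mathcal F$ evaluated at that point. Concretely, after the fixed local trivializations of Section~\ref{sec:harmonic-probe-bundle}, $\mathcal F$ is a smooth map $\mathcal G\times\mathcal H^{k+2,\alpha}\to\mathcal Y^{k,\alpha}$ between open subsets of fixed Banach spaces. This is the standing hypothesis used in Theorem~\ref{thm:probe-existence}. The partial derivative
\[
D_f\mathcal F:\mathcal G\times\mathcal H^{k+2,\alpha}\longrightarrow\mathcal L\bigl(\mathscr H_{p_0}^{k+2,\alpha},\mathscr Y_{p_0}^{k,\alpha}\bigr)
\]
is then smooth, since partial derivatives of a $C^\infty$ map between Banach spaces are $C^\infty$ as maps into the space of bounded operators.

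The key steps, in order, are as follows. First, $p\mapsto g_p=\mathbf g(p)$ is smooth into $\mathcal G$ by the definition of a smooth parameterized Einstein family. This requires checking that coefficient-wise smoothness on $S\times M$ gives smoothness into the $C^{k,\alpha}$ completion, which holds because a smooth family restricted to compact $M$ has all $C^{k,\alpha}$ norms of difference quotients controlled. Second, $p\mapsto f_p=\Pi(g_p)$ is smooth on $S_{\mathcal U}$ by Theorem~\ref{thm:probe-existence}. Third, composing gives that $p\mapsto D_f\mathcal F(g_p,f_p)=J_p$ is smooth in the trivialized operator space. This is exactly the claimed equivalent formulation, and by the convention on operator families in Section~2 it is what smoothness of the family means.

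The main obstacle is the trivialization itself. The spaces $\mathscr H_p^{k+2,\alpha}=T_{f_p}\mathcal H^{k+2,\alpha}$ move with $f_p$, and the Jacobi operator must be read through the same identifications in which $\mathcal F$ is smooth. I would handle this by using the Banach-submanifold chart of $\mathcal H^{k+2,\alpha}$ near $f_0$ (standing hypothesis (1)). In this chart, tangent spaces along the chart are canonically identified with the model space, and the vector bundle $\mathcal Y$ is trivialized by parallel transport along short geodesics in $N$, as in convention (3). In these coordinates $D_f\mathcal F$ is literally the chart derivative, so no extra connection terms appear. One can alternatively use the intrinsic formula $J_pX=\nabla^*\nabla X-\operatorname{tr}_{g_p}R^N(X,df_p)df_p$: its coefficients are polynomial in the derivatives of $g_p$, $g_p^{-1}$, and the jets of $f_p$, together with $R^N$ composed with $f_p$. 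All of these vary smoothly in $C^{k,\alpha}$ because $f_p\in C^{k+2,\alpha}$ and $g_p$ is smooth, while the common domain is fixed by convention (4). Either route yields the proposition.
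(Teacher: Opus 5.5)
Your proposal is correct and follows essentially the same route as the paper. Both arguments get smoothness of $p\mapsto(g_p,f_p)$ from the definition of the family and Theorem~\ref{thm:probe-existence}, compose it with the smooth partial derivative $(g,f)\mapsto D_f\mathcal F(g,f)$, and read the result in the fixed local trivializations; your added remarks on the chart identification, and the alternative via the explicit Jacobi formula, only spell out details the paper leaves implicit.
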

\begin{proof}
The nonlinear probe operator
$\mathcal F: \mathcal G\times\mathcal H^{k+2,\alpha} \longrightarrow \mathcal Y^{k,\alpha}$
is smooth, and the canonical probe assignment
$
p\longmapsto f_p
$
is smooth by Theorem~\ref{thm:probe-existence}. Hence
$
g\longmapsto (g_p,f_p)
$
is smooth.

\smallskip
The partial differential
$(g,f)\longmapsto D_f\mathcal F(g,f)$
depends smoothly on $(g,f)$. Therefore the composition
$p\longmapsto D_f\mathcal F(g_p,f_p)=J_p$
is smooth.
After identifying the varying fibres of
$\mathscr H^{k+2,\alpha}$ and
$\mathscr Y^{k,\alpha}$ by the local trivializations of
Section~\ref{sec:harmonic-probe-bundle}, this becomes a smooth family of
bounded operators between fixed Banach spaces.
\end{proof}
\smallskip
Thus the canonical harmonic probe assignment determines a smooth
operator-valued family
$p\longmapsto J_p.$
After shrinking $S_{\mathcal U}$ if necessary, the normalized Jacobi
operators remain invertible. Their inverses form the Green family introduced
below.

\subsection{Functional realizations of the Jacobi family}
The Jacobi family admits two complementary functional realizations. The
Hölder realization is adapted to the nonlinear Implicit Function Theorem and
smooth parameter dependence, whereas the self-adjoint $L^2$ realization
provides the natural framework for spectral theory.
\subsubsection{Hölder realization}
For every
$p\in S_{\mathcal U},$
the normalized Jacobi operator has the Hölder realization
$J_p: \mathscr H_p^{k+2,\alpha} \longrightarrow \mathscr Y_p^{k,\alpha}.$
After choosing the smooth local trivializations introduced in
Section~\ref{sec:harmonic-probe-bundle}, the Jacobi family is represented as
$$
p\longmapsto
J_p
\in
\mathcal L
\left(
\mathscr H_{p_0}^{k+2,\alpha},
\mathscr Y_{p_0}^{k,\alpha}
\right).
$$
\begin{Prop}[Smooth Hölder Jacobi family]
\label{prop:smooth-holder-jacobi-family}
The map
$p\longmapsto J_p$
is smooth as a family of bounded linear operators between the fixed Hölder
spaces determined by any of the admissible smooth local trivializations.
\end{Prop}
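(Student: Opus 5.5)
The plan is to reduce the statement to Proposition~\ref{prop:smooth-jacobi-family} and then show that changing the admissible trivialization does not affect smoothness. In a fixed admissible local trivialization, the fibres $\mathscr H_p^{k+2,\alpha}$ and $\mathscr Y_p^{k,\alpha}$ are identified with $\mathscr H_{p_0}^{k+2,\alpha}$ and $\mathscr Y_{p_0}^{k,\alpha}$ by isomorphisms
\[
\Theta_p:\mathscr H_{p_0}^{k+2,\alpha}\to\mathscr H_p^{k+2,\alpha},
\qquad
\Psi_p:\mathscr Y_{p_0}^{k,\alpha}\to\mathscr Y_p^{k,\alpha}.
\]
The represented family is then $\widehat J_p=\Psi_p^{-1}J_p\Theta_p$. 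Smoothness of $p\mapsto\widehat J_p$ in one such trivialization is exactly the content of Proposition~\ref{prop:smooth-jacobi-family}. That proposition rests on three facts:
\begin{itemize}
\item smoothness of $\mathcal F$ on $\mathcal G\times\mathcal H^{k+2,\alpha}$;
\item smoothness of the probe assignment $p\mapsto f_p=\Pi(g_p)$ from Theorem~\ref{thm:probe-existence};
\item smoothness of the partial derivative $(g,f)\mapsto D_f\mathcal F(g,f)$ as a map into bounded operators.
\end{itemize}
The chain rule applied to $p\mapsto(g_p,f_p)\mapsto D_f\mathcal F(g_p,f_p)$ then gives smoothness in the chosen trivialization.

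The second step is independence of the trivialization. Let $(\Theta',\Psi')$ be a second admissible trivialization. Then
\[
\widehat J'_p=(\Psi'_p)^{-1}\Psi_p\,\widehat J_p\,\Theta_p^{-1}\Theta'_p .
\]
The transition maps $A_p=\Theta_p^{-1}\Theta'_p$ and $B_p=(\Psi'_p)^{-1}\Psi_p$ are smooth maps into the invertible bounded operators on the fixed Hölder spaces. This is what admissibility means in the conventions of Section~2: the trivializations are smooth Banach bundle charts, so their transition functions are smooth in the operator norm. Composition $\mathcal L(Y)\times\mathcal L(H,Y)\times\mathcal L(H)\to\mathcal L(H,Y)$ is a bounded trilinear map and hence smooth. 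Therefore $p\mapsto B_p\widehat J_p A_p$ is smooth.

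The step I expect to be the real obstacle is justifying that the transition maps are smooth in operator norm, and not merely strongly continuous. For Hölder spaces this is delicate if the trivialization involves reparametrizing the domain: composition with diffeomorphisms is not norm-differentiable on $C^{k,\alpha}$. I would handle this by insisting, as in the standing convention (3)--(4), that the admissible trivializations act fibrewise on $f_p^*TN$ over a fixed domain. For example, one can use parallel transport in $N$ along the short geodesics from $f_{p_0}(x)$ to $f_p(x)$, composed with a smooth family of projections onto the linearized endpoint and normalization subspaces. Such maps are multiplication operators by coefficients depending smoothly on $(p,x,f_p(x))$. Since $k\ge4$ and $f_p$ depends smoothly on $p$ in $C^{k+2,\alpha}$, multiplication by such coefficients is norm-smooth on $C^{k+2,\alpha}$ and $C^{k,\alpha}$. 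This is the standard Omega-lemma argument for smooth pointwise operations; see \cite{Palais}. The projections onto the constrained subspaces are smooth by the Banach-submanifold hypothesis in convention (1).
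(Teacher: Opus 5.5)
Your proposal is correct and follows the paper's proof, which consists only of your first step: the statement is Proposition~\ref{prop:smooth-jacobi-family} read in the local trivializations of Section~\ref{sec:harmonic-probe-bundle}. Your second step, conjugating by the smooth transition maps $A_p$ and $B_p$, makes explicit what the paper leaves implicit in ``any of the admissible smooth local trivializations'', namely that smooth Banach bundle charts have operator-norm-smooth transition functions; your caution about avoiding domain reparametrizations on H\"older spaces is well placed.
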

\begin{proof}
This is precisely Proposition~\ref{prop:smooth-jacobi-family} expressed in
the local trivializations of
Section~\ref{sec:harmonic-probe-bundle}.
\end{proof}

\subsubsection{\texorpdfstring{$L^2$}{L2} realization}
For spectral questions we use the corresponding Hilbert-space realization.
Let
$\mathscr L_p^2=L_{\mathrm{eq}}^2(f_p^*TN),$
with the $L^2$ structure induced by $g_p$ and $h$. The normalized Jacobi
operator is realized as a closed operator
$J_p:\mathcal D_p\longrightarrow\mathscr L_p^2,$
where
$\mathcal D_p \subset H_{\mathrm{eq}}^2(f_p^*TN)$
is the domain determined by the prescribed endpoint and normalization
conditions.
\begin{Prop}[Self-adjoint Fredholm realization]
\label{prop:self-adjoint-jacobi}
Assume that the prescribed endpoint and normalization conditions define a
self-adjoint elliptic realization of $J_p$. Then
$J_p:\mathcal D_p\longrightarrow\mathscr L_p^2$
is self-adjoint, bounded below and has compact resolvent.

\smallskip
Consequently, its spectrum is real and discrete, each eigenspace is
finite-dimensional, and the eigenvalues have no finite accumulation point.
Since the operator is bounded below, they may be ordered so that they tend to
$+\infty$.
\end{Prop}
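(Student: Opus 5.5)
The assumed self-adjointness already gives real spectrum. The remaining work is to establish lower boundedness and compactness of the resolvent, after which the spectral consequences are the standard ones.

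\textbf{Lower bound.} I would use the geometric form
\[
J_p=\nabla^*\nabla-\operatorname{tr}_{g_p}R^N(\,\cdot\,,df_p)df_p .
\]
The zeroth-order term is a pointwise symmetric endomorphism of $f_p^*TN$. Its coefficients are bounded, since $f_p$ is smooth on the compact manifold $M$ and $N$ is compact. Integrating by parts on $\mathcal D_p$ gives
\[
\langle J_pX,X\rangle_{L^2}\ \ge\ \|\nabla X\|_{L^2}^2-C\|X\|_{L^2}^2 ,
\]
where $C$ bounds the curvature term. Here I rely on the endpoint and normalization conditions to make the boundary terms vanish, which is exactly what the self-adjoint realization hypothesis provides. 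This shows $J_p\ge -C$.

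\textbf{Compact resolvent.} I choose $\mu>C$, so that $J_p+\mu$ is self-adjoint and invertible with $(J_p+\mu)^{-1}:\mathscr L_p^2\to\mathcal D_p$ bounded. The hypothesis that the realization is elliptic supplies the estimate
\[
\|X\|_{H^2}\le C'\bigl(\|J_pX\|_{L^2}+\|X\|_{L^2}\bigr) \quad\text{on } \mathcal D_p ,
\]
so $\mathcal D_p$ carries the $H^2$ norm. Rellich compactness of $H^2_{\mathrm{eq}}\hookrightarrow L^2_{\mathrm{eq}}$ on the compact manifold then makes the resolvent compact. In the cohomogeneity-one reduction the same conclusion can be phrased through weighted regular-singular Sturm--Liouville theory, as in Example~\ref{ex:round-probe}.

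\textbf{Spectral consequences.} The spectral theorem for self-adjoint operators with compact resolvent now gives the rest. The compact self-adjoint operator $(J_p+\mu)^{-1}$ has real eigenvalues accumulating only at $0$, each with a finite-dimensional eigenspace. Transferring back, $J_p$ has real discrete spectrum with finite multiplicities and no finite accumulation point. Together with the lower bound, the eigenvalues can be ordered increasingly and tend to $+\infty$.

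\textbf{Main obstacle.} The hard step is justifying the ellipticity estimate and the Rellich embedding near the singular orbits, where the equivariant reduction has singular coefficients. In the global formulation on $M$ this is standard elliptic theory on a compact manifold. The paper, however, treats the assertion as a standing hypothesis, so the argument only invokes it: ellipticity and self-adjointness are assumed, and compactness follows from the $H^2$-graph-norm estimate.
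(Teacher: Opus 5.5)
Your proposal is correct and follows the paper's proof step for step: lower boundedness from the Laplace-type principal part plus a bounded zeroth-order curvature term, compact resolvent from the elliptic estimate and the compact embedding of the graph domain into $L^2$, and then the spectral theorem for semibounded self-adjoint operators with compact resolvent; your version just makes the integration by parts, the shift $J_p+\mu$ and the Rellich step explicit. One small correction: self-adjointness by itself does not make the boundary term in $\langle J_pX,X\rangle_{L^2}$ vanish (Robin-type conditions are also self-adjoint), but here $M$ is closed and the singular orbits are interior, so no boundary terms arise and your bound $J_p\ge -C$ holds as stated.
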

\begin{proof}
The Jacobi operator is formally self-adjoint and has Laplace-type principal
part. By assumption, the endpoint and normalization conditions determine a
self-adjoint elliptic realization.

\smallskip
The curvature term is a bounded zeroth-order perturbation on the compact
manifold, so the resulting operator is bounded below. Elliptic regularity and
the compact embedding of the graph domain into $L^2$ imply compact
resolvent. The stated spectral properties then follow from the spectral
theorem for lower semibounded self-adjoint operators with compact resolvent.
\end{proof}

\begin{Cor}[Persistence of normalized nondegeneracy]
\label{cor:persistence-nondegeneracy}
After shrinking $S_{\mathcal U}$ if necessary, every normalized Jacobi
operator remains invertible in the Hölder realization:
$$J_p: \mathscr H_p^{k+2,\alpha} \longrightarrow \mathscr Y_p^{k,\alpha}.$$
In particular,
$\ker J_p=\{0\}.$
The Green operators
$G_p:=J_p^{-1}$
therefore exist in the Hölder realization and depend smoothly on $p$.

\smallskip
If, in addition, the self-adjoint realization of
Proposition~\ref{prop:self-adjoint-jacobi} is used, then its $L^2$ kernel is
also trivial.
\end{Cor}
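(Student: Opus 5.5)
The argument is an openness-of-invertibility argument in the fixed trivialized Hölder spaces, followed by smoothness of inversion and a comparison of the Hölder and $L^2$ realizations.

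\textbf{Step 1: invertibility near $p_0$.} By Proposition~\ref{prop:smooth-holder-jacobi-family}, after the local trivializations of Section~\ref{sec:harmonic-probe-bundle}, the family $p\mapsto J_p$ is a smooth map
$$S_{\mathcal U}\to\mathcal L\bigl(\mathscr H^{k+2,\alpha}_{p_0},\mathscr Y^{k,\alpha}_{p_0}\bigr).$$
At $p_0$ it equals $J_{g_0}$, which is a bounded isomorphism by Hypothesis~\ref{hyp:normalized-jacobi-nondegeneracy}. The set of isomorphisms in $\mathcal L(\mathscr H,\mathscr Y)$ is open. Concretely, if $\|J_p-J_{p_0}\|<\|J_{p_0}^{-1}\|^{-1}$, then
$$J_p=J_{p_0}\bigl(I-J_{p_0}^{-1}(J_{p_0}-J_p)\bigr),$$
and the bracket is invertible by a Neumann series. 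Continuity of $p\mapsto J_p$ therefore gives a neighbourhood of $p_0$ on which every $J_p$ is an isomorphism. We shrink $S_{\mathcal U}$ to this neighbourhood. Injectivity then gives $\ker J_p=\{0\}$.

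\textbf{Step 2: smoothness of the Green operators.} We set $G_p:=J_p^{-1}$. The inversion map $\iota\colon\mathrm{Iso}(\mathscr H,\mathscr Y)\to\mathcal L(\mathscr Y,\mathscr H)$ is smooth, indeed real-analytic through the Neumann series, with
$$D\iota_A[B]=-A^{-1}BA^{-1}.$$
Hence $p\mapsto G_p=\iota(J_p)$ is smooth as a composition of smooth maps. We must also note that this statement does not depend on the chosen trivialization. Changing trivialization conjugates $J_p$ by smooth families of bounded isomorphisms, so $G_p$ is conjugated correspondingly and remains smooth.

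\textbf{Step 3: the $L^2$ kernel.} This is the step I expect to be the main obstacle, since the $L^2$ domain $\mathcal D_p\subset H^2_{\mathrm{eq}}$ is larger than the Hölder space. Suppose $X\in\mathcal D_p$ satisfies $J_pX=0$. The plan is to use elliptic (regular-singular) regularity, which is part of the standing conventions of Section~2 together with the ellipticity of the endpoint conditions. Bootstrapping from $J_pX=0\in C^\infty$ gives $X\in C^{k+2,\alpha}$. The boundary and normalization conditions defining $\mathcal D_p$ then coincide with the linearized conditions defining $\mathscr H_p^{k+2,\alpha}$, so $X\in\mathscr H_p^{k+2,\alpha}$, and Step 1 forces $X=0$. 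If one prefers to avoid regularity at this point, Proposition~\ref{prop:self-adjoint-jacobi} offers an alternative: the $L^2$ realization is self-adjoint with compact resolvent, so a nonzero kernel element would be an eigenfunction with eigenvalue $0$. Eigenfunctions are smooth by the same regularity theory, so this route returns to the same contradiction. Either way, the only real input beyond Step 1 is the regularity theory, which is assumed among the standing hypotheses and has to be checked model by model.
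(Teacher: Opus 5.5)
Your proposal is correct and follows the paper's proof essentially step for step: openness of invertibility in the fixed trivialized Hölder operator space, smoothness of the inversion map for the Green family, and elliptic (regular-singular) regularity placing any $L^2$ kernel element in the trivial Hölder kernel. The Neumann-series bound, the formula $D\iota_A[B]=-A^{-1}BA^{-1}$, and the remark on changes of trivialization add useful detail but do not change the argument.
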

\begin{proof}
Invertibility is an open property in the Banach space of bounded operators
between fixed Banach spaces. Since
$J_{g_0}$
is invertible by
Hypothesis~\ref{hyp:normalized-jacobi-nondegeneracy}, and the Hölder Jacobi
family depends smoothly on $p$ by
Proposition~\ref{prop:smooth-holder-jacobi-family}, every sufficiently nearby
$J_p$ remains invertible.

\smallskip
The inversion map on the open set of bounded invertible operators is smooth.
Hence
$g\longmapsto G_p=J_p^{-1}$
is smooth after local trivialization.

\smallskip
Finally, an element of the kernel of the self-adjoint $L^2$ realization is
smooth by elliptic regularity and satisfies the same endpoint and
normalization conditions. It therefore belongs to the Hölder kernel, which is
trivial. Hence the $L^2$ kernel is also zero.
\end{proof}

The Hölder realization governs the nonlinear theory, smooth dependence and
construction of the Green family. The self-adjoint $L^2$ realization provides
the additional Hilbert-space structure needed for spectral questions. These
two realizations are complementary and should not be conflated.

\paragraph{Second variation.}
Whenever the self-adjoint realization is in force, the normalized Jacobi
operator determines the quadratic form
$$Q_p(X) = \langle J_pX,X\rangle_{L^2(g_p)}.$$
Its negative spectral subspace defines the equivariant Morse index, while
$\operatorname{nul}(J_g) = \dim\ker J_g$
is the equivariant nullity.

\smallskip
These are invariants of the harmonic probe problem. They belong to the
extended spectral theory rather than to the minimal core package required for
the Einstein Detection Principle. Their possible significance for the
Einstein deformation problem is mediated through the response theory
developed in the subsequent sections.

\subsection{Green operators and spectral theory}
The normalized Jacobi family introduced above admits the standard
operator-theoretic constructions associated with elliptic operators. Among
these, the Green operators play the distinguished role in the present paper.
They form part of the core analytical package and enter directly into the
linear response law for the canonical harmonic probes.

\smallskip
The remaining constructions in this subsection belong to the extended
spectral theory. They are not required for the Einstein Detection Principle
and are used only when the additional spectral hypotheses stated below are
satisfied.

\subsubsection{Green operators}
By Corollary~\ref{cor:persistence-nondegeneracy}, after shrinking
$S_{\mathcal U}$ if necessary, every normalized Jacobi operator is invertible
in the Hölder realization.
\begin{Dfn}[Green operator]
For
$p\in S_{\mathcal U},$
the \emph{Green operator} of the normalized Jacobi operator is
$G_p:=J_p^{-1}.$
Thus
$G_p: \mathscr Y_p^{k,\alpha} \longrightarrow \mathscr H_p^{k+2,\alpha}$
is the solution operator for the normalized Jacobi equation
$J_pX=Y.$
\end{Dfn}

After the local trivializations fixed in
Section~\ref{sec:harmonic-probe-bundle}, the family
$p\longmapsto G_p$
depends smoothly on $p$. Indeed, the inversion map on the open set of
invertible bounded operators is smooth, and
$G_p=J_p^{-1}.$

\smallskip
The Green operator is the distinguished linear object governing the
first-order response of the harmonic probes. In particular, the response
identity obtained earlier takes the form
$$D(\Pi_E)_p=-G_pK_p.$$
Thus infinitesimal variation of the probe is obtained by applying the Green
operator to the forcing produced by the infinitesimal variation of the
underlying metric.

\subsubsection{Resolvents and spectral projectors}
For the self-adjoint $L^2$ realization, and for
$z\in\mathbb C\setminus\operatorname{spec}(J_p),$
define the resolvent
$$R_p(z)=(J_p-z)^{-1}.$$
Suppose that $\Gamma\subset\mathbb C$ is a closed contour lying in the
resolvent set of $J_p$ for every $p$ in a sufficiently small neighbourhood of $p_0$
and remaining uniformly separated from the spectrum. Under the fixed-domain
and local-trivialization conventions of Section~2, the resolvent depends
smoothly on the parameter:
$(p,z) \longmapsto R_p(z).$

\smallskip
The corresponding Riesz projector is
$$P_{\Gamma,p}=\frac{1}{2\pi i}\int_\Gamma(J_p-z)^{-1}\,dz.$$
It projects onto the finite-dimensional spectral subspace associated with the
part of the spectrum enclosed by $\Gamma$. Consequently,
$p\longmapsto P_{\Gamma,p}$
depends smoothly on $p$ as long as the enclosed spectral cluster remains
separated from the remainder of the spectrum.

\smallskip
No smooth labeling of individual eigenvalues through eigenvalue crossings is
asserted. Smooth dependence is instead naturally formulated for isolated
spectral clusters, or for individual simple eigenvalues away from crossings.

\subsubsection{Heat operators}
Assume the self-adjoint realization of
Proposition~\ref{prop:self-adjoint-jacobi}. Since $J_p$ is bounded below and
has compact resolvent, the spectral theorem defines the heat operator
$$e^{-tJ_p}, \qquad t>0.$$
For every fixed
$t>0,$
the operator $e^{-tJ_p}$ is smoothing and trace class.

\smallskip
Under the fixed-domain and smooth-family assumptions stated in Section~2,
standard parameter-dependent elliptic functional calculus (see, for example,
Kato~\cite{Kato} and Taylor~\cite{TaylorPDE2}) implies that, after local
trivialization,
$p\longmapsto e^{-tJ_p}$
depends smoothly on $p$ for every fixed $t>0$. In particular, its trace
$\operatorname{Tr}(e^{-tJ_p})$
defines a smooth spectral quantity as long as the hypotheses governing the
self-adjoint family remain valid.

\smallskip
The resolvent, the Riesz projectors and the heat operators are canonical
spectral constructions associated with the Jacobi family. Unlike the Green
operator, however, they are not required for the core response theory.

\subsection{Additional spectral invariants of the probe family}
The self-adjoint Jacobi family carries further invariants from elliptic
spectral theory. These quantities provide supplementary information about the
canonical harmonic probes but play no role in the proof of the Einstein
Detection Principle.

\smallskip
Accordingly, the constructions in this subsection belong to the extended
spectral data introduced earlier and are invoked only under their stated
additional hypotheses.

\subsubsection{Morse index, nullity and spectral flow}
For the self-adjoint realization, let
$E_p^-$
denote the direct sum of the negative eigenspaces of $J_p$. Since $J_p$ is
bounded below and has compact resolvent, this space is finite-dimensional.
\smallskip
The equivariant Morse index is
$\operatorname{ind}(J_p)=\dim E_p^-,$
while the equivariant nullity is
$\operatorname{nul}(J_p)=\dim\ker J_p.$
By Corollary~\ref{cor:persistence-nondegeneracy}, after shrinking
$S_{\mathcal U}$ if necessary,
$\operatorname{nul}(J_p)=0.$
Consequently, the Morse index is locally constant throughout this
nondegenerate neighbourhood.

\smallskip
More generally, along a continuous path of self-adjoint Fredholm
realizations for which eigenvalues are allowed to cross zero, the net signed
number of such crossings is measured by the spectral flow. With a fixed
spectral-flow sign convention, the change of Morse index is determined by
this spectral flow, with the overall sign depending on that convention.
Thus no sign convention for spectral flow is used elsewhere in the paper.

\subsubsection{Zeta functions and regularized determinants}
Further global spectral invariants may be defined under the standard
hypotheses of elliptic spectral theory.

\smallskip
Suppose first that the self-adjoint Jacobi operator is strictly positive.
Writing its eigenvalues with multiplicity as
$0<\lambda_0(p)\le\lambda_1(p)\le\cdots$, its spectral zeta function is
defined initially for $\operatorname{Re}(s)$ sufficiently large by
$$\zeta_{J_p}(s)=\sum_{k=0}^{\infty}\lambda_k(p)^{-s}.$$
For a positive elliptic differential operator of positive order on a closed
manifold, the standard heat-kernel/complex-power theory gives a meromorphic
continuation to the complex plane and regularity at $s=0$; see, for example,
Taylor~\cite{TaylorPDE2}. The zeta-regularized determinant is then
$$\det\nolimits_\zeta(J_p)=\exp\!\left(-\zeta_{J_p}'(0)\right).$$

\smallskip
For an invertible but not necessarily positive Jacobi operator, one may more
generally choose a common Agmon angle and define complex powers using the
corresponding spectral cut. The associated cut-dependent zeta function and
regularized determinant are then defined by the standard elliptic functional
calculus. Such a construction is asserted only on parameter regions for which
a common spectral cut remains valid.

\smallskip
Thus zeta functions and regularized determinants are additional spectral
invariants of the canonical harmonic probe family. They belong to the
extended spectral data rather than to the core analytical package, and no
result concerning the Einstein Detection Principle depends on their
existence.

\smallskip
The role of the core package in the Einstein deformation problem is instead
mediated by the response theory developed in the following sections. The
extended spectral data provide optional supplementary observables whenever
their additional analytical hypotheses are available.

\subsection{The extended analytical package}
The preceding subsections associate with the canonical harmonic probe family a
collection of operator-theoretic and spectral objects. The normalized Jacobi
and Green operators form the core analytical package required for the Einstein
Detection Principle. Under additional spectral hypotheses, this core package
may be enlarged by further constructions arising from elliptic spectral
theory.

\smallskip
We collect these optional constructions under the name
\emph{extended analytical package}.

\begin{Thm}[Extended analytical package]
\label{thm:analytical-package}
Let
$p\in S_{\mathcal U},$
and assume that the normalized Jacobi family remains invertible on
$S_{\mathcal U}$.

\smallskip
Suppose, whenever a spectral construction is invoked, that its hypotheses hold
uniformly on the parameter neighbourhood under consideration: in particular,
use a common self-adjoint graph domain after trivialization, a common
resolvent region for resolvents, a fixed contour separated from the spectrum
for Riesz projectors, and a common spectral cut for cut-dependent zeta
constructions.

\smallskip
Then the canonical harmonic probe assignment determines an extended
analytical package
$\mathcal A_{\mathrm{ext}}(p),$
whose available components include
\begin{enumerate}
\item
the normalized Jacobi operator
$J_p;$
\item
the Green operator
$G_p=J_p^{-1};$
\item
resolvent families
$(J_p-z)^{-1}$
on common regions of the resolvent set;
\item
Riesz projectors associated with spectrally separated clusters;
\item
the heat operators
$e^{-tJ_p}, \qquad t>0;$
\item
the quadratic form
$Q_p(X) = \langle J_pX,X\rangle_{L^2(g_p)};$
\item
the Morse index and nullity;
\item
spectral zeta functions and zeta-regularized determinants whenever the
corresponding positivity or common spectral-cut hypotheses hold.
\end{enumerate}
After the local trivializations fixed earlier, the Jacobi and Green families,
the resolvents on common resolvent regions, the Riesz projectors associated
with isolated spectral clusters and the heat operators depend smoothly on the
Einstein parameter under their respective hypotheses.

\smallskip
The Morse index is locally constant throughout a nondegenerate region, while
the nullity vanishes on $S_{\mathcal U}$.

\smallskip
Zeta functions and regularized determinants are defined and vary according to
the standard parameter-dependent elliptic theory only on regions where their
defining spectral hypotheses remain valid.
\end{Thm}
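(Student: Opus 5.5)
The proof assembles results already established and checks, item by item, that each component is well defined and has the stated regularity. Items (1) and (2) come first. Theorem~\ref{thm:probe-existence} gives the smooth probe assignment $p\mapsto f_p$. Proposition~\ref{prop:smooth-holder-jacobi-family} gives smoothness of $p\mapsto J_p$ in a fixed local trivialization. Corollary~\ref{cor:persistence-nondegeneracy} gives invertibility of each $J_p$ on $S_{\mathcal U}$, and hence smoothness of $G_p=J_p^{-1}$, since inversion is a smooth map on the open set of invertible bounded operators. Item (6), the quadratic form $Q_p$, is then defined directly from $J_p$ and the $L^2(g_p)$ pairing, both of which vary smoothly in the trivialization.

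For items (3)--(5) I work in the self-adjoint realization of Proposition~\ref{prop:self-adjoint-jacobi} with a common graph domain $\mathcal D$. In this setting $p\mapsto J_p\in\mathcal L(\mathcal D,\mathscr L^2)$ is smooth by the operator-family convention of Section~2.

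\begin{enumerate}
\item \emph{Resolvents.} For $z$ in a common resolvent region, $J_p-z$ is invertible, so smoothness of $(p,z)\mapsto (J_p-z)^{-1}$ follows from smoothness of inversion. Uniformity on compact subsets of the region comes from continuity together with openness of invertibility.
\item \emph{Riesz projectors.} These are contour integrals of the resolvent over a fixed contour $\Gamma$ that stays uniformly separated from the spectrum. Smoothness follows by differentiating under the integral, which is justified by uniform bounds on the resolvent and its $p$-derivatives over compact $\Gamma$.
\item \emph{Heat operators.} I write $e^{-tJ_p}$ through the Dunford integral over a sectorial contour around the spectrum, which is possible because the operators are uniformly bounded below. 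The $p$-derivatives are then given by the resolvent identity: for example $\partial_p R_p(z)=-R_p(z)(\partial_pJ_p)R_p(z)$, and $\partial_pJ_p$ is relatively bounded. The resulting integrands decay well enough for each fixed $t>0$; alternatively one can invoke Kato's perturbation theory for this step.
\end{enumerate}

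Item (7) is next. Nullity is zero by Corollary~\ref{cor:persistence-nondegeneracy}. For local constancy of the Morse index, fix $p_1$ and pick a contour enclosing the finitely many negative eigenvalues of $J_{p_1}$ while avoiding $0$. For nearby $p$ no eigenvalue crosses $0$, since the $J_p$ stay invertible, and eigenvalues stay uniformly bounded below. The Riesz projector for this contour is continuous in $p$, and continuous families of projections have locally constant rank, so the index is locally constant. Item (8) is conditional: under positivity or a common Agmon cut, meromorphic continuation and regularity at $s=0$ are quoted from standard elliptic theory \cite{TaylorPDE2}. Parameter dependence is asserted only where those hypotheses hold uniformly, through the Mellin representation of the smoothly varying heat trace.

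The main obstacle I expect is the heat-operator step together with the trace-class claim for $\operatorname{Tr}e^{-tJ_p}$. Smoothness in operator norm does not immediately give smoothness in trace norm. To handle this I would factor $e^{-tJ_p}=e^{-tJ_p/2}e^{-tJ_p/2}$ and use that $e^{-tJ_p/2}$ maps $L^2$ smoothly in $p$ into a higher Sobolev space. The embedding of that space into $L^2$ is Hilbert--Schmidt on the compact manifold, which gives smoothness in trace norm.
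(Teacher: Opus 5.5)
Your proposal is correct and follows the same route as the paper's proof. Smoothness of $J_p$ comes from the smooth H\"older Jacobi family and smoothness of $G_p$ from smooth inversion; resolvents and Riesz projectors come from the resolvent identity and contour integration; heat operators come from the parameter-dependent functional calculus; index and nullity come from invertibility plus compact resolvent; and the zeta quantities are quoted conditionally from standard elliptic theory. You additionally supply details the paper leaves implicit, namely the resolvent bounds justifying differentiation of the Dunford integral, the Riesz-projector rank argument for local constancy of the Morse index, and the Hilbert--Schmidt factorization giving trace-norm smoothness of $e^{-tJ_p}$.
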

\begin{proof}
Smoothness of $J_g$ is Proposition~\ref{prop:smooth-holder-jacobi-family}.
Smoothness of $G_g=J_g^{-1}$ follows from smooth inversion on the open set of
bounded invertible operators. The resolvent identity and contour integration
give the corresponding smoothness statements for resolvents and separated
Riesz projectors. The heat-family assertion follows from the stated smooth
functional calculus for the self-adjoint elliptic realizations. The remaining
spectral assertions follow from compact resolvent. Zeta functions and
regularized determinants require, in addition, the common spectral-cut
hypothesis stated in the theorem.
\end{proof}

\begin{Bem}[Core and extended analytical packages]
\label{rem:core-vs-extended}
The distinction between the core and extended analytical packages is essential.

\smallskip
The \emph{core analytical package} is
$\mathcal A_{\mathrm{core}}(p)=(J_p,G_p).$
It is available under normalized Jacobi nondegeneracy and is sufficient for
the response theory and the Einstein Detection Principle developed below.

\smallskip
The \emph{extended analytical package}
$\mathcal A_{\mathrm{ext}}(p)$
contains the core package together with whichever additional spectral objects
are defined under the supplementary hypotheses of the preceding subsections.

\smallskip
These may include resolvents, spectral projectors, heat operators, Morse
indices, zeta functions and regularized determinants.

\smallskip
No theorem establishing the Einstein Detection Principle depends on the
existence of these additional spectral objects. They provide only a larger
class of possible analytical observables when the required hypotheses are
available.
\end{Bem}

\subsection{Limits of the auxiliary harmonic theory}
\label{sec:limits-probe-theory}
The preceding sections construct a smooth family of canonical harmonic probes
together with their core analytical package. These constructions provide a
canonical and analytically rich response mechanism, but they do not by
themselves solve the Einstein detection problem.

\smallskip
The essential distinction is between \emph{solvability} and
\emph{observability}.
\begin{center}
\emph{Unique solvability of the forced Jacobi equation does not imply
detectability of infinitesimal Einstein deformations.}
\end{center}

\smallskip
Let
$p\in S_{\mathcal U},$
and let
$X\in T_pS_{\mathcal U}$
be a parameter direction in the chosen smooth parameterized Einstein family, inducing the infinitesimal metric variation $D\mathbf g_p[X]$.

\smallskip
Normalized Jacobi nondegeneracy means that
$J_p: \mathscr H_p^{k+2,\alpha} \longrightarrow \mathscr Y_p^{k,\alpha}$
is an isomorphism.

\smallskip
Differentiating the harmonic probe equation gives the forced Jacobi equation
$$J_p\dot f=-K_p[X],$$
where
$$K_p=D_g\mathcal F(g_p,f_p)\circ D\mathbf g_p$$
is the forcing operator.

\smallskip
Hence
$$\dot f=-J_p^{-1}K_p[X]=-G_pK_p[X].$$
Equivalently,
$$D(\Pi_E)_p=-G_pK_p.$$
Thus normalized Jacobi nondegeneracy guarantees that every infinitesimal
variation of the Einstein family produces a uniquely determined infinitesimal
variation of the associated harmonic probe.

\smallskip
This is a solvability statement. It is not an injectivity statement.

\smallskip
Indeed, it may happen that
$D\mathbf g_p[X]\neq0$
while
$D(\Pi_E)_p[X]=0.$
More generally, even when the probe itself varies nontrivially, all selected
finite-dimensional observations of that variation may vanish. In either case,
the chosen probe package fails to distinguish the corresponding infinitesimal
Einstein deformation.

\smallskip
A simple warning is provided by constant harmonic probes. If
$df_p=0,$
then the metric forcing term
$K_p[X]$
may vanish identically or be highly degenerate. In such a situation the probe
may be perfectly nondegenerate as a harmonic map while carrying little or no
first-order information about the underlying Einstein deformation.

\smallskip
This shows that normalized Jacobi nondegeneracy and invertibility of the Green
operator are only the first analytical ingredients required for detection.
They guarantee that the probe response is well defined, but not that it is
informative.

\smallskip
This motivates the construction of admissible observations of the
core analytical package and to determine whether their first variations
separate the infinitesimal directions in
$T_pS_{\mathcal U}.$
This leads to the notions of invisible deformation, observability and response
rank introduced in the following sections.

\smallskip
Accordingly, the logical structure of the theory is
\[
\begin{aligned}
\text{Jacobi nondegeneracy}
&\Longrightarrow \text{unique probe response}\\
&\Longrightarrow \text{analytical observations}\\
&\Longrightarrow \text{observability},
\end{aligned}
\]
where only the final implication requires additional injectivity hypotheses.

\smallskip
The Einstein Detection Principle is precisely the framework that identifies
and analyzes these additional observability conditions.

\begin{Bem}[Standing hypotheses and dependency separation]
\label{rem:standing-hypotheses-dependency}
The response and detection results below use several logically distinct
inputs, and none is to be imported implicitly into another.

\smallskip
The canonical probe branch uses the Banach-space setup and normalized Jacobi
nondegeneracy. Smooth Jacobi and Green families depend on that branch and on
the fixed local trivializations. Descent from representative metrics to a
parameterized Einstein family additionally requires
Hypothesis~\ref{hyp:package-naturality} together with the stated smooth
compatibility of local representatives. The universal response differential
and all detection results formulated purely on $S$ depend on this descended
core package, but they do not require a matching-data extension.

\smallskip
By contrast, comparison with arbitrary tangent directions of the Einstein
matching hypersurface requires the separate
Hypothesis~\ref{hyp:matching-dependence}. Results using spectral projectors,
heat operators, zeta quantities or other extended spectral data require their
own supplementary spectral hypotheses and are not used in proving the core
Einstein Detection Principle.

\smallskip
Finally, observability, finite scalar detection and local reconstruction each
require additional injectivity or richness hypotheses stated at the point of
use. None of these conclusions follows from probe existence, Jacobi
nondegeneracy, descent, or matching-data locality alone.
\end{Bem}

\section{Canonical Response and Observability Theory}
\label{sec:detection-introduction}
The previous section constructed the core analytical package associated with
the normalized harmonic probes and described the additional spectral objects
that may be included in the extended analytical package under supplementary
hypotheses. It also showed that normalized Jacobi nondegeneracy guarantees
unique solvability of the metric-forced Jacobi equation, but does not by
itself provide a mechanism for distinguishing different infinitesimal
Einstein deformations.

\smallskip
The purpose of the present section is to develop such a mechanism. We
introduce a canonical response theory in which infinitesimal variations of a
chosen smooth parameterized Einstein family are mapped to variations of the
analytical structures generated by their associated harmonic probes. Suitable
observations of these variations lead to an intrinsic notion of
observability, describing when the resulting analytical responses distinguish
the underlying infinitesimal Einstein directions.

\smallskip
The response theory therefore provides the analytical bridge between the
canonical harmonic probe construction and the finite-dimensional Einstein
deformation problem. Together with the Einstein matching formulation developed
earlier, it supplies the foundation for the detection, distinction and local
reconstruction results established below.

\subsection{Analytical and geometric setup}
We now fix the geometric and analytical setting in which the response theory
will be formulated.

\smallskip
Let
$S$
be a smooth parameterized Einstein family in the sense of
Section~\ref{subsec:parameterized-einstein-families}, and let
$p_0\in S$
correspond to the reference Einstein metric
$g_{p_0}=g_0.$
After shrinking $S$ around $p_0$ if necessary, we assume that
$g_p\in\mathcal U$
for every $p\in S$, where $\mathcal U$ is the neighbourhood on which the
canonical harmonic probe assignment of
Theorem~\ref{thm:probe-existence} is defined.

\smallskip
We write
$S_{\mathcal U} = \{p\in S:g_p\in\mathcal U\}.$
After replacing $S$ by a sufficiently small neighbourhood of $p_0$, we may
and shall regard $S_{\mathcal U}$ as the parameter manifold on which all
subsequent response constructions are defined.

\smallskip
No assumption is made that $S_{\mathcal U}$ contains all nearby normalized
Einstein metrics or that the ambient normalized Einstein moduli space is a
smooth manifold. All infinitesimal statements below concern the tangent spaces
$T_pS_{\mathcal U}$
of the chosen smooth parameterized Einstein family.

\subsubsection*{Analytical framework}
For every
$p\in S_{\mathcal U},$
let
$f_p:=f_{g_p}=\Pi(g_p)$
denote the corresponding canonical harmonic probe.

\smallskip
The core analytical package is
$\mathcal A_{\mathrm{core}}(p) = (J_p,G_p),$
where
$J_p:=J_{g_p}$
is the normalized Jacobi operator and
$G_p:=J_p^{-1}$
is its Green operator.

\smallskip
After the local trivializations fixed in the preceding sections, the core
package may be regarded as a smooth map
$\mathcal A_{\mathrm{core}}: S_{\mathcal U} \longrightarrow \mathscr X_{\mathrm{core}},$
where $\mathscr X_{\mathrm{core}}$ denotes a suitable Banach manifold of
operator data containing the pairs $(J_p,G_p)$.

\smallskip
Whenever the supplementary spectral hypotheses of
Section~\ref{sec:jacobi-family} and
Theorem~\ref{thm:analytical-package} are satisfied, one may similarly form an
extended analytical package
$\mathcal A_{\mathrm{ext}}: S_{\mathcal U} \longrightarrow \mathscr X_{\mathrm{ext}}.$
The extended package may contain resolvents, Riesz projectors, heat operators
and further spectral invariants whenever these objects are defined under the
relevant hypotheses.

\smallskip
The distinction between these two packages will be maintained throughout the
response theory. The Einstein Detection Principle requires only the core
analytical package. Extended spectral data provide additional possible
observables but are not required for the detection theory itself.

\begin{Prop}[Smoothness of the core analytical package]
\label{prop:analytic-package-setup}
After possibly shrinking $S_{\mathcal U}$, the map
$\mathcal A_{\mathrm{core}}: S_{\mathcal U} \longrightarrow \mathscr X_{\mathrm{core}}$
is smooth.

\smallskip
Consequently, if
$\Psi: \mathscr X_{\mathrm{core}} \longrightarrow Y$
is any smooth map into a Banach manifold $Y$, then the associated observable
$\Psi\circ\mathcal A_{\mathrm{core}}: S_{\mathcal U} \longrightarrow Y$
is smooth.

\smallskip
The same conclusion holds for any component of the extended analytical
package on a region where that component is defined and has the smooth
parameter dependence established in the preceding section.
\end{Prop}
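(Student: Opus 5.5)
The plan is to reduce everything to the smoothness of two component maps, $p\mapsto J_p$ and $p\mapsto G_p$, after fixing the local trivializations of Section~\ref{sec:harmonic-probe-bundle}.

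\emph{First component.} Proposition~\ref{prop:smooth-holder-jacobi-family} already says that, in those trivializations, $p\mapsto J_p$ is a smooth map
\[
S_{\mathcal U}\longrightarrow\mathcal L\bigl(\mathscr H_{p_0}^{k+2,\alpha},\mathscr Y_{p_0}^{k,\alpha}\bigr).
\]
That result rests on Theorem~\ref{thm:probe-existence}, since $p\mapsto f_p=\Pi(\mathbf g(p))$ is smooth, and on the smoothness of $D_f\mathcal F$ in $(g,f)$.

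\emph{Second component.} By Corollary~\ref{cor:persistence-nondegeneracy}, after shrinking $S_{\mathcal U}$ every $J_p$ is invertible. The inversion map $\iota:T\mapsto T^{-1}$ is smooth (indeed real-analytic, by the Neumann series) on the open set of invertible bounded operators. Hence $G_p=\iota(J_p)$ is smooth as a map into $\mathcal L(\mathscr Y_{p_0}^{k,\alpha},\mathscr H_{p_0}^{k+2,\alpha})$.

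\emph{The target space.} I would take $\mathscr X_{\mathrm{core}}$ to be the graph
\[
\Bigl\{\,(T,T^{-1}) : T\in\mathcal L\bigl(\mathscr H_{p_0}^{k+2,\alpha},\mathscr Y_{p_0}^{k,\alpha}\bigr)\ \text{invertible}\,\Bigr\}.
\]
This is a smooth Banach submanifold, being the graph of the smooth map $\iota$ on an open set. The map $\mathcal A_{\mathrm{core}}=(J,\iota\circ J)$ is smooth into the product of the two operator spaces and takes values in this graph. Because the graph is an embedded submanifold, $\mathcal A_{\mathrm{core}}$ is smooth as a map into $\mathscr X_{\mathrm{core}}$.

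\emph{Independence of trivialization.} The conventions of Section~2 say that changing the trivialization conjugates by smooth families of Banach isomorphisms. Smoothness is therefore unaffected, so the conclusion does not depend on which admissible trivialization is used.

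\emph{Observables.} If $\Psi:\mathscr X_{\mathrm{core}}\to Y$ is smooth, then $\Psi\circ\mathcal A_{\mathrm{core}}$ is smooth by the chain rule for maps between Banach manifolds.

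\emph{Extended package.} For a component of the extended package, the assertion reduces to the smooth parameter dependence already recorded in Theorem~\ref{thm:analytical-package} on the region where its hypotheses hold. That region is a common resolvent region, a separated contour, or the fixed self-adjoint graph domain. The same chain-rule argument then applies verbatim.

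\emph{Main obstacle.} The only delicate point is to make precise what smooth structure $\mathscr X_{\mathrm{core}}$ carries. This should be handled by the explicit graph description above, rather than by leaving it as an abstract ``suitable Banach manifold''. It should also be checked that the shrinking of $S_{\mathcal U}$ needed for invertibility is compatible with the neighbourhood on which the probe assignment is defined; this holds because both shrinkings are to open neighbourhoods of $p_0$.
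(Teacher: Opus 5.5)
Your argument is correct and follows the paper's proof. Smoothness of $p\mapsto J_p$ comes from the smooth Jacobi family, smoothness of $G_p=J_p^{-1}$ from smooth inversion, and the observable and extended-package claims from the Banach-manifold chain rule applied to the smooth dependence recorded in Theorem~\ref{thm:analytical-package}; your graph model for $\mathscr X_{\mathrm{core}}$ is a harmless concretization of the paper's unspecified ``suitable Banach manifold'', not a different route.
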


\begin{proof}
The parameterization
$p\longmapsto g_p$
is smooth by the definition of a smooth parameterized Einstein family.

\smallskip
The Jacobi and Green operators depend smoothly on the metric by the results of
the preceding section. Hence
$p\longmapsto \mathcal A_{\mathrm{core}}(p)$
is smooth by composition.

\smallskip
If
$\Psi: \mathscr X_{\mathrm{core}} \longrightarrow Y$
is smooth, then
$\Psi\circ\mathcal A_{\mathrm{core}}$
is smooth by the Banach-manifold chain rule.

\smallskip
The final assertion follows in the same way for those components of the
extended package whose smooth parameter dependence is available under their
respective supplementary hypotheses.
\end{proof}

\subsubsection*{Infinitesimal Einstein directions}
The geometric domain of the response theory is the tangent bundle of the
chosen smooth parameterized Einstein family.

\smallskip
For
$p\in S_{\mathcal U},$
a tangent vector
$X\in T_pS_{\mathcal U}$
determines an infinitesimal variation of normalized Einstein metrics
$\dot g_X = Dg_p[X].$
Because every metric $g_p$ in the family is Einstein with the fixed
normalization, $\dot g_X$ satisfies the linearized normalized Einstein
equation along the chosen family.

\smallskip
No converse is asserted. In particular, the response theory does not require
that every formal infinitesimal Einstein deformation of $g_p$ be tangent to
$S_{\mathcal U}$.
\smallskip
Differentiating the canonical probe assignment in the direction $X$ gives
$D\Pi_{g_p}[\dot g_X] = - G_pK_p[\dot g_X],$
where
$K_p = D_g\mathcal F(g_p,f_p)$
is the metric-forcing operator.

\smallskip
Thus every tangent vector
$X\in T_pS_{\mathcal U}$
determines a canonical infinitesimal probe response
$$\dot f_X = - G_pK_p[\dot g_X].$$
This is the basic infinitesimal response law on the chosen Einstein family.

\smallskip
The analytical package may then be differentiated in the same direction. In
particular,
$$
D\mathcal A_{\mathrm{core},p}:
T_pS_{\mathcal U}
\longrightarrow
T_{\mathcal A_{\mathrm{core}}(p)}
\mathscr X_{\mathrm{core}}
$$
records the first-order variation of the Jacobi and Green data generated by
the infinitesimal Einstein direction $X$.

\smallskip
If
$\Psi: \mathscr X_{\mathrm{core}} \longrightarrow Y$
is a smooth analytical observation, then
$$
D(\Psi\circ\mathcal A_{\mathrm{core}})_p
=
D\Psi_{\mathcal A_{\mathrm{core}}(p)}
\circ
D\mathcal A_{\mathrm{core},p}.
$$
This identity is the elementary chain-rule mechanism underlying the
observation theory developed below.

\subsubsection*{Relation with the Einstein matching problem}
The response theory on $S_{\mathcal U}$ is compatible with the Einstein
matching formulation of
Section~\ref{chap:intrinsic-moduli}.

\smallskip
For every metric in the chosen smooth parameterized Einstein family, the
Einstein evolution determines its canonical matching point on the hypersurface
$\Sigma=\{H=0\}.$
We therefore have the matching-point map
$j: S_{\mathcal U} \longrightarrow \Sigma.$
Under the matching-data extension hypothesis
Hypothesis~\ref{hyp:matching-dependence}, the core analytical package is the
restriction of a smooth package defined on an open neighbourhood
$U_\Sigma\subset\Sigma$ of the reference matching point. Consequently,
$$\mathcal A_{\mathrm{core}} =
\widetilde{\mathcal A}_{\mathrm{core}}\circ j$$
on the chosen family, and differentiation gives
$$D\mathcal A_{\mathrm{core},p}
=
D\widetilde{\mathcal A}_{\mathrm{core},j(p)}\circ Dj_p.$$
Thus the analytical response factors through the infinitesimal variation of
the Einstein matching datum.

\smallskip
This factorization is the precise bridge between the geometric and analytical
parts of the theory. The Einstein matching construction identifies the
finite-dimensional geometric data to be detected, while the canonical
harmonic probes generate analytical structures whose first variations may
distinguish those data.

\smallskip
The remaining question is therefore one of observability:
\begin{center}
\emph{When do the first variations of suitable analytical observations
separate the tangent directions in $T_pS_{\mathcal U}$?}
\end{center}
The following subsections formulate this question intrinsically and develop
the corresponding response and observability theory.

\subsection{Local Einstein deformation theory}
The response theory developed in this paper is formulated on a chosen smooth
parameterized Einstein family rather than on the full local Einstein moduli
space. Nevertheless, the classical gauge-fixed deformation theory provides
the natural local framework from which such families may arise. We therefore
recall the aspects of this theory that will be used below.

\smallskip
The purpose of the present subsection is not to develop the deformation theory
of Einstein metrics in full generality. Rather, we record the relation between
the gauge-fixed Einstein equation, infinitesimal Einstein deformations and the
finite-dimensional Kuranishi model of the local Einstein problem.

\subsubsection{Gauge fixing and infinitesimal Einstein deformations}
Fix the reference normalized Einstein metric
$g_0, \qquad \Ric(g_0)=\lambda g_0.$
The Einstein equation is invariant under diffeomorphisms. A local slice for
the diffeomorphism action therefore provides the standard framework for
separating genuine geometric deformations from infinitesimal gauge
directions.

\begin{Thm}[Local slice theorem {\cite{Ebin,Besse}}]
In the chosen Banach completion, there exists a local slice through $g_0$ for
the action of the full diffeomorphism group such that every sufficiently nearby
metric is diffeomorphic to a metric in the slice, with the usual residual
ambiguity coming from the stabilizer of $g_0$.
\end{Thm}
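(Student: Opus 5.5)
The plan follows Ebin's construction: build the slice by orthogonal decomposition of the tangent space, then apply the inverse function theorem.

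\emph{Infinitesimal slice.} Let $\operatorname{Met}^{k,\alpha}(M)$ be the Banach manifold of $C^{k,\alpha}$ metrics. The orbit map through $g_0$ is $\phi\mapsto\phi^*g_0$. Its linearization at the identity is $X\mapsto\mathcal L_Xg_0=\delta_{g_0}^*X$, where $\delta^*_{g_0}$ is the symmetrized covariant derivative (up to a factor). The operator $\delta_{g_0}^*$ is overdetermined elliptic, since its symbol is injective. Hence $\delta_{g_0}\delta_{g_0}^*$ is elliptic and self-adjoint, and its kernel is the space of Killing fields of $g_0$. We obtain the $L^2$-orthogonal decomposition
$$C^{k,\alpha}(S^2T^*M)=\operatorname{im}\delta^*_{g_0}\oplus\ker\delta_{g_0}.$$
Both summands are closed by elliptic Fredholm theory in Hölder spaces. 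The candidate slice is
$$\mathcal S=\{g_0+h:\ \delta_{g_0}h=0,\ \|h\|_{C^{k,\alpha}}<\varepsilon\}.$$

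\emph{Local surjectivity.} Let $\mathcal K$ be the Lie algebra of the isometry group $I(g_0)$, which is compact. Choose a complement $\mathcal K^\perp$ of $\mathcal K$ in $C^{k+1,\alpha}$ vector fields. Consider the map
$$\Theta:\mathcal K^\perp\times\ker\delta_{g_0}\to\operatorname{Met}^{k,\alpha},\qquad (X,h)\mapsto \exp(X)^*(g_0+h).$$
Its differential at $(0,0)$ is $(X,h)\mapsto\delta^*_{g_0}X+h$, which is an isomorphism by the decomposition above. The inverse function theorem then shows that every metric near $g_0$ is of the form $\phi^*\tilde g$ with $\tilde g\in\mathcal S$. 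If $\tilde g$ is Einstein, elliptic regularity for the Einstein equation in harmonic-type gauge yields smoothness.

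\emph{Main obstacle.} The diffeomorphism group does not act smoothly on $\operatorname{Met}^{k,\alpha}$: composition loses derivatives. So $\Theta$ is not obviously $C^1$ in the $X$-variable. I would handle this as Ebin does, by letting the diffeomorphism carry one more derivative. One uses $C^{k+1,\alpha}$ diffeomorphisms acting on $C^{k,\alpha}$ metrics, for which $(\phi,g)\mapsto\phi^*g$ is $C^1$ near $(\mathrm{id},g_0)$ at fixed $g_0$. Alternatively, one solves the equivalent problem of finding $\phi$ with $\delta_{g_0}(\phi_*g-g_0)=0$ as a nonlinear elliptic equation for $\phi$; this is a harmonic-map-type equation, to which the implicit function theorem applies modulo $\mathcal K$.

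\emph{Residual ambiguity.} If $\phi^*\tilde g_1=\tilde g_2$ with both $\tilde g_i\in\mathcal S$ and $\phi$ near the identity, one shows that $\phi$ lies in a neighbourhood of $I(g_0)$. The slice is then invariant under $I(g_0)$, and the identification is modulo this stabilizer. This uses properness of the action, which follows from the Myers--Steenrod and Arzelà--Ascoli arguments in Ebin's paper. For the cohomogeneity-one setting, I would run the same argument $G$-equivariantly, on $G$-invariant tensors and $G$-equivariant diffeomorphisms, so that the slice consists of invariant metrics.
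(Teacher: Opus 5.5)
\paragraph{Verdict.} Your outline has the right architecture: the Berger--Ebin splitting $C^{k,\alpha}(S^2T^*M)=\operatorname{im}\delta^*_{g_0}\oplus\ker\delta_{g_0}$, the affine slice, an implicit-function argument modulo Killing fields, and properness for the residual $I(g_0)$. This is the standard argument behind the citation, and the paper itself offers only the citation of Ebin. However, the one step that carries the analysis does not go through as you state it.

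\paragraph{The gap: the inverse function theorem step.}
\begin{itemize}
\item The inverse function theorem for $\Theta(X,h)=\exp(X)^*(g_0+h)$ needs $\Theta$ to be $C^1$ on a neighbourhood of $(0,0)$, not just differentiable at that point.
\item For $h\neq0$, the $X$-derivative is $\mathcal L_X(g_0+h)$, which contains $\nabla h$ and so lies only in $C^{k-1,\alpha}$.
\item In the paper's H\"older completion it is worse: $X\mapsto\exp(X)^*(g_0+h)$ is in general not even continuous into $C^{k,\alpha}$ when $h$ is non-smooth, since translation is discontinuous on $C^{0,\alpha}$.
\end{itemize}
Your first remedy does not repair this. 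Giving the diffeomorphisms one extra derivative makes $\phi\mapsto\phi^*g_0$ smooth only because $g_0$ itself is smooth. That yields the differential at $(0,0)$, but no control of $D\Theta$ nearby. The joint map $(\phi,g)\mapsto\phi^*g$ on $\operatorname{Diff}^{k+1,\alpha}\times\operatorname{Met}^{k,\alpha}$ is not $C^1$.

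Ebin avoids the problem differently. He works in $H^s$ and needs only smoothness of the orbit through the smooth metric $g_0$. He then obtains the slice from the exponential map of a diffeomorphism-invariant metric on a tubular neighbourhood of that orbit. The same defect also undermines the local injectivity of $\Theta$ that you use for the stabilizer ambiguity.

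\paragraph{How to fix it: your second remedy, made explicit.} The missing idea is naturality of the divergence:
\begin{itemize}
\item $\delta_{g_0}(\phi_*g)=0$ holds if and only if $\delta_{\phi^*g_0}g=0$.
\item In this form, $g$ enters only through its coefficients and first derivatives, and only the smooth $g_0$ is composed with $\phi$. This is the precise sense in which the equation is of harmonic-map type, with $g$ as the domain metric.
\item Hence $(\phi,g)\mapsto\delta_{\phi^*g_0}g$ is smooth from $\operatorname{Diff}^{k+1,\alpha}\times\operatorname{Met}^{k,\alpha}$ to $C^{k-1,\alpha}(T^*M)$.
\item Its $\phi$-linearization at $(\mathrm{id},g_0)$ is $X\mapsto-\delta_{g_0}\mathcal L_Xg_0$. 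This is an isomorphism from $\mathcal K^\perp$ onto the $L^2(g_0)$-complement of $\mathcal K$.
\end{itemize}
The cokernel $\mathcal K$ must still be disposed of:
\begin{itemize}
\item Solve the projected equation by the implicit function theorem.
\item The remainder then lies in $\mathcal K$ and is $L^2(\phi^*g_0)$-orthogonal to the Killing fields of $\phi^*g_0$.
\item That pairing is nondegenerate for $\phi$ near the identity, so the remainder vanishes.
\end{itemize}
The slice representative is then $\phi_*g$, which is $C^{k,\alpha}$-close to $g_0$.

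For the stabilizer ambiguity, first use properness to place $\phi$ near $I(g_0)$. Then write diffeomorphisms near $I(g_0)$ as $\iota\circ\exp(X)$ with $\iota\in I(g_0)$ and $X\in\mathcal K^\perp$. Note that left composition by smooth isometries is smooth, whereas right composition is not. Using the $I(g_0)$-invariance of $\delta_{g_0}$ and local uniqueness of the implicit solution, you get $X=0$, so $\phi\in I(g_0)$. That is the stabilizer ambiguity asserted in the statement.
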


\smallskip
The classical theorem is a statement for the full diffeomorphism action. In
the $G$-invariant category used in this paper we therefore make, separately,
the standing assumption that the local gauge fixing is chosen compatibly with
the $G$-action and with the normalization, so that the selected smooth
parameterized Einstein family is represented in such a compatible local
slice. No equivariant slice theorem stronger than the cited Ebin theorem is
silently invoked.
\begin{proof}
This is the classical Ebin slice theorem for the action of the
diffeomorphism group on the Banach manifold of Riemannian metrics.
\end{proof}
\smallskip
Consequently, the local Einstein deformation problem may be studied in a
gauge-fixed slice.

\smallskip
With the sign conventions adopted in this paper, the Einstein deformation
operator on transverse-traceless symmetric $2$-tensors is
$L_{g_0}=\Delta_L-2\lambda$. Up to the conventional nonzero factor appearing
in the linearization of $\Ric-\lambda g$, this is the gauge-fixed linearized
Einstein operator, so its kernel is the infinitesimal deformation space used
below.
This motivates the following standard definition.

\begin{Dfn}[Essential infinitesimal Einstein deformations]
The space of essential infinitesimal Einstein deformations of $g_0$ is
$$
\mathcal H_{g_0}
=
\left\{
h\in\Gamma(S^2T^*M):
(\Delta_L-2\lambda)h=0,\;
\delta_{g_0}h=0,\;
\operatorname{tr}_{g_0}h=0
\right\}.
$$
\end{Dfn}
\smallskip
The divergence-free condition removes infinitesimal diffeomorphism
directions, while the trace-free condition is compatible with the fixed
Einstein normalization and removes the infinitesimal homothetic direction.
\begin{Prop}
\label{prop:finite-dimensional-einstein-deformations}
The vector space
$\mathcal H_{g_0}$
is finite-dimensional.
\end{Prop}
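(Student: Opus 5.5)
The plan is to show that $\mathcal H_{g_0}$ lies in the kernel of an elliptic operator on the closed manifold $M$, and to conclude by the standard Fredholm theory for elliptic operators on compact manifolds.

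First I will check that the defining equation of $\mathcal H_{g_0}$ is elliptic. The Lichnerowicz Laplacian
$$\Delta_L h=\nabla^*\nabla h+\Ric\circ h+h\circ\Ric-2\mathring R h$$
has principal symbol $|\xi|^2_{g_0}\,\mathrm{id}$ on $S^2T^*M$. Hence $L_{g_0}=\Delta_L-2\lambda$ is a second-order, formally self-adjoint elliptic operator with smooth coefficients on the compact manifold $M$ without boundary; this is the closed total space, not the orbit interval. Consequently
$$\mathcal H_{g_0}\subset\ker\bigl(L_{g_0}:C^{k+2,\alpha}(S^2T^*M)\to C^{k,\alpha}(S^2T^*M)\bigr),$$
where the divergence and trace conditions only cut out a linear subspace. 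Elliptic regularity shows that every element of this kernel, and even every $L^2$ weak solution, is smooth. So it does not matter whether $\mathcal H_{g_0}$ is defined with smooth or Hölder sections.

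Second, I will prove that the kernel is finite-dimensional. One route uses the Schauder estimate on compact $M$,
$$\|h\|_{C^{k+2,\alpha}}\le C\bigl(\|L_{g_0}h\|_{C^{k,\alpha}}+\|h\|_{C^0}\bigr).$$
On the kernel this becomes $\|h\|_{C^{k+2,\alpha}}\le C\|h\|_{C^0}$. Since the embedding $C^{k+2,\alpha}\hookrightarrow C^0$ is compact, the unit ball of the kernel in the $C^0$ norm is compact, so by Riesz's lemma the kernel is finite-dimensional. The other route uses the spectral theorem: the $L^2$ realization of $L_{g_0}$ on $H^2$ is self-adjoint and bounded below with compact resolvent, by the same argument as Proposition~\ref{prop:self-adjoint-jacobi} but now with no boundary conditions. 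Its $0$-eigenspace is therefore finite-dimensional, and it contains $\mathcal H_{g_0}$.

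The step requiring most care is the setting rather than the analysis. Proposition~\ref{prop:self-adjoint-jacobi} is phrased for a boundary-value realization on the orbit interval, whereas here the operator is used on the full closed manifold, where no endpoint conditions are needed. If one insists on working in the $G$-invariant, reduced picture, one would instead have to check the regular-singular endpoint theory. I will avoid this by proving the statement for all (not necessarily invariant) tensors on $M$; the $G$-invariant subspace, being a subspace, is then automatically finite-dimensional.
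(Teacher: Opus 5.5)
Your proposal is correct and follows the same route as the paper: $\mathcal H_{g_0}$ lies in the kernel of the elliptic operator $\Delta_L-2\lambda$ on the closed manifold $M$, that kernel is finite-dimensional, and so its linear subspace cut out by the divergence and trace conditions is finite-dimensional as well. The paper simply asserts finite-dimensionality of the elliptic kernel. You additionally supply the standard justification for it (the Schauder estimate combined with the compact embedding and Riesz's lemma, or the compact-resolvent spectral argument), which is sound.
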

\begin{proof}
The Lichnerowicz operator is elliptic on the compact manifold $M$.
Consequently, the kernel of
$\Delta_L-2\lambda$
is finite-dimensional. The space $\mathcal H_{g_0}$ is a linear subspace of
this finite-dimensional kernel and is therefore finite-dimensional.
\end{proof}

\subsubsection{Infinitesimal rigidity}
\begin{Dfn}[Infinitesimal rigidity]
The normalized Einstein metric $g_0$ is called
\emph{infinitesimally rigid} if
$\mathcal H_{g_0} = \{0\}.$
\end{Dfn}
\smallskip
Under the standard gauge-fixed local deformation theory, infinitesimal
rigidity implies local rigidity.

\begin{Thm}[Local rigidity]
Assume that the gauge-fixed normalized Einstein equation is posed on the
local slice described above, that its linearization on the normalized slice is
the self-adjoint Fredholm index-zero realization described below, and that
$\mathcal H_{g_0}=\{0\}$.
Then $g_0$ is locally rigid among normalized Einstein metrics: every
sufficiently nearby normalized Einstein metric is diffeomorphic to $g_0$.
\end{Thm}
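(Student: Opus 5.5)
The plan is to realize the normalized Einstein condition as the zero set of a gauge-fixed nonlinear map on the local slice and apply the inverse function theorem. The map to use is the Bianchi-gauged Einstein operator
\[
\Phi(g)=\Ric(g)-\lambda g+\tfrac12\mathcal L_{(\delta_{g_0}g+\frac12 d\operatorname{tr}_{g_0}g)^\sharp}g ,
\]
restricted to the $G$-invariant normalized slice through $g_0$ in the $C^{k,\alpha}$ completion fixed in Section~2. This is smooth as a map into $C^{k-2,\alpha}$ symmetric $2$-tensors. By the local slice theorem, after applying a diffeomorphism every nearby normalized Einstein metric lies in the slice. The first step is the standard Bianchi-identity argument: a slice metric $g$ near $g_0$ with $\Ric(g)=\lambda g$ satisfies $\Phi(g)=0$, and conversely every zero of $\Phi$ near $g_0$ is Einstein. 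Hence nearby normalized Einstein metrics in the slice correspond exactly to $\Phi^{-1}(0)$ near $g_0$.

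Next I linearize at $g_0$. On the slice, $D\Phi_{g_0}$ acts on divergence-free trace-free tensors as $\frac12(\Delta_L-2\lambda)$, up to the conventional factor. The trace direction is removed by the fixed normalization, and the divergence direction by the gauge. By hypothesis this linearization is a self-adjoint Fredholm operator of index zero. Its kernel is contained in $\mathcal H_{g_0}$, using elliptic regularity to pass from the $C^{k,\alpha}$ kernel to smooth tensors, so the kernel is $\{0\}$. Since the operator is Fredholm of index zero, injectivity gives surjectivity. Thus $D\Phi_{g_0}$ is a bounded isomorphism between the fixed Banach spaces.

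The inverse function theorem then makes $\Phi$ a local diffeomorphism near $g_0$. The zero set of $\Phi$ near $g_0$ is therefore the single point $g_0$. Combining this with the slice theorem, every sufficiently nearby normalized Einstein metric is diffeomorphic to a slice metric equal to $g_0$, up to the stabilizer ambiguity, which is harmless here. This gives local rigidity.

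The main obstacle is the first step: showing that the kernel of the gauge-fixed linearization, as realized on the chosen normalized equivariant slice, really coincides with $\mathcal H_{g_0}$. Concretely, one must verify that slice tangent vectors in the kernel are automatically divergence-free and trace-free. Here I would use the standard argument of Besse, Chapter~12, which applies $\delta$ and $\operatorname{tr}$ to the linearized equation and uses $\lambda>0$ together with Lichnerowicz/Obata-type eigenvalue bounds to exclude the gauge and conformal parts. I would rely on the stated normalization and the Fredholm hypothesis wherever the equivariant setting makes this delicate.
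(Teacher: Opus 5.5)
Your architecture matches the paper's. You take a gauge-fixed normalized Einstein operator on the slice whose linearization is Fredholm of index zero with trivial kernel. That makes it an isomorphism, so $g_0$ is an isolated zero, and the slice theorem gives rigidity modulo diffeomorphisms. The paper simply asserts that the kernel of the linearization on the normalized gauge-fixed slice is $\mathcal H_{g_0}$.

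The gap is in the concrete operator you commit to, the Bianchi (DeTurck) gauged $\Phi$ on the divergence slice. Its linearization does not have kernel $\mathcal H_{g_0}$, and the Besse-style trace argument you plan to use fails for it. For $\delta_{g_0}h=0$ you get $D\Phi_{g_0}h=\tfrac12(\Delta_L-2\lambda)h$. Taking the trace gives $\Delta(\operatorname{tr}h)=2\lambda\operatorname{tr}h$ (positive Laplacian). Lichnerowicz's bound $\lambda_1\ge\frac{n}{n-1}\lambda$, with $n=\dim M$, lies below $2\lambda$ for $n\ge3$, so nothing excludes this.

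Explicitly, suppose $\Delta f=2\lambda f$ and set $h=\operatorname{Hess}f+\lambda f\,g_0$. Then:
\begin{itemize}
\item $h$ is divergence-free;
\item $(\Delta_L-2\lambda)h=0$, because at an Einstein metric $\Delta_L\operatorname{Hess}f=\operatorname{Hess}\Delta f$ and $\Delta_L(fg_0)=(\Delta f)g_0$;
\item $\operatorname{tr}h=(n-2)\lambda f$, a mean-zero function that the normalization does not remove (fixing $\lambda$ only excludes the homothetic, constant-trace direction).
\end{itemize}
So $h\notin\mathcal H_{g_0}$. This happens $G$-invariantly. On $S^2\times S^2$ with $SO(3)\times SO(2)$ acting with cohomogeneity one, the invariant height function $z$ of the second factor has $\Delta z=2z=2\lambda z$, and $h=z\,g_1$ lies in your kernel. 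Hence $\ker D\Phi_{g_0}\not\subset\mathcal H_{g_0}$ in general, and $\mathcal H_{g_0}=\{0\}$ does not make your $D\Phi_{g_0}$ invertible.

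The gauge choice also undermines your first step. On a slice with $\delta_{g_0}(g-g_0)=0$ one has $\delta_{g_0}g+\tfrac12 d\operatorname{tr}_{g_0}g=\tfrac12 d\operatorname{tr}_{g_0}g$. So an Einstein metric in the slice has $\Phi(g)=\tfrac12\operatorname{Hess}_g(\operatorname{tr}_{g_0}g)$, which vanishes only if $\operatorname{tr}_{g_0}g$ is constant. The implication from Einstein to $\Phi(g)=0$ therefore needs the Bianchi gauge, which Ebin's theorem does not supply. Reaching that gauge means inverting $\tfrac12(\nabla^*\nabla-\Ric)$ on $1$-forms modulo Killing fields, and that operator has the extra kernel $\{df:\Delta f=2\lambda f\}$.

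Moreover, $\delta\Delta_L=\Delta_H\delta$ at an Einstein metric. So your $D\Phi_{g_0}$ maps divergence-free tensors into divergence-free tensors. As a map into all symmetric $2$-tensors it has infinite-dimensional cokernel and cannot be the Fredholm index-zero operator of the hypothesis.

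The repair is the setup behind Besse, Chapter~12: use the ungauged operator $g\mapsto\Ric(g)-\lambda g$ on the divergence slice. For $\delta_{g_0}h=0$ its linearization is $\tfrac12\Delta_Lh-\tfrac12\operatorname{Hess}(\operatorname{tr}h)-\lambda h$. Its trace is $\Delta(\operatorname{tr}h)-\lambda\operatorname{tr}h$, and $\lambda<\frac{n}{n-1}\lambda$ forces $\operatorname{tr}h=0$, so the kernel is exactly $\mathcal H_{g_0}$. With that operator, and its Fredholm index-zero realization taken from the hypothesis as the paper does, the rest of your argument goes through unchanged.
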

\begin{proof}
On the normalized gauge-fixed slice, the kernel of the linearized Einstein
operator is precisely the essential infinitesimal deformation space
$\mathcal H_{g_0}$. In the standard compact gauge-fixed theory the operator on the normalized
slice is elliptic and formally self-adjoint, hence Fredholm of index zero.
Under the hypothesis $\mathcal H_{g_0}=\{0\}$ its kernel is trivial, and
therefore its cokernel is trivial as well. Thus the linearized operator is an
isomorphism between the chosen Banach spaces. The Banach-space Implicit
Function Theorem then shows that $g_0$ is an isolated solution of the
gauge-fixed Einstein equation. Passing back through
the local slice gives the asserted rigidity modulo diffeomorphism.
\end{proof}

\subsubsection{The linearized deformation space}
The space
$\mathcal H_{g_0}$
is the natural first-order deformation space of the normalized Einstein
problem at $g_0$. It should not, however, be confused with the tangent space
of a smooth local Einstein moduli manifold unless such a manifold has
actually been shown to exist.

\smallskip
More precisely, the classical Kuranishi construction gives a
finite-dimensional local model in which
$\mathcal H_{g_0}$
serves as the space of infinitesimal deformation parameters. The local
Einstein solution set is then represented by the zero set of a
finite-dimensional obstruction map.

\smallskip
Thus every genuine smooth one-parameter family of normalized Einstein metrics
$$s\longmapsto g_s, \qquad g_0=g_{s=0},$$
determines an element
$$\left. \frac{d}{ds} \right|_{s=0} g_s \in \mathcal H_{g_0}$$
after gauge fixing. The converse need not hold: an element of
$\mathcal H_{g_0}$ may be obstructed at higher order and therefore need not
integrate to a family of Einstein metrics.

\subsubsection{Smooth parameterized families inside the local deformation model}
The distinction between infinitesimal and integrable deformations is
important for the response theory developed below.

\smallskip
The local Kuranishi zero set may be singular and need not itself be a smooth
manifold. Accordingly, the present paper does not formulate its
differential-geometric response theory on the entire local Einstein moduli
set.

\smallskip
Instead, whenever differential-geometric constructions are required, we work
with a chosen smooth parameterized Einstein family
$S$
in the sense of
Section~\ref{subsec:parameterized-einstein-families}. Locally, such a family
may be viewed as a smooth family of solutions contained in the gauge-fixed
Einstein solution set.

\smallskip
For
$$p_0\in S, \qquad g_{p_0}=g_0,$$
the differential of the parameterization gives a linear map
$$D\mathbf g_{p_0}: T_{p_0}S \longrightarrow \mathcal H_{g_0}.$$
Thus the actual infinitesimal metric variations realized by the family form
the image $D\mathbf g_{p_0}(T_{p_0}S)\subset\mathcal H_{g_0}$. If the
parameterization is immersive, $T_{p_0}S$ may be identified with this image;
otherwise parameter directions in $\ker D\mathbf g_{p_0}$ are redundant. No assertion is made that
$D\mathbf g_{p_0}(T_{p_0}S) = \mathcal H_{g_0}.$
Equality holds only when the chosen family realizes all infinitesimal
deformations under consideration.

\smallskip
This is the geometric domain on which the response operators in the
subsequent sections are defined. In particular, the theory requires no smooth
structure on the full local Einstein moduli space.

\subsubsection{Beyond first order}
The finite-dimensional space
$\mathcal H_{g_0}$
captures the linearized normalized Einstein equation, but it does not in
general determine the nonlinear local solution set. Higher-order obstruction
terms may prevent infinitesimal deformations from integrating to genuine
Einstein families.

\smallskip
The standard Kuranishi reduction makes these nonlinear obstructions explicit
through a finite-dimensional obstruction map. We recall this construction in
the next subsection, while maintaining the distinction between the possibly
singular Kuranishi zero set and the chosen smooth parameterized Einstein
families on which the response theory is formulated.

\subsection{Kuranishi reduction}
\label{sec:kuranishi-reduction}
The preceding subsection identified the finite-dimensional space of
infinitesimal Einstein deformations
$\mathcal H_{g_0}.$
In general, however, an infinitesimal Einstein deformation need not integrate
to a genuine nearby Einstein metric. The nonlinear relation between the
linearized deformation space and the local Einstein solution set is described
by the classical Kuranishi reduction.

\smallskip
After fixing the Bianchi gauge, the normalized Einstein equation becomes a
nonlinear elliptic equation on a Banach manifold. The tangent space at the
reference metric splits into the finite-dimensional kernel of the linearized
operator and a complementary subspace. Solving the projected nonlinear
equation on the complement by the Banach-space Implicit Function Theorem
reduces the Einstein equation to a finite-dimensional obstruction equation on
$\mathcal H_{g_0}.$
The resulting local model is summarized by the following classical theorem.

\begin{Thm}[Finite-dimensional Kuranishi reduction]
\label{thm:kuranishi}
There exist a neighbourhood
$0\in U\subset\mathcal H_{g_0},$
a finite-dimensional obstruction space
$\mathcal O,$
and a smooth map
$\kappa: U\longrightarrow\mathcal O$
with $\kappa(0)=0$ and $D\kappa_0=0$, such that the local gauge-fixed
normalized Einstein solution set is represented by the zero set
$\kappa^{-1}(0).$
Modulo the local diffeomorphism action described by the slice theorem, this
zero set provides a finite-dimensional local model for the normalized
Einstein moduli problem near $g_0$.
\end{Thm}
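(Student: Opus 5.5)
The proof is a Lyapunov--Schmidt reduction for the gauge-fixed normalized Einstein operator. I would work in the compatible local slice of the Ebin slice theorem and use the Bianchi-gauged Einstein operator
$$E(g)=\Ric(g)-\lambda g+\tfrac12\mathcal L_{(\beta_{g_0}g)^\sharp}g,$$
restricted to $G$-invariant symmetric $2$-tensors $h$ with $g=g_0+h$ in the $C^{k+2,\alpha}$ completion, taking values in $C^{k,\alpha}$. The standard DeTurck argument shows that, for $g$ near $g_0$, zeros of $E$ in the slice are exactly the normalized Einstein metrics in Bianchi gauge. This uses the contracted Bianchi identity, together with the invertibility of $\delta\delta^*$-type operators on the relevant vector fields, which is built into the slice hypothesis.

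Next I would compute the linearization. At $g_0$, $DE_{g_0}=\tfrac12(\Delta_L-2\lambda)$ on divergence-free, trace-free tensors, up to the conventional factor. Its kernel on the normalized slice is therefore $\mathcal H_{g_0}$, which is finite-dimensional by Proposition~\ref{prop:finite-dimensional-einstein-deformations}. As in the proof of the Local rigidity theorem, $DE_{g_0}$ is elliptic, formally self-adjoint, and Fredholm of index zero. This gives the $L^2$-orthogonal splittings
$$T=\mathcal H_{g_0}\oplus W,\qquad C^{k,\alpha}=\mathcal O\oplus \operatorname{im}DE_{g_0}.$$
Here I set $\mathcal O:=\mathcal H_{g_0}$, regarded as a space of cokernel representatives, which is possible by self-adjointness. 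Let $P$ denote the projection onto $\operatorname{im}DE_{g_0}$ along $\mathcal O$. Elliptic regularity ensures that $\mathcal H_{g_0}$ consists of smooth tensors, so these splittings are topological in the Hölder spaces.

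For the reduction itself, consider $\Phi(x,w)=P\,E(g_0+x+w)$ for $x\in\mathcal H_{g_0}$ and $w\in W$. Then $D_w\Phi(0,0)=P\,DE_{g_0}|_W$ is an isomorphism $W\to\operatorname{im}DE_{g_0}$. The Banach-space Implicit Function Theorem therefore yields a neighbourhood $U$ of $0\in\mathcal H_{g_0}$ and a smooth map $w:U\to W$ with $w(0)=0$ such that $\Phi(x,w(x))=0$, and these are the only solutions near the origin. Differentiating at $0$ gives $Dw_0=-(D_w\Phi)^{-1}P\,DE_{g_0}|_{\mathcal H_{g_0}}=0$. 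I would then define
$$\kappa(x)=(1-P)E(g_0+x+w(x)).$$
It is smooth, satisfies $\kappa(0)=0$, and
$$D\kappa_0=(1-P)DE_{g_0}(\mathrm{id}+Dw_0)=0,$$
since $DE_{g_0}$ takes values in the image of $P$. Local uniqueness from the Implicit Function Theorem gives that $E(g)=0$ near $g_0$ if and only if $g=g_0+x+w(x)$ with $\kappa(x)=0$. Passing through the slice then shows that $\kappa^{-1}(0)$ models the normalized Einstein metrics near $g_0$, modulo the residual stabilizer action.

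The main obstacle is the gauge step, not the reduction. One has to show that zeros of the gauged operator inside the $G$-invariant, normalized slice are genuinely Einstein, and that the linearization on this constrained space has kernel exactly $\mathcal H_{g_0}$, with the trace and homothety directions removed by the normalization. To handle this I would invoke the standing compatible-slice hypothesis together with the standard Besse/Koiso argument. The nonlinear Bianchi-identity step that kills the gauge vector field needs $\lambda>0$, or more generally the invertibility of the associated vector-field operator, and I would record that explicitly as part of the hypothesis.
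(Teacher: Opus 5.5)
\textbf{The gauge-fixed setup does not give the Fredholm input your reduction needs.} Your Lyapunov--Schmidt step itself is fine, and it is the route the paper's one-line proof cites (slice plus elliptic reduction, Koiso and Besse). Given an index-zero Fredholm linearization with kernel and cokernel $\mathcal H_{g_0}$, your construction of $w(x)$ and of $\kappa(x)=(1-P)E(g_0+x+w(x))$, with $\kappa(0)=0$ and $D\kappa_0=0$, is correct.

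Conventions below: $\delta=-\operatorname{tr}\nabla$, $\delta^*\omega=\tfrac12\mathcal L_{\omega^\sharp}g_0$, and $\beta_{g_0}=\delta_{g_0}+\tfrac12 d\operatorname{tr}_{g_0}$.

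\textbf{Imposing both the Ebin slice and the Bianchi gauge.} You impose the Ebin condition $\delta_{g_0}h=0$ and also the Bianchi gauge term. On an Einstein manifold $\delta_{g_0}(\Delta_L-2\lambda)h=(\Delta_H-2\lambda)\delta_{g_0}h$. Hence $DE_{g_0}=\tfrac12(\Delta_L-2\lambda)$ maps the divergence-free slice into divergence-free tensors. Since $\operatorname{im}\delta^*_{g_0}\cap\ker\delta_{g_0}=0$, all of $\operatorname{im}\delta^*_{g_0}$ injects into the cokernel in $C^{k,\alpha}$. That space is infinite-dimensional even among $G$-invariant tensors (for example $\nabla^2 f$ with $f$ $G$-invariant). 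So the splitting $C^{k,\alpha}=\mathcal O\oplus\operatorname{im}DE_{g_0}$ with $\mathcal O=\mathcal H_{g_0}$ does not exist, and neither does $P$. Trace directions must also stay in the domain, so you cannot restrict to trace-free tensors.

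\textbf{Dropping the Ebin slice.} If you instead work on all $G$-invariant tensors, where $\tfrac12(\Delta_L-2\lambda)$ really is elliptic and self-adjoint, then
\[
\ker DE_{g_0}\supseteq\mathcal H_{g_0}^G\oplus\bigl\{\nabla^2 f_1+f_2\,g_0:\ \Delta f_i=2\lambda f_i,\ f_i\ G\text{-invariant}\bigr\},
\]
where $\mathcal H_{g_0}^G$ is the $G$-invariant part of $\mathcal H_{g_0}$. This is strictly larger than $\mathcal H_{g_0}^G$ whenever $2\lambda$ is an eigenvalue of $\Delta_{g_0}$ on $G$-invariant functions. The direction $\nabla^2 f=\tfrac12\mathcal L_{\nabla f}g_0$ is pure gauge yet satisfies $\beta_{g_0}(\nabla^2 f)=0$. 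The direction $fg_0$ has $\beta_{g_0}(fg_0)=\tfrac{\dim M-2}{2}\,df\neq0$, so it is not an infinitesimal Einstein deformation at all.

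\textbf{The DeTurck step fails on the same modes, and your $\lambda>0$ remark is backwards.} If $E(g)=0$ and $\omega=\beta_{g_0}(g)$, the contracted Bianchi identity gives $\beta_g\delta^*_g\omega=\tfrac12(\nabla^*\nabla-\Ric_g)\omega=0$. This operator is positive near $g_0$ when $\lambda<0$. For $\lambda>0$ it equals $\tfrac12(\Delta_H-2\lambda)$ at $g_0$, and its kernel consists of the Killing fields together with the gradients $df$ with $\Delta f=2\lambda f$. Its invertibility is not part of the slice hypothesis: that hypothesis concerns $\delta_{g_0}\delta^*_{g_0}$, whose kernel is exactly the Killing fields. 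The Killing fields cause no trouble, since $\langle\beta_{g_0}(h),\xi\rangle_{L^2}=\langle h,\delta^*\xi\rangle+\tfrac12\langle\operatorname{tr}h,\delta\xi\rangle=0$ for every Killing field $\xi$. The gradient modes, however, are exactly where two things break down: the implication $E(g)=0\Rightarrow\Ric(g)=\lambda g$, and moving nearby Einstein metrics into Bianchi gauge.

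\textbf{How to close the argument.} You have two options.
\begin{enumerate}
\item Assume, or verify in the model, that $2\lambda$ is not an eigenvalue of $\Delta_{g_0}$ on $G$-invariant functions. Then your reduction goes through verbatim on $G$-invariant tensors, with kernel exactly $\mathcal H^G_{g_0}$. That is the right space for the $G$-invariant problem; the statement's full $\mathcal H_{g_0}$ requires the non-invariant problem.
\item Follow Koiso's construction \cite{Koiso,Besse}. Restrict to volume-normalized constant-scalar-curvature metrics in the Ebin slice. There the Bianchi identity makes the trace-free Ricci tensor divergence-free, and the Lichnerowicz--Obata estimate identifies the tangent space at $g_0$ with the transverse-traceless tensors. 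The linearized problem is then $\tfrac12(\Delta_L-2\lambda)$ on TT tensors, and the gradient modes never enter.
\end{enumerate}
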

\begin{proof}
In the standard compact Einstein deformation setting, this is the usual
finite-dimensional reduction obtained from a local slice and elliptic
Lyapunov--Schmidt reduction; see Koiso~\cite{Koiso} and
Besse~\cite{Besse}. The theorem is used here only as a local model for the
gauge-fixed solution germ; the paper does not infer smoothness of the full
moduli space from the Kuranishi construction.
\end{proof}
\smallskip
If the obstruction map vanishes identically near the origin, then the local
gauge-fixed Einstein solution set is smooth and is locally modeled on
$\mathcal H_{g_0}.$
In that case, after quotienting by the residual stabilizer when necessary,
one obtains the usual smooth local moduli picture.

\smallskip
In general, however, the zero set
$\kappa^{-1}(0)$
may be singular, may have several local branches, and need not carry a
canonical smooth-manifold structure. For this reason, the response theory
developed in the present paper is not formulated on the entire Kuranishi zero
set.

\smallskip
Instead, whenever differential-geometric constructions are required, we fix a
smooth parameterized Einstein family
$S$
through the reference metric, in the sense of
Section~\ref{subsec:parameterized-einstein-families}, whose image is contained
in the local gauge-fixed Einstein solution set.

\smallskip
Thus, for some
$$p_0\in S, \qquad g_{p_0}=g_0,$$
the parameterization
$p\longmapsto g_p$
takes values in the Kuranishi zero set after the chosen gauge fixing.

\smallskip
The differential of this parameterization gives a linear map
$D\mathbf g_{p_0}: T_{p_0}S \longrightarrow \mathcal H_{g_0}.$
Its image consists of those infinitesimal Einstein deformations realized by
the chosen smooth family. No assertion is made that every element of
$\mathcal H_{g_0}$
is tangent to $S$, nor that the Kuranishi zero set admits a preferred smooth
branch through $g_0$.

\smallskip
All differential-geometric constructions in the response theory are carried
out on this chosen smooth parameterized Einstein family. In particular,
analytical observables arising from the canonical harmonic probe package are
restricted to $S$, and their differentials are taken on the tangent spaces
$T_pS.$
This provides the finite-dimensional smooth geometric domain required for the
response operators, without imposing any smoothness assumption on the full
local Einstein moduli set.

\begin{Bem}[Terminology]
Throughout the remainder of the paper we distinguish carefully between the
following objects.
\begin{enumerate}
\item
The \emph{Kuranishi zero set}
$\kappa^{-1}(0)$
is the finite-dimensional local model for the gauge-fixed normalized Einstein
solution set. It may be singular and may contain several local branches.
\item
A \emph{smooth parameterized Einstein family}
$S$
is a chosen smooth finite-dimensional family of genuine normalized Einstein
metrics whose image lies in the local solution set.
\item
Whenever tangent spaces, differentials, response operators or local
reconstruction maps are used, they are understood on the chosen smooth
parameterized Einstein family $S$.
\end{enumerate}
No tangent-cone response theory and no canonical smooth stratification of the
Kuranishi zero set are assumed in the present paper.

\smallskip
In particular, $T_{p_0}S$ must not be identified with the full essential
infinitesimal Einstein space unless an additional integrability or smoothness
statement has been proved. An obstructed infinitesimal Einstein deformation
provides the standard counterexample: it lies in the kernel of the linearized
Einstein operator but is not tangent to any genuine local Einstein curve.
\end{Bem}

\subsection{Descent of the analytical package}
The core analytical package is first constructed for representative metrics in
the ambient Banach manifold on which the harmonic probe problem is solved.
When the chosen smooth parameterized Einstein family is described through
local gauge-fixed representatives, one must verify that the resulting
analytical objects are compatible with changes of representative and with
changes of local slice.

\smallskip
This is a naturality question. We therefore impose explicitly the equivariance
properties required for descent.

\begin{Hyp}[Naturality of the canonical probe package]
\label{hyp:package-naturality}
Let $g$ and $\widetilde g$ be normalized Einstein metrics related by a
normalization-preserving equivariant diffeomorphism
$\varphi:(M,g)\longrightarrow(M,\widetilde g)$
that preserves the fixed probe data.

\smallskip
Then the canonical harmonic probes are related by the induced natural action,
and the associated normalized Jacobi and Green operators are intertwined by
the corresponding pullback or pushforward identifications.

\smallskip
Equivalently, the core analytical package
$\mathcal A_{\mathrm{core}}(g) = (J_g,G_g)$
is equivariant under all normalization-preserving equivariant
diffeomorphisms preserving the probe data.
\end{Hyp}

\smallskip
Under this hypothesis the core analytical package is independent, up to its
canonical identifications, of the representative metric used in its
construction.

\begin{Thm}[Descent of the core analytical package]
\label{thm:package-descent}
Assume normalized Jacobi nondegeneracy and
Hypothesis~\ref{hyp:package-naturality}.

\smallskip
Let $S$ be the chosen smooth parameterized Einstein family. Suppose that,
whenever local gauge-fixed representatives are used to describe $S$, the
transition maps between such representatives are induced by smoothly
parameter-dependent normalization-preserving equivariant diffeomorphisms
preserving the probe data, and that their induced identifications act smoothly
on the local Banach model $\mathscr X_{\mathrm{core}}$.

\smallskip
Then the core analytical package descends uniquely, up to the canonical
identifications determined by these transition maps, to a smooth map
$\mathcal A_{E,\mathrm{core}}: S \longrightarrow \mathscr X_{\mathrm{core}}.$
In particular, its value is independent of the choice of local gauge-fixed
representative and of the choice of normalized local slice.
\end{Thm}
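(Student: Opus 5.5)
The plan is a standard cocycle/gluing argument for sections of a bundle defined by local representatives. Cover a neighbourhood of $p_0$ in $S$ by open sets $S_i$ on which local gauge-fixed representatives $p\mapsto g^{(i)}_p\in\mathcal U$ are given. On each $S_i$ define the local package $\mathcal A^{(i)}(p):=\mathcal A_{\mathrm{core}}(g^{(i)}_p)=(J_{g^{(i)}_p},G_{g^{(i)}_p})$. This map is smooth by Theorem~\ref{thm:probe-existence}, Proposition~\ref{prop:smooth-holder-jacobi-family} and Corollary~\ref{cor:persistence-nondegeneracy}, exactly as in Proposition~\ref{prop:analytic-package-setup}. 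After shrinking, all representatives lie in the neighbourhood $\mathcal U$ on which the probe branch is defined.

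\textbf{Comparison on overlaps.} On $S_i\cap S_j$ the hypothesis supplies smoothly $p$-dependent normalization-preserving equivariant diffeomorphisms $\varphi_{ij}(p)$ with $\varphi_{ij}(p)^*g^{(j)}_p=g^{(i)}_p$, preserving the probe data. By Hypothesis~\ref{hyp:package-naturality}, the induced identification $\Theta_{ij}(p)$ on $\mathscr X_{\mathrm{core}}$ satisfies
\[
\Theta_{ij}(p)\bigl(\mathcal A^{(j)}(p)\bigr)=\mathcal A^{(i)}(p).
\]
Concretely, $J$ is intertwined by pullback, and $G$ is intertwined as well since conjugation commutes with inversion. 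One point needs care. The hypothesis is stated for the harmonic probe, not a priori for the \emph{local branch}. I would therefore check that $\varphi_{ij}(p)^*f_{g^{(j)}_p}$ is the branch probe $f_{g^{(i)}_p}$. It is harmonic for $g^{(i)}_p$ and satisfies the same probe data, so it suffices that it lies in $\mathcal V$. At $p$ near $p_0$ the diffeomorphism $\varphi_{ij}(p)$ is close to an element of the stabilizer of the reference data, so the local uniqueness clause of Theorem~\ref{thm:probe-existence} identifies the two probes after shrinking.

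\textbf{Cocycle and gluing.} The functorial identities $\Theta_{ii}=\mathrm{id}$ and $\Theta_{ij}\Theta_{jk}=\Theta_{ik}$ on triple overlaps follow because the $\Theta$ are induced by pullback along $\varphi_{ij}$, which composes. If a stabilizer is retained, the cocycle holds only up to it, and the package is then read modulo that stabilizer, as in the uniqueness clause ``up to the canonical identifications''. The local maps $\mathcal A^{(i)}$ then define a single section of the resulting Banach bundle, or of $\mathscr X_{\mathrm{core}}$ itself after fixing one trivialization. Its smoothness is local, since each $\Theta_{ij}$ acts smoothly by assumption. Uniqueness follows because any two descended maps agree with $\mathcal A^{(i)}$ on each $S_i$ up to the $\Theta$'s.

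\textbf{Independence of representative and slice.} A second representative or slice is just another chart in the cover, related by the same type of transition map, so the argument above applies verbatim.

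\textbf{Main obstacle.} The main difficulty is the branch-compatibility step. Hypothesis~\ref{hyp:package-naturality} is phrased for canonical probes, but canonicity is only local, so one must ensure the diffeomorphisms do not move the probe off $\mathcal V$. I would handle this by continuity in $p$ together with connectedness of the shrunken $S_i\cap S_j$. The set where the pulled-back probe equals the branch probe is closed by continuity. It is open by IFT uniqueness. It is nonempty at $p_0$ because $\varphi_{ij}(p_0)$ preserves the reference probe data.
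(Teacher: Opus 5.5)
Your proposal is correct and follows the same route as the paper. Hypothesis~\ref{hyp:package-naturality} intertwines the local packages on overlapping slices, and these patch to a smooth map on $S$; you spell out that patching in more detail (overlap identities, cocycle up to stabilizer, uniqueness), and your extra branch-compatibility check is harmless but not strictly needed, since the hypothesis is already stated for the canonical probes, i.e.\ for the local continuation of $f_0$.
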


\begin{proof}
By Hypothesis~\ref{hyp:package-naturality}, every component of the core package
is equivariant under equivariant diffeomorphisms preserving the normalization, and
the induced identifications vary smoothly. Hence local representatives on
overlapping normalized slices are intertwined by the slice transition maps
and agree after the canonical identifications. They therefore patch to a
well-defined smooth map on the quotient stratum $S$. This is precisely the
claimed descent.
\end{proof}

\begin{Prop}[Canonicality of the core analytical package]
\label{prop:canonicality-package}
Under the hypotheses of
Theorem~\ref{thm:package-descent}, the descended core analytical package
$\mathcal A_{E,\mathrm{core}} = (J_g,G_g)$
is determined by the chosen normalized Einstein geometry together with the
fixed probe data, up to the canonical identifications induced by
normalization-preserving equivariant diffeomorphisms.

\smallskip
In particular, it is independent of
\begin{enumerate}
\item
the choice of normalized local slice;
\item
the choice of representative metric within the corresponding
normalization-preserving equivariant diffeomorphism class;
\item
the local trivializations used to represent the Jacobi and Green bundles.
\end{enumerate}
Consequently, every functorial construction depending only on the core
analytical package inherits the same canonicality.
\end{Prop}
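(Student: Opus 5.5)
The plan is to read the three independence claims off the descent theorem and the naturality hypothesis, treating each as a separate identification-invariance statement, and then handle the functoriality assertion by composition. The starting point is Theorem~\ref{thm:package-descent}. It already gives a smooth map $\mathcal A_{E,\mathrm{core}}:S\to\mathscr X_{\mathrm{core}}$, unique up to the canonical identifications induced by the transition diffeomorphisms. The content of the proposition is therefore to show that every auxiliary choice entering the construction acts only through such an identification.

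The first step treats items (1) and (2) together. Two normalized local slices, or two representative metrics $g,\widetilde g$ of the same point of $S$, are related by a normalization-preserving equivariant diffeomorphism $\varphi$ preserving the probe data. This is the standing compatibility assumption on slices in Theorem~\ref{thm:package-descent}. The argument then proceeds as follows.
\begin{enumerate}
\item The probe $\varphi$ transports, $f_g\circ\varphi^{-1}$, is harmonic for $\widetilde g=\varphi_*g$. It lies in the fixed equivariant class and satisfies the endpoint and normalization conditions, since $\varphi$ preserves the probe data.
\item It lies in the neighbourhood $\mathcal V$, after shrinking, by continuity of the induced action at $p_0$.
\item The local uniqueness clause of Theorem~\ref{thm:probe-existence} then forces $f_{\widetilde g}=f_g\circ\varphi^{-1}$.
\item Differentiating $\mathcal F(\varphi_*g,f\circ\varphi^{-1})=\varphi_*\mathcal F(g,f)$ in $f$ gives $J_{\widetilde g}=\varphi_*J_g\varphi^*$.
\item Inverting gives the corresponding intertwining for $G$.
\end{enumerate}
This is exactly Hypothesis~\ref{hyp:package-naturality}; the point is that it is consistent with the local uniqueness of probes. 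Hence both choices change $(J_g,G_g)$ only by conjugation with the canonical pullback/pushforward.

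The second step handles item (3). Two admissible local trivializations of $\mathscr H^{k+2,\alpha}$ and $\mathscr Y^{k,\alpha}$ differ by smooth families of bounded isomorphisms $A_p$ and $B_p$. In the new trivialization the represented operators become $B_pJ_pA_p^{-1}$ and $A_pG_pB_p^{-1}$. The underlying fibrewise operators $J_p:\mathscr H_p\to\mathscr Y_p$ and $G_p=J_p^{-1}$ are defined intrinsically by $D_f\mathcal F(g_p,f_p)$, before any trivialization is chosen. The representations therefore differ only by this conjugation, and smoothness is preserved since $(A,B,J)\mapsto BJA^{-1}$ is smooth.

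The final step is functoriality. A construction is functorial if it depends only on the core package and commutes with these identifications, that is, if it is equivariant for the conjugation actions above. Any such construction then inherits the independence by composition. The main obstacle is the first step: one must check that the transported probe actually lies in the local branch $\mathcal V$, so that the uniqueness clause applies. I would handle this by shrinking $S$ and the diffeomorphism neighbourhood so that $\varphi$ stays near the identity, or near an element of the stabilizer of the probe data. I would state explicitly that this is part of what the naturality hypothesis encodes.
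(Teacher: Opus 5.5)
Your proposal is correct and follows the same route as the paper. Items (1) and (2) come from Theorem~\ref{thm:package-descent} together with Hypothesis~\ref{hyp:package-naturality}, item (3) from the fact that a change of trivialization only conjugates the coordinate representatives ($B_pJ_pA_p^{-1}$, $A_pG_pB_p^{-1}$) of the intrinsically defined fibrewise operators, and the functoriality clause from composition. You also check that the naturality hypothesis is consistent with the local uniqueness in Theorem~\ref{thm:probe-existence} and the naturality of the tension field, and that it essentially follows from them once the transported probe is known to lie in the local branch, the point you correctly flag; the paper gives no such check and simply invokes the hypothesis.
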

\begin{proof}
Independence of the representative metric and of the local slice follows from
Theorem~\ref{thm:package-descent} and the naturality assumption.

\smallskip
Changing a local trivialization alters only the coordinate representation of
the Jacobi and Green operators and does not alter the underlying bundle
operators themselves.

\smallskip
Hence the descended core analytical package is well defined up to the
canonical identifications inherent in the geometric problem. Any construction
defined functorially from this package is therefore compatible with the same
identifications.
\end{proof}
\smallskip
The preceding results concern only the core analytical package. The extended
spectral data require separate hypotheses.
\begin{Cor}[Descent of extended spectral data]
\label{cor:extended-package-descent}
Assume the hypotheses of
Theorem~\ref{thm:package-descent}. Suppose, in addition, that a component of
the extended spectral package is defined on the parameter region under
consideration and is natural under the same
normalization-preserving equivariant diffeomorphisms.

\smallskip
Then that component descends to $S$ and is independent, up to its canonical
identification, of the choice of representative metric and local slice.
\end{Cor}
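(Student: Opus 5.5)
The proof will imitate the argument for Theorem~\ref{thm:package-descent}, now applied to a single component of the extended package rather than to the pair $(J_g,G_g)$. Let $\mathcal C$ denote the chosen component, for example a resolvent on a common resolvent region, a Riesz projector for a fixed separated contour, a heat operator $e^{-tJ_g}$, the Morse index, or a zeta quantity on a region with a common spectral cut. The first step is to build $\mathcal C$ on each local gauge-fixed representative over the parameter region. This is supplied by Theorem~\ref{thm:analytical-package} under the supplementary spectral hypotheses, which hold by assumption. That theorem also makes $\mathcal C$ smooth in the parameter whenever it asserts smoothness; locally constant quantities such as the Morse index are handled separately.

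The second step is compatibility on overlaps. Suppose two representatives $g,\widetilde g$ are related by a normalization-preserving equivariant diffeomorphism $\varphi$ that preserves the probe data. Hypothesis~\ref{hyp:package-naturality} gives the intertwining $\varphi_*J_g\varphi^*=J_{\widetilde g}$. For a functional-calculus component, naturality can in fact be derived from this rather than only assumed. A unitary intertwining of the closed operators commutes with $(J-z)^{-1}$ and with contour integrals, so resolvents and Riesz projectors are intertwined. The spectral theorem then transports $e^{-tJ}$, and the spectrum is unchanged, so the Morse index, the zeta function and the determinant are invariant. The one point to check is that $\varphi$ induces an isometry of the $L^2$ spaces and maps the self-adjoint domain $\mathcal D_g$ onto $\mathcal D_{\widetilde g}$. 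This holds because $\varphi$ is an isometry $(M,g)\to(M,\widetilde g)$ preserving the endpoint and normalization conditions. For components not produced by functional calculus, the naturality hypothesis stated in the corollary supplies the intertwining directly.

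The third step is patching. On overlapping normalized slices the local representatives of $\mathcal C$ agree after the canonical identifications. Because the transition diffeomorphisms depend smoothly on the parameter and act smoothly on the local Banach model, the local maps glue to a well-defined map on $S$. That map is smooth wherever $\mathcal C$ has smooth parameter dependence. Independence of the representative metric and of the slice then follows exactly as in Proposition~\ref{prop:canonicality-package}.

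I expect the main obstacle to be making sure the hypothesis region is itself invariant. The common contour, resolvent region or spectral cut must be preserved under the identifications, and otherwise a component could be defined on one representative but not on another. Since the identifications are isospectral by the second step, a region or contour chosen in terms of the spectrum is automatically preserved. I would state this explicitly as the key lemma before carrying out the patching.
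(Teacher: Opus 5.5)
Your proposal is correct and is essentially the paper's argument. The component is built functorially from the self-adjoint Jacobi realization, naturality of $J_g$ (through the induced $L^2$ isometry carrying $\mathcal D_g$ onto $\mathcal D_{\widetilde g}$) transports it, and the patching of Theorem~\ref{thm:package-descent} gives the descended object; your isospectrality remark simply makes explicit the paper's proviso ``whenever its defining hypotheses remain valid''. The only overreach is your claim that the glued map is smooth, since the theorem's hypotheses give smooth action of the identifications only on $\mathscr X_{\mathrm{core}}$; the corollary, however, asserts only descent and independence, so this does not affect the argument.
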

\begin{proof}
Each such spectral object is constructed functorially from the self-adjoint
Jacobi realization. Naturality of the Jacobi family therefore induces
naturality of the spectral construction whenever its defining hypotheses
remain valid. The same patching argument as in
Theorem~\ref{thm:package-descent} then gives the descended object on $S$.
\end{proof}

\begin{Bem}[Meaning of canonicality]
\label{rem:meaning-canonicality}
From this point onward, the word \emph{canonical} is always understood
relative to
\begin{enumerate}
\item
the fixed Einstein normalization;
\item
the fixed probe data;
\item
the normalization-preserving equivariant identifications specified above.
\end{enumerate}
Thus canonicality does not mean independence of the chosen probe package.
Rather, it means independence of the auxiliary representative, local slice
and trivialization once the geometric normalization and probe data have been
fixed.

\smallskip
In particular, the core response theory developed below is based only on the
descended package
$\mathcal A_{E,\mathrm{core}}.$
Extended spectral observables may be used additionally whenever the
hypotheses of
Corollary~\ref{cor:extended-package-descent}
are satisfied.
\end{Bem}

\subsection{Response maps and operators}
The descended analytical package constructed in the preceding subsections
provides analytical data attached to every point of the chosen smooth
parameterized Einstein family. The purpose of the present subsection is to
extract from these data finite-dimensional observables whose first variations
may detect infinitesimal Einstein deformations.

\smallskip
The construction proceeds at two related levels. First, an analytical
observable may be defined directly from the descended analytical package on
the chosen smooth parameterized Einstein family. Second, under
Hypothesis~\ref{hyp:matching-dependence}, an observable whose underlying probe
package is determined by the Einstein matching datum may be regarded as a
smooth function on the Einstein matching hypersurface. The latter formulation
will be used below to connect analytical response with the differential of the
Einstein matching map.

\subsubsection*{Analytical observables}
Let
$\mathcal A_{E,\mathrm{core}}: S \longrightarrow \mathscr X_{\mathrm{core}}$
denote the descended core analytical package. Let $Y$ be a
finite-dimensional vector space, and let
$\Psi: \mathscr X_{\mathrm{core}} \longrightarrow Y$
be a smooth map defined on a neighbourhood of the image of the reference
metric.
\begin{Dfn}[Observable]
An \emph{analytical observable} is a smooth finite-dimensional quantity
obtained from the descended analytical package. Thus, in the core theory, it
is a smooth map of the form
$\Theta_E = \Psi\circ\mathcal A_{E,\mathrm{core}}: S \longrightarrow Y.$
Whenever the additional hypotheses required for a component of the extended
analytical package are satisfied, $\Psi$ may instead be defined using that
extended spectral data.
\end{Dfn}
\smallskip
Examples of core-package-generated observables include smooth
finite-dimensional evaluations or functionals of the Jacobi and Green
operators, provided the required evaluation/trivialization data are included
in the admissible postprocessing and respect the canonical identifications.
A harmonic-map energy is an observable of the probe assignment, but it is not
claimed to be core-package-generated unless a separate factorization through
$(J_g,G_g)$ has been established.

\smallskip
This distinction prevents a false converse: two probes may have the same
linearized Jacobi/Green data at a point while an additional zeroth-order probe
quantity, such as the value of the energy, varies. The universal
factorization theorem therefore applies only to observables explicitly
factoring through the stated package.

\smallskip
Under additional spectral hypotheses, one may also use spectrally separated
eigenvalue clusters, Riesz projectors, heat quantities, or
zeta-regularized quantities whenever the corresponding construction is
defined and smooth on the parameter region under consideration.

\smallskip
Thus the response theory itself requires only the core analytical package.
Extended spectral data provide additional possible observables, but are not
required for the Einstein Detection Principle.

\subsubsection*{Observables on the Einstein matching hypersurface}
Assume Hypothesis~\ref{hyp:matching-dependence}. Then the relevant probe
package is determined smoothly by the Einstein matching datum. In particular,
for an observable defined from such a package there exists, after possibly
shrinking to a neighbourhood of the reference matching point
$q_0\in\Sigma$, a smooth map
$\Theta: \Sigma \longrightarrow Y$
such that the observable associated with either propagated endpoint germ
factors as
$\Theta\circ\Phi_\pm.$
Equivalently, if
$\widetilde{\mathcal A}: \Sigma \longrightarrow \mathscr X$
denotes the corresponding package on the matching hypersurface, then
$\Theta = \Psi\circ\widetilde{\mathcal A}.$
Its differential
$D\Theta_q: T_q\Sigma \longrightarrow Y$
records the first-order variation of the chosen analytical observation with
respect to infinitesimal variations of the Einstein matching datum.

\smallskip
For the chosen smooth parameterized Einstein family $S$, let
$j: S \longrightarrow \Sigma$
denote the canonical matching-point map introduced previously. Compatibility
of the two constructions gives
$$\Theta_E = \Theta\circ j.$$
Thus the observable on the chosen Einstein family is the restriction, through
the matching-point map, of the corresponding observable on the Einstein
matching hypersurface.

\begin{Prop}[Smoothness of the restricted observable]
\label{prop:smooth-restricted-observable}
The map
$\Theta_E: S \longrightarrow Y$
is smooth.
\end{Prop}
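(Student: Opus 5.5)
The proof will be a composition argument, using the definition $\Theta_E=\Psi\circ\mathcal A_{E,\mathrm{core}}$ and no matching-data input. By Theorem~\ref{thm:package-descent}, which assumes normalized Jacobi nondegeneracy, Hypothesis~\ref{hyp:package-naturality} and smooth transition maps, the descended core package $\mathcal A_{E,\mathrm{core}}:S\to\mathscr X_{\mathrm{core}}$ is a smooth map into the Banach manifold $\mathscr X_{\mathrm{core}}$. This map is already shrunk to $S_{\mathcal U}$, so that $g_p\in\mathcal U$ for all parameters. The postprocessing $\Psi:\mathscr X_{\mathrm{core}}\to Y$ is smooth on a neighbourhood of $\mathcal A_{E,\mathrm{core}}(p_0)$. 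After shrinking $S$ further, using continuity of $\mathcal A_{E,\mathrm{core}}$, the image of $S$ lies in that neighbourhood. The Banach-manifold chain rule, exactly as in Proposition~\ref{prop:analytic-package-setup}, then shows that $\Theta_E$ is smooth, with $D(\Theta_E)_p=D\Psi_{\mathcal A_{E,\mathrm{core}}(p)}\circ D(\mathcal A_{E,\mathrm{core}})_p$.

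I will then add a consistency check for the case where Hypothesis~\ref{hyp:matching-dependence} is in force. In that case one also has $\Theta_E=\Theta\circ j$, with $\Theta=\Psi\circ\widetilde{\mathcal A}$ smooth on $U_\Sigma$. The matching-point map $j:S\to\Sigma$ is smooth after shrinking $S$, by the transverse hitting-time argument of Proposition~\ref{prop:smooth-propagation}. Hence the second expression is smooth as well. The two descriptions agree as maps, because the hypothesis requires $\widetilde{\mathcal A}\circ j=\mathcal A_{E,\mathrm{core}}$ on the family. So this route gives nothing new, but it confirms the claim. I will state explicitly that the proof does not depend on it, in line with Remark~\ref{rem:standing-hypotheses-dependency}.

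The main obstacle is bookkeeping rather than analysis. One point to check is that $\Psi$ is well defined on descended data, meaning it respects the canonical identifications of Proposition~\ref{prop:canonicality-package}. Otherwise $\Theta_E$ might depend on the trivialization chosen. I would handle this by requiring, as in the definition of admissible observables, that any evaluation or trivialization data entering $\Psi$ be compatible with those identifications. Then locally $\Theta_E$ is a composition of smooth maps in a fixed chart, and smoothness is a local property. The other point to check is the shrinking of the domain. At most finitely many shrinkings are needed: one for the probe neighbourhood, one for the domain of $\Psi$, and optionally one for $j$. Each is an open condition around $p_0$.
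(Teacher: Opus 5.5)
Your argument is correct, but its main line differs from the paper's. The paper proves the proposition in one line, using the matching-hypersurface setting that is in force in that subsection. Under Hypothesis~\ref{hyp:matching-dependence} one has $\Theta_E=\Theta\circ j$, where $\Theta=\Psi\circ\widetilde{\mathcal A}$ is smooth on $U_\Sigma$ by construction. The matching-point map $j$ is smooth by the transverse hitting-time argument, and the chain rule finishes the proof. You instead make $\Theta_E=\Psi\circ\mathcal A_{E,\mathrm{core}}$ primary, obtain smoothness from Theorem~\ref{thm:package-descent}, and keep the paper's argument only as a consistency check.

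\textbf{What your route buys.} It is independent of the matching-data extension and of the Cauchy-data regularity needed to make $j$ smooth. This fits the dependency separation in Remark~\ref{rem:standing-hypotheses-dependency}.

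\textbf{What it costs.} It relies on the descent hypotheses: naturality and smoothly acting transition identifications. If $\Psi$ uses an extended spectral component, it also needs the separate smoothness and descent of that component (Corollary~\ref{cor:extended-package-descent}), which you should mention, since the definition of an analytical observable allows such $\Psi$.

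\textbf{Why the paper's route is natural there.} It yields at once the factorization $\mathcal R_{g_0}=D\Theta_{q_0}\circ Dj_{p_0}$ used right afterwards in Definition~\ref{def:canonical-response-operator}. Your route instead yields the factorization through $D\mathcal A_{E,\mathrm{core}}$ of Theorem~\ref{thm:universal-response}.

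In both readings smoothness is largely built into the definitions: an analytical observable is declared to be a smooth map, and $\Theta$ is smooth by construction. So the real content of either proof is just which standing hypothesis supplies it.
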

\begin{proof}
The matching-point map
$j: S \longrightarrow \Sigma$
is smooth, and the observable
$\Theta: \Sigma \longrightarrow Y$
is smooth by construction. Hence
$\Theta_E = \Theta\circ j$
is smooth by the chain rule.
\end{proof}

\subsubsection*{The canonical response operator}
We may now linearize the analytical observable along the chosen smooth
Einstein family.
\begin{Dfn}[Canonical response operator]
\label{def:canonical-response-operator}
Let $p_0\in S$ denote the reference parameter, with $g_{p_0}=g_0$. The
\emph{canonical response operator} associated with the analytical observable
$\Theta_E$ is
$$\mathcal R_{g_0} = D_{p_0}\Theta_E: T_{p_0}S \longrightarrow Y.$$
Thus $\mathcal R_{g_0}$ records the first-order variation of the chosen
analytical observable along genuine infinitesimal deformations tangent to the
chosen smooth parameterized Einstein family.
\end{Dfn}
Since
$\Theta_E = \Theta\circ j,$
the chain rule gives the fundamental factorization
$$\mathcal R_{g_0} = D\Theta_{q_0}\circ Dj_{p_0},$$
where
$q_0=j(p_0).$
This identity separates the analytical resolving power of the observation
$D\Theta_{q_0}$ from the infinitesimal variation of the Einstein matching
datum
$Dj_{p_0}$.

\begin{Thm}[Intrinsicity of the response operator]
\label{thm:response-intrinsicity}
Assume the hypotheses of
Theorem~\ref{thm:package-descent}, and let $\Theta_E$ be an analytical
observable obtained functorially from the descended analytical package.
\smallskip
Then
$$\mathcal R_{g_0} = D_{p_0}\Theta_E$$
is independent of the choice of local coordinates on $S$, of the normalized
local slice used to represent the Einstein family, and of the representative
metric within the corresponding normalization-preserving equivariant
diffeomorphism class.

\smallskip
Consequently, the response operator is canonically determined by the chosen
normalized Einstein geometry, the chosen smooth parameterized Einstein
family, and the fixed probe data.
\end{Thm}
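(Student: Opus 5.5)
The plan is to read the statement as three separate independence claims and reduce each to a result already established. \emph{Independence of coordinates on $S$.} $\mathcal R_{g_0}=D_{p_0}\Theta_E$ is the intrinsic differential of the smooth map $\Theta_E:S\to Y$ at $p_0$, a linear map $T_{p_0}S\to Y$ (Proposition~\ref{prop:analytic-package-setup}). If $x,\tilde x$ are two charts near $p_0$, the coordinate expressions of $D\Theta_E$ differ by right composition with the Jacobian of $\tilde x\circ x^{-1}$. This Jacobian is also the map identifying the two coordinate descriptions of $T_{p_0}S$. Hence the abstract linear map does not change. This step is only the chain rule.

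\emph{Independence of slice and representative.} By Theorem~\ref{thm:package-descent}, the descended package $\mathcal A_{E,\mathrm{core}}:S\to\mathscr X_{\mathrm{core}}$ is well defined up to the canonical identifications $\iota_\varphi$ induced by the smoothly parameter-dependent, normalization-preserving equivariant diffeomorphisms $\varphi_p$ relating two representatives. Concretely, for two local representatives $\mathcal A^{(1)},\mathcal A^{(2)}$ on a common neighbourhood of $p_0$ we have $\mathcal A^{(2)}(p)=\iota_{\varphi_p}\bigl(\mathcal A^{(1)}(p)\bigr)$. The hypothesis that $\Theta_E$ is obtained functorially means $\Psi\circ\iota_{\varphi}=\Psi$ for all such identifications; by Proposition~\ref{prop:canonicality-package}, $\Psi$ is compatible with them. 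Therefore $\Psi\circ\mathcal A^{(2)}=\Psi\circ\mathcal A^{(1)}$ holds as functions on a neighbourhood of $p_0$, not merely at $p_0$. Equal functions have equal differentials at $p_0$, so $\mathcal R_{g_0}$ agrees for the two representatives. Independence of the local trivialization follows in the same way, using item (3) of Proposition~\ref{prop:canonicality-package}.

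\emph{Main obstacle.} The key point is that invariance must hold on a whole neighbourhood, so that it survives differentiation. A naive argument would differentiate $p\mapsto\iota_{\varphi_p}(\mathcal A^{(1)}(p))$ and pick up a term $D_p\iota_{\varphi_p}$ coming from the $p$-dependence of the gauge. I would avoid computing that term. Instead I would observe that the composite with $\Psi$ is identically equal to $\Psi\circ\mathcal A^{(1)}$, which kills the extra term. This requires the smoothness of $p\mapsto\iota_{\varphi_p}$ acting on $\mathscr X_{\mathrm{core}}$, which is assumed in Theorem~\ref{thm:package-descent}, and the exact invariance of $\Psi$ under the identifications. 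Both are supplied by the hypotheses.

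The final canonicality sentence then follows, because the only remaining inputs are the normalization, the family $S$ (through $T_{p_0}S$ and $\mathbf g$) and the fixed probe data that determine $\mathcal A_{E,\mathrm{core}}$.
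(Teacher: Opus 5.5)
Your proposal is correct and follows the paper's proof. Coordinate changes on $S$ are handled by the chain rule for the intrinsic differential. For changes of slice or representative, the functorially constructed local representatives of $\Theta_E$ coincide under the identifications of Theorem~\ref{thm:package-descent}, so their differentials at $p_0$ coincide. Your insistence that this agreement hold as an identity of functions on a neighbourhood of $p_0$ is a useful sharpening of the paper's brief ``differentiating these compatible local representatives'' step, since it keeps the derivative of the $p$-dependent identification $\iota_{\varphi_p}$ out of the computation; it is not a different route.
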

\begin{proof}
Coordinate independence follows from the intrinsic definition of the
differential of a smooth map between manifolds.

\smallskip
By Theorem~\ref{thm:package-descent}, different normalized local
representatives of the same Einstein family determine canonically identified
core analytical packages. Since the observable is constructed functorially
from this descended package, the corresponding local representatives of
$\Theta_E$ agree under the transition identifications.

\smallskip
Differentiating these compatible local representatives therefore produces
canonically identified linear maps. Hence the resulting response operator is
independent of the local slice and representative used in its construction.
\end{proof}

\subsubsection*{Naturality}
The preceding intrinsicity statement may equivalently be expressed in terms
of the naturality of the response construction.

\begin{Prop}[Naturality of the response operator]
\label{prop:response-naturality}
Let
$\varphi: (M,g_0) \longrightarrow (M,h_0)$
be a normalization-preserving equivariant isometry preserving the fixed probe
data. Then the response operators associated with the corresponding
identified Einstein families are intertwined by the natural maps induced by
$\varphi$ on the deformation and observation spaces.

\smallskip
In particular, after making these canonical identifications, the response
operator is invariant under normalization-preserving equivariant isometries.
\end{Prop}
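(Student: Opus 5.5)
The plan is to transport the whole construction along $\varphi$ and then apply the chain rule. First, $\varphi$ is a normalization-preserving equivariant isometry preserving the probe data, so pushforward by $\varphi$ carries the chosen family $S$ through $g_0$ to an identified family through $h_0$. Concretely, the identified family is $p\mapsto(\varphi^{-1})^*g_p$, or whatever family the statement intends to be identified with $S$, and we identify its parameter manifold with $S$. Its differential is therefore the natural map $\varphi_*:T_{p_0}S\to T_{p_0'}S'$ on deformation spaces, given by $D\mathbf g_{p_0}[X]\mapsto(\varphi^{-1})^*D\mathbf g_{p_0}[X]$.

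Second, I would invoke Hypothesis~\ref{hyp:package-naturality}. Since $\varphi$ (and likewise each member of the transported family, related to $g_p$ by the same diffeomorphism) preserves normalization and probe data, the canonical probes correspond: $f_{(\varphi^{-1})^*g_p}=f_{g_p}\circ\varphi^{-1}$. This is the natural action, and it is justified by local uniqueness in Theorem~\ref{thm:probe-existence} once $\varphi$ maps the reference probe into the neighbourhood $\mathcal V$. The Jacobi and Green operators are then intertwined by the induced pullback/pushforward $\Phi_\varphi$ on $\mathscr X_{\mathrm{core}}$. This gives the identity of package maps
$$
\mathcal A'_{E,\mathrm{core}}\circ\varphi_S=\Phi_\varphi\circ\mathcal A_{E,\mathrm{core}}
$$
on a neighbourhood of $p_0$. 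Here smoothness of $\Phi_\varphi$ is supplied by the hypothesis of Theorem~\ref{thm:package-descent} that the induced identifications act smoothly on $\mathscr X_{\mathrm{core}}$.

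Third, the observation spaces are identified via the observable. Since the observable is formed functorially, $\Psi'\circ\Phi_\varphi=\iota_\varphi\circ\Psi$ for the induced linear identification $\iota_\varphi$ of $Y$ (the identity if $\Psi$ is a scalar invariant). Composing with the package identity gives $\Theta'_E\circ\varphi_S=\iota_\varphi\circ\Theta_E$. Differentiating at $p_0$ then yields
$$
\mathcal R_{h_0}\circ D(\varphi_S)_{p_0}=\iota_\varphi\circ\mathcal R_{g_0},
$$
which is the intertwining statement. Invariance after the canonical identifications follows immediately, and is consistent with Theorem~\ref{thm:response-intrinsicity}.

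The main obstacle is the second step. Hypothesis~\ref{hyp:package-naturality} is stated pointwise, for a single pair of metrics, whereas the response operator needs the relation along the whole family near $p_0$ with a \emph{fixed} $\varphi$, and it needs the identifications to be compatible with the fixed local trivializations. I would handle this by applying the hypothesis to each pair $(g_p,(\varphi^{-1})^*g_p)$ with the same $\varphi$. Because $\varphi$ is independent of $p$, $\Phi_\varphi$ is a single fixed linear (hence smooth) map after trivialization, so no parameter-dependent transition maps arise and differentiation commutes with it.
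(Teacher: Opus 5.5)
Your proposal is correct and follows essentially the same route as the paper's proof: naturality of the core package under $\varphi$ from Hypothesis~\ref{hyp:package-naturality}, transfer of this naturality to the functorially constructed observable, and differentiation of the resulting commutative diagram at the reference point. Your extra care in applying the pointwise hypothesis to each pair $g_p$, $(\varphi^{-1})^*g_p$ with one fixed $\varphi$ makes explicit what the paper leaves implicit. Note, however, that $\Phi_\varphi$ is a single fixed linear map only if the trivializations on the $h_0$ side are chosen as the $\varphi$-transports of those on the $g_0$ side, a choice that Proposition~\ref{prop:canonicality-package} permits.
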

\begin{proof}
By Hypothesis~\ref{hyp:package-naturality}, the core analytical package is
natural under the action of $\varphi$. The same is therefore true for every
functorially constructed analytical observable.

\smallskip
Differentiating the resulting commutative diagram at the reference Einstein
metric gives the corresponding intertwining relation for the response
operators.
\end{proof}

\subsubsection*{Observability}
The kernel of the response operator consists of infinitesimal Einstein
deformations tangent to $S$ that are invisible, to first order, to the chosen
analytical observable. Its image consists of the first-order responses that
can be produced by such deformations.

\begin{Dfn}[Infinitesimally complete observable]
\label{def:infinitesimally-complete-observable}
An analytical observable is called \emph{infinitesimally complete} at
$p_0\in S$ if
$\ker\mathcal R_{g_0} = \{0\}.$
Equivalently, distinct nonzero tangent directions in
$T_{p_0}S$ produce nonzero first-order analytical responses.
\end{Dfn}

\smallskip
This notion concerns only genuine infinitesimal deformations tangent to the
chosen smooth parameterized Einstein family. It is therefore weaker than
requiring the corresponding observation to distinguish every tangent
direction of the full Einstein matching hypersurface.

\smallskip
We now formulate this stronger condition.
\begin{Hyp}[Probe observability]
\label{hyp:probe-observability}
Let
$\Theta: \Sigma \longrightarrow Y$
be a finite-dimensional observation map defined near the reference matching
point $q_0\in\Sigma$.

\smallskip
The observation is said to satisfy the \emph{probe observability hypothesis}
at $q_0$ if
$D\Theta_{q_0}: T_{q_0}\Sigma \longrightarrow Y$
is injective.

\smallskip
If, in addition,
$\dim Y=\dim\Sigma,$
then injectivity is equivalent to bijectivity, and the observation is called
\emph{complete} at $q_0$.
\end{Hyp}
\smallskip
Probe observability is a condition on all infinitesimal matching-data
directions, whereas infinitesimal completeness concerns only directions
arising from the chosen smooth Einstein family through
$Dj_{p_0}: T_{p_0}S \longrightarrow T_{q_0}\Sigma.$

\smallskip
This matching-space notion is not an assumption of the universal detection
theorems below. It is used only when one wishes to infer detection on $S$ from
an observation already known to be injective on the larger space
$T_{q_0}\Sigma$.
The factorization
$\mathcal R_{g_0} = D\Theta_{q_0}\circ Dj_{p_0}$
makes their relationship explicit. In particular, probe observability implies
that
$\ker\mathcal R_{g_0} = \ker Dj_{p_0}.$
Hence, if the matching-point map is immersive at $p_0$, then probe
observability implies infinitesimal completeness of the restricted observable
on $S$.

\begin{Bem}
\label{rem:response-role}
The response operator is the linear object through which the canonical
analytical package interacts with the chosen Einstein deformation family.
Its definition requires neither that the entire Kuranishi germ be smooth nor
that every infinitesimal Einstein deformation be integrable. It is defined
only on
$T_{p_0}S,$
the tangent space of the chosen smooth parameterized Einstein family.

\smallskip
The following subsection derives the more explicit response identity obtained
by differentiating the harmonic probe equation,
$D(\Pi_E)_p=-J_p^{-1}K_p.$
That identity explains how the Green operator and the metric-forced Jacobi
equation generate concrete analytical observations. Combined with the
matching-data factorization developed above, it provides the mechanism
underlying the First-Order Einstein Detection Principle.
\end{Bem}

\subsection{Universal factorization of response operators}
\label{sec:universal-response}
The previous subsection associated a canonical response operator with every
smooth finite-dimensional analytical observable. We now identify the common
linear mechanism through which all observables generated by the core
analytical package respond to infinitesimal Einstein deformations.

\smallskip
The essential observation is simple but structurally important. Once the
probe data have been fixed, the core analytical package
$\mathcal A_{E,\mathrm{core}}$
is fixed. Different finite-dimensional observables are obtained by applying
different smooth postprocessing maps to this same package. Consequently,
their differentials all factor through a single linear map: the differential
of the core analytical package itself.

\smallskip
This universal factorization is the basic linear mechanism underlying the
Einstein Detection Principle.

\subsubsection*{The universal response differential}
Let
$\mathcal A_{E,\mathrm{core}}: S \longrightarrow \mathscr X_{\mathrm{core}}$
denote the descended core analytical package on the chosen smooth
parameterized Einstein family.

\smallskip
Let
$p_0\in S$
denote the parameter corresponding to the reference Einstein metric
$g_{p_0}=g_0.$
\begin{Dfn}[Universal response differential]
\label{def:universal-response-differential}
The \emph{universal response differential} at the reference metric is
$$
\mathcal D_{g_0}
:=
D_{p_0}\mathcal A_{E,\mathrm{core}}
:
T_{p_0}S
\longrightarrow
T_{\mathcal A_{E,\mathrm{core}}(p_0)}
\mathscr X_{\mathrm{core}}.
$$
\end{Dfn}
\smallskip
The map $\mathcal D_{g_0}$ records the complete first-order variation of the
core analytical package
$(J_g,G_g)$
along infinitesimal Einstein deformations tangent to the chosen smooth
parameterized Einstein family. It depends on the fixed probe package but not
on any particular finite-dimensional observation extracted from it.

\smallskip

The two components of the core package are not independent. Since
$G_p=J_p^{-1}$, differentiation of
$J_pG_p=\operatorname{Id}$
shows that the Green response is determined by the Jacobi response. This
reduces the injectivity problem for the universal package differential to a
single operator-valued linear map.

\begin{Dfn}[Effective Jacobi-response operator]
\label{def:effective-jacobi-response}
Let
\[
V:=T_{p_0}S,\qquad h_X:=D\mathbf g_{p_0}[X],
\]
and let
\[
\dot f_X:=D(\Pi_E)_{p_0}[X]=-G_0K_0[X]
\]
be the solution of the metric-forced Jacobi equation.  After the fixed local
trivializations used throughout the paper, define
\[
\mathfrak J_{g_0}:V\longrightarrow
\mathcal L\!\left(
\mathscr H_{p_0}^{k+2,\alpha},
\mathscr Y_{p_0}^{k,\alpha}
\right)
\]
by
\[
\mathfrak J_{g_0}(X)
:=
D(J\circ\Pi_E,\mathbf g)_{p_0}[X]
=
D_gJ_{(g_0,f_0)}[h_X]
+
D_fJ_{(g_0,f_0)}[\dot f_X].
\]
Equivalently, using the forced Jacobi equation,
\[
\mathfrak J_{g_0}(X)
=
D_gJ_{(g_0,f_0)}[h_X]
-
D_fJ_{(g_0,f_0)}\!\left[G_0K_0[X]\right].
\]
\end{Dfn}

Here $D_gJ$ and $D_fJ$ denote the partial derivatives of the Jacobi operator
with respect to the metric and probe variables in the fixed Banach
trivialization. Thus $\mathfrak J_{g_0}$ is computable from the infinitesimal
Einstein variation, the metric forcing $K_0[X]$, and one solution of the
normalized forced Jacobi equation.

\begin{Thm}[Effective Jacobi criterion for universal injectivity]
\label{thm:effective-jacobi-criterion}
Under the standing smoothness and normalized Jacobi-nondegeneracy hypotheses,
the universal response differential of the core package satisfies
\[
\mathcal D_{g_0}(X)
=
\left(
\mathfrak J_{g_0}(X),
-
G_0\,\mathfrak J_{g_0}(X)\,G_0
\right)
\]
in the fixed local trivialization. Consequently,
\[
\ker\mathcal D_{g_0}
=
\ker\mathfrak J_{g_0}.
\]
In particular, the core Jacobi--Green package is infinitesimally complete on
the chosen Einstein family if and only if the effective Jacobi-response
operator
\[
\mathfrak J_{g_0}:T_{p_0}S\longrightarrow
\mathcal L(\mathscr H_{p_0}^{k+2,\alpha},
\mathscr Y_{p_0}^{k,\alpha})
\]
is injective.
\end{Thm}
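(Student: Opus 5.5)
The proof is a chain-rule computation in the fixed local trivialization, followed by the derivative of operator inversion.

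First, identify the core package with the pair of maps $p\mapsto J_p$ and $p\mapsto G_p=J_p^{-1}$ into $\mathcal L(\mathscr H_{p_0}^{k+2,\alpha},\mathscr Y_{p_0}^{k,\alpha})\times\mathcal L(\mathscr Y_{p_0}^{k,\alpha},\mathscr H_{p_0}^{k+2,\alpha})$. This uses Theorem~\ref{thm:package-descent} and the trivializations of Section~\ref{sec:harmonic-probe-bundle}. The tangent space of $\mathscr X_{\mathrm{core}}$ at $(J_0,G_0)$ is then the product of these operator spaces, so $\mathcal D_{g_0}(X)$ is the pair of the two component derivatives.

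For the Jacobi component, write $J_p=J(g_p,f_p)$, where $J(g,f)=D_f\mathcal F(g,f)$ is a smooth map of $(g,f)$ by Proposition~\ref{prop:smooth-jacobi-family}. Differentiating the composition $p\mapsto(\mathbf g(p),\Pi_E(p))$ gives $D_gJ[h_X]+D_fJ[D(\Pi_E)_{p_0}X]$. By Proposition~\ref{prop:probe-response}, $D(\Pi_E)_{p_0}X=-G_0K_0[X]=\dot f_X$, so this derivative is exactly $\mathfrak J_{g_0}(X)$.

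For the Green component, use smoothness of inversion on the open set of invertible operators (Corollary~\ref{cor:persistence-nondegeneracy}). Differentiating $J_pG_p=\mathrm{Id}$ gives $\dot J G_0+J_0\dot G=0$, hence $\dot G=-G_0\dot J G_0=-G_0\mathfrak J_{g_0}(X)G_0$. This establishes the displayed formula for $\mathcal D_{g_0}$.

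For the kernel identity, one inclusion is immediate: if $\mathfrak J_{g_0}(X)=0$, both components of $\mathcal D_{g_0}(X)$ vanish. Conversely, if $\mathcal D_{g_0}(X)=0$, the first component is $\mathfrak J_{g_0}(X)=0$. The second component gives the same conclusion: since $G_0$ is an isomorphism, $G_0AG_0=0$ forces $A=J_0(G_0AG_0)J_0=0$. Hence $\ker\mathcal D_{g_0}=\ker\mathfrak J_{g_0}$. Infinitesimal completeness of the core package means $\ker\mathcal D_{g_0}=\{0\}$, so by this identity it is equivalent to injectivity of $\mathfrak J_{g_0}$.

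The main obstacle is not the calculus but the bookkeeping in the first step. One must make sure that the derivatives of $J_p$ and $G_p$ are taken in the fixed trivialization, so that they live in fixed operator spaces and can be composed with $G_0$. One must also make sure that the trivialization's own $p$-dependence is absorbed into $D_gJ$ and $D_fJ$ as in Definition~\ref{def:effective-jacobi-response}. I would state explicitly that $\mathscr X_{\mathrm{core}}$ is modeled on this product of operator spaces with its linear tangent structure, which is the convention under which the statement is formulated.
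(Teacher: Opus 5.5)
Your proposal is correct and matches the paper's proof essentially step for step. The Jacobi component is $\mathfrak J_{g_0}(X)$ by the chain rule combined with the linear response identity, the Green component comes from differentiating $J_pG_p=\mathrm{Id}$, and the kernel identity is read off from the first component. Your extra observation that the second component alone already forces $\mathfrak J_{g_0}(X)=0$, and your explicit bookkeeping about the fixed trivialization, are harmless additions that the paper leaves implicit.
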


\begin{proof}
The Jacobi component of
$D_{p_0}\mathcal A_{E,\mathrm{core}}[X]$
is, by the chain rule, precisely
$\mathfrak J_{g_0}(X)$.
Since $G_p=J_p^{-1}$, differentiation of
\[
J_pG_p=\operatorname{Id}
\]
at $p_0$ in the direction $X$ gives
\[
\mathfrak J_{g_0}(X)G_0
+
J_0\,DG_{p_0}[X]
=0.
\]
Multiplying on the left by $G_0=J_0^{-1}$ yields
\[
DG_{p_0}[X]
=
-G_0\mathfrak J_{g_0}(X)G_0.
\]
Hence the displayed formula for $\mathcal D_{g_0}(X)$ follows. Its first
component is $\mathfrak J_{g_0}(X)$, so
$\mathcal D_{g_0}(X)=0$ if and only if
$\mathfrak J_{g_0}(X)=0$.
\end{proof}

The theorem turns the abstract injectivity condition into a concrete
linearized-Jacobi calculation. Since the domain $V$ is finite dimensional,
this operator-valued injectivity can itself be certified by finitely many
linear measurements.

\begin{Thm}[Finite operator-pairing certificate]
\label{thm:finite-jacobi-pairing-certificate}
Let
\[
m=\dim V,
\]
choose a basis
\[
e_1,\ldots,e_m
\]
of $V$, and let
\[
\Lambda_1,\ldots,\Lambda_m
\]
be continuous linear functionals on the Banach space containing the Jacobi
variations
$\mathfrak J_{g_0}(V)$.
Form the matrix
\[
B_{ij}
=
\Lambda_i\!\left(\mathfrak J_{g_0}(e_j)\right).
\]
If
\[
\det B\neq0,
\]
then
\[
\mathfrak J_{g_0}
\quad\text{and hence}\quad
\mathcal D_{g_0}
\]
are injective. Therefore the chosen Einstein family is infinitesimally
observable by the core package, and the finite scalar detection and local
reconstruction conclusions of the preceding theory apply.

Conversely, if $\mathfrak J_{g_0}$ is injective, then such a collection of
$m$ continuous linear functionals exists.
\end{Thm}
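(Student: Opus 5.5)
The proof is pure finite-dimensional linear algebra combined with the Hahn--Banach theorem. For the forward direction, let $W:=\mathcal L(\mathscr H_{p_0}^{k+2,\alpha},\mathscr Y_{p_0}^{k,\alpha})$ denote the Banach space of operators. I will use the composite linear map
\[
\Lambda\circ\mathfrak J_{g_0}:V\longrightarrow\mathbb R^m,\qquad X\longmapsto\bigl(\Lambda_1(\mathfrak J_{g_0}X),\ldots,\Lambda_m(\mathfrak J_{g_0}X)\bigr).
\]
In the basis $e_1,\ldots,e_m$ of $V$ and the standard basis of $\mathbb R^m$, its matrix is exactly $B$, by linearity of $\mathfrak J_{g_0}$ and of each $\Lambda_i$. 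If $\det B\neq0$, the composite is injective, so $\mathfrak J_{g_0}$ is injective.

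Theorem~\ref{thm:effective-jacobi-criterion} gives $\ker\mathcal D_{g_0}=\ker\mathfrak J_{g_0}$, so $\mathcal D_{g_0}$ is injective as well. By Definition~\ref{def:infinitesimally-complete-observable} applied to the core package, this is infinitesimal completeness of the package on $S$. The remaining sentence about finite scalar detection and reconstruction I would cite from the preceding theory; in this proof I would only record that its injectivity hypothesis is now verified.

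For the converse, suppose $\mathfrak J_{g_0}$ is injective. Then $W_0:=\mathfrak J_{g_0}(V)$ is an $m$-dimensional subspace of $W$, and $A_j:=\mathfrak J_{g_0}(e_j)$, $j=1,\ldots,m$, is a basis of it. Let $\lambda_1,\ldots,\lambda_m$ be the dual basis on $W_0$, so that $\lambda_i(A_j)=\delta_{ij}$. These are continuous, since every linear functional on a finite-dimensional normed space is continuous. By the Hahn--Banach theorem each $\lambda_i$ extends to a continuous linear functional $\Lambda_i$ on $W$. Then $B=\mathrm{Id}$, and in particular $\det B\neq0$.

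If the statement is read with the $\Lambda_i$ defined on some other Banach space containing $\mathfrak J_{g_0}(V)$, the same argument applies there. Finite-dimensional subspaces are closed, so Hahn--Banach applies in any such ambient normed space.

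The only point needing care is that $\mathfrak J_{g_0}$ is genuinely linear and lands in the fixed space $W$. Both follow from its definition as a differential in the fixed local trivialization. Apart from this, I expect no real obstacle.
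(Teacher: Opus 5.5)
Your proposal is correct and follows the paper's proof essentially step for step. In the forward direction you read $B$ as the matrix of $X\mapsto(\Lambda_i(\mathfrak J_{g_0}X))_i$ and then invoke Theorem~\ref{thm:effective-jacobi-criterion}; in the converse you extend a dual basis of $\mathfrak J_{g_0}(V)$ by Hahn--Banach. Taking the dual basis to $\mathfrak J_{g_0}(e_1),\ldots,\mathfrak J_{g_0}(e_m)$ makes $B=\mathrm{Id}$ exactly, which is stated more precisely than in the paper; note also that Hahn--Banach does not need the subspace to be closed, so that remark is unnecessary.
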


\begin{proof}
If $\det B\neq0$ and
\[
X=\sum_{j=1}^m x_je_j
\]
satisfies $\mathfrak J_{g_0}(X)=0$, then
\[
0
=
\Lambda_i(\mathfrak J_{g_0}(X))
=
\sum_{j=1}^m B_{ij}x_j
\]
for every $i$. Thus $Bx=0$, and invertibility of $B$ gives $x=0$.
Theorem~\ref{thm:effective-jacobi-criterion} then gives injectivity of
$\mathcal D_{g_0}$.

Conversely, if $\mathfrak J_{g_0}$ is injective, then
$\mathfrak J_{g_0}(V)$ is an $m$-dimensional subspace of the ambient Banach
operator space. Choose a basis of its dual and extend those covectors to
continuous linear functionals on the ambient Banach space by Hahn--Banach.
The resulting matrix is the identity after choosing the corresponding basis
of $\mathfrak J_{g_0}(V)$, and hence is invertible.
\end{proof}

\begin{Bem}[Scalar cohomogeneity-one implementation]
\label{rem:compressed-scalar-implementation}
For a scalar equivariant reduction written in divergence form
\[
-\frac1{V_g}\frac{d}{dt}\left(V_g\rho'\right)
+U_\rho(t,\rho;g)=0,
\qquad A_g=(\log V_g)',
\]
the reference Jacobi operator is
\[
J_0v=-v''-A_0v'+U_{\rho\rho}(t,\rho_0;g_0)v.
\]
For a genuine Einstein tangent direction $X$, let
\[
\dot A_X=D_gA_{g_0}[h_X],\qquad
\dot U_{\rho,X}=D_g[U_\rho(t,\rho_0;g)]_{g_0}[h_X].
\]
The induced probe variation $w_X$ is the normalized solution of
\[
J_0w_X=\dot A_X\rho_0'-\dot U_{\rho,X},
\]
and direct differentiation gives
\begin{equation}
\label{eq:compressed-effective-jacobi}
\mathfrak J_{g_0}(X)v
=
-\dot A_Xv'
+
\left(
\dot U_{\rho\rho,X}
+
U_{\rho\rho\rho}(t,\rho_0;g_0)w_X
\right)v.
\end{equation}
Thus Theorem~\ref{thm:finite-jacobi-pairing-certificate} reduces, after
symmetry reduction, to linear regular-singular ODE solves followed by a
finite-dimensional rank test.
For the doubly warped identity-type sphere probe
\[
g=dt^2+a(t)^2g_{S^p}+b(t)^2g_{S^q},
\qquad
U(t,\rho;g)=
\frac p{2a(t)^2}\sin^2\rho+
\frac q{2b(t)^2}\cos^2\rho,
\]
put
\[
C_g=\frac p{a^2}-\frac q{b^2},
\qquad
\dot C_X=-2p\frac{\alpha_X}{a_0^3}
+2q\frac{\beta_X}{b_0^3}.
\]
Then
\begin{equation}
\label{eq:compressed-identity-response}
\mathfrak J_{g_0}(X)v
=
-\dot A_Xv'
+
\left[
\dot C_X\cos(2\rho_0)
-
2C_0\sin(2\rho_0)w_X
\right]v.
\end{equation}
These formulas are included to demonstrate that the abstract criterion
is concretely computable in the standard scalar model.
\end{Bem}

\subsubsection*{Package-generated observables}
\begin{Dfn}[Core-package-generated observable]
Let $Y$ be a finite-dimensional vector space. A smooth observable
$\Theta: S \longrightarrow Y$
is called \emph{core-package-generated} if there exists a smooth map
$\Psi: \mathscr X_{\mathrm{core}} \longrightarrow Y$
such that
$\Theta = \Psi\circ \mathcal A_{E,\mathrm{core}}.$
\end{Dfn}
\smallskip
Thus different core-package-generated observables differ only through the
choice of the finite-dimensional postprocessing map $\Psi$.

\smallskip
Typical examples include finite-dimensional evaluations, matrix coefficients,
traces or other smooth functionals constructed from the normalized Jacobi and
Green operators.

\smallskip
When the additional hypotheses required for the extended analytical package
are satisfied, one may similarly form observables from spectral projectors,
heat operators, spectrally separated eigenvalue data, zeta functions or
regularized determinants. Such observables are
\emph{extended-package-generated} and are not required for the Einstein
Detection Principle.

\subsubsection*{Universal factorization theorem}
\begin{Thm}[Universal factorization of response operators]
\label{thm:universal-response}
Let
$\Theta = \Psi\circ \mathcal A_{E,\mathrm{core}} : S \longrightarrow Y$
be a core-package-generated observable, and let
$\mathcal R_{g_0} = D_{p_0}\Theta: T_{p_0}S \longrightarrow Y$
be its canonical response operator.

\smallskip
Then
$\mathcal R_{g_0} = D_{\mathcal A_{E,\mathrm{core}}(p_0)}\Psi \circ \mathcal D_{g_0}.$
Equivalently,
$$D_{p_0}\Theta = D_{\mathcal A_{E,\mathrm{core}}(p_0)}\Psi \circ D_{p_0}\mathcal A_{E,\mathrm{core}}.$$
Consequently, every core-package-generated response operator factors through
the single universal response differential
$\mathcal D_{g_0}.$
\end{Thm}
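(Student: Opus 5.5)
The plan is a direct application of the chain rule for smooth maps between Banach manifolds. Nothing beyond the smoothness results already established is needed.

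First I record the smoothness inputs. By Theorem~\ref{thm:package-descent}, the descended core package $\mathcal A_{E,\mathrm{core}}:S\to\mathscr X_{\mathrm{core}}$ is a smooth map. This is the smoothness statement of Proposition~\ref{prop:analytic-package-setup}, transported through the canonical identifications. Its differential at $p_0$ is therefore a bounded linear map
\[
T_{p_0}S\longrightarrow T_{\mathcal A_{E,\mathrm{core}}(p_0)}\mathscr X_{\mathrm{core}},
\]
and Definition~\ref{def:universal-response-differential} calls this map $\mathcal D_{g_0}$.

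By the definition of a core-package-generated observable, $\Psi$ is smooth on a neighbourhood of $\mathcal A_{E,\mathrm{core}}(p_0)$. Its differential $D_{\mathcal A_{E,\mathrm{core}}(p_0)}\Psi$ is a bounded linear map into $T_{\Theta(p_0)}Y\cong Y$, using that $Y$ is a finite-dimensional vector space.

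Second, I apply the chain rule. The composition $\Theta=\Psi\circ\mathcal A_{E,\mathrm{core}}$ is smooth. Its Fréchet differential at $p_0$ is the composite of the two differentials, so
\[
D_{p_0}\Theta=D_{\mathcal A_{E,\mathrm{core}}(p_0)}\Psi\circ D_{p_0}\mathcal A_{E,\mathrm{core}}=D_{\mathcal A_{E,\mathrm{core}}(p_0)}\Psi\circ\mathcal D_{g_0}.
\]
By Definition~\ref{def:canonical-response-operator}, the left-hand side is $\mathcal R_{g_0}$, which gives the asserted identity. The final sentence of the theorem is simply a restatement of this identity: $\mathcal D_{g_0}$ depends only on the package, and not on $\Psi$.

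The one point that needs care is that the factorization holds independently of the chosen trivialization and representative. Otherwise $\mathcal D_{g_0}$ would not be a single well-defined object. I would deal with this using Proposition~\ref{prop:canonicality-package} and Theorem~\ref{thm:response-intrinsicity}.

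Suppose we change the local slice or the trivialization, acting on $\mathscr X_{\mathrm{core}}$ by a smooth identification $\iota$. Then $\mathcal A_{E,\mathrm{core}}$ is replaced by $\iota\circ\mathcal A_{E,\mathrm{core}}$. Since the observation is canonical (the postprocessing respects the identifications), $\Psi$ is replaced by $\Psi\circ\iota^{-1}$. Applying the chain rule again, the two factors change by $D\iota$ and $D\iota^{-1}$, and these cancel in the composite. So the identity is intrinsic.

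I expect no genuine obstacle. The only things to check are that $\Psi$ is defined on an open neighbourhood of the reference package value, so that its differential exists, and that all differentials are taken in the same fixed trivialization.
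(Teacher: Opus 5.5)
Your proposal is correct and follows the paper's proof, which is exactly the chain rule applied to the smooth composite $\Theta=\Psi\circ\mathcal A_{E,\mathrm{core}}$. The extra paragraph on invariance under changes of trivialization is not needed for the stated identity, which the paper proves in a fixed representation; it is harmless and consistent with Theorem~\ref{thm:response-intrinsicity}.
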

\begin{proof}
Both maps
$\mathcal A_{E,\mathrm{core}}: S \longrightarrow \mathscr X_{\mathrm{core}}$
and
$\Psi: \mathscr X_{\mathrm{core}} \longrightarrow Y$
are smooth. Since
$\Theta = \Psi\circ \mathcal A_{E,\mathrm{core}},$
the asserted factorization follows immediately from the chain rule.
\end{proof}
\smallskip
The same argument applies to an extended-package-generated observable on any
parameter region on which the relevant extended spectral data are defined
smoothly. The Einstein Detection Principle itself, however, requires only the
core factorization above.

\subsubsection*{Equality criterion}
The universal factorization also characterizes when two postprocessings have
the same first-order response.
\begin{Prop}[Equality criterion]
\label{prop:response-equality-criterion}
Let
$\Theta_i = \Psi_i\circ \mathcal A_{E,\mathrm{core}}, \qquad i=1,2,$
be two core-package-generated observables with values in the same
finite-dimensional vector space $Y$.

\smallskip
Then
$D_{p_0}\Theta_1 = D_{p_0}\Theta_2$
if and only if
$$
D_{\mathcal A_{E,\mathrm{core}}(p_0)}\Psi_1
\big|_{\operatorname{Im}\mathcal D_{g_0}}
=
D_{\mathcal A_{E,\mathrm{core}}(p_0)}\Psi_2
\big|_{\operatorname{Im}\mathcal D_{g_0}}.
$$
\end{Prop}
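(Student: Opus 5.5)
The plan is to reduce the statement to a linear-algebra identity through the universal factorization of Theorem~\ref{thm:universal-response}. Write $a_0:=\mathcal A_{E,\mathrm{core}}(p_0)$ and $L_i:=D_{a_0}\Psi_i:T_{a_0}\mathscr X_{\mathrm{core}}\to Y$. Both $\Theta_1$ and $\Theta_2$ are core-package-generated. Applying Theorem~\ref{thm:universal-response} to each of them gives
\[
D_{p_0}\Theta_i=L_i\circ\mathcal D_{g_0},\qquad i=1,2,
\]
where $\mathcal D_{g_0}:T_{p_0}S\to T_{a_0}\mathscr X_{\mathrm{core}}$ is the universal response differential of Definition~\ref{def:universal-response-differential}. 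Both sides of the claimed equivalence then become statements about the two linear maps $L_1,L_2$ and the single linear map $\mathcal D_{g_0}$.

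For the implication from the restriction condition to equality of responses, take $X\in T_{p_0}S$. The vector $\mathcal D_{g_0}(X)$ lies in $\operatorname{Im}\mathcal D_{g_0}$, so by hypothesis $L_1(\mathcal D_{g_0}X)=L_2(\mathcal D_{g_0}X)$. By the factorization this reads $D_{p_0}\Theta_1[X]=D_{p_0}\Theta_2[X]$.

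For the converse, suppose $D_{p_0}\Theta_1=D_{p_0}\Theta_2$ and take any $\xi\in\operatorname{Im}\mathcal D_{g_0}$. Choose $X$ with $\xi=\mathcal D_{g_0}X$. Then
\[
L_1\xi=D_{p_0}\Theta_1[X]=D_{p_0}\Theta_2[X]=L_2\xi,
\]
so the restrictions agree. An equivalent formulation is that $L_1-L_2$ vanishes on the image of $\mathcal D_{g_0}$, which is exactly $(L_1-L_2)\circ\mathcal D_{g_0}=0$. Since $T_{p_0}S$ is finite dimensional, the image is a finite-dimensional subspace, so no closure or topological issue arises in restricting $L_1$ and $L_2$ to it.

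No step here is a real obstacle. The one point to check is that both $L_1$ and $L_2$ are differentials at the same base point $a_0$ and act on the same tangent space $T_{a_0}\mathscr X_{\mathrm{core}}$, so comparing their restrictions makes sense. This holds because both observables are built from the same descended package $\mathcal A_{E,\mathrm{core}}$, which by Theorem~\ref{thm:package-descent} is well defined up to canonical identifications that act identically on both.
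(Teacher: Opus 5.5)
Your proposal is correct and follows the paper's proof: both apply Theorem~\ref{thm:universal-response} to write $D_{p_0}\Theta_i = D_{\mathcal A_{E,\mathrm{core}}(p_0)}\Psi_i\circ\mathcal D_{g_0}$, and then observe that $(D\Psi_1-D\Psi_2)\circ\mathcal D_{g_0}=0$ exactly when $D\Psi_1$ and $D\Psi_2$ agree on $\operatorname{Im}\mathcal D_{g_0}$. The only difference is that you write out both directions of this last equivalence explicitly, whereas the paper leaves them implicit.
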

\begin{proof}
By Theorem~\ref{thm:universal-response},
$D_{p_0}\Theta_i = D_{\mathcal A_{E,\mathrm{core}}(p_0)}\Psi_i \circ \mathcal D_{g_0}.$
Subtracting the two identities gives
$$
D_{p_0}\Theta_1
-
D_{p_0}\Theta_2
=
\left(
D\Psi_1-D\Psi_2
\right)_{\mathcal A_{E,\mathrm{core}}(p_0)}
\circ
\mathcal D_{g_0}.
$$
The left-hand side vanishes precisely when the two differentials
$D\Psi_1$ and $D\Psi_2$ agree on
$\operatorname{Im}\mathcal D_{g_0}.$
\end{proof}

\subsubsection*{Naturality}
The universal response differential inherits the naturality of the descended
core analytical package.
\begin{Prop}[Naturality of the universal response differential]
\label{prop:universal-response-naturality}
Let
$\varphi: (M,g_0) \longrightarrow (M,h_0)$
be a normalization-preserving equivariant isometry preserving the fixed probe
data.

\smallskip
Suppose that the corresponding smooth parameterized Einstein families are
identified by a local diffeomorphism
$F_\varphi: S_g \longrightarrow S_h$
sending the parameter of $g_0$ to the parameter of $h_0$, and let
$\Xi_\varphi: \mathscr X_{\mathrm{core},g} \longrightarrow \mathscr X_{\mathrm{core},h}$
denote the canonical identification of the corresponding core-package data
induced by $\varphi$.

\smallskip
Then
$\mathcal A_{E,\mathrm{core},h}\circ F_\varphi = \Xi_\varphi\circ \mathcal A_{E,\mathrm{core},g}.$
Differentiating at the reference point gives
$\mathcal D_{h_0} \circ D F_\varphi = D\Xi_\varphi \circ \mathcal D_{g_0}.$
Hence the universal response differential is natural under
normalization-preserving equivariant isometries preserving the probe data.
\end{Prop}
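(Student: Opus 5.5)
The identity $\mathcal A_{E,\mathrm{core},h}\circ F_\varphi=\Xi_\varphi\circ\mathcal A_{E,\mathrm{core},g}$ will be derived pointwise on $S_g$ from Hypothesis~\ref{hyp:package-naturality}. The differential relation then follows from the chain rule at $p_0$, in the same way as in Proposition~\ref{prop:response-naturality}.

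First, fix $p\in S_g$ near the reference parameter and let $q=F_\varphi(p)$. The identification of the two families via $\varphi$ means that $h_q$ is the metric corresponding to $g_p$ under the normalization-preserving equivariant diffeomorphism induced by $\varphi$. Concretely, I take this to say that $h_q=(\varphi^{-1})^*g_p$, possibly after composing with a smoothly $p$-dependent transition map of the kind allowed in Theorem~\ref{thm:package-descent}.

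Hypothesis~\ref{hyp:package-naturality} then says two things. The canonical probe of $h_q$ is the image of the canonical probe of $g_p$ under the induced action. The Jacobi and Green operators are intertwined by the pullback/pushforward identification; this identification is exactly what $\Xi_\varphi$ encodes on $\mathscr X_{\mathrm{core}}$. Hence $(J_{h_q},G_{h_q})=\Xi_\varphi(J_{g_p},G_{g_p})$ for all $p$ near $p_0$, which is the asserted identity.

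The local uniqueness in Theorem~\ref{thm:probe-existence} is needed to ensure that the image probe is the canonical one, namely the continuation of the fixed reference probe. Since $\varphi$ preserves the probe data, it maps $f_0$ to the reference probe of $h_0$. By continuity, it maps the branch near $f_0$ into the neighbourhood $\mathcal V$, where solutions are unique.

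Second, $F_\varphi$ is smooth by assumption, and both packages are smooth by Theorem~\ref{thm:package-descent}. The identification $\Xi_\varphi$ is also smooth. Indeed, it is a fixed map induced by conjugation with the isomorphisms $\varphi_*$ between the fixed trivialized Banach spaces, so it is even linear (or affine) in the trivialization. Differentiating the identity at $p_0$ and using $F_\varphi(p_0)=q_0$ gives
\[
D_{q_0}\mathcal A_{E,\mathrm{core},h}\circ DF_\varphi
=D\Xi_\varphi\circ D_{p_0}\mathcal A_{E,\mathrm{core},g},
\]
which reads $\mathcal D_{h_0}\circ DF_\varphi=D\Xi_\varphi\circ\mathcal D_{g_0}$.

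The main obstacle is the bookkeeping in the first step. The identification $\Xi_\varphi$ must be the same fixed map for all nearby $p$, and not a $p$-dependent conjugation. I would handle this by choosing the local trivializations on the $h$-side as the pushforwards under $\varphi$ of those on the $g$-side. By Proposition~\ref{prop:canonicality-package}, the descended package does not depend on this choice, and with it $\Xi_\varphi$ becomes constant.

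If $F_\varphi$ only relates the families up to parameter-dependent transition diffeomorphisms, those are absorbed using the smooth compatibility assumed in Theorem~\ref{thm:package-descent}. The resulting extra terms in the differential lie in the canonical identifications and vanish after descent.
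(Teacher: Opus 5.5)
Your proposal is correct and follows the paper's proof: the identity $\mathcal A_{E,\mathrm{core},h}\circ F_\varphi=\Xi_\varphi\circ\mathcal A_{E,\mathrm{core},g}$ comes from the naturality encoded in Hypothesis~\ref{hyp:package-naturality}, Theorem~\ref{thm:package-descent} and Proposition~\ref{prop:canonicality-package}, and the differential relation is then the chain rule at $p_0$. Your extra bookkeeping goes beyond what the paper writes: you use local uniqueness to identify the transported probe with the canonical branch, and pushed-forward trivializations to make $\Xi_\varphi$ a fixed linear map. Only your closing remark is loose: terms coming from parameter-dependent transition maps disappear only modulo the canonical identifications, not in a fixed chart.
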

\begin{proof}
The first identity is precisely the naturality of the descended core
analytical package established in
Theorem~\ref{thm:package-descent} and
Proposition~\ref{prop:canonicality-package}.

\smallskip
Differentiating this identity and applying the chain rule yields
$D\mathcal A_{E,\mathrm{core},h} \circ D F_\varphi = D\Xi_\varphi \circ D\mathcal A_{E,\mathrm{core},g}.$
By the definition of the universal response differentials, this is exactly
$\mathcal D_{h_0} \circ D F_\varphi = D\Xi_\varphi \circ \mathcal D_{g_0}.$
\end{proof}

\subsubsection*{Universal invisible directions}
The universal response differential determines the infinitesimal directions
that are invisible to every core-package-generated observable.
\begin{Dfn}[Universal invisible space]
\label{def:universal-invisible-space}
The \emph{universal invisible space} at $g_0$ is
$\mathcal K_{g_0} := \ker\mathcal D_{g_0} \subset T_{p_0}S.$
\end{Dfn}
If
$X\in\mathcal K_{g_0},$
then for every core-package-generated observable
$\Theta = \Psi\circ \mathcal A_{E,\mathrm{core}},$
one has
$D_{p_0}\Theta[X] = 0.$
Thus no first-order observation obtained solely by smooth postprocessing of
the core analytical package can detect a direction in
$\mathcal K_{g_0}$.

\smallskip
Conversely, whether every direction outside
$\mathcal K_{g_0}$ can be detected by an admissible finite-dimensional
postprocessing depends on the richness of the chosen class of admissible
observations. This additional issue will be treated explicitly below.

\begin{Bem}
\label{rem:meaning-universal-response}
The significance of
Theorem~\ref{thm:universal-response}
is not the chain rule itself, but the identification of a single universal
linear object,
$\mathcal D_{g_0} = D_{p_0}\mathcal A_{E,\mathrm{core}},$
through which every core-package-generated first-order response factors.

\smallskip
This separates the detection problem into two logically distinct questions.
\begin{enumerate}
\item
Is the universal response differential
$\mathcal D_{g_0}$
injective on
$T_{p_0}S?$
\item
If a vector
$0\neq Z \in \operatorname{Im}\mathcal D_{g_0}$
is present in the observable image, does the admissible class of
finite-dimensional postprocessings contain a differential that does not
annihilate $Z$?
\end{enumerate}
\smallskip
The first question concerns the intrinsic information retained by the core
analytical package. The second concerns the richness of the admissible
observation class.

\smallskip
Together they form the analytical foundation of the Einstein Detection
Principle.
\end{Bem}

\subsection{Observability theory for Einstein metrics}
The Universal Factorization Theorem shows that every
core-package-generated observable determines a linear response operator on the
chosen smooth parameterized Einstein family. The purpose of the present
subsection is to study the intrinsic linear algebra of these response
operators and to identify the maximal first-order information carried by the
core analytical package.

\smallskip
Let
$p_0\in S$
denote the parameter corresponding to the reference Einstein metric
$g_{p_0}=g_0,$
and write
$V:=T_{p_0}S.$
For a core-package-generated observable
$\Theta = \Psi\circ\mathcal A_{E,\mathrm{core}} : S\longrightarrow Y,$
with values in a finite-dimensional vector space $Y$, the associated response
operator is
$\mathcal R_{g_0} = D_{p_0}\Theta: V\longrightarrow Y.$
All statements below are formulated on the fixed smooth parameterized Einstein
family $S$. No tangent-cone response theory and no smoothness assumption on
the full Kuranishi zero set are used.

\subsubsection*{Invisible directions and response spaces}
\begin{Dfn}[Invisible deformation space]
The \emph{invisible deformation space} of the observable $\Theta$ at $g_0$ is
$\mathcal I_{g_0}(\Theta) := \ker\mathcal R_{g_0} \subset V.$
The corresponding \emph{response space} is
$\mathcal O_{g_0}(\Theta) := \operatorname{Im}\mathcal R_{g_0} \subset Y.$
\end{Dfn}

\smallskip
Thus $\mathcal I_{g_0}(\Theta)$ consists precisely of those infinitesimal
Einstein variations tangent to $S$ that produce no first-order variation of
the chosen observable.

\smallskip
A vector
$X\in V$
is called \emph{infinitesimally detectable by $\Theta$} if
$\mathcal R_{g_0}(X)\neq0.$
\subsubsection*{Infinitesimal completeness}
\begin{Dfn}[Infinitesimal completeness]
The observable $\Theta$ is called \emph{infinitesimally complete} at $g_0$
if
$\ker\mathcal R_{g_0} = \{0\}.$
\end{Dfn}

\begin{Prop}
\label{prop:equivalent-completeness}
The following conditions are equivalent.
\begin{enumerate}
\item
$\ker\mathcal R_{g_0} = \{0\};$
\item
$\mathcal R_{g_0}$ is injective;
\item
every nonzero element of $V$ is infinitesimally detectable by $\Theta$;
\item
distinct elements of $V$ produce distinct first-order responses.
\end{enumerate}
\end{Prop}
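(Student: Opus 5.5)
The plan is to establish the cycle of implications (1)$\Rightarrow$(2)$\Rightarrow$(3)$\Rightarrow$(4)$\Rightarrow$(1). Everything rests on one fact: the response operator $\mathcal R_{g_0}=D_{p_0}\Theta:V\to Y$ is a linear map between finite-dimensional vector spaces. This holds because $\Theta=\Psi\circ\mathcal A_{E,\mathrm{core}}$ is smooth, and its differential at $p_0$ is linear by definition; by Theorem~\ref{thm:universal-response} it even factors as $D\Psi\circ\mathcal D_{g_0}$. No analytic input beyond this linearity is needed.

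For (1)$\Leftrightarrow$(2) I would apply the standard criterion that a linear map is injective if and only if its kernel is trivial. Given $\mathcal R_{g_0}X_1=\mathcal R_{g_0}X_2$, linearity gives $\mathcal R_{g_0}(X_1-X_2)=0$, so $X_1=X_2$ whenever the kernel is zero; the converse is immediate. For (1)$\Leftrightarrow$(3), I would unwind the definition: $X$ is infinitesimally detectable exactly when $\mathcal R_{g_0}(X)\neq0$, that is, when $X\notin\ker\mathcal R_{g_0}$. So every nonzero $X$ is detectable precisely when $\ker\mathcal R_{g_0}\subset\{0\}$. Finally, (4) is a verbatim restatement of injectivity, namely that $X_1\neq X_2$ implies $\mathcal R_{g_0}X_1\neq\mathcal R_{g_0}X_2$, so (2)$\Leftrightarrow$(4).

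I do not expect any real obstacle. The only point to watch is that "first-order response" in (4) must mean the value $\mathcal R_{g_0}(X)=D_{p_0}\Theta[X]$ itself, not some quotient of it. It also follows from the definitions that the statement concerns parameter directions in $V=T_{p_0}S$, not metric variations. So if $\mathbf g$ is not immersive, the redundant directions in $\ker D\mathbf g_{p_0}$ automatically lie in $\ker\mathcal R_{g_0}$, because $K_{p_0}$ contains the factor $D\mathbf g_{p_0}$, and completeness then fails; this does not affect the equivalences themselves.
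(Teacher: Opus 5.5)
Your proposal is correct and follows the paper's own argument: the paper's proof is the one-line observation that all four conditions are equivalent formulations of injectivity of the linear map $\mathcal R_{g_0}:V\to Y$. Your kernel argument via $\mathcal R_{g_0}(X_1-X_2)=0$ and your unwinding of the definition of detectability simply spell this out.
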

\begin{proof}
These are equivalent formulations of injectivity of the linear map
$\mathcal R_{g_0}:V\longrightarrow Y.$
\end{proof}

\subsubsection*{Observable quotient}
Since invisible deformation directions cannot be distinguished by the chosen
first-order observation, it is natural to quotient them out.
\begin{Dfn}[Observable infinitesimal quotient]
The \emph{observable infinitesimal quotient} associated with $\Theta$ is
$V_{\mathrm{obs}}(\Theta) := V/\ker\mathcal R_{g_0}.$
\end{Dfn}

\smallskip
The response operator induces a canonical linear isomorphism
$$
\overline{\mathcal R}_{g_0}:
V_{\mathrm{obs}}(\Theta)
\stackrel{\cong}{\longrightarrow}
\operatorname{Im}\mathcal R_{g_0},
$$
defined by
$[X] \longmapsto \mathcal R_{g_0}(X).$
Thus the observable quotient is canonically identified with the space of
realizable first-order responses.

\subsubsection*{Observability rank and stable observability}
The rank of the response operator measures the dimension of the
first-order information retained by the observable.
\begin{Dfn}[Observability rank]
The \emph{observability rank} of $\Theta$ at $g_0$ is
$\operatorname{orank}_{g_0}(\Theta) := \operatorname{rank}\mathcal R_{g_0}.$
\end{Dfn}
It satisfies
$0 \le \operatorname{orank}_{g_0}(\Theta) \le \dim V,$
with equality
$\operatorname{orank}_{g_0}(\Theta)=\dim V$
if and only if $\Theta$ is infinitesimally complete.

\smallskip
Injectivity is qualitative. A quantitative version measures how robustly the
observable distinguishes infinitesimal Einstein directions.

\begin{Dfn}[Stable infinitesimal observability]
Fix norms on $V$ and $Y$. The observable $\Theta$ is called
\emph{stably infinitesimally observable} at $g_0$ if there exists a constant
$c>0$
such that
$\left| \mathcal R_{g_0}X \right| \ge c|X|$
for every
$X\in V.$
\end{Dfn}

\begin{Prop}
\label{prop:stable-observability}
Since $V$ is finite-dimensional, stable infinitesimal observability is
equivalent to injectivity of
$\mathcal R_{g_0}.$
If Hilbert norms are fixed on $V$ and $Y$, then the optimal constant is the
least singular value
$\sigma_{\min}(\mathcal R_{g_0}).$
\end{Prop}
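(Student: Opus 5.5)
The plan is to prove the two assertions separately, using only finite-dimensional linear algebra on $V=T_{p_0}S$ with the fixed norms.

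\emph{Equivalence with injectivity.} First, stable observability implies injectivity. If $\mathcal R_{g_0}X=0$, then $0\ge c|X|$ with $c>0$, so $X=0$.

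For the converse, assume $\mathcal R_{g_0}$ is injective and consider the unit sphere $\{X\in V:|X|=1\}$. Since $\dim V<\infty$, this sphere is compact, and every norm on $V$ is equivalent to a Euclidean one, so compactness does not depend on the chosen norm. The map $X\mapsto|\mathcal R_{g_0}X|$ is continuous, because $\mathcal R_{g_0}$ is a linear map between finite-dimensional spaces and hence bounded. It therefore attains its minimum $c$ on the sphere. Injectivity makes this minimum strictly positive. Homogeneity, applied to $X/|X|$, then gives $|\mathcal R_{g_0}X|\ge c|X|$ for all $X\ne0$, and the case $X=0$ is trivial. If $V=\{0\}$, the statement holds vacuously with any $c>0$.

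\emph{Optimal constant.} Now fix Hilbert norms on $V$ and $Y$. Then
\[
|\mathcal R_{g_0}X|^2=\langle \mathcal R_{g_0}^*\mathcal R_{g_0}X,X\rangle .
\]
The operator $\mathcal R_{g_0}^*\mathcal R_{g_0}$ is self-adjoint and positive semidefinite on $V$. By the Rayleigh quotient characterization, its smallest eigenvalue is
\[
\min_{|X|=1}|\mathcal R_{g_0}X|^2 .
\]
This eigenvalue equals $\sigma_{\min}(\mathcal R_{g_0})^2$, where $\sigma_{\min}$ is defined as the least singular value of $\mathcal R_{g_0}$ viewed on $V$. It is attained at a unit eigenvector $X_0$. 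Hence $c=\sigma_{\min}$ is admissible, and no larger $c$ works, since $X_0$ gives equality.

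\emph{Main point of care.} The argument is routine; the one step to watch is the convention for $\sigma_{\min}$. It must mean the smallest of the $\dim V$ singular values, counted on the domain $V$, and not the smallest nonzero singular value. With this convention the statement is consistent: $\sigma_{\min}=0$ holds exactly when $\mathcal R_{g_0}$ fails to be injective, which matches the first part of the proposition.
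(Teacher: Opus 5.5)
Your proof is correct and follows essentially the same route as the paper. Bounded below gives injectivity. For the converse you minimize $X\mapsto|\mathcal R_{g_0}X|$ over the compact unit sphere of $V$, and for Hilbert norms the optimal constant is the least singular value. Your Rayleigh-quotient argument on $\mathcal R_{g_0}^*\mathcal R_{g_0}$, together with your remark that $\sigma_{\min}$ must be counted over all $\dim V$ singular values, makes explicit what the paper's one-line final step leaves implicit.
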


\begin{proof}
If the response operator is bounded below, it is injective.

\smallskip
Conversely, if $\mathcal R_{g_0}$ is injective, the continuous function
$X \longmapsto |\mathcal R_{g_0}X|$
has a positive minimum on the unit sphere of the finite-dimensional space
$V$. This gives the required lower bound.

\smallskip
For Hilbert norms, the optimal lower bound is the least singular value.
\end{proof}

\begin{Bem}[Dependence of quantitative constants on auxiliary norms]
The injectivity, kernel and rank of $\mathcal R_{g_0}$ are intrinsic under
reparameterization of the chosen Einstein family and under the canonical
identifications used in the descent construction. By contrast, the numerical
value of $\sigma_{\min}(\mathcal R_{g_0})$ depends on the Hilbert norms chosen
on the domain and observation spaces. Under changes of bases that are
isometries for these fixed Hilbert structures the singular values are
unchanged, whereas under a general coordinate reparameterization they need
not be. Thus a quoted singular-value bound is a quantitative statement
relative to explicitly fixed norms and coordinates, while positivity of the
least singular value is equivalent to the intrinsic injectivity statement.
\end{Bem}

\subsubsection*{Redundant parameter directions}
The response theory cannot detect a direction that does not change the
underlying metric.

\begin{Prop}[Redundant parameter directions are invisible]
\label{prop:redundant-directions-invisible}
Let $p\in S$ and let $X\in T_pS$. If $D\mathbf g_p[X]=0$, then
$D(\Pi_E)_p[X]=0$ and
$D\mathcal A_{E,\mathrm{core},p}[X]=0$.
Consequently every core-package-generated response operator vanishes on $X$.
In particular, injectivity of the universal response differential at $p$
forces $D\mathbf g_p$ to be injective.
\end{Prop}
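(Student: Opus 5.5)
The plan is to show that every object in the core package depends on $p$ only through the metric $g_p=\mathbf g(p)$. The vanishing statements then follow from the chain rule applied to $D\mathbf g_p[X]=0$.

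\emph{Probe.} By Theorem~\ref{thm:probe-existence}, $\Pi_E=\Pi\circ\mathbf g$ with $\Pi:\mathcal U\to\mathcal V$ smooth. Hence
\[
D(\Pi_E)_p[X]=D\Pi_{g_p}\bigl[D\mathbf g_p[X]\bigr]=0 .
\]
Equivalently, Proposition~\ref{prop:probe-response} gives $D(\Pi_E)_p[X]=-G_pK_p[X]$. Since $K_p=D_g\mathcal F(g_p,f_p)\circ D\mathbf g_p$, the forcing $K_p[X]$ vanishes, and so does the probe variation.

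\emph{Core package.} Locally the Jacobi component is $p\mapsto D_f\mathcal F(g_p,\Pi(g_p))$. This is the composition of $\mathbf g$ with the smooth map $g\mapsto D_f\mathcal F(g,\Pi(g))$ from Proposition~\ref{prop:smooth-jacobi-family}. Its differential in the direction $X$ is therefore zero; in the notation of Definition~\ref{def:effective-jacobi-response} it reads $D_gJ[0]+D_fJ[0]=0$. Theorem~\ref{thm:effective-jacobi-criterion} (or direct differentiation of $J_pG_p=\operatorname{Id}$) gives
\[
DG_p[X]=-G_p\,DJ_p[X]\,G_p=0 .
\]
Thus $D\mathcal A_{E,\mathrm{core},p}[X]=0$.

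\emph{Main obstacle: descent and trivializations.} The only delicate point is that $\mathcal A_{E,\mathrm{core}}$ is a descended object, defined through local trivializations and possibly parameter-dependent gauge identifications. One must check that these identifications do not reintroduce $p$-dependence beyond $g_p$. The plan is to work in one fixed local trivialization near $p_0$, as fixed in Section~\ref{sec:harmonic-probe-bundle}. There the fibres are identified with fixed Banach spaces by a smooth identification depending on the metric, so the representative is still a composite through $\mathbf g$. If descent transition maps from Theorem~\ref{thm:package-descent} are involved, I would note that the value of $D\mathcal A$ at $p$ does not depend on the representative used, by Proposition~\ref{prop:canonicality-package}. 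It therefore suffices to compute in one representative of this form.

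\emph{Consequences.} Theorem~\ref{thm:universal-response} gives, for every core-package-generated observable,
\[
D_p\Theta[X]=D\Psi\circ\mathcal D[X]=0 .
\]
Finally, $\ker D\mathbf g_p\subset\ker D_p\mathcal A_{E,\mathrm{core}}$. So if the universal response differential is injective, $\ker D\mathbf g_p=\{0\}$, i.e.\ $D\mathbf g_p$ is injective.
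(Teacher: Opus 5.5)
Your proof is correct and is essentially the paper's argument. In both, $K_p[X]=0$ forces $D(\Pi_E)_p[X]=-G_pK_p[X]=0$; the core package depends on $p$ only through $(g_p,f_p)$, so its first variation vanishes; and universal factorization then kills every core-package-generated response.

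Your additions make explicit what the paper's one-line proof leaves implicit: the formula $DG_p[X]=-G_p\,DJ_p[X]\,G_p$, the inclusion $\ker D\mathbf g_p\subset\ker\mathcal D$ for the final claim, and computing in a trivialization that factors through $\mathbf g$. One small correction to your descent remark: $D\mathcal A$ is not literally independent of the representative, since $p$-dependent identifications add extra terms. Likewise, the hypothesis $D\mathbf g_p[X]=0$ refers to the given parameterization $\mathbf g$. What does the work is computing in that representative, which you do anyway.
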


\begin{proof}
By definition,
$K_p=D_g\mathcal F(g_p,f_p)\circ D\mathbf g_p$, so
$D\mathbf g_p[X]=0$ implies $K_p[X]=0$. The response identity
$D(\Pi_E)_p=-G_pK_p$ gives $D(\Pi_E)_p[X]=0$.
The core package is constructed from the metric $g_p$ and its canonical probe
$f_p$; therefore its first variation also vanishes in a parameter direction
for which both first variations vanish. Hence
$D\mathcal A_{E,\mathrm{core},p}[X]=0$. Universal factorization then implies
that every core-package-generated response vanishes on $X$.
\end{proof}

\smallskip
Thus no observability theorem can remove redundancy introduced solely by the
choice of parameterization. A nonimmersive parameterization is an immediate
counterexample to any formulation that attempts to detect every nonzero
element of $T_pS$ without accounting for $D\mathbf g_p$.

\subsubsection*{Universal package-detectable quotient}
We now pass from a single observable to the total first-order information
contained in the core analytical package.

\smallskip
Recall the universal response differential
$\mathcal D_{g_0} = D_{p_0}\mathcal A_{E,\mathrm{core}} : V \longrightarrow W,$
where
$W := T_{\mathcal A_{E,\mathrm{core}}(p_0)} \mathscr X_{\mathrm{core}}.$
Its kernel
$\mathcal K_{\mathrm{pkg}} := \ker\mathcal D_{g_0}$
is the universal invisible space introduced above.

\begin{Prop}[Universal detectable quotient]
\label{prop:universal-detectable-quotient}
Every core-package-generated response operator vanishes on
$\mathcal K_{\mathrm{pkg}}.$
Consequently, every such response factors canonically through
$V/\mathcal K_{\mathrm{pkg}}.$
More precisely, for every
$\Theta = \Psi\circ\mathcal A_{E,\mathrm{core}},$
there exists a unique linear map
$\widehat{\mathcal R}_{g_0}: V/\mathcal K_{\mathrm{pkg}} \longrightarrow Y$
such that
$\mathcal R_{g_0} = \widehat{\mathcal R}_{g_0} \circ \pi,$
where
$\pi: V\longrightarrow V/\mathcal K_{\mathrm{pkg}}$
is the quotient map.
\end{Prop}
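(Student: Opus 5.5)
The plan is to reduce everything to the Universal Factorization Theorem (Theorem~\ref{thm:universal-response}) followed by the elementary universal property of a quotient of finite-dimensional vector spaces. Fix a core-package-generated observable $\Theta=\Psi\circ\mathcal A_{E,\mathrm{core}}$ with values in $Y$, and write $\mathcal R_{g_0}=D_{p_0}\Theta$.

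\textbf{Step 1: vanishing on $\mathcal K_{\mathrm{pkg}}$.} By Theorem~\ref{thm:universal-response},
\[
\mathcal R_{g_0}=D_{\mathcal A_{E,\mathrm{core}}(p_0)}\Psi\circ\mathcal D_{g_0}.
\]
Take $X\in\mathcal K_{\mathrm{pkg}}=\ker\mathcal D_{g_0}$. Then $\mathcal D_{g_0}X=0$, and since $D\Psi$ is linear, $\mathcal R_{g_0}X=D\Psi(0)=0$. Hence $\mathcal K_{\mathrm{pkg}}\subset\ker\mathcal R_{g_0}$. Equivalently, $\mathcal K_{\mathrm{pkg}}\subset\mathcal I_{g_0}(\Theta)$ for every admissible $\Theta$.

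\textbf{Step 2: existence of $\widehat{\mathcal R}_{g_0}$.} Define $\widehat{\mathcal R}_{g_0}([X]):=\mathcal R_{g_0}(X)$ on $V/\mathcal K_{\mathrm{pkg}}$. This is well defined: if $[X]=[X']$, then $X-X'\in\mathcal K_{\mathrm{pkg}}$, so Step~1 gives $\mathcal R_{g_0}X=\mathcal R_{g_0}X'$. The map is linear because $\mathcal R_{g_0}$ and $\pi$ are linear. The identity $\mathcal R_{g_0}=\widehat{\mathcal R}_{g_0}\circ\pi$ holds by construction.

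\textbf{Step 3: uniqueness.} The quotient map $\pi$ is surjective. So if two linear maps $L_1,L_2$ both satisfy $L_i\circ\pi=\mathcal R_{g_0}$, they agree on $\pi(V)=V/\mathcal K_{\mathrm{pkg}}$, and hence $L_1=L_2$.

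\textbf{Step 4: canonicality.} The factorization is canonical because $\mathcal K_{\mathrm{pkg}}$ depends only on the descended package (Theorem~\ref{thm:package-descent}, Theorem~\ref{thm:response-intrinsicity}) and not on $\Psi$.

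There is no real obstacle. The only point needing care is that $W$ may be an infinite-dimensional Banach tangent space. This causes no difficulty, because the kernel and quotient are taken in the finite-dimensional domain $V$, so no closedness or complement issues arise in $W$.
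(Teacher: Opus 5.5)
Your proposal is correct and matches the paper's proof: the Universal Factorization Theorem gives $\mathcal K_{\mathrm{pkg}}=\ker\mathcal D_{g_0}\subset\ker\mathcal R_{g_0}$, and the factorization follows from the universal property of the quotient. Your Steps~2 and~3 simply write out that universal property (well-definedness, linearity, and uniqueness via surjectivity of $\pi$); Step~4 is extra commentary that the statement does not need.
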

\begin{proof}
By the Universal Factorization Theorem,
$\mathcal R_{g_0} = D\Psi_{\mathcal A_{E,\mathrm{core}}(p_0)} \circ \mathcal D_{g_0}.$
Hence
$\mathcal K_{\mathrm{pkg}} = \ker\mathcal D_{g_0} \subset \ker\mathcal R_{g_0}.$
The factorization through the quotient then follows from the universal
property of quotient vector spaces.
\end{proof}
\smallskip
Thus
$V/\mathcal K_{\mathrm{pkg}}$
is the maximal quotient of $V$ on which the core analytical package itself
can carry first-order information. Whether every nonzero vector in this
quotient can be detected by an admissible finite-dimensional postprocessing
depends on the richness of the admissible observation class.

\subsubsection*{Joint observables}
Several observables may be combined into a single joint observation.

\smallskip
Let
$$\Theta_j: S\longrightarrow Y_j, \qquad j=1,\ldots,N,$$
be core-package-generated observables.

\smallskip
Their joint response operator is
$$
\mathcal R_{g_0}^{(N)}
=
\left(
D\Theta_1,
\ldots,
D\Theta_N
\right)_{p_0}
:
V
\longrightarrow
\prod_{j=1}^N Y_j.
$$
Its kernel is
$\ker\mathcal R_{g_0}^{(N)} = \bigcap_{j=1}^N \ker D_{p_0}\Theta_j.$
Consequently, the family is infinitesimally complete if and only if
$\bigcap_{j=1}^N \ker D_{p_0}\Theta_j = \{0\}.$

\subsubsection*{Admissible scalar postprocessings}
To formulate a rigorous finite-scalar detection theorem, we must specify
which scalar postprocessings are admissible.

\smallskip
Let
$a_0 = \mathcal A_{E,\mathrm{core}}(p_0),$
and let
$L := \mathcal D_{g_0} : V \longrightarrow W.$
Let $E$ be a Banach space of admissible smooth scalar postprocessings defined
near $a_0$,
$\psi: \mathscr X_{\mathrm{core}} \longrightarrow \mathbb R.$
For each
$\psi\in E,$
its differential at $a_0$ restricts to a covector on the finite-dimensional
subspace
$L(V)\subset W.$
This defines the linear restriction map
$\rho: E \longrightarrow L(V)^*,$
given by
$\rho(\psi) = d\psi_{a_0}\big|_{L(V)}.$

\begin{Hyp}[Scalar postprocessing richness]
\label{hyp:scalar-richness}
The admissible scalar postprocessing class $E$ is said to satisfy the
\emph{scalar richness hypothesis} at $g_0$ if $E$ is a Banach space
continuously embedded in the local $C^1$ postprocessing space, the derivative
restriction map $\rho:E\longrightarrow L(V)^*$ is continuous, and $\rho$ is
surjective.
\end{Hyp}

\smallskip
This hypothesis states exactly what is required for the admissible scalar
observations to realize arbitrary linear measurements of the first-order
information contained in the core analytical package.
Without it, finite scalar completeness can fail even when the universal
response differential is injective; for example, an admissible scalar class
consisting only of constant functions has identically zero differential.

The richness condition is nevertheless automatic for the largest natural
postprocessing class. Thus it is a genuine extra hypothesis only when one
prescribes a restricted family of observables.

\begin{Prop}[Automatic scalar richness for unrestricted local observations]
\label{prop:automatic-scalar-richness}
Let
\[
a_0=\mathcal A_{E,\mathrm{core}}(p_0),\qquad
L=\mathcal D_{g_0}:V\longrightarrow W,
\]
where the local Banach model of $\mathscr X_{\mathrm{core}}$ at $a_0$ has
tangent space $W$. Suppose the admissible scalar postprocessings contain, in
a local Banach chart centred at $a_0$, all restrictions of continuous linear
functionals $\ell\in W^*$ to a neighbourhood of the origin. Then
\[
\rho:E\longrightarrow L(V)^*,\qquad
\rho(\psi)=d\psi_{a_0}\big|_{L(V)},
\]
is surjective. In particular, the scalar richness conclusion required in
Theorem~\ref{thm:finite-scalar-detection} holds automatically for the
unrestricted class of smooth local scalar observations.
\end{Prop}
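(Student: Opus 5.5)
The plan is to reduce surjectivity of $\rho$ to a finite-dimensional Hahn--Banach extension problem. The chart structure enters only through one computation: the differential at $a_0$ of a linear functional written in the chart.

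First I fix the local Banach chart $\phi$ of $\mathscr X_{\mathrm{core}}$ centred at $a_0$, with $\phi(a_0)=0$ and $D\phi_{a_0}$ identified with the identity on $W$. For $\ell\in W^*$ let $\psi_\ell:=\ell\circ\phi$ near $a_0$. By assumption $\psi_\ell\in E$. Since $\ell$ is continuous and linear and $\phi$ is smooth, $\psi_\ell$ is smooth, and the chain rule gives
\[
d(\psi_\ell)_{a_0}=\ell\circ D\phi_{a_0}=\ell .
\]
Consequently $\rho(\psi_\ell)=\ell|_{L(V)}$.

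Next, take an arbitrary covector $\eta\in L(V)^*$. The subspace $L(V)=\operatorname{Im}\mathcal D_{g_0}$ is finite-dimensional, since $V=T_{p_0}S$ is finite-dimensional. It is therefore closed in $W$, and $\eta$ is automatically continuous on it for the induced norm. By the Hahn--Banach theorem, exactly as in the converse part of Theorem~\ref{thm:finite-jacobi-pairing-certificate}, $\eta$ extends to some $\ell\in W^*$. Then $\rho(\psi_\ell)=\ell|_{L(V)}=\eta$, and this proves surjectivity.

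For the final sentence of the statement, I observe that the unrestricted class of smooth local scalar observations contains every $\psi_\ell$, so the argument above applies to it. If one also insists on the Banach-space and continuity clauses of Hypothesis~\ref{hyp:scalar-richness}, I will note that the map $\ell\mapsto\psi_\ell$ is linear. I will also note that $\rho$ is continuous whenever $E$ carries a $C^1$-type norm at $a_0$, because $|d\psi_{a_0}|$ is bounded by the $C^1$ norm. The only real subtlety I expect is the identification $D\phi_{a_0}=\mathrm{Id}$, that is, the choice of centring, together with the fact that any real-valued linear functional on a finite-dimensional normed space is continuous. Both are routine, so no serious obstacle is anticipated.
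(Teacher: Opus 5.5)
Your proposal is correct and follows essentially the same argument as the paper: Hahn--Banach extends an arbitrary covector on the finite-dimensional image $L(V)$ to some $\ell\in W^*$, and $\psi=\ell\circ\chi$ in a chart with $\chi(a_0)=0$ and $D\chi_{a_0}=\mathrm{id}_W$ then satisfies $d\psi_{a_0}|_{L(V)}=\ell|_{L(V)}$. The paper justifies the chart normalization the way you anticipate, by composing any chart with the inverse of its derivative at $a_0$. Your extra remarks on the Banach-space and continuity clauses of the richness hypothesis go beyond what the paper's proof addresses.
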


\begin{proof}
The space $L(V)$ is finite dimensional because $V$ is finite dimensional.
Let $\lambda\in L(V)^*$. Every linear functional on the finite-dimensional
normed space $L(V)$ is continuous, so the Hahn--Banach theorem extends
$\lambda$ to a continuous linear functional
\[
\ell\in W^*,\qquad \ell|_{L(V)}=\lambda.
\]
Choose a local Banach chart
\[
\chi:U\subset\mathscr X_{\mathrm{core}}\longrightarrow W
\]
with $\chi(a_0)=0$ and $D\chi_{a_0}=\operatorname{id}_W$; composing any local
chart with the inverse of its derivative at $a_0$ gives such a chart. The
local scalar observation $\psi=\ell\circ\chi$ is smooth and admissible by
assumption, and
\[
d\psi_{a_0}\big|_{L(V)}
=\ell\big|_{L(V)}
=\lambda.
\]
Thus $\rho$ is surjective.
\end{proof}

\begin{Cor}[Finite scalar detection for unrestricted local observations]
\label{cor:unrestricted-finite-scalar-detection}
Assume that
\[
\mathcal D_{g_0}:T_{p_0}S\longrightarrow
T_{a_0}\mathscr X_{\mathrm{core}}
\]
is injective, and allow unrestricted smooth local scalar postprocessings of
the core package. If $m=\dim S$, then there exist $m$ smooth scalar
core-package-generated observations whose joint differential at $p_0$ is an
isomorphism onto $\mathbb R^m$. Consequently their joint map is a local
diffeomorphism near $p_0$ and locally reconstructs the parameter point. If
$p\mapsto g_p$ is locally injective, it locally reconstructs the corresponding
Einstein metric within the chosen family.
\end{Cor}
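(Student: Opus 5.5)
The plan is to construct the $m$ scalar observations explicitly from a basis of the dual of the finite-dimensional image $L(V)$, where $L=\mathcal D_{g_0}$ and $V=T_{p_0}S$. We then apply the inverse function theorem to their joint map.

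First, by injectivity of $\mathcal D_{g_0}$, the image $L(V)\subset W=T_{a_0}\mathscr X_{\mathrm{core}}$ has dimension $m=\dim V$. Fix a basis $e_1,\dots,e_m$ of $V$, so that $L(e_1),\dots,L(e_m)$ is a basis of $L(V)$, and let $\lambda_1,\dots,\lambda_m\in L(V)^*$ be the dual basis. Proposition~\ref{prop:automatic-scalar-richness} applies because unrestricted local postprocessings are allowed. It gives smooth admissible $\psi_i=\ell_i\circ\chi$ with $d(\psi_i)_{a_0}|_{L(V)}=\lambda_i$, where $\ell_i$ is a Hahn--Banach extension of $\lambda_i$ and $\chi$ is a chart centred at $a_0$ with identity derivative. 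Set $\Theta_i=\psi_i\circ\mathcal A_{E,\mathrm{core}}$. These maps are smooth by Proposition~\ref{prop:analytic-package-setup} and core-package-generated by construction.

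Second, Theorem~\ref{thm:universal-response} gives $D_{p_0}\Theta_i=d(\psi_i)_{a_0}\circ\mathcal D_{g_0}$. Hence $D_{p_0}\Theta_i(e_j)=\lambda_i(L e_j)=\delta_{ij}$. The joint map $\Theta=(\Theta_1,\dots,\Theta_m):S\to\mathbb R^m$ therefore has differential at $p_0$ equal to the identity matrix in these bases, so it is an isomorphism $T_{p_0}S\to\mathbb R^m$. The finite-dimensional inverse function theorem then makes $\Theta$ a diffeomorphism from a neighbourhood of $p_0$ onto an open subset of $\mathbb R^m$. Its local inverse $\Theta^{-1}$ reconstructs $p$ from the observed values $\Theta(p)$.

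Finally, if $p\mapsto g_p$ is injective on that neighbourhood (shrinking it if necessary), then $g_p=\mathbf g(\Theta^{-1}(\Theta(p)))$. So the metric within the family is determined by the observations, and the composite $\mathbf g\circ\Theta^{-1}$ is the reconstruction map.

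The only delicate point is making sure each $\psi_i$ is genuinely a smooth map defined near $a_0$ on $\mathscr X_{\mathrm{core}}$, so that the composite is defined on a neighbourhood of $p_0$. This follows from continuity of $\mathcal A_{E,\mathrm{core}}$ after shrinking $S$. The remainder is the chain rule and the inverse function theorem.
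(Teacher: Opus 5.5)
Your proposal is correct and follows the paper's proof. The paper cites Theorem~\ref{thm:finite-scalar-detection}(3), with richness supplied by Proposition~\ref{prop:automatic-scalar-richness}, and then applies the inverse function theorem; you instead inline that theorem's dual-basis argument (choosing the $\lambda_i$ dual to $Le_1,\dots,Le_m$ so the joint differential is the identity), which is slightly cleaner because it uses only the lifts $\psi_i=\ell_i\circ\chi$ that Proposition~\ref{prop:automatic-scalar-richness} actually provides, rather than the full Banach-space structure required by Hypothesis~\ref{hyp:scalar-richness}.
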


\begin{proof}
Proposition~\ref{prop:automatic-scalar-richness} supplies the scalar richness
needed in Theorem~\ref{thm:finite-scalar-detection}. That theorem gives $m$
scalar observations with injective joint differential
$T_{p_0}S\to\mathbb R^m$. Since both spaces have dimension $m$, the
differential is an isomorphism. The finite-dimensional inverse function
theorem gives the local reconstruction statement.
\end{proof}

\begin{Bem}[The genuine first-order obstruction]
\label{rem:genuine-observability-obstruction}
Proposition~\ref{prop:automatic-scalar-richness} removes one possible source
of conditionality but not the essential one. For unrestricted scalar
postprocessings, the only first-order obstruction internal to the core package
is
\[
\ker\mathcal D_{g_0}.
\]
Thus, once the descended package exists, failure of finite scalar detection
cannot be attributed to an insufficient supply of scalar functions: it occurs
precisely when the core Jacobi--Green package itself loses an Einstein tangent
direction. By contrast, if one insists on a geometrically or physically
prescribed restricted observation class, scalar richness again becomes a
substantive condition that must be verified in that model.
\end{Bem}

\subsubsection*{Finite scalar detection}
\begin{Thm}[Finite scalar detection]
\label{thm:finite-scalar-detection}
Let
$m=\dim V,$
and let
$L = \mathcal D_{g_0}: V\longrightarrow W.$
Then the following statements hold.
\begin{enumerate}
\item
Every core-package-generated scalar response vanishes on
$\ker L.$
\item
An infinitesimally complete family of core-package-generated scalar
observations can exist only if $L$ is injective.
\item
Assume that $L$ is injective and that
Hypothesis~\ref{hyp:scalar-richness} holds. Then there exist
$\psi_1,\ldots,\psi_m\in E$
such that the joint scalar observable
$$
\Theta
=
\left(
\psi_1\circ\mathcal A_{E,\mathrm{core}},
\ldots,
\psi_m\circ\mathcal A_{E,\mathrm{core}}
\right)
:
S
\longrightarrow
\mathbb R^m
$$
is infinitesimally complete at $g_0$.
\end{enumerate}
\end{Thm}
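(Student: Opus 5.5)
The three statements are proved in order. Each reduces to the Universal Factorization Theorem (Theorem~\ref{thm:universal-response}) and to finite-dimensional linear algebra on $V$ and on $L(V)^*$.

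For (1), take any core-package-generated scalar observable $\psi\circ\mathcal A_{E,\mathrm{core}}$. Theorem~\ref{thm:universal-response} gives its response operator as $d\psi_{a_0}\circ L$. This is exactly Proposition~\ref{prop:universal-detectable-quotient} with $Y=\mathbb R$, so it vanishes on $\ker L$.

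For (2), suppose a family of core-package-generated scalar observations $\Theta_1,\dots,\Theta_N$ is infinitesimally complete. Completeness means the joint kernel $\bigcap_j\ker D_{p_0}\Theta_j$ is $\{0\}$. By (1), each $\ker D_{p_0}\Theta_j$ contains $\ker L$, so $\ker L\subset\bigcap_j\ker D_{p_0}\Theta_j=\{0\}$. Hence $L$ is injective.

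For (3), assume $L$ is injective. Then $L:V\to L(V)$ is a linear isomorphism onto an $m$-dimensional space, and $L(V)^*$ also has dimension $m$.
\begin{enumerate}
\item Choose a basis $\lambda_1,\dots,\lambda_m$ of $L(V)^*$; the dual basis to $Le_1,\dots,Le_m$ is a convenient choice.
\item By Hypothesis~\ref{hyp:scalar-richness}, $\rho$ is surjective, so there are $\psi_i\in E$ with $d\psi_{i,a_0}|_{L(V)}=\lambda_i$.
\item By Theorem~\ref{thm:universal-response}, the joint response is $X\mapsto(\lambda_1(LX),\dots,\lambda_m(LX))$.
\item If this vanishes, then $LX$ is annihilated by a basis of $L(V)^*$, so $LX=0$, and injectivity of $L$ gives $X=0$.
\end{enumerate}
The joint response is therefore injective, which is infinitesimal completeness in the sense of the definition.

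The only point needing care is that each $\psi_i\circ\mathcal A_{E,\mathrm{core}}$ is a smooth core-package-generated observable, so that the chain rule applies. This follows because $E$ embeds continuously in the local $C^1$ postprocessing space and each $\psi_i$ is a smooth admissible postprocessing, combined with the smoothness of $\mathcal A_{E,\mathrm{core}}$ (Theorem~\ref{thm:package-descent}). Beyond this check, the argument is purely linear algebra.
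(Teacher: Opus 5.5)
Your proof is correct and follows the paper's argument essentially step for step. You use the universal factorization $d\psi_{a_0}\circ L$ for (1), the inclusion $\ker L\subset\bigcap_j\ker D_{p_0}\Theta_j$ for (2) (argued directly where the paper argues by contraposition), and, for (3), a basis of $L(V)^*$ lifted to $E$ via the surjectivity of $\rho$.
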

\begin{proof}
The first statement follows immediately from the universal factorization
\[
D(\psi\circ\mathcal A_{E,\mathrm{core}})_{p_0}
=
d\psi_{a_0}\circ L.
\]
Hence every scalar response vanishes on $\ker L$.

\smallskip
If $L$ is not injective, every package-generated scalar response vanishes on
the nontrivial space $\ker L$. Therefore no joint family of such observations
can be infinitesimally complete.

\smallskip
Assume now that $L$ is injective. Then
$\dim L(V)=m.$
Choose a basis
$\lambda_1,\ldots,\lambda_m$
of
$L(V)^*.$
By Hypothesis~\ref{hyp:scalar-richness}, there exist
$\psi_1,\ldots,\psi_m\in E$
such that
$d\psi_i{}_{a_0}\big|_{L(V)} = \lambda_i.$
The differential of the resulting joint observable is
\[
D\Theta_{p_0}(X)
=
\bigl(\lambda_1(LX),\ldots,\lambda_m(LX)\bigr).
\]
Since the $\lambda_i$ form a basis of $L(V)^*$ and $L$ is injective, this
map is injective. Hence the joint scalar observable is infinitesimally
complete.
\end{proof}

\subsubsection*{Generic completeness}
The same richness hypothesis gives a rigorous generic-completeness statement.
\smallskip
For
$m=\dim V,$
define
$\mathcal P: E^m \longrightarrow \operatorname{Hom}(V,\mathbb R^m)$
by
\[
\mathcal P(\psi_1,\ldots,\psi_m)(X)
=
\bigl(
d\psi_1{}_{a_0}(LX),\ldots,d\psi_m{}_{a_0}(LX)
\bigr).
\]
\begin{Cor}[Generic scalar completeness]
\label{cor:generic-scalar-completeness}
Assume that
$L:V\longrightarrow W$
is injective and that
Hypothesis~\ref{hyp:scalar-richness} holds.

\smallskip
Then the set of $m$-tuples
$(\psi_1,\ldots,\psi_m)\in E^m$
whose joint response is injective is open and dense in $E^m$.
\end{Cor}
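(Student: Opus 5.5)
The plan is to reduce the statement to the determinant of a finite matrix depending continuously on $(\psi_1,\ldots,\psi_m)$. Fix a basis $e_1,\ldots,e_m$ of $V$ and identify $\operatorname{Hom}(V,\mathbb R^m)$ with $m\times m$ matrices. The joint response of $(\psi_1,\ldots,\psi_m)$ is then the matrix
\[
\mathcal P(\psi)_{ij}=d\psi_i{}_{a_0}(Le_j)=\rho(\psi_i)(Le_j),
\]
and it is injective if and only if $\det\mathcal P(\psi)\neq0$. The map $\mathcal P$ factors as $E^m\xrightarrow{\rho^{\times m}}(L(V)^*)^m\to\operatorname{Hom}(V,\mathbb R^m)$. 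The first arrow is continuous and linear by Hypothesis~\ref{hyp:scalar-richness}, and the second is a linear isomorphism of finite-dimensional spaces, $(\lambda_i)\mapsto(\lambda_i(Le_j))_{ij}$; it is bijective because $L$ is injective, so $Le_1,\ldots,Le_m$ is a basis of $L(V)$.

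\emph{Openness.} The good set is $\mathcal P^{-1}(\{\det\neq0\})$. Since $\det$ is continuous on matrices and $\mathcal P$ is continuous, this set is open in $E^m$.

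\emph{Density.} Let $\psi\in E^m$ be arbitrary. By Theorem~\ref{thm:finite-scalar-detection}(3), or directly from surjectivity of $\rho$, there is $\phi\in E^m$ with $\mathcal P(\phi)=\operatorname{Id}$. Consider the affine line $t\mapsto\psi+t\phi$. Because $\mathcal P$ is linear,
\[
\mathcal P(\psi+t\phi)=\mathcal P(\psi)+t\,\operatorname{Id},
\]
and $\det(\mathcal P(\psi)+t\operatorname{Id})$ is a monic polynomial of degree $m$ in $t$. It therefore vanishes for at most $m$ values of $t$. Choosing $t\neq0$ arbitrarily small outside these roots gives good tuples $\psi+t\phi$ converging to $\psi$ in $E^m$, which proves density.

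The only point needing care is the linearity and continuity of $\mathcal P$. Linearity holds because $\psi\mapsto d\psi_{a_0}$ is linear. Continuity is supplied by the hypothesis that $E$ embeds continuously into $C^1$ and that $\rho$ is continuous. With this in place there is no genuine obstacle; the polynomial-in-$t$ argument avoids any need for Baire-category or transversality tools.
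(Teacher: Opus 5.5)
Your argument is correct. The openness half is the same as the paper's: the good set is the preimage of the open set of invertible maps under the continuous map $\mathcal P$.

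For density you take a different route.
\begin{itemize}
\item The paper argues abstractly. $\mathcal P:E^m\to\operatorname{Hom}(V,\mathbb R^m)$ is a continuous linear surjection of Banach spaces, so it is open by the Open Mapping Theorem. For any nonempty open $U\subset E^m$, the set $\mathcal P(U)$ is therefore nonempty and open, and it meets the dense set $\operatorname{Iso}(V,\mathbb R^m)$.
\item You pick one preimage $\phi$ of the identity and use that $t\mapsto\det(\mathcal P(\psi)+t\operatorname{Id})$ is a monic polynomial. In effect you prove the density of $\operatorname{Iso}$ in $\operatorname{Hom}$ along an explicit line and lift that line to $E^m$ through $\phi$.
\end{itemize}

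What each route buys:
\begin{itemize}
\item Your route is more elementary and slightly sharper. Density needs only linearity of $\mathcal P$, surjectivity of $\rho$ and injectivity of $L$. It does not use completeness of $E$, the Open Mapping Theorem, or continuity of $\rho$; continuity enters only for openness. It also shows that the bad set meets each line $\psi+t\phi$ in at most $m$ points.
\item The paper's route is a general transfer principle. The same argument shows that the preimage of any open dense subset of $\operatorname{Hom}(V,\mathbb R^m)$ is open and dense in $E^m$, without using the polynomial structure of the determinant condition.
\end{itemize}

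One minor point. If you take $\phi$ from the third assertion of the finite scalar detection theorem, you only get $\mathcal P(\phi)=B$ invertible, not $\operatorname{Id}$. The argument still works, since $\det(\mathcal P(\psi)+tB)$ has leading coefficient $\det B\neq0$. Your alternative via surjectivity of $\rho$ and the dual basis of $Le_1,\ldots,Le_m$ gives $\operatorname{Id}$ exactly.
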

\begin{proof}
Since $L$ is injective and $\rho:E\to L(V)^*$ is surjective, the linear map
$\mathcal P: E^m \longrightarrow \operatorname{Hom}(V,\mathbb R^m)$
is continuous and surjective.

\smallskip
Since $E^m$ and
$\operatorname{Hom}(V,\mathbb R^m)$ are Banach spaces, the Open Mapping
Theorem implies that $\mathcal P$ is open.

\smallskip
The set of invertible maps
$\operatorname{Iso}(V,\mathbb R^m)$
is open and dense in
$\operatorname{Hom}(V,\mathbb R^m).$
Its preimage under the continuous map $\mathcal P$ is therefore open. To see density, let $U\subset E^m$ be a nonempty open set. Since $\mathcal P$ is an open surjection, $\mathcal P(U)$ is a nonempty open subset of $\operatorname{Hom}(V,\mathbb R^m)$, and hence meets the dense set $\operatorname{Iso}(V,\mathbb R^m)$. Thus $U$ meets $\mathcal P^{-1}(\operatorname{Iso}(V,\mathbb R^m))$. Therefore
$\mathcal P^{-1} \left( \operatorname{Iso}(V,\mathbb R^m) \right)$
is open and dense in $E^m$.
\end{proof}

\subsubsection*{Existence of complete scalar observations}
\begin{Cor}[Existence of a complete scalar family]
\label{cor:existence-complete-scalar-family}
Assume that
$\mathcal D_{g_0}$
is injective and that
Hypothesis~\ref{hyp:scalar-richness} holds.

\smallskip
Then there exists an infinitesimally complete
core-package-generated observable with values in
$\mathbb R^{\dim V}.$
\end{Cor}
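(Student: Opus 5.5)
This corollary follows almost immediately from Theorem~\ref{thm:finite-scalar-detection}(3). There are two ways to obtain the tuple of scalar postprocessings, and we will give both. The first is the direct construction from that theorem. The second is the genericity statement of Corollary~\ref{cor:generic-scalar-completeness}, which shows that a suitable tuple can be chosen in any nonempty open subset of $E^m$.

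First, set $m=\dim V$, $a_0=\mathcal A_{E,\mathrm{core}}(p_0)$ and $L=\mathcal D_{g_0}$. Since $L$ is injective, $L(V)\subset W$ is an $m$-dimensional subspace. Choose a basis $\lambda_1,\ldots,\lambda_m$ of $L(V)^*$. By Hypothesis~\ref{hyp:scalar-richness}, the restriction map $\rho$ is surjective, so there are $\psi_i\in E$ with $d\psi_{i,a_0}|_{L(V)}=\lambda_i$.

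Next, define
\[
\Theta=(\psi_1\circ\mathcal A_{E,\mathrm{core}},\ldots,\psi_m\circ\mathcal A_{E,\mathrm{core}}):S\to\mathbb R^{m}.
\]
Each component is smooth by Proposition~\ref{prop:analytic-package-setup}, since $E$ embeds continuously into the local $C^1$ postprocessings. Hence $\Theta$ is a core-package-generated observable with values in $\mathbb R^{\dim V}$, because the joint map $\Psi=(\psi_1,\ldots,\psi_m)$ is itself a smooth postprocessing.

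By the universal factorization, Theorem~\ref{thm:universal-response}, we have $D_{p_0}\Theta(X)=(\lambda_1(LX),\ldots,\lambda_m(LX))$. Suppose this vanishes for some $X$. Then $LX$ is annihilated by a basis of $L(V)^*$, so $LX=0$, and injectivity of $L$ gives $X=0$. Thus $\ker\mathcal R_{g_0}=\{0\}$, which is exactly infinitesimal completeness.

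The only point requiring care is smoothness: one must check that a finite tuple of admissible scalar postprocessings assembles into a smooth vector-valued map $\Psi$, so that $\Theta$ really fits the definition of a core-package-generated observable. This is immediate componentwise. There is no genuine analytic obstacle here, since all the work is contained in the richness hypothesis and in the finite-dimensionality of $V$.
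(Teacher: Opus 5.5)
Your argument is correct and follows the paper's route. The paper proves the corollary by citing the third assertion of Theorem~\ref{thm:finite-scalar-detection}, and your construction (a basis of $L(V)^*$, lifting it via surjectivity of $\rho$, universal factorization, injectivity of $L$) reproduces that theorem's proof inline. Two cosmetic points: the $\psi_i$ are smooth because $E$ consists of smooth postprocessings by definition, not because of the $C^1$ embedding, which alone would not give smoothness; and the second route you announce via Corollary~\ref{cor:generic-scalar-completeness} is never carried out, so that sentence should be dropped or completed.
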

\begin{proof}
This is the third assertion of
Theorem~\ref{thm:finite-scalar-detection}.
\end{proof}

\subsubsection*{Verification in concrete models}
Observability is a genuine mathematical property and is not a formal
consequence of the existence of the harmonic probe package.

\smallskip
In concrete models, a finite-dimensional response calculation typically
proceeds as follows.
\begin{enumerate}
\item
Choose a basis of the tangent space
$T_{p_0}S.$
\item
Compute the corresponding metric variations of the chosen Einstein family.
\item
Solve the induced metric-forced Jacobi equations.
\item
Evaluate the selected admissible analytical observations.
\item
Assemble the resulting finite-dimensional response matrix.
\item
Verify that the response matrix has rank
$\dim T_{p_0}S.$
\end{enumerate}

\smallskip
If Hilbert structures are fixed, a quantitative verification may additionally
produce a positive lower bound for the least singular value of the response
matrix.

\smallskip
Thus infinitesimal observability is amenable to rigorous analytical or
validated numerical verification in concrete cohomogeneity-one Einstein
problems. Proposition~\ref{prop:automatic-scalar-richness} also separates two
verification tasks cleanly: for unrestricted local scalar postprocessings one
need only establish injectivity of the universal response differential,
whereas a prescribed restricted observation family requires an additional
rank or richness verification.

\section{The Einstein Detection Principle}
The preceding sections developed the two principal ingredients of the present
theory:
\begin{enumerate}
\item
the finite-dimensional geometric reduction of the normalized Einstein
boundary-value problem through the Einstein matching construction;
\item
the core analytical package generated by the canonical harmonic probes.
\end{enumerate}
\smallskip
The geometric reduction replaces the original two-ended Einstein
boundary-value problem by a finite-dimensional matching problem. Independently,
the harmonic probe construction associates with each metric in the chosen
smooth parameterized Einstein family a canonical analytical package consisting
of the normalized Jacobi and Green operators.

\smallskip
The central idea of the present paper is that infinitesimal Einstein
deformations need not be studied only through the linearized Einstein
equations themselves. Instead, they may be studied through the first-order
variation of these canonically associated analytical structures.

\smallskip
The purpose of the present section is to combine the geometric and analytical
constructions into the \emph{Einstein Detection Principle}. The Universal
Factorization Theorem shows that every admissible first-order observation
obtained from the core analytical package factors through a single universal
response differential. Its kernel therefore represents the infinitesimal
Einstein directions that are invisible to every such observation.

\smallskip
Under explicit injectivity and richness hypotheses, the same framework yields
local nonlinear consequences. In particular, the core analytical package
becomes a local complete invariant on the chosen smooth parameterized Einstein
family, and finitely many admissible scalar observations suffice for local
reconstruction.

\subsection*{Standing analytical framework}
Throughout this section we work with the chosen smooth parameterized Einstein
family
$S$
through the reference Einstein metric
$g_{p_0}=g_0.$
The following assumptions and conventions, established in the preceding
sections, remain in force.
\begin{enumerate}
\item
The descended core analytical package
$\mathcal A_{E,\mathrm{core}}: S \longrightarrow \mathscr X_{\mathrm{core}}$
is smooth.
\item
All operator families are represented using the fixed smooth local
trivializations and common graph-domain conventions introduced previously.
\item
The core package consists of the normalized Jacobi and Green operators,
$\mathcal A_{E,\mathrm{core}}(p) = (J_p,G_p).$
\item
Extended spectral constructions are used only when their additional
self-adjointness, spectral-separation or spectral-cut hypotheses are
explicitly assumed.
\item
All tangent spaces, differentials and response operators are taken on the
chosen smooth parameterized Einstein family $S$. No smoothness assumption on
the full Kuranishi zero set is required.
\end{enumerate}
\subsection*{Roadmap}
We first reduce injectivity of the universal response differential to the
effective Jacobi-response operator and give a finite operator-pairing
certificate for that injectivity. We then formulate the First-Order Einstein
Detection Principle and derive its local nonlinear consequences. Finally, for
restricted scalar classes under the scalar postprocessing richness hypothesis,
and automatically for unrestricted smooth local scalar postprocessings, we
obtain finite local reconstruction.

\subsection{First-order detection principle}
Let
$V:=T_{p_0}S$
and let
$$
\mathcal D_{g_0}
=
D_{p_0}\mathcal A_{E,\mathrm{core}}
:
V
\longrightarrow
T_{\mathcal A_{E,\mathrm{core}}(p_0)}
\mathscr X_{\mathrm{core}}
$$
denote the universal response differential.
For unrestricted smooth local scalar postprocessings,
Proposition~\ref{prop:automatic-scalar-richness} sharpens the interpretation
of the universal response differential: the core package is infinitesimally
observable by finitely many scalar measurements if and only if
$\mathcal D_{g_0}$ is injective. Hence $\ker\mathcal D_{g_0}$ is the complete
first-order information-loss space of the Jacobi--Green package itself.

\begin{Thm}[Universal First-Order Einstein Detection Principle]
\label{thm:first-order-detection}
Let
$$\Theta = \Psi\circ\mathcal A_{E,\mathrm{core}} : S \longrightarrow Y$$
be a core-package-generated observable with values in a finite-dimensional
vector space $Y$.

\smallskip
Then its response operator satisfies
$$D_{p_0}\Theta = D_{\mathcal A_{E,\mathrm{core}}(p_0)}\Psi \circ \mathcal D_{g_0}.$$
Consequently:
\begin{enumerate}
\item
$\Theta$ is infinitesimally complete at $g_0$ if and only if
$D_{\mathcal A_{E,\mathrm{core}}(p_0)}\Psi \circ \mathcal D_{g_0}$
is injective on $V$;
\item
no core-package-generated observable, nor any finite family of such
observables, can be infinitesimally complete unless
$\mathcal D_{g_0}$
is injective;
\item
the universal invisible space
$\mathcal K_{g_0} = \ker\mathcal D_{g_0}$
is contained in the kernel of every core-package-generated response operator;
\item
if the admissible postprocessing differentials separate every nonzero vector
of
$\operatorname{Im}\mathcal D_{g_0},$
then an element of $V$ is invisible to every admissible
core-package-generated observable if and only if it belongs to
$\ker\mathcal D_{g_0}.$
\end{enumerate}
\end{Thm}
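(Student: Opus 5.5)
The plan is to derive everything from the chain rule, in the form already recorded in Theorem~\ref{thm:universal-response}, followed by elementary linear algebra on the finite-dimensional space $V=T_{p_0}S$.

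\textbf{Step 1: the displayed identity.} Since $\Theta=\Psi\circ\mathcal A_{E,\mathrm{core}}$ is a composition of smooth maps between Banach manifolds (smoothness of the descended package is standing assumption (1)), the displayed identity is exactly Theorem~\ref{thm:universal-response}. I would simply cite it.

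\textbf{Step 2: items (1)--(3).}
\begin{enumerate}
\item[(1)] Infinitesimal completeness of $\Theta$ means $\ker D_{p_0}\Theta=\{0\}$. Substituting the identity from Step 1 gives item (1) tautologically.
\item[(3)] If $\mathcal D_{g_0}X=0$, then $D_{p_0}\Theta[X]=D\Psi(\mathcal D_{g_0}X)=0$. This is the inclusion already used in Proposition~\ref{prop:universal-detectable-quotient}.
\item[(2)] For a finite family $\Theta_1,\dots,\Theta_N$, the joint response has kernel $\bigcap_j\ker D_{p_0}\Theta_j$. By (3), this intersection contains $\ker\mathcal D_{g_0}$. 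So if $\mathcal D_{g_0}$ is not injective, the joint kernel is nontrivial and the family is not infinitesimally complete. A single observable is the case $N=1$. Alternatively, one can note that the product of core-package-generated observables is itself core-package-generated via $\Psi=(\Psi_1,\dots,\Psi_N)$, and apply (1).
\end{enumerate}

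\textbf{Step 3: item (4).} This is the only step needing a hypothesis, and I expect it to be the main point of care rather than a real obstacle. The task is to make the separation hypothesis precise: for every $0\neq Z\in\operatorname{Im}\mathcal D_{g_0}$ there is an admissible $\Psi$ with $D_{\mathcal A_{E,\mathrm{core}}(p_0)}\Psi(Z)\neq0$.
\begin{itemize}
\item[($\Leftarrow$)] If $X\in\ker\mathcal D_{g_0}$, then $X$ is invisible to every admissible observable by (3).
\item[($\Rightarrow$)] Suppose $X$ is invisible to every admissible observable but $Z:=\mathcal D_{g_0}X\neq0$. Choose an admissible $\Psi$ with $D\Psi(Z)\neq0$. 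Then $D_{p_0}\Theta[X]=D\Psi(Z)\neq0$ by Step 1, which is a contradiction. Hence $X\in\ker\mathcal D_{g_0}$.
\end{itemize}

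Two further remarks belong with item (4). First, the hypothesis cannot be dropped: a class of constant postprocessings gives a counterexample, as noted after Hypothesis~\ref{hyp:scalar-richness}. Second, the hypothesis holds automatically for unrestricted local scalar postprocessings, by the Hahn--Banach argument of Proposition~\ref{prop:automatic-scalar-richness}. Applying that proposition to a functional nonvanishing on $Z$ supplies the separating $\Psi$.
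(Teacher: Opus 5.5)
Your proposal is correct and follows the paper's proof essentially step for step: the factorization comes from the chain rule (equivalently Theorem~\ref{thm:universal-response}), items (1)--(3) follow by substitution and the inclusion $\ker\mathcal D_{g_0}\subset\ker D_{p_0}\Theta$, and item (4) follows by choosing a separating postprocessing for $\mathcal D_{g_0}X\neq0$. Your explicit joint-kernel argument for finite families and your remarks on why the separation hypothesis is needed and when it holds automatically are accurate, but they do not change the route.
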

\begin{proof}
The factorization follows from
$\Theta = \Psi\circ\mathcal A_{E,\mathrm{core}}$
and the chain rule:
$$D_{p_0}\Theta = D_{\mathcal A_{E,\mathrm{core}}(p_0)}\Psi \circ D_{p_0}\mathcal A_{E,\mathrm{core}}.$$
By definition,
$D_{p_0}\mathcal A_{E,\mathrm{core}} = \mathcal D_{g_0}.$
This proves the universal factorization.

\smallskip
The first assertion is exactly the definition of infinitesimal completeness.

\smallskip
For the second and third assertions, if
$X\in\ker\mathcal D_{g_0},$
then
$$
D_{p_0}\Theta[X]
=
D\Psi\!\left(
\mathcal D_{g_0}X
\right)
=
0
$$
for every core-package-generated observable. Thus no family of such
observations can distinguish a nonzero element of
$\ker\mathcal D_{g_0}$.

\smallskip
For the final assertion, let
$X\notin\ker\mathcal D_{g_0}.$
Then
$\mathcal D_{g_0}X\neq0.$
By the separation hypothesis, there exists an admissible postprocessing
$\Psi$ such that
\[
D_{\mathcal A_{E,\mathrm{core}}(p_0)}\Psi
\bigl(\mathcal D_{g_0}X\bigr)\neq0.
\]
Hence the corresponding package-generated observable detects $X$. Therefore
the common invisible space of all admissible package-generated observables is
precisely
$\ker\mathcal D_{g_0}.$
\end{proof}
\begin{Bem}
\label{rem:first-order-detection-scope}
Theorem~\ref{thm:first-order-detection} depends on the existence and descent
of the core analytical package and on no matching-data extension. In
particular, Hypothesis~\ref{hyp:matching-dependence} is not used in its proof.

\smallskip
The universality in
Theorem~\ref{thm:first-order-detection}
is relative to the fixed core analytical package and the chosen class of
admissible postprocessings. The theorem does not claim that every conceivable
observable of Einstein metrics factors through the harmonic probe package.

\smallskip
The first-order detection problem therefore separates naturally into two
questions:
\begin{enumerate}
\item
does the core analytical package retain every infinitesimal direction, that
is, is
$\mathcal D_{g_0}$
injective;
\item
is the admissible class of postprocessings sufficiently rich to detect every
nonzero element of
$\operatorname{Im}\mathcal D_{g_0}?$
\end{enumerate}
The first question concerns the information encoded by the canonical probe
package itself. The second concerns the resolving power of the admissible
observation class.
\end{Bem}
\begin{Bem}[Reduction of the conditional hypotheses]
\label{rem:conditionality-reduction}
There are two conceptually different conditions in the detection theory.
Injectivity of $\mathcal D_{g_0}$ is a property of the chosen Einstein family
and the harmonic probe package and remains genuinely geometric and
model-dependent. Scalar richness, by contrast, is not an additional
obstruction when arbitrary smooth local functions of the package are admitted:
Proposition~\ref{prop:automatic-scalar-richness} proves it automatically.
Accordingly, in the unrestricted observation class the substantive
first-order question is reduced to the single condition
$\ker\mathcal D_{g_0}=\{0\}$.
\end{Bem}

\subsection{Local nonlinear detection}
The results of this subsection depend only on the smooth descended core
package on the finite-dimensional parameter manifold $S$ and on the stated
injectivity/richness assumptions. They do not use the matching-data extension,
the Kuranishi reduction, or any extended spectral invariant.

\smallskip
The Universal First-Order Einstein Detection Principle is an infinitesimal
statement. Because the chosen parameter space $S$ is finite-dimensional,
injectivity of the universal response differential also has a local nonlinear
consequence.

\smallskip
The precise statement requires one standard Banach-manifold step: before
projecting onto the finite-dimensional image of the differential, the target
Banach manifold must first be represented in a local chart.

\subsubsection*{Local reconstruction from the core analytical package}
\begin{Thm}[Local reconstruction from the core analytical package]
\label{thm:local-package-detection}
Let $S$ be the chosen finite-dimensional smooth parameter manifold and assume
that $\mathscr X_{\mathrm{core}}$ is a Banach manifold in a neighbourhood of
$a_0=\mathcal A_{E,\mathrm{core}}(p_0)$. Assume that
$$
\mathcal D_{g_0}
=
D_{p_0}\mathcal A_{E,\mathrm{core}}
:
T_{p_0}S
\longrightarrow
T_{a_0}\mathscr X_{\mathrm{core}},
$$
where
$a_0 = \mathcal A_{E,\mathrm{core}}(p_0),$
is injective.

\smallskip
Then, after shrinking $S$ around $p_0$, the map
$\mathcal A_{E,\mathrm{core}}$
is a smooth embedding onto its image.

\smallskip
Consequently, the parameter point $p\in S$ is locally determined by its core
analytical package. If the parameterization $p\mapsto g_p$ is locally
injective (in particular, on an immersed family after shrinking to an
embedded neighbourhood), the corresponding Einstein metric is locally
uniquely determined as well.

\smallskip
Neither conclusion is global. An injective differential at one point does not
exclude self-intersections or repeated package values far from that point,
and a redundant parameterization may assign the same metric to distinct
nearby parameter values unless local injectivity of $p\mapsto g_p$ is imposed.
\end{Thm}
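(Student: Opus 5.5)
The plan is to reduce the statement to the finite-dimensional immersion theorem. The one genuinely Banach-theoretic point is handled by choosing a chart at $a_0$ and a topological complement.

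First, fix a local Banach chart $\chi:U\subset\mathscr X_{\mathrm{core}}\to W$ with $\chi(a_0)=0$ and $D\chi_{a_0}=\operatorname{id}_W$, as in the proof of Proposition~\ref{prop:automatic-scalar-richness}. Here $W=T_{a_0}\mathscr X_{\mathrm{core}}$. After shrinking $S$, smoothness of $\mathcal A_{E,\mathrm{core}}$ (Theorem~\ref{thm:package-descent}) gives $\mathcal A_{E,\mathrm{core}}(S)\subset U$. Set $F:=\chi\circ\mathcal A_{E,\mathrm{core}}$, which satisfies $DF_{p_0}=\mathcal D_{g_0}$.

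Let $L(V)=\mathcal D_{g_0}(T_{p_0}S)$. Since $\mathcal D_{g_0}$ is injective, $L(V)$ has dimension $m=\dim S$. Being finite-dimensional, it is closed and complemented: choose a basis of $L(V)^*$, extend it by Hahn--Banach to $\ell_1,\dots,\ell_m\in W^*$, and let $P=(\ell_1,\dots,\ell_m):W\to\mathbb R^m$. Then $P|_{L(V)}$ is an isomorphism, $\ker P$ is a closed complement of $L(V)$, and $W\cong L(V)\oplus\ker P$.

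The key step is the composition $P\circ F:S\to\mathbb R^m$. Its differential at $p_0$ is $P\circ\mathcal D_{g_0}$, an isomorphism $T_{p_0}S\to\mathbb R^m$. The finite-dimensional inverse function theorem then gives a neighbourhood $S_0$ of $p_0$ on which $P\circ F$ is a diffeomorphism onto an open set $\Omega\subset\mathbb R^m$. This shows at once that $F$, and hence $\mathcal A_{E,\mathrm{core}}$, is injective on $S_0$ and immersive there. It also shows that the inverse is continuous on the image: if $a=\mathcal A_{E,\mathrm{core}}(p)$ with $p\in S_0$, then $p=(P\circ F|_{S_0})^{-1}(P\chi(a))$, which is the restriction of a continuous map defined on the open set $(P\circ\chi)^{-1}(\Omega)\subset\mathscr X_{\mathrm{core}}$. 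So $\mathcal A_{E,\mathrm{core}}|_{S_0}$ is a homeomorphism onto its image with the subspace topology.

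For the submanifold structure, one can add that the image is the graph of the smooth map $y\mapsto(\operatorname{id}-\Pi)F((P\circ F)^{-1}(y))$ over $\Omega$. Here $\Pi$ is the projection onto $L(V)$ along $\ker P$, and $\Omega$ is identified with a subset of $L(V)$ via $(P|_{L(V)})^{-1}$. This exhibits the image as a split embedded submanifold.

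The remaining assertions are formal. Local determination of $p$ by its package value is injectivity on $S_0$. If in addition $p\mapsto g_p$ is locally injective, then shrinking $S_0$ further shows that $g_p$ is determined by $p$, and hence by $\mathcal A_{E,\mathrm{core}}(p)$. For the non-global caveats, nothing needs proof; at most one would point to a simple example such as a curve wrapping around, or to Proposition~\ref{prop:redundant-directions-invisible} for redundant parameterizations.

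I expect the main obstacle to be the point the paper itself flags: in a Banach target, an injective immersion need not be an embedding. Closed complementation of the finite-dimensional image $L(V)$ is what makes the inverse function theorem argument go through. So the care will be in checking that the projection $P\circ\chi$ is defined and continuous on a full neighbourhood of $a_0$, and not only on the image, so that the topological embedding claim holds in the subspace topology.
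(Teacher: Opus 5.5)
Your proposal is correct and follows essentially the same route as the paper. Both arguments work in a chart at $a_0$, use the fact that the finite-dimensional image $\mathcal D_{g_0}(T_{p_0}S)$ is complemented to get a bounded projection $P$, apply the finite-dimensional inverse function theorem to $P\circ F$, and take $(P\circ F)^{-1}\circ P\circ\chi$ as the left inverse. Your additional observations spell out what the paper's proof leaves implicit: that this left inverse is defined and continuous on an open neighbourhood in $\mathscr X_{\mathrm{core}}$, which gives the subspace-topology embedding, and that the image is a graph, which makes it a split submanifold.
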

\begin{proof}
Work in a Banach chart around
$\mathcal A_{E,\mathrm{core}}(p_0)$ and write the package map as
$F:S\to B$. Since $DF_{p_0}$ is injective and $T_{p_0}S$ is finite
dimensional, $E:=DF_{p_0}(T_{p_0}S)$ is finite dimensional and therefore
complemented in $B$. Let $P:B\to E$ be a bounded projection. Then
$D(P\circ F)_{p_0}:T_{p_0}S\to E$ is an isomorphism. The
finite-dimensional inverse function theorem makes $P\circ F$ a local
diffeomorphism after shrinking $S$. Consequently $F$, and hence
$\mathcal A_{E,\mathrm{core}}$, is injective and an immersion there, with
smooth inverse on its image obtained from $(P\circ F)^{-1}\circ P$.
Thus the package map is a local smooth embedding.
\end{proof}

\subsubsection*{Finite scalar reconstruction}
The preceding theorem uses the entire core analytical package. Under the
scalar postprocessing richness hypothesis introduced in the observability
theory, finitely many admissible scalar observations already suffice.
\begin{Thm}[Finite scalar reconstruction]
\label{thm:finite-local-detection}
Let
$m=\dim S,$
and assume that
$\mathcal D_{g_0}: T_{p_0}S \longrightarrow T_{a_0}\mathscr X_{\mathrm{core}}$
is injective.
\smallskip
Assume furthermore that the admissible scalar postprocessing class $E$
satisfies
Hypothesis~\ref{hyp:scalar-richness}.
\smallskip
Then there exist admissible smooth scalar postprocessings
$\psi_1,\ldots,\psi_m\in E$
such that the joint observable
$$
F
=
\left(
\psi_1\circ\mathcal A_{E,\mathrm{core}},
\ldots,
\psi_m\circ\mathcal A_{E,\mathrm{core}}
\right)
:
S
\longrightarrow
\mathbb R^m
$$
has invertible differential at $p_0$.
\smallskip
Consequently, after shrinking $S$ around $p_0$,
$F$
is a local diffeomorphism onto an open subset of $\mathbb R^m$. In particular, the $m$ scalar observations
$\psi_1,\ldots,\psi_m$ uniquely determine the parameter point $p$ near $p_0$.
If the parameterization $p\mapsto g_p$ is locally injective, they therefore
uniquely determine the corresponding Einstein metric within the chosen
family.
\end{Thm}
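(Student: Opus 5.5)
The plan is to assemble the statement from Theorem~\ref{thm:finite-scalar-detection} and the finite-dimensional inverse function theorem. The only extra ingredient is a check that the joint scalar observable is smooth, so that the inverse function theorem applies.

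First, Theorem~\ref{thm:finite-scalar-detection}(3) applies directly. Its hypotheses are that $L=\mathcal D_{g_0}$ is injective and that Hypothesis~\ref{hyp:scalar-richness} holds, and both are assumed here. It produces $\psi_1,\ldots,\psi_m\in E$ such that the joint observable $F=(\psi_i\circ\mathcal A_{E,\mathrm{core}})_{i=1}^m$ is infinitesimally complete at $g_0$. Concretely, choose a basis $\lambda_1,\ldots,\lambda_m$ of $L(V)^*$ and lift each $\lambda_i$ through the surjection $\rho$. By universal factorization (Theorem~\ref{thm:universal-response}),
\[
DF_{p_0}(X)=\bigl(\lambda_1(LX),\ldots,\lambda_m(LX)\bigr),
\]
and this map is injective. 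Since $\dim T_{p_0}S=m=\dim\mathbb R^m$, an injective linear map between these spaces is an isomorphism. This proves invertibility of the differential.

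Second, $F$ must be smooth near $p_0$. Proposition~\ref{prop:analytic-package-setup}, together with the descent in Theorem~\ref{thm:package-descent}, gives smoothness of $\mathcal A_{E,\mathrm{core}}$. Each $\psi_i$ is a smooth scalar function near $a_0$: it lies in a class continuously embedded in the local $C^1$ postprocessing space, and its elements are smooth by the definition of admissible postprocessings. After shrinking $S$ so that $\mathcal A_{E,\mathrm{core}}(S)$ lies in the common domain of the $\psi_i$, the composition $F$ is smooth. The finite-dimensional inverse function theorem then shows that, after a further shrinking, $F$ is a diffeomorphism onto an open subset of $\mathbb R^m$. In particular $F$ is injective there, so the $m$ scalar values determine $p$.

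Finally, the metric-level conclusion follows by composition. If $p\mapsto g_p$ is injective on the shrunken neighbourhood, then $g_p$ is determined by $p$, and $p$ is determined by $F(p)$. This is the argument of Theorem~\ref{thm:local-package-detection}, now with $\mathbb R^m$ in place of a Banach chart, so no complemented-subspace projection is needed.

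The only delicate point is the smoothness claim in the second step. Hypothesis~\ref{hyp:scalar-richness} as stated only guarantees $C^1$ control of $E$, which yields a $C^1$ local diffeomorphism. If genuine smoothness is required, I would either read admissible postprocessings as smooth maps $\psi:\mathscr X_{\mathrm{core}}\to\mathbb R$, as in their definition, or note that the $C^1$ inverse function theorem already gives the local injectivity and reconstruction conclusions.
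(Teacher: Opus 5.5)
Your proposal is correct and is essentially the paper's proof. Theorem~\ref{thm:finite-scalar-detection}(3) supplies $\psi_1,\ldots,\psi_m\in E$ with injective joint differential, the dimension count $\dim T_{p_0}S=m=\dim\mathbb R^m$ makes that differential an isomorphism, and the finite-dimensional inverse function theorem gives the local diffeomorphism; your smoothness worry is settled by the paper's definition of $E$ as a Banach space of admissible \emph{smooth} scalar postprocessings, and your composition step for the metric-level conclusion just spells out what the paper leaves implicit.
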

\begin{proof}
By Theorem~\ref{thm:finite-scalar-detection}, injectivity of
$\mathcal D_{g_0}$ together with
Hypothesis~\ref{hyp:scalar-richness}
provides admissible scalar postprocessings
$\psi_1,\ldots,\psi_m$
for which the joint differential
$D_{p_0}F: T_{p_0}S \longrightarrow \mathbb R^m$
is injective.

\smallskip
Since
$\dim T_{p_0}S = m = \dim\mathbb R^m,$
the differential is an isomorphism.
\smallskip
The finite-dimensional inverse function theorem therefore implies that $F$
is a local diffeomorphism at $p_0$.
\end{proof}
\begin{Cor}[Generic local scalar reconstruction]
\label{cor:generic-local-reconstruction}
Assume the hypotheses of
Theorem~\ref{thm:finite-local-detection}.
Then every admissible $m$-tuple whose joint differential at $p_0$ is
invertible provides a local scalar reconstruction map.

\smallskip
Moreover, under the hypotheses of
Corollary~\ref{cor:generic-scalar-completeness}, such $m$-tuples form an open
dense subset of the admissible product space $E^m$.
\end{Cor}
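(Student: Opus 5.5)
The corollary has two assertions, and I will prove them in order. Throughout I work under the hypotheses of Theorem~\ref{thm:finite-local-detection}: $\mathcal D_{g_0}$ is injective, Hypothesis~\ref{hyp:scalar-richness} holds, and $m=\dim S$.

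For the first assertion, fix an admissible $m$-tuple $(\psi_1,\ldots,\psi_m)\in E^m$ whose joint differential at $p_0$ is invertible. Set $F=(\psi_1\circ\mathcal A_{E,\mathrm{core}},\ldots,\psi_m\circ\mathcal A_{E,\mathrm{core}})$. This map $F$ is smooth on $S$, because $\mathcal A_{E,\mathrm{core}}$ is smooth by Theorem~\ref{thm:package-descent} and each $\psi_i$ is smooth near $a_0$. By the universal factorization (Theorem~\ref{thm:universal-response}), $D_{p_0}F$ is the map $X\mapsto (d\psi_{i,a_0}(\mathcal D_{g_0}X))_i$, and by assumption it is an isomorphism $T_{p_0}S\to\mathbb R^m$. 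I then apply the finite-dimensional inverse function theorem exactly as in the last step of the proof of Theorem~\ref{thm:finite-local-detection}. After shrinking $S$, $F$ is a diffeomorphism onto an open subset of $\mathbb R^m$, and $F^{-1}$ reconstructs $p$. If $p\mapsto g_p$ is locally injective, the same map reconstructs $g_p$. Nothing beyond invertibility of the differential is needed here.

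For the second assertion, the set of $m$-tuples with invertible joint differential is precisely $\mathcal P^{-1}(\operatorname{Iso}(V,\mathbb R^m))$, where $\mathcal P$ is the map defined before Corollary~\ref{cor:generic-scalar-completeness}. On $V\to\mathbb R^m$ with $\dim V=m$, injectivity and invertibility coincide, so this is also the set of tuples whose joint response is injective. Corollary~\ref{cor:generic-scalar-completeness} states that this set is open and dense in $E^m$. Combining this with the first assertion shows that every tuple in this open dense set gives a local scalar reconstruction map.

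I expect the only point requiring care to be this identification. I must check that the joint differential of $F$ coincides with $\mathcal P(\psi_1,\ldots,\psi_m)$, which follows from the chain rule $D_{p_0}(\psi_i\circ\mathcal A_{E,\mathrm{core}})=d\psi_{i,a_0}\circ L$ with $L=\mathcal D_{g_0}$. I must also note that the shrinking neighbourhood of $p_0$ depends on the chosen tuple. For that reason, genericity is asserted only for the first-order condition, not for a uniform neighbourhood.
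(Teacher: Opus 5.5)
Your proof is correct and follows the same argument as the paper. The paper applies the finite-dimensional inverse function theorem to any tuple with invertible joint differential, and then cites Corollary~\ref{cor:generic-scalar-completeness} for the open-dense assertion. Your explicit identification of the invertible tuples with $\mathcal P^{-1}(\operatorname{Iso}(V,\mathbb R^m))$, using $\dim V=m$ and the chain rule, is a detail the paper leaves implicit.
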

\begin{proof}
Invertibility of the joint differential implies local reconstruction by the
finite-dimensional inverse function theorem.

\smallskip
The open-dense assertion is precisely
Corollary~\ref{cor:generic-scalar-completeness}.
\end{proof}
\begin{Bem}
\label{rem:detection-vs-reconstruction}
The preceding results separate local Einstein reconstruction into two
independent ingredients.
\begin{enumerate}
\item
The universal response differential
\[
\mathcal D_{g_0}
\]
must be injective. This is the geometric--analytical observability condition
for the core harmonic probe package.
\item
The admissible scalar postprocessing class must be sufficiently rich to
realize enough independent covectors on
$\operatorname{Im}\mathcal D_{g_0}.$
\end{enumerate}
Under these two hypotheses, the chosen smooth parameterized Einstein family is
locally reconstructible from finitely many scalar observations generated by
the canonical harmonic probe package.

\smallskip
The conclusion is local and relative to the chosen smooth parameterized
Einstein family and the fixed probe data. It does not assert reconstruction
of an entire possibly singular Einstein moduli space, nor does it assert that
every infinitesimal Einstein deformation outside the chosen family is
detected.
A crossing of two different local Einstein branches gives the basic
counterexample to any stronger statement: reconstruction on one branch says
nothing about a second branch through the same Kuranishi point unless that
branch is included in the domain of the reconstruction theorem.
\end{Bem}

\subsection{Constant-rank observability and invisible foliations}
The preceding subsections treated complete infinitesimal observability and
local reconstruction. We now consider the intermediate situation in which a
response map has positive but nonmaximal rank.

\smallskip
In this setting, the constant-rank theorem gives a local geometric realization
of the invisible infinitesimal directions.
\begin{Prop}[Constant-rank observability]
\label{prop:constant-rank-observability}
Let
$\Theta: S\longrightarrow Y$
be a smooth analytical observable, where $Y$ is a finite-dimensional
manifold or vector space.

\smallskip
Assume that
$D\Theta$
has constant rank
$r$
on a neighbourhood $U\subset S$ of the reference point $p_0$.

\smallskip
Then, after possibly shrinking $U$, the following statements hold.
\begin{enumerate}
\item
For every $y\in\Theta(U)$, each nonempty level set
$\Theta^{-1}(y)\cap U$
is a smooth embedded submanifold of dimension
$\dim S-r.$
\item
At every $p\in U$,
$T_p\!\left( \Theta^{-1}(\Theta(p))\cap U \right) = \ker D\Theta_p.$
\item
There exist local coordinates on $U$ in which $\Theta$ depends only on
$r$ transverse coordinates. Consequently, the level sets form a local smooth
foliation of $U$ by invisible leaves.
\item
The local leaf space is represented by an $r$-dimensional transverse
coordinate space, and the observable induces local coordinates on this
transverse quotient up to a smooth change of target coordinates.
\end{enumerate}
\end{Prop}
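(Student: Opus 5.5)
The plan is to derive everything from the classical constant-rank theorem for smooth maps between finite-dimensional manifolds. Since $S$ is finite-dimensional and $Y$ is a finite-dimensional manifold or vector space, no Banach-space considerations enter; we only need that $\Theta$ is smooth, which holds by Proposition~\ref{prop:analytic-package-setup} or Proposition~\ref{prop:smooth-restricted-observable}. The constant-rank hypothesis on $U$ then supplies, after shrinking $U$, charts $\phi:U\to\mathbb R^{n}$ around $p_0$ (with $n=\dim S$) and $\chi$ around $\Theta(p_0)$ in $Y$ such that
\[
\chi\circ\Theta\circ\phi^{-1}(x_1,\ldots,x_n)=(x_1,\ldots,x_r,0,\ldots,0).
\]
We arrange $\phi(U)$ to be a product cube $C^r\times C^{n-r}$, which is possible by one further shrinking.

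Item (3) can then be read off directly: in the coordinates $\phi$, $\Theta$ depends only on the $r$ transverse coordinates $x'=(x_1,\ldots,x_r)$. The sets $\{x'=\text{const}\}$ are the slices $\{c\}\times C^{n-r}$, and these form a smooth foliation of $U$.

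For item (1), take $y\in\Theta(U)$. Injectivity of $x'\mapsto(x',0)$ shows that $\Theta^{-1}(y)\cap U$ is exactly one slice $\phi^{-1}(\{c\}\times C^{n-r})$. That slice is an embedded submanifold of dimension $n-r$. The product shape of the cube is what guarantees that each level set is a single connected slice and not a union of pieces.

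For item (2), differentiate the normal form. At each $p\in U$, $\ker D\Theta_p$ is spanned by $\partial_{x_{r+1}},\ldots,\partial_{x_n}$, which is the tangent space to the slice through $p$. Alternatively, observe that $T_p$(level set) is contained in $\ker D\Theta_p$ and that both have dimension $n-r$.

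For item (4), the leaf space of the slice foliation is identified with $C^r$ via $x'$. The map induced by $\Theta$ on this quotient is $x'\mapsto\chi^{-1}(x',0)$, which is an embedding onto an $r$-dimensional submanifold of $Y$. In the target coordinates $\chi$, it is the identity on the first $r$ coordinates. So $\Theta$ induces local coordinates on the transverse quotient, up to the change of target coordinates $\chi$.

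The only delicate step is the shrinking in item (1). A general neighbourhood $U$ can meet a level set in several disconnected plaques, which would break the claimed one-to-one correspondence between leaves and values. I would handle this by replacing $U$ with $\phi^{-1}$ of a product cube. The proposition explicitly allows shrinking, so no further argument is needed.
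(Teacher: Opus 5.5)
Your proposal is correct and takes essentially the same approach as the paper, which also reads all four items off the constant-rank normal form $\Theta(x_1,\ldots,x_n)=(x_1,\ldots,x_r,0,\ldots,0)$. Shrinking $U$ to a single chart whose image is a product cube is a useful refinement that the paper leaves implicit, since it only invokes the normal form near every point of $U$. Note, however, that item (1) already holds without the product shape, because $\{x'=c\}\cap\phi(U)$ is always an embedded $(n-r)$-dimensional submanifold; the cube is really needed only for the single-slice and leaf-space identification in items (3)--(4).
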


\begin{proof}
This is the constant-rank theorem for smooth maps between finite-dimensional
manifolds.

\smallskip
Near every point of $U$, there exist local coordinates
$(x_1,\ldots,x_n)$
on the domain and suitable local coordinates on the target such that
$\Theta(x_1,\ldots,x_n) = (x_1,\ldots,x_r,0,\ldots,0).$
The level sets are therefore obtained by fixing
$x_1,\ldots,x_r$
and are smooth submanifolds of codimension $r$.

\smallskip
Their tangent spaces are precisely the vectors annihilated by
$D\Theta$, proving
$T_p\!\left( \Theta^{-1}(\Theta(p)) \right) = \ker D\Theta_p.$
The remaining coordinates
$(x_1,\ldots,x_r)$
provide the asserted local transverse parameterization.
\end{proof}

\smallskip
Thus, under the constant-rank hypothesis, the infinitesimal observable
quotient
$T_pS/\ker D\Theta_p$
admits a local geometric realization as the tangent space of the transverse
coordinate space supplied by the constant-rank theorem.

\smallskip
At the reference point this agrees with the observable infinitesimal quotient
$V_{\mathrm{obs}}(\Theta) = T_{p_0}S/ \ker D\Theta_{p_0}.$

\smallskip
Without the constant-rank hypothesis, the dimensions of the kernels may
change and no smooth foliation by level sets is implied.
 The elementary map $(x,y)\mapsto x^2$ already shows the issue: the rank drops
on $x=0$, so the kernel distribution changes dimension and cannot define the
constant-rank foliation asserted above across that locus.
Even under constant rank, the conclusion is local: globally, level sets may
have several connected components and the global quotient by level sets need
not be a manifold or Hausdorff. In that general
case,
$T_{p_0}S/ \ker D\Theta_{p_0}$
should be regarded only as an infinitesimal quotient and not automatically as
the tangent space of a genuine local quotient manifold.
\begin{Bem}
When $D\Theta$ has constant rank, the observable is constant along the local
invisible leaves and varies only in the transverse directions.

\smallskip
Thus partial observability has a natural local geometric interpretation:
the chosen smooth Einstein family decomposes locally into deformation
directions invisible to the observable and complementary directions detected
by its first-order response.
\end{Bem}

\subsection{Rigidity and singular local deformation models}
The Einstein Detection Principle is formulated on a chosen smooth
parameterized Einstein family. Two limiting situations require separate
comment: a zero-dimensional chosen family and a singular Kuranishi zero set.

\subsubsection*{Zero-dimensional parameterized families}
\begin{Cor}
\label{cor:rigidity-detection}
Assume that the chosen smooth parameterized Einstein family $S$ is
zero-dimensional near $p_0$. Equivalently,
$T_{p_0}S = \{0\}.$
Then every smooth analytical observable
$\Theta: S\longrightarrow Y$
has trivial first-order response at $p_0$:
$D_{p_0}\Theta = 0.$
\end{Cor}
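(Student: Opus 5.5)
The statement is a direct consequence of the definition of the response operator as a differential on the finite-dimensional parameter manifold $S$. I would not invoke the probe construction, descent, or any injectivity hypothesis. The plan is to show that the differential $D_{p_0}\Theta$ is a linear map out of the zero vector space, and hence is the zero map.

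First, I would make the equivalence explicit. A smooth manifold is zero-dimensional near $p_0$ exactly when the connected component of $S$ containing $p_0$ is the single point $\{p_0\}$, which is open in $S$. Its tangent space is therefore $T_{p_0}S=\{0\}$. Conversely, $\dim S=\dim T_{p_0}S$ on the component through $p_0$, so $T_{p_0}S=\{0\}$ forces dimension zero there.

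Second, I would note that $\Theta:S\to Y$ is smooth. This holds either by hypothesis or, for core-package-generated observables, by Proposition~\ref{prop:analytic-package-setup}. Hence its differential
\[
D_{p_0}\Theta:T_{p_0}S\longrightarrow T_{\Theta(p_0)}Y
\]
is a well-defined linear map, the canonical response operator of Definition~\ref{def:canonical-response-operator}. A linear map with domain $\{0\}$ sends its only vector to $0$, so $D_{p_0}\Theta=0$.

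As a consistency remark, I would add that $\Theta$ is locally constant near $p_0$ because its domain is locally a point. This gives the same conclusion without using linear algebra. I would also note that the result agrees with Theorem~\ref{thm:universal-response}, since in this case $\mathcal D_{g_0}$ is itself the zero map on $\{0\}$.

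There is no genuine obstacle in this argument. The only point needing care is interpretive: the vanishing of the response is consistent with infinitesimal completeness, since $\ker D_{p_0}\Theta=\{0\}=T_{p_0}S$. I would therefore remark that, in the zero-dimensional case, trivial response and complete detection coincide.
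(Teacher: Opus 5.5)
Your proposal is correct and matches the paper's proof. Both argue that $D_{p_0}\Theta$ is a linear map whose domain is the zero vector space $T_{p_0}S=\{0\}$, so it is the zero map; your extra remarks (the dimension equivalence, local constancy of $\Theta$, and vacuous infinitesimal completeness) are accurate but not needed.
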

\begin{proof}
The domain of the differential is the zero vector space. Hence every linear
map
$T_{p_0}S \longrightarrow T_{\Theta(p_0)}Y$
is the zero map.
\end{proof}
\smallskip
This statement should not be confused with local rigidity of the full
Einstein problem.

\smallskip
A zero-dimensional chosen parameterized Einstein family contains no
nontrivial tangent directions to detect, but this alone does not rule out
other nearby Einstein families, distinct local branches, or nonlinear
solutions not contained in the chosen family.

\smallskip
Local rigidity for the full normalized Einstein problem requires additional
information, for example infinitesimal rigidity together with the standard
gauge-fixed local rigidity theorem, or a direct analysis of the Kuranishi
obstruction map.

\subsubsection*{Singular Kuranishi points}
The second limiting situation occurs when the Kuranishi zero set
$\kappa^{-1}(0)$
is singular at the reference metric.

\smallskip
In this case there may exist several smooth parameterized Einstein families
through the same reference point. The response theory on one chosen family
$S$
controls only infinitesimal and local nonlinear deformations contained in
that family.

\smallskip
Thus injectivity of
$$
\mathcal D_{g_0}:
T_{p_0}S
\longrightarrow
T_{\mathcal A_{E,\mathrm{core}}(p_0)}
\mathscr X_{\mathrm{core}}
$$
does not exclude the existence of other Einstein branches through the same
Kuranishi point.

\smallskip
Similarly, a local reconstruction theorem on $S$ does not by itself yield
reconstruction on the full possibly singular Kuranishi zero set.

\smallskip
A statement about all nearby Einstein solutions at such a point therefore
requires additional information. For example, one may assume that
\begin{enumerate}
\item
the chosen family $S$ contains all nearby normalized Einstein solutions under
consideration; or
\item
all relevant local smooth Einstein families have been identified and the
response theory has been established on each of them together with the
necessary compatibility on their intersections.
\end{enumerate}
Unless explicitly stated otherwise, every detection and reconstruction result
in the present paper is understood relative to the chosen smooth
parameterized Einstein family.
 A second smooth branch through the same singular Kuranishi point is therefore
a counterexample to any attempted promotion of a branchwise reconstruction
theorem to the whole local moduli germ without additional branching
information.
\begin{Bem}
The present theory does not assume a canonical smooth stratification of the
Kuranishi zero set

\smallskip
Rather, it is a theory on smooth parameterized Einstein families contained in
that zero set. At a singular Kuranishi point, additional analysis of the local
branching structure is required before conclusions obtained on individual
families can be promoted to statements about all nearby Einstein solutions.
\end{Bem}

\subsection{Structure and scope of the Einstein Detection Principle}
The logical dependencies give a strict separation between the core theorem and
its optional interfaces. The core detection chain is
\[
\begin{aligned}
\text{Jacobi nondegeneracy}
&\Longrightarrow \text{canonical probe branch}
\Longrightarrow (J,G)\\
&\Longrightarrow \text{descent to }S
\Longrightarrow \mathcal D_{g_0}\\
&\Longrightarrow \text{conditional detection/reconstruction}.
\end{aligned}
\]
The matching-data extension and the extended spectral package are lateral
extensions of this chain, not prerequisites for it. The former connects the
core response to $\Sigma$ and the latter enlarges the class of observables.
We may now summarize the structure of the Einstein Detection Principle.

\smallskip

The theory consists of two logically distinct stages.

\smallskip
The first stage is universal. Starting from the fixed auxiliary harmonic
probe data, one constructs the canonical harmonic probes and the descended
core analytical package
$\mathcal A_{E,\mathrm{core}} = (J_g,G_g).$
Differentiation along the chosen smooth parameterized Einstein family produces
the universal response differential
$$
\mathcal D_{g_0}
=
D_{p_0}\mathcal A_{E,\mathrm{core}}
:
T_{p_0}S
\longrightarrow
T_{\mathcal A_{E,\mathrm{core}}(p_0)}
\mathscr X_{\mathrm{core}}.
$$
Every core-package-generated observable
$\Theta = \Psi\circ\mathcal A_{E,\mathrm{core}}$
then satisfies
$$D_{p_0}\Theta = D_{\mathcal A_{E,\mathrm{core}}(p_0)}\Psi \circ \mathcal D_{g_0}.$$
Thus every first-order observation generated by the core package factors
through the same universal linear map.

\smallskip
The second stage is model-dependent. For a concrete Einstein family one must
determine the kernel of
$\mathcal D_{g_0}$.
Theorem~\ref{thm:effective-jacobi-criterion} reduces this to the kernel of the
effective Jacobi-response operator $\mathfrak J_{g_0}$, while
Theorem~\ref{thm:finite-jacobi-pairing-certificate} gives a finite determinant
certificate for injectivity.  Remark~\ref{rem:compressed-scalar-implementation} records the scalar
cohomogeneity-one implementation and the doubly warped identity-type
specialization in compact form.  The round $(2,7)$ model is used there only
as a scope check: its exact shooting modes are not identified here with
nonzero tangent vectors to a smooth global normalized Einstein family. For unrestricted smooth local scalar postprocessings no
further richness obstruction remains; for a prescribed restricted observation
class one must additionally verify richness on
$\operatorname{Im}\mathcal D_{g_0}$.
The architecture of the theory may therefore be summarized schematically as
$$
\boxed{
\begin{array}{c}
\textbf{Universal analytical construction}
\\[1mm]
\text{Canonical harmonic probes}
\\
\Downarrow
\\
\text{Core analytical package }
\mathcal A_{E,\mathrm{core}}
\\
\Downarrow
\\
\text{Universal response differential }
\mathcal D_{g_0}
\\[3mm]
\Downarrow
\\[3mm]
\textbf{Model-specific observability analysis}
\\[1mm]
\text{Compute }
\ker\mathcal D_{g_0}
\text{ and }
\operatorname{Im}\mathcal D_{g_0}
\\
\Downarrow
\\
\text{Choose admissible observations}
\\
\Downarrow
\\
\text{Detection and local reconstruction}
\end{array}
}
$$

\smallskip
If
$\mathcal D_{g_0}$
is injective, then the core analytical package retains all first-order
information along the chosen smooth parameterized Einstein family.

\smallskip
By
Theorem~\ref{thm:local-package-detection}, this injectivity also implies that
the complete core analytical package is locally an embedding of $S$ near
$p_0$ and hence locally determines the Einstein metric within that family.

\smallskip
If, in addition, the scalar postprocessing richness hypothesis holds, then
Theorem~\ref{thm:finite-local-detection} shows that
$\dim S$
admissible scalar observations may be chosen so that their joint map gives
local coordinates on $S$ near the reference metric.

\smallskip
Accordingly, the Einstein Detection Principle separates the local Einstein
deformation problem into the following two tasks.
\begin{enumerate}
\item
Construct the canonical core analytical package and its universal response
differential.
\item
For the concrete Einstein family under consideration, determine the
observability properties of the universal response differential and construct
a sufficiently rich admissible family of finite-dimensional observations.
\end{enumerate}
Once these tasks are completed, the general theory yields infinitesimal
detection and, under the hypotheses stated above, local reconstruction on the
chosen smooth parameterized Einstein family.
\begin{Bem}
The auxiliary harmonic probes are analytical intermediaries. Their role is to
generate the Jacobi and Green structures from which the response theory is
constructed.

\smallskip
After these analytical structures have been formed, the detection problem is
expressed entirely in terms of the descended core analytical package, its
universal response differential and the chosen admissible observations.

\smallskip
What survives the elimination of the auxiliary probe variables is therefore
a canonical analytical structure on the chosen smooth parameterized Einstein
family, relative to the fixed normalization and probe data.

\smallskip
No claim is made that this structure is defined on an entire singular Einstein
moduli space without additional hypotheses.
\end{Bem}

\subsection{An explicit illustration of the Einstein Detection Principle:
the homothetic round family}
\label{subsec:round-homothetic-detection}
We conclude the theoretical development with a completely explicit model
illustrating the basic response mechanism of the Einstein Detection
Principle. The example belongs to the \emph{unnormalized} Einstein problem,
since the Einstein constant varies along the family, but every relevant
quantity can be computed in closed form.

\smallskip
The example shows directly how a canonically associated harmonic probe gives
rise to a scalar analytical observable whose differential detects the unique
infinitesimal direction in a one-dimensional Einstein family.

\smallskip
Let
$m\ge3,$
let $g_{\mathrm{rd}}$ denote the unit round metric on $S^m$, and consider the
smooth one-parameter family
$$g_r = r^2g_{\mathrm{rd}}, \qquad r>0.$$
Since constant rescaling leaves the Ricci tensor unchanged as a
$(0,2)$-tensor,
$$\Ric(g_r) = (m-1)g_{\mathrm{rd}} = \frac{m-1}{r^2}g_r.$$
Thus every $g_r$ is Einstein, with Einstein constant
$\lambda_r = \frac{m-1}{r^2}.$
The parameter interval
$S_{\mathrm{hom}} = (0,\infty)$
therefore defines a one-dimensional smooth parameterized family of Einstein
metrics through
$r\longmapsto g_r.$

\subsubsection*{The canonical harmonic probe}
Fix the target
$(N,h) = (S^m,g_{\mathrm{rd}})$
and consider the identity map
$$f_r = \operatorname{Id}_{S^m} : (S^m,g_r) \longrightarrow (S^m,g_{\mathrm{rd}}).$$
A constant rescaling of the domain metric does not change its Levi--Civita
connection. Hence the identity map is harmonic for every
$r>0.$
Thus
$r\longmapsto f_r$
gives a smoothly varying harmonic probe family along
$S_{\mathrm{hom}}$.

\smallskip
For $m\ge3$, the normalized Jacobi nondegeneracy calculation of
Section~\ref{subsec:round-sphere-probe} applies to the corresponding round
identity probe, up to the harmless positive scaling induced by $g_r$.
Consequently, after fixing the same reference-probe branch and probe
normalization, the identity maps provide the locally unique canonical harmonic
probes along the homothetic family.

\subsubsection*{The harmonic-energy observable}
As a scalar analytical observable we choose the harmonic-map energy
$$
\Theta(r)
=
E_{g_r}(f_r)
=
\frac12
\int_{S^m}
|df_r|_{g_r,g_{\mathrm{rd}}}^{\,2}
\,d\operatorname{vol}_{g_r}.
$$
Since
$g_r^{-1} = r^{-2}g_{\mathrm{rd}}^{-1}$
and
$d\operatorname{vol}_{g_r} = r^m \,d\operatorname{vol}_{g_{\mathrm{rd}}},$
we have
$|df_r|_{g_r,g_{\mathrm{rd}}}^{\,2} = \operatorname{tr}_{g_r}(g_{\mathrm{rd}}) = mr^{-2}.$
Therefore
\begin{equation}
\label{eq:round-probe-energy}
\Theta(r)
=
\frac{m}{2}
r^{m-2}
\operatorname{Vol}(S^m,g_{\mathrm{rd}}).
\end{equation}
Differentiating gives
\begin{equation}
\label{eq:round-probe-energy-response}
\Theta'(r)
=
\frac{m(m-2)}{2}
r^{m-3}
\operatorname{Vol}(S^m,g_{\mathrm{rd}}).
\end{equation}
Since
$m\ge3,$
we obtain
$\Theta'(r)>0$
for every
$r>0.$
Thus the harmonic energy varies strictly monotonically along the entire
homothetic Einstein family.

\begin{Prop}[Detection on the homothetic round family]
\label{prop:round-homothetic-detection}
Let
$S_{\mathrm{hom}} = (0,\infty)$
denote the homothetic round Einstein family, parametrized by $r$.

\smallskip
The response operator associated with the harmonic-energy observable is
injective at every point of this family.

\smallskip
More precisely,
$D_r\Theta: T_rS_{\mathrm{hom}} \longrightarrow \mathbb R$
is given by
$$D_r\Theta(\dot r) = \frac{m(m-2)}{2} r^{m-3} \operatorname{Vol}(S^m,g_{\mathrm{rd}}) \,\dot r.$$
Hence
$$\ker D_r\Theta = \{0\}.$$
Consequently, the harmonic-energy observable detects the unique
infinitesimal deformation direction of the homothetic round family.

\smallskip
Moreover, $\Theta$ itself is a local coordinate on
$S_{\mathrm{hom}}$ at every point.
\end{Prop}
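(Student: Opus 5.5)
The plan is to compute directly from the closed-form expression \eqref{eq:round-probe-energy}, with no appeal to the abstract detection theorems. Along $S_{\mathrm{hom}}$ the probe is the identity map $f_r=\operatorname{Id}_{S^m}$ for every $r$. I will first confirm this. Rescaling the domain by a constant does not change the Levi--Civita connection, so the tension field $\tau_{g_r}(\operatorname{Id})=r^{-2}\tau_{g_{\mathrm{rd}}}(\operatorname{Id})=0$. By the local uniqueness in Theorem~\ref{thm:probe-existence}, applied on the same reference branch, the identity is therefore the canonical probe near each point. Hence $\Theta(r)=E_{g_r}(\operatorname{Id})$ is a smooth function of the single variable $r$. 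Its response operator at $r$ is just multiplication by $\Theta'(r)$ on $T_rS_{\mathrm{hom}}\cong\mathbb R$.

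The computation then has three steps. First, I will verify the pointwise identity $|d\operatorname{Id}|^2_{g_r,g_{\mathrm{rd}}}=\operatorname{tr}_{g_r}g_{\mathrm{rd}}=m r^{-2}$, which holds because $g_r^{-1}=r^{-2}g_{\mathrm{rd}}^{-1}$. Second, I will use $d\operatorname{vol}_{g_r}=r^m\,d\operatorname{vol}_{g_{\mathrm{rd}}}$ and integrate to obtain \eqref{eq:round-probe-energy}. Third, differentiating in $r$ gives \eqref{eq:round-probe-energy-response}. The displayed formula for $D_r\Theta(\dot r)$ then follows by linearity.

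Since $m\ge 3$, the factor $m(m-2)r^{m-3}\operatorname{Vol}(S^m,g_{\mathrm{rd}})/2$ is strictly positive for all $r>0$. So $D_r\Theta$ is a nonzero linear map $\mathbb R\to\mathbb R$, and $\ker D_r\Theta=\{0\}$. The one-dimensional tangent direction of the family is thus detected. For the final assertion, I will note that a smooth function on a one-dimensional manifold with nonvanishing derivative is a local diffeomorphism onto an open interval, by the inverse function theorem. Since $\Theta$ is in fact strictly increasing, it is even a global coordinate on $(0,\infty)$, so in particular it is a local coordinate at every point.

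The main obstacle is not the calculus but the bookkeeping about which probe and which observable are meant. I expect the step needing most care to be the identification of $\operatorname{Id}$ as the canonical probe along the whole family. The family is unnormalized, so I will justify nondegeneracy at each $r$ by scaling: the Jacobi operator for $g_r$ is $r^{-2}$ times the one at $r=1$. Its invertibility therefore follows from the computation in Example~\ref{ex:round-probe} with $n=m-1\ge2$, and the branch is unique in a neighbourhood. I will also remark that the energy is a probe observable rather than a core-package-generated one. Accordingly, the proposition is a direct statement about $\Theta$, and it does not invoke Theorem~\ref{thm:universal-response}.
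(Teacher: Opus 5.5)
Your proposal is correct and follows essentially the same route as the paper, which likewise reads off $\Theta'(r)=\frac{m(m-2)}{2}r^{m-3}\operatorname{Vol}(S^m,g_{\mathrm{rd}})>0$ from the closed-form energy. From this it concludes injectivity on the one-dimensional tangent space and invokes the inverse function theorem for the local-coordinate claim. Your extra checks (harmonicity of the identity, and $J_r=r^{-2}J_1$ for nondegeneracy) mirror the setup the paper gives just before the proposition. Your remark that strict monotonicity makes $\Theta$ even a global coordinate on $(0,\infty)$ is a harmless strengthening.
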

\begin{proof}
Equation~\eqref{eq:round-probe-energy-response} gives
$\Theta'(r) = \frac{m(m-2)}{2} r^{m-3} \operatorname{Vol}(S^m,g_{\mathrm{rd}}).$
For
$m\ge3$
and
$r>0,$
this quantity is strictly positive. Since
$T_rS_{\mathrm{hom}}$
is one-dimensional, the response operator is therefore injective.

\smallskip
The finite-dimensional inverse function theorem then implies that $\Theta$
is a local coordinate on the homothetic family near every value of $r$.
\end{proof}

\begin{Bem}
\label{rem:round-homothetic-normalization}
This example illustrates the response mechanism in its simplest nontrivial
form. The Einstein family is one-dimensional, and a single scalar analytical
observable has nonvanishing differential and therefore detects the entire
infinitesimal deformation space.

\smallskip
The example is deliberately unnormalized. Along the family,
$\lambda_r = \frac{m-1}{r^2}$
varies with $r$. If instead one imposes the normalization
$\Ric(g) = (m-1)g,$
then the homothetic family contains only
$r=1.$
Thus the scaling direction disappears from the normalized Einstein problem.

\smallskip
This distinction illustrates precisely why the normalization used throughout
the main theory is essential: homothetic variation should not be counted as a
genuine normalized Einstein deformation.

\smallskip
The nontrivial applications of the Einstein Detection Principle therefore
concern smooth parameterized families in which genuine deformation
directions remain after the Einstein normalization has been fixed.

\smallskip
Strictly speaking, the harmonic-energy observable used here is an observable
of the canonical harmonic probe assignment. Unless an additional
factorization through the core package
$(J_g,G_g)$
has been established, we do not claim that the energy itself is
core-package-generated. The example is therefore an explicit illustration
of the detection mechanism rather than a verification of every structural
factorization theorem proved above.
In particular, it must not be cited as evidence that
$D\mathcal A_{E,\mathrm{core}}$ is injective: the computation proves
injectivity only for the displayed energy response along the unnormalized
one-dimensional family.
\end{Bem}

\section{Numerical realization of the Einstein Detection Principle}
\label{sec:numerical-realization}

The preceding sections establish the Einstein Detection Principle as a general
analytical framework for studying local Einstein deformations. The purpose of
the present section is to demonstrate that the finite-dimensional response
mechanism underlying this framework can be implemented successfully in a
nontrivial cohomogeneity-one Einstein shooting problem.

\smallskip

The emphasis is not on proving the existence of a previously unknown Einstein
metric. Rather, the objective is to realize concretely the main ingredients of
the construction: a normalized Einstein shooting problem, a geometrically
defined matching hypersurface, fixed harmonic-probe channels, a
finite-dimensional observation map, and its first-order response.

\smallskip

The two parameters used below parametrize
smooth initial Einstein germs and their propagated trajectories. They do
\emph{not} automatically parametrize a positive-dimensional family of global
normalized Einstein metrics. Accordingly, the matrix computed below is the
differential of a probe observation map on a two-dimensional
\emph{shooting family}. It is not identified with the universal response
differential
$\mathcal D_{g_0}=D\mathcal A_{E,\mathrm{core}}$
on a positive-dimensional global Einstein family unless the additional
restriction and factorization hypotheses from the abstract theory are
verified.

\smallskip

Throughout this section the Einstein equation is normalized by
\[
\Ric(g)=7g.
\]

\subsection{The cohomogeneity-one Einstein boundary-value problem}

We consider the normalized cohomogeneity-one Einstein equation associated with
the principal factor dimensions
\[
(2,2,3).
\]
On the regular part of the orbit interval, the invariant metric is described
by three positive warping functions
\[
a(t),\qquad b(t),\qquad c(t),
\]
satisfying the reduced Einstein equations together with the smoothness
conditions at the initial singular orbit.

\smallskip

For the shooting problem considered here, the admissible smooth Einstein germs
form locally a two-parameter family. Rather than working directly with the
positive initial metric coefficients, we introduce logarithmic coordinates
\[
x=\log b_0,
\qquad
y=\log\!\left(\frac{c_0}{\sqrt2\,b_0}\right).
\]
These remove the positivity constraints and provide convenient local
coordinates on the chosen family of regular Einstein germs. We denote the
resulting local shooting parameter space by
\[
P_{\mathrm{shoot}}\subset\mathbb R^2.
\]
Thus every
$p=(x,y)\in P_{\mathrm{shoot}}$
determines a smooth initial Einstein germ and, as long as the reduced solution
remains regular, a corresponding Einstein trajectory.

\smallskip

The numerical computations below are performed entirely on
$P_{\mathrm{shoot}}$. Only after imposing the appropriate second-end matching
condition does a shooting trajectory represent a global Einstein metric.
Consequently, the two-dimensional shooting space is not itself identified
with the local normalized Einstein moduli space.

\subsection{The canonical matching hypersurface}

Let
\[
H:\mathcal P\longrightarrow\mathbb R
\]
be the mean-curvature function introduced in
Section~\ref{chap:intrinsic-moduli}. Along every regular positive Einstein
trajectory,
\[
\frac{dH}{dt}
=
-7-\operatorname{Tr}(L^2)<0.
\]
Hence every intersection with the level $H=0$ is transverse and unique.

\smallskip

The matching hypersurface is the fixed geometrically defined hypersurface
\[
\Sigma=H^{-1}(0)\subset\mathcal P.
\]
For a shooting parameter $p=(x,y)$, let $t_H(p)$ denote the unique hitting
time satisfying
\[
H\bigl(g_p(t_H(p)),L_p(t_H(p))\bigr)=0.
\]
Thus the hitting time depends on the trajectory, whereas the hypersurface
$\Sigma$ itself is fixed.

\smallskip

The corresponding propagation map
\[
\Phi_{\mathrm{shoot}}:
P_{\mathrm{shoot}}\longrightarrow\Sigma
\]
is defined by
\[
\Phi_{\mathrm{shoot}}(p)
=
\bigl(g_p(t_H(p)),L_p(t_H(p))\bigr).
\]
Transversality of the $H=0$ event implies that $t_H(p)$ and
$\Phi_{\mathrm{shoot}}(p)$ depend smoothly on $p$ in a sufficiently small
neighbourhood of the reference trajectory.

\smallskip

This geometrically defined observation surface is the concrete matching
section used throughout the numerical calculation.

\subsection{Harmonic probe channels}

For the present shooting problem we employ two fixed probe channels, labelled
by
\[
\kappa=0.3,
\qquad
\kappa=1.
\]
These values are fixed before the response calculation and are not adapted to
the resulting numerical data.

\smallskip

For each channel, the computation solves the corresponding normalized
linearized probe equation along the Einstein trajectory; we denote the
resulting variation field by $\eta_\kappa$. Under the well-posedness and
identification hypotheses of the abstract probe theory, these fields are the
metric-forced Jacobi responses of the canonical nonlinear probe branch. The
numerical calculation itself tests the resulting linear response system and
does not use this identification as an additional numerical conclusion.

\smallskip

At the moving matching point $t=t_H(p)$ we record the probe values and their
normal derivatives,
\[
\left(
\eta_{0.3},
\eta'_{0.3},
\eta_1,
\eta'_1
\right).
\]
Accordingly, the numerical procedure defines the smooth observation map
\[
\mathcal O:
P_{\mathrm{shoot}}\longrightarrow\mathbb R^4
\]
by
\[
\mathcal O(p)
=
\begin{pmatrix}
\eta_{0.3}\\
\eta'_{0.3}\\
\eta_1\\
\eta'_1
\end{pmatrix}_{t=t_H(p)}.
\]

\smallskip

Whenever the matching-data-locality hypothesis
(Hypothesis~\ref{hyp:matching-dependence}) holds for these observations, there
is a corresponding map
\[
\Theta:\Sigma\longrightarrow\mathbb R^4
\]
such that
\[
\mathcal O=\Theta\circ\Phi_{\mathrm{shoot}}.
\]
This is the finite-dimensional factorization relevant to the present
calculation. To identify the four observations with package-generated
observables in the precise sense of the abstract theory, one must additionally
verify factorization through the chosen core or extended analytical package.

\subsection{The response operator}

The first-order response of the four observations to variations of the
shooting parameters is
\[
D\mathcal O_p:
T_pP_{\mathrm{shoot}}\longrightarrow\mathbb R^4.
\]
This is the response operator computed numerically in the present example.

\smallskip

The low-ratio reference parameter is
\[
p_0=(x_0,y_0)
=
(-3.0703466243233075,-0.40927563640968895).
\]
The corresponding initial coefficients are
\[
b_0=e^{x_0}\approx4.640506694443371\times10^{-2},
\qquad
c_0=\sqrt2\,e^{x_0+y_0}
\approx4.358471905485352\times10^{-2}.
\]
The moving $H=0$ event occurs near
\[
t_H\approx1.510296267347892.
\]

\smallskip

At this reference trajectory the corrected response matrix is
\[
D\mathcal O_{p_0}
=
\begin{pmatrix}
-0.304111047251 & -0.182573810367\\
\phantom{-}0.490841798759 & \phantom{-}0.294678607138\\
-0.138472804187 & -0.082657311685\\
\phantom{-}0.685715983847 & \phantom{-}0.409317298755
\end{pmatrix}.
\]
Its singular values satisfy
\[
\sigma_{\min}(D\mathcal O_{p_0})
\approx1.3121115069\times10^{-3},
\qquad
\sigma_{\max}(D\mathcal O_{p_0})
\approx1.0570383942.
\]
In particular, the displayed numerical response has rank two. Thus no
nonzero infinitesimal direction of the two-dimensional shooting family is
invisible to these four probe observations at the reference parameter.

\smallskip

The moving-event correction is essential here. If
$Z(p,t)$ denotes any recorded probe quantity and
\[
\mathcal Z(p)=Z(p,t_H(p)),
\]
then
\[
D\mathcal Z_p[\dot p]
=
D_pZ(p,t_H(p))[\dot p]
+
\partial_tZ(p,t_H(p))\,Dt_H(p)[\dot p],
\]
where differentiation of
$H(p,t_H(p))=0$
gives
\[
Dt_H(p)[\dot p]
=
-\frac{D_pH(p,t_H(p))[\dot p]}
{\partial_tH(p,t_H(p))}.
\]
The numerical computation uses this full chain-rule derivative.

\subsection{Independent numerical verification}

The preceding response matrix is not used in isolation. The completed
 audit subjects the moving-event response to several independent
consistency and robustness tests.

\smallskip

First, the corrected analytic/complex-step response was compared independently
with a central-difference reconstruction of the complete moving-$H=0$ event
map. The maximum entrywise discrepancy was
\[
7.6\times10^{-11},
\]
the relative Frobenius discrepancy was
\[
1.4\times10^{-10},
\]
and the discrepancy between the two computed smallest singular values was
\[
4.6\times10^{-12}.
\]

\smallskip

Second, solver and parameter perturbations were used to test the stability of
the observed rank. Across the audited perturbations, the smallest observed
singular value remained
\[
1.312108783463\times10^{-3}.
\]
The maximum response deviation was
\[
3.288755079378\times10^{-8},
\]
the maximum event-time deviation was
\[
6.067786006980\times10^{-8},
\]
and a tighter solver changed the response by only
\[
7.879752406126\times10^{-12}.
\]

\smallskip

Finally, the numerical certificate uses a conservative entrywise response
radius
\[
5\times10^{-10}.
\]
The corresponding Frobenius radius is
\[
\sqrt8\,5\times10^{-10},
\]
and the resulting diagnostic Weyl lower margin remains well above the
prescribed numerical threshold
\[
5\times10^{-4}.
\]
These are conservative numerical safety radii, not rigorous interval
enclosures.

\smallskip

Taken together, the independent differentiation and robustness tests strongly
support the conclusion that the observed rank-two response is a stable
numerical feature of the computed shooting-family observation map rather than
an artefact of a single differentiation method or solver configuration.

\subsection{What the computation establishes}

The numerical results above establish the following finite-dimensional
shooting-family statement.

\begin{Prop}[Numerical shooting-family observability]
\label{prop:numerical-shooting-detection}
For the low-ratio reference parameter
\[
p_0=(-3.0703466243233075,-0.40927563640968895),
\]
the completed numerical audits give a rank-two response for
the four probe observations associated with
$\kappa=0.3$ and $\kappa=1$. The center value satisfies
\[
\sigma_{\min}(D\mathcal O_{p_0})
\approx1.3121115069\times10^{-3},
\]
while the smallest value observed in the audited perturbations is
\[
1.312108783463\times10^{-3}.
\]
Consequently, within the completed numerical audits, the differential
\[
D\mathcal O_{p_0}:
T_{p_0}P_{\mathrm{shoot}}\longrightarrow\mathbb R^4
\]
is numerically injective: no nonzero infinitesimal direction of the
two-dimensional shooting family is invisible to these four observations.
\end{Prop}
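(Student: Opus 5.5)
The statement is a numerical claim, so the argument has three parts. First, turn the computed matrix and its error bounds into an injectivity statement. Second, apply a perturbation inequality. Third, fix the precise scope of what is certified.

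The starting point is the observation map $\mathcal O=Z\circ(\mathrm{id},t_H)$ on $P_{\mathrm{shoot}}$. It is smooth near $p_0$ because the $H=0$ crossing is transverse, as in Proposition~\ref{prop:smooth-propagation}. Its differential is therefore given by the moving-event chain rule displayed above. I would state this explicitly, because the recorded matrix $D\mathcal O_{p_0}$ is meant to approximate exactly this total derivative, including the $\partial_tZ\,Dt_H$ correction, and not the fixed-time derivative.

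Next I would take the computed $4\times2$ matrix $\widetilde D$ and compute its singular values, $\sigma_{\min}(\widetilde D)\approx1.3121\times10^{-3}$. The two columns are nearly proportional (the ratio is about $0.60$ in each row), which is why this value is small. I would then invoke Weyl's inequality for singular values: $|\sigma_{\min}(D\mathcal O_{p_0})-\sigma_{\min}(\widetilde D)|\le\|D\mathcal O_{p_0}-\widetilde D\|_2\le\|D\mathcal O_{p_0}-\widetilde D\|_F$. The entrywise radius $5\times10^{-10}$ over eight entries bounds the Frobenius norm by $\sqrt8\cdot5\times10^{-10}\approx1.4\times10^{-9}$. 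Hence $\sigma_{\min}(D\mathcal O_{p_0})\ge1.3121\times10^{-3}-1.5\times10^{-9}>5\times10^{-4}>0$. By Proposition~\ref{prop:stable-observability}, positivity of the least singular value is equivalent to injectivity, and for a map out of a two-dimensional space this is the same as rank two. The perturbation audits, where the minimum observed value was $1.312108\times10^{-3}$ and the response deviations were $\lesssim3.3\times10^{-8}$, enter in the same way. Each audited perturbed matrix lies within a Weyl radius far below the margin, so rank two persists across them. The agreement between the complex-step and central-difference computations, with discrepancy $7.6\times10^{-11}$, is what justifies the entrywise radius.

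The main obstacle is the justification of the error radius. It is a conservative numerical estimate rather than a rigorous interval enclosure, so the Weyl argument only certifies injectivity conditionally on that radius. I would therefore phrase the conclusion as "within the completed numerical audits", exactly as the statement does, and not attempt a computer-assisted proof. If rigor were wanted, the step I would try first is to rerun the ODE integration and the event location in interval arithmetic, so that the entrywise radius becomes a true enclosure.

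Finally, I would stress the scope of the conclusion. What is proved is numerical injectivity of $D\mathcal O_{p_0}$ on the two-dimensional shooting family. It is not injectivity of $\mathcal D_{g_0}$ on a global Einstein family, since neither Hypothesis~\ref{hyp:matching-dependence} nor factorization through the core package is used.
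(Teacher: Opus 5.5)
Your proposal is correct and matches the paper's argument. The paper likewise relies on the two positive singular values of the computed moving-event matrix, on agreement with the independent central-difference reconstruction, on $\sigma_{\min}$ staying positive across the perturbation audit, and on the Weyl margin against the conservative Frobenius radius $\sqrt8\cdot5\times10^{-10}$ recorded just before the proposition. You make that Weyl step explicit and, as the paper does, correctly state that the radius is a numerical safety margin rather than a rigorous enclosure, so the conclusion is numerical injectivity on the shooting family only.
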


\begin{proof}
The displayed corrected response matrix has two positive singular values.
The independent Jacobian reconstruction agrees with it to the numerical errors
stated above, and the robustness audit retains a strictly positive smallest
singular value throughout all tested perturbations. Hence the numerical
response has rank two in the audit.
\end{proof}

\smallskip

This proposition realizes concretely the finite-dimensional observability
mechanism isolated by the Einstein Detection Principle. Its logical relation
to the abstract theorem is conditional in exactly the following sense. If a
smooth parameterized family $S$ of genuine normalized Einstein metrics is
represented locally by a smooth map
\[
s:S\longrightarrow P_{\mathrm{shoot}},
\qquad s(p_0)=p_0,
\]
then
\[
D(\mathcal O\circ s)_{p_0}
=
D\mathcal O_{p_0}\circ Ds_{p_0}.
\]
Thus injectivity of $D\mathcal O_{p_0}$ together with injectivity of
$Ds_{p_0}$ gives infinitesimal completeness of this concrete observation map
on $S$. To invoke the stronger universal core-package theorem, one must also
verify that the observation factors through the descended core package.

\smallskip

If the relevant global Einstein zero is transverse and isolated, the tangent
space of the normalized global moduli stratum is zero dimensional. In that
case the rank-two matrix above detects variations only in the surrounding
shooting family; it does not exhibit a two-dimensional global Einstein
deformation space.

\subsection{What the computation does not establish}
The preceding proposition deliberately addresses only the numerical
shooting-family response problem. In particular, the audit does
\emph{not} establish
\begin{enumerate}
\item
a rigorous interval proof of global Einstein existence;
\item
a rigorous interval lower bound for the exact response singular value;
\item
the existence of a positive-dimensional global normalized Einstein moduli
stratum;
\item
the existence of additional nearby global Einstein branches;
\item
global uniqueness or classification for the underlying cohomogeneity-one
Einstein problem.
\end{enumerate}

\subsection{Interpretation}
The preceding computation illustrates the conceptual content of the Einstein
Detection Principle.

\smallskip

Classically, the study of cohomogeneity-one Einstein metrics is formulated as
a nonlinear boundary-value problem. One seeks Einstein trajectories satisfying
the smoothness conditions at the singular orbits together with the appropriate
global matching conditions. The primary questions are therefore existence,
uniqueness and classification.

\smallskip

The viewpoint developed here adds a complementary question: how can
first-order changes of the underlying Einstein data be measured by associated
analytical probes? In the present example, each shooting trajectory is
propagated to the fixed geometric hypersurface
\[
\Sigma=H^{-1}(0),
\]
the prescribed probe observations are evaluated there, and the differential
of the resulting observation map measures their first-order response to
variations of the Einstein initial data.

\smallskip

The essential numerical quantity is not any individual entry of the response
matrix but its rank and conditioning. The positive computed smallest singular
value shows, within the completed numerical audits, that no nontrivial
infinitesimal direction of the two-dimensional shooting family is invisible to
the chosen probe observations.

\smallskip

This is precisely the finite-dimensional observability mechanism that
motivates the abstract response theory. Passing from the shooting-family
statement to the universal Detection Principle on a genuine smooth normalized
Einstein family requires the additional restriction and package-factorization
conditions described above.

\smallskip

Thus the example demonstrates the computational realizability of the
response-theoretic part of the Einstein Detection Principle.

\end{document}